\documentclass[11pt]{article}        

\usepackage{etoolbox}
\usepackage{hyperref}

\usepackage{amsthm,amssymb}
\usepackage{nicematrix}
\usepackage{graphicx}
\usepackage{comment}
\usepackage{graphicx}
\usepackage{amscd,amssymb}
\usepackage{amsmath}
\usepackage{tikz}
\usepackage{tikz-3dplot}
\usepackage{tikz-cd}
\usepackage{faktor}

\newtheorem{theorem}{Theorem}[section]

\newtheorem*{RefTheorem}{Theorem}

\newtheorem{lemma}[theorem]{Lemma}
\newtheorem{prop}[theorem]{Proposition}

\newtheorem{corollary}[theorem]{Corollary}

\theoremstyle{definition}
\newtheorem{defi}[theorem]{Definition}
\newtheorem{remark}[theorem]{Remark}

\newtheorem{convention}[theorem]{Convention}

\newtheorem*{claim}{Claim}
\newtheorem{example}[theorem]{Example}
\newtheorem*{example*}{Example}

\def\R{\mathbb{R}}

\newcommand{\pp}[2]{\frac{\partial#1}{\partial#2}}
\newcommand{\norm}[1]{\|#1\|}

\newcommand{\vol}{\operatorname{vol}}
\newcommand{\image}{\operatorname{Im}}

\newcommand{\sym}{\operatorname{Sym}}

\newcommand{\transv}{\mathrel{\text{\tpitchfork}}}
\makeatletter
\newcommand{\tpitchfork}{%
  \vbox{
    \baselineskip\z@skip
    \lineskip-.52ex
    \lineskiplimit\maxdimen
    \m@th
    \ialign{##\crcr\hidewidth\smash{$-$}\hidewidth\crcr$\pitchfork$\crcr}
  }%
}

\definecolor{mymagenta}{RGB}{139, 39, 95}
\definecolor{myblue}{RGB}{52,105,165}

\makeatother

\title{Contact topology and intrinsic characterization of transversely vanishing curl eigenforms}

\date{}

\author{Robert Cardona and Willi Kepplinger}

\newcommand{\Addresses}{{
  \bigskip
  \footnotesize

  R.~Cardona, \textsc{Departament de Matem\`atiques i Inform\`atica, Universitat de Barcelona, Gran Via de Les Corts Catalanes 585, 08007 Barcelona, Spain; Centre de Recerca Matemàtica, Campus de Bellaterra, Edifici C, 08193, Barcelona, Spain.}\par\nopagebreak
  \textit{E-mail address}: \texttt{robert.cardona@ub.edu}

  \medskip

  W.~Kepplinger, \textsc{Instituto de Ciencias Matemáticas (ICMAT), Calle Nicolás Cabrera 13-15, 28049 Madrid, Spain.}\par\nopagebreak
  \textit{E-mail address}: \texttt{willi.kepplinger@icmat.es}

}}

\begin{document}

\maketitle

\begin{abstract}
We study the geometry and topology of transversely vanishing eigenforms of the curl operator on $3$-manifolds, placing them in the broader framework of near-contact forms. We prove a Gray-type stability theorem for the kernels of near-contact forms, which we call near-contact structures, and derive a local neighborhood theorem around their singularities. As applications, we obtain a stability result for the contact topology of curl eigenforms, a classification of near-contact structures in terms of formal homotopical data, and a proof of the existence of contact structures on closed orientable 3-manifolds based on the spectral theory of the curl operator. We also study the symplectization of curl eigenforms as a possible bridge to symplectic dynamics. Finally, we characterize those transversely vanishing $1$-forms which are curl eigenforms for some smooth Riemannian metric. We deduce a generalization of the classical correspondence, due to Etnyre and Ghrist, between Reeb-like fields of contact forms and nowhere-vanishing rotational Beltrami fields to the setting of transversely vanishing vector fields. 
\end{abstract}

\tableofcontents

\section{Introduction}\label{sec:Intro}

\paragraph{Context.} In a now classical paper of Etnyre and Ghrist \cite{Etnyre2000I}, the authors establish a correspondence between reparametrizations of Reeb fields of contact forms (``Reeb-like fields") on $3$-dimensional manifolds and (non-vanishing) rotational Beltrami fields, that is,
vector fields which are parallel to their curl with respect to some Riemannian metric.  In terms of forms, every contact form satisfies the Beltrami equation 
\begin{equation}\label{eq:beltrami_equation_form}
    \star_g d\alpha=f\alpha 
\end{equation}
for some Riemannian metric $g$ and some nowhere-vanishing function $f$, and, conversely, a nowhere-vanishing $1$-form solving the Beltrami equation for some Riemannian metric $g$ and some nowhere-vanishing proportionality factor $f$ defines a contact form. This equivalence has opened the door to the introduction of contact topological methods as well as techniques from symplectic dynamics in the study of nowhere-vanishing rotational Beltrami fields. One reason this is important is that Beltrami fields constitute a remarkable class of stationary solutions to ideal magnetohydrodynamics and the Euler equations for ideal fluids. Striking examples of this geometric and topological way of looking at fluid dynamics include a proof of the fact that any analytic non-vanishing stationary solution to the Euler equations in a closed $3$-manifold which is not a torus bundle over the circle admits a periodic orbit \cite{Etnyre2000I, CieliebakVolkov2017}, the construction of stationary solutions to the $3$D incompressible Euler equations that contain every knot and link among its periodic orbits \cite{Etnyre2000III,EncisoPeralta2012}, of a solution which is isolated with respect to the $C^1$-topology \cite{EncisoKepplingerPeralta2025}, or the construction of solutions which are Turing complete \cite{Cardona2021}. 

Using the correspondence in the other direction, that is, solving the PDE to obtain potentially interesting contact structures, has so far proven to be challenging. There are several reasons for this. While there are geometric criteria for compatible pairs of a contact form and a Riemannian metric which guarantee tightness of the underlying contact structure \cite[Theorems 1.1. and 1.3.]{Etnyre2012}, there does not exist a method for constructing such compatible pairs. One possible idea for achieving this on a closed oriented $3$-manifold would be to pick a smooth Riemannian metric $g$ and to solve the curl eigenvalue problem
$$
\star_g d\alpha=\lambda\alpha 
$$
It is well known that the spectrum of curl consists of eigenvalues only and that it is a discrete subset of the real line, and that furthermore all nonzero eigenvalues have finite multiplicity and that the associated eigenforms are smooth. See \cite{Bar} for an account of properties of this spectral problem. Crucially, any nowhere-vanishing eigenform with nonzero eigenvalue will be (weakly) compatible with the Riemannian metric $g$ \cite[Proposition 2.1.]{Etnyre2012}. The main problem with this approach is that the modulus of compatibility can in general not be controlled and that the solutions to this eigenvalue problem are in general somewhere vanishing. Indeed, in the explicitly computable cases it is not hard to see that in general one has to contend with such zeros \cite[Lemma 1]{PeraltaSlobodeanu2023}.

\paragraph{Near-contact forms and structures.} On the other hand, it is known that, for generic Riemannian metrics, all curl eigenforms have only transverse zeros \cite[Theorem 1.1]{Enciso2012} (see also Remark \ref{rmk: regularity of metric transversality statement}). It is therefore of interest to understand such forms, or more generally, $1$-forms which satisfy the Beltrami equation and vanish transversely. To study them, we use the more general notion of near-contact forms.

\begin{defi}\label{def:near_contact_form_intro}
    Let $M$ be an oriented $3$-dimensional manifold. A $1$-form $\alpha$ is called a \textit{positive (negative) near-contact form} if 
\begin{enumerate}
    \item $\alpha\wedge d\alpha>0$ ($<0$) away from $\alpha^{-1}(0)$,
    \item $\alpha^{-1}(0)=d\alpha^{-1}(0)$ and both vanish transversely,
    \item for every $p\in \alpha^{-1}(0)$, the map \begin{align}\label{eq:symmetric_wedge_intro}
    \begin{split}
        \nabla \alpha \wedge \nabla d\alpha|_p: T_pM \times T_pM &\longrightarrow \wedge^3T^*M\\
                (v,w) &\longmapsto \nabla_v\alpha\wedge\nabla_wd\alpha
    \end{split}
\end{align}
is positive (negative) definite, meaning that for any $v\neq 0$ the form $\nabla_v\alpha \wedge \nabla_v d\alpha|_p$ is a positive (negative) three-form with respect to the fixed orientation in $M$.
\end{enumerate}
\end{defi}
For an arbitrary $1$-form $\alpha$, a point where the second and third items above are satisfied is called a \textit{near-contact singularity}, and near such a singularity the form is a near-contact form (Lemma \ref{lem:Taylor_contact_condition}). While preparing the final version of this article, we became aware of \cite{ShevSmir}, where near-contact forms are introduced with a different (but equivalent) definition. We have thus kept their name, and discuss this equivalence in Section \ref{subsec: a comparison with other models for singular}. Some of the basic properties of near-contact forms that we establish overlap with \cite{ShevSmir}. We will point out when this is the case.
\medskip

Near-contact forms are a compelling generalization of contact forms, as we will see in Section \ref{sec:contact_singularities}, for the following reasons.

First, they define a kind of analogue of the Reeb field: In this context there is no canonical vector field determined by $\alpha$, but there is a canonical class of vector fields that coincide up to positive rescaling (and which generalize the positive rescalings of the Reeb field of a contact form), which we call the \emph{Reeb class} $\mathcal{R}_\alpha$ of $\alpha$, see Definition \ref{def:Reeb_class}. Mimicking the contact terminology, a vector field in the Reeb class is called Reeb-like. Quite naturally, a transversely vanishing rotational Beltrami form $\alpha$, that is, a form that satisfies $\star_g d \alpha=f\alpha$ for a nowhere-vanishing proportionality factor $f$, is indeed a near-contact form (positive respectively negative, depending on the sign of $f$). Throughout the paper, we work in the positive case; see Convention \ref{conv:positive}. Moreover, its corresponding metric-dual $X$ solves $\nabla \times_g X=fX$ and is Reeb-like for $\alpha$ (Lemma \ref{lem:transverse}). While it is not immediately obvious from the definition, near-contact singularities only come as zeros of either index $1$ or index $2$ which is related to the volume preserving nature of these dual vector fields.

Secondly, the property of being a near-contact form is preserved both by pullback along diffeomorphisms and multiplication with nowhere-vanishing functions, and it therefore makes sense to speak of near-contact structures, see Definition \ref{def:near_contact_structure}. \medskip

Besides the already mentioned \cite{ShevSmir}, several other authors have introduced singular generalizations of contact forms or structures, for instance, singular contact forms with non-vanishing differential at the zero \cite{JiangMinhZung} or $b$-contact structures in the context of Jacobi geometry \cite{MirandaOms}. Closest to our setting, Pollard and Alexander \cite{PollardAlexander}, motivated by point defects in cholesteric liquid crystals, introduced singular contact forms (contact away from finitely many zeros) and Reeb-like fields for them. Furthermore, they showed that singular Beltrami fields give rise to such forms \cite[Proposition 3.1]{PollardAlexander}, and studied which types of singularities can occur in them. We compare these notions with near-contact forms in Section \ref{subsec: a comparison with other models for singular}. 
\medskip

Our goal in the first part of the present paper is to establish some foundations for the contact topological properties of near-contact forms and transversely vanishing curl eigenforms, with the main goal of understanding the local structure near the singularities. As a first step towards this end, we prove a Gray-stability-type result for near-contact structures (Theorem \ref{thm:singularGray}). The essential technical step, which is inspired by the work of Honda in the context of normal forms for self-dual harmonic $2$-forms \cite{honda2004local}, is to modify the flow constructed in usual Gray stability-type arguments (defined in the complement of the singularities of the near-contact forms) to ensure existence up to time one of the corresponding flow. In our context, this is done by adding a suitably chosen contact Hamiltonian field to the Gray stability vector field. We deduce the following normal form theorem for near-contact structures near their singularities. We state the result for positive near-contact structures (see Convention \ref{conv:positive} for the negative case).

\begin{theorem}[Local model theorem]\label{thm: local model for Beltrami}
    Let $M$ be an oriented $3$-manifold and $\alpha$ a $1$-form with a positive near-contact singularity at $p\in M$. Then there exists a neighborhood $V$ of $p$, a neighborhood $U$ of the origin in $\R^3$, and a homeomorphism 
    $$\phi: (U,0)\subset \mathbb R^3 \longrightarrow (V,p),$$ 
    which is a smooth diffeomorphism in the complement of $0$ and satisfies $$(\phi\vert_{U\setminus \{0\}})^\ast (\alpha)=f\alpha_{std},$$ 
    where $\alpha_{std}$ is equal to
    $$\alpha_{std}^+= xdx+ydy-2zdz + yzdx-xzdy$$
    if the signature of the linearization of $\alpha$ is $1$ at $p$, or equal to
    $$\alpha_{std}^-= -xdx-ydy+2zdz - yzdx+xzdy,$$
    if the signature is $-1$. Furthermore, $f$ is positive and smooth on $U\setminus\{0\}$, and extends continuously to $U$ with $f(0)=1$ and $f-1=\mathcal{O}(\norm{x})$.
\end{theorem}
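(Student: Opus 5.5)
The plan is to reduce the statement to the Gray-type stability theorem for near-contact structures (Theorem \ref{thm:singularGray}), applied on a small ball around the singularity. The reduction proceeds in three steps. First, we bring the first jet of $\alpha$ at $p$ into a normal form. Second, we connect $\alpha$ to $\alpha_{std}^\pm$ by a path of near-contact forms sharing the same singular point. Third, we invoke the stability theorem to obtain an isotopy that is a diffeomorphism away from the singular point and pulls back the kernel of one form to the kernel of the other.

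\emph{Step 1: linear normal form.} Choose coordinates centered at $p$. Since $\alpha(p)=0$ and the zero is transverse, $\alpha = A x + \mathcal{O}(|x|^2)$ for an invertible matrix $A$, viewed as a linear $1$-form. Write $A = S + K$ with $S$ symmetric and $K$ antisymmetric. Then $d\alpha|_p$ is determined by $K$, and condition (2) of Definition \ref{def:near_contact_form_intro} forces $d\alpha(p)=0$, so $K=0$ and the linear part $Sx\cdot dx = d(\tfrac12 x^TSx)$ is exact.

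Condition (3) pairs $\nabla\alpha|_p = S$ with $\nabla d\alpha|_p$, which is governed by the quadratic part of $\alpha$. Positive definiteness of this pairing should force $\operatorname{tr} S=0$ and restrict the signature of $S$ to $(2,1)$ or $(1,2)$; this is the index $1$/index $2$ dichotomy mentioned in the introduction. I would carry out that computation using Lemma \ref{lem:Taylor_contact_condition}. A linear change of coordinates then brings the quadratic form to $x^2+y^2-2z^2$ or its negative. The extra factor of $2$ is the trace-free normalization.

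A further linear map commuting with $S$, for instance a rotation in the $(x,y)$-plane together with scalings, should normalize the relevant part of $d\alpha$ at second order. Explicitly, $d\alpha$ should vanish linearly, with linear part that pairs positively with $S$, as is the case for $\alpha_{std}^\pm$, where $d(yz\,dx - xz\,dy) = -2z\,dx\wedge dy + \dots$. I do not need the quadratic parts to agree exactly. I only need both forms to lie in the convex set of forms whose $1$-jet is $S$ and whose $2$-jet satisfies condition (3).

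\emph{Step 2: the path.} Set $\alpha_t = (1-t)\alpha_{std}^\pm + t\,\alpha$ in these coordinates. Along this path:
\begin{itemize}
\item the linear part is constant, equal to $S$, so $0$ remains a transverse zero of every $\alpha_t$;
\item $d\alpha_t(0)=0$;
\item the condition-(3) pairing is affine in the $2$-jet and linear in it for fixed $S$, and positivity of definite pairings is convex.
\end{itemize}
Hence every $\alpha_t$ has a positive near-contact singularity at $0$. By Lemma \ref{lem:Taylor_contact_condition}, uniformly in $t$ (using compactness of $[0,1]$), there is a ball $B$ on which every $\alpha_t$ is near-contact with $0$ as its only zero. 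If convexity fails because condition (3) is not convex in the needed sense, I would instead first normalize the second jet of $\alpha$ exactly, by a quadratic coordinate change $x\mapsto x+Q(x)$ that preserves the $1$-jet.

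\emph{Step 3: stability.} Apply Theorem \ref{thm:singularGray} on $B$, in its local or relative version, to the family $\alpha_t$. This produces a flow $\psi_t$ fixing $0$, smooth off $0$ and continuous at $0$, with $\psi_t^*\alpha_t = f_t\,\alpha_0$. Taking $\phi = \psi_1$ restricted to a smaller ball $U$ gives $\phi^*\alpha = f\,\alpha_{std}^\pm$. The Gray vector field is in general singular at $0$, and the stability theorem is designed to handle exactly this by adding a contact Hamiltonian correction; that is why $\phi$ is only a homeomorphism at the origin.

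\emph{Expected obstacle.} The main difficulty is the control of $f$, namely showing $f(0)=1$ and $f-1=\mathcal{O}(\|x\|)$. Since $\alpha_t$ and $\alpha_0$ share their $1$-jet, the Gray vector field $X_t$ should vanish to second order in a suitable averaged sense. This gives $d\psi_t \to \mathrm{Id}$ at $0$ and $\psi_t(x)=x+\mathcal{O}(|x|^2)$. Comparing the linear terms of $\psi_1^*\alpha$ and $\alpha_0$ then gives $f = 1+\mathcal{O}(|x|)$. To make this rigorous, I would write $\frac{d}{dt}f_t = \mu_t\circ\psi_t\, f_t$, where $\mu_t$ is the conformal factor of the Gray equation, and show that $\mu_t = \mathcal{O}(|x|)$ using the explicit Hamiltonian correction from the proof of Theorem \ref{thm:singularGray}.
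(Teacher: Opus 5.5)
Your route is the paper's. You normalize $\nabla\alpha|_p$ by Sylvester's law and interpolate linearly to $\alpha^\pm_{std}$; for fixed $\nabla\alpha|_p$ one has $B_{\alpha_t}=(1-t)B_{\alpha_{std}}+tB_\alpha$, as in Propositions \ref{prop:homotopy_signature} and \ref{prop: stability of near contact forms}. You then run the modified Gray flow in its local form (Remark \ref{rmk:singularGray_openmanifold}).

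Absorbing the Sylvester matrix $L$ into the chart, instead of homotoping along $GL^+(\R^3)$, is fine and has a pleasant side effect. Since $\dot\alpha_t=\alpha-\alpha_{std}$ then has no linear part, the correction Hamiltonian $H_s=gf_s$ of \eqref{eq:Hamiltonian_Gray} vanishes. The plain Gray field therefore already obeys the linear bound of Lemma \ref{lem:linearbound1}. You must take $L\in GL^+(\R^3)$, because the sign of $B$ is measured against the orientation; replacing $L$ by $-L$ achieves this without changing $L^TSL$.

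Two claims in your proposal are false and should be removed.

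First, definiteness of $B_\alpha$ does not force $\operatorname{tr}S=0$. The trace is not even invariant under $S\mapsto L^TSL$, and the forms $\alpha_c$ in the proof of Corollary \ref{cor: sharpness of local normal forms} have $\nabla\alpha_c|_0=\mathrm{diag}(1/c,1,-1/(c+1))$. What vanishes is $\operatorname{tr}(Dd\alpha|_p)$, because $d^2\alpha=0$, and this is what drives Lemma \ref{lem:signature}. Sylvester then gives $\mathrm{diag}(1,1,-2)$ or its negative with no trace condition.

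Second, your heuristic for $f$ fails. Although $\dot\alpha_t=\mathcal{O}(|x|^2)$, the Gray field is obtained by inverting $v\mapsto\iota_v d\alpha_t$ on $\xi_t$. That map is only bounded below by $c\norm{x}$ (proof of Lemma \ref{lem:linearbound1}), so $X_t$ is merely $\mathcal{O}(|x|)$, with a leading part that is homogeneous of degree one and in general not linear. For the constant-$1$-jet path $\alpha_{std}+s\,xz\,dy$ of Remark \ref{rem:preclude}, one computes that to leading order $X_s=(2xz^2,0,x^2z)/((1-s)x^2+y^2+(4-2s)z^2)$. This vanishes on the $x$- and $z$-axes but not on the line $x=z$, $y=0$, so it is not linear. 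That remark shows no $C^1$ conjugation exists, so $D\psi_t\to\mathrm{Id}$ and $\psi_t(x)=x+\mathcal{O}(|x|^2)$ are false in general. Hence $f$ cannot be controlled by comparing linear terms.

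Your fallback of fixing the $2$-jet by $x\mapsto x+Q(x)$ would not help either, since such maps leave $\nabla d\alpha|_0$ unchanged. Convexity makes it unnecessary anyway.

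The correct control of $f$ is your second argument, which is Lemma \ref{lem:conformal_factor}. One has $|\mu_t|=|\dot\alpha_t(R_t)|\le C_3\norm{x}^2\cdot C_1\norm{x}^{-1}$, and this is integrated along trajectories using $\norm{\psi_\sigma(x)}\le e^K\norm{x}$ from Proposition \ref{prop:Zs_flow_defined}. This estimate is already part of the conclusion of Theorem \ref{thm:singularGray}, so once you invoke that theorem there is no remaining obstacle.
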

The model $\alpha^+_{std}$ already appears in \cite[Example 4.1]{PollardAlexander}, and $-\alpha^+_{std}=\alpha^-_{std}$ gives the other possible signature. Theorem \ref{thm: local model for Beltrami} shows that these are universal local models for near-contact singularities, up to a conformal factor. Interestingly, the regularity of $\phi$ in this theorem cannot be improved to be $C^1$ on $\alpha^{-1}(0)$ in general; see Section \ref{subsec: optimal regularity}. This local normal form allows us to study the local contact topology of $\ker \alpha$ near the singularity. In particular, we find a foliation by ``overtwisted" convex spheres (in the sense of Giroux's criterion, i.e., with a disconnected dividing set) surrounding it (Corollary \ref{cor:foliation_convex_spheres}), implying that the corresponding contact structure in $M\setminus \alpha^{-1}(0)$ is overtwisted. An informal overview of the applications of both the Gray-stability result, the local normal form Theorem \ref{thm: local model for Beltrami}, and its consequences concerning the local contact topology near the singularities is the following:
\begin{enumerate}
    \item \textit{Contact topological stability of generic curl eigenforms (Theorem \ref{thm: topological stability of curl eigenforms})}: 
    Given a Riemannian metric $g$ in a certain generic subset of the space of Riemannian metrics, any curl eigenform $\alpha (g)$ associated to $g$ defines a structurally stable contact topology. That is, for any metric $g^\prime$ sufficiently close to $g$ there exists a homeomorphism which takes $f\alpha$ to the corresponding curl eigenform $\alpha (g^\prime)$. Both the homeomorphism and the function $f$ are smooth in the complement of the zeros of $\alpha(g)$. Moreover, just like for Theorem \ref{thm: local model for Beltrami}, the regularity of this homeomorphism cannot be improved to $C^1$ at the zeros of $\alpha$.
    \item \textit{A classification of near-contact structures on closed $3$-manifolds by formal homotopical data (Theorem \ref{thm:classification_near_contact_structures})}: This is a classification of near-contact structures in terms of an $h$-principle. Given two positive near-contact structures with the same set of zeros $Z\neq \emptyset$, they are isotopic by a continuous isotopy of $M$ fixing $Z$, and smooth away from $Z$, if and only if they are homotopic as plane fields in $M\setminus Z$. 
    \item \textit{A spectral-theoretic proof of Martinet's theorem (Theorem \ref{thm:Martinet})}: Every closed orientable 3-manifold admits a contact structure. Classical proofs \cite{Geiges} rely on strong structural results on the structure of $3$-manifolds such as the existence of surgery presentations, open book decompositions, or their realizability as branched coverings. Other approaches use a geometric heat flow \cite{Altschuler1995} or the $h$-principle \cite{El}. Our proof replaces the topological input by spectral theory of the curl operator: For a generic metric, all associated curl eigenforms have nondegenerate zeros only \cite[Theorems 3.5 and 3.7]{Enciso2012}. By Theorem \ref{thm: local model for Beltrami} and Corollary \ref{cor:foliation_convex_spheres}, these zeros are surrounded by convex spheres, and zeros of opposite index can be canceled in pairs using convex surface theory.
\end{enumerate}

\paragraph{Symplectizations of near-contact forms.} We then turn to studying 
the ``symplectization'' of a near-contact form $\alpha$, that is, the $2$-form $d(e^t\alpha)$ on the manifold $\R\times M$. Symplectizations of near-contact forms are shown to be self-dual harmonic $2$-forms for some metric (or, equivalently, near-symplectic forms). Moreover, our local model for near-contact forms carries over to a local model of their symplectizations (Corollary \ref{cor:symplectization_normal_form}). The model $d(e^t\alpha_{std})$ agrees to first order along the singular lines $\R \times \alpha^{-1}(0)$ with the natural non-compact analogue of Honda's local model for self-dual harmonic $2$-forms near their vanishing circles \cite{honda2004local}, but the two cannot be identified on a neighborhood that is uniform along the line (see Remark \ref{rem:Moser_Honda}). We deduce from Theorem \ref{thm: local model for Beltrami} that in the symplectization of a near-contact structure, the corresponding singular lines $\R\times \alpha^{-1}(0)$ have neighborhoods which are foliated by overtwisted contact hypersurfaces (Lemma \ref{lem:contact_foliation}), see Figure \ref{fig:symplectization} for a schematic representation.

Let us also mention that the local normal forms for self-dual harmonic $2$-forms have been the object of significant interest in the context of the geometry and topology of $4$-manifolds; see a series of papers by Taubes \cite{Taubes1999, Taubes2002}, the work of Gay and Kirby \cite{GayKirby2004}, as well as Gerig \cite{Gerig2020, Gerig2022}. We believe that it would be interesting to explore an application of similar techniques in the symplectization of transversely vanishing curl eigenforms, respectively rotational Beltrami forms or near-contact forms, to possibly obtain dynamical information about them. Indeed, extracting dynamical information about (non-vanishing) Reeb flows by studying their corresponding symplectizations has proven extremely fruitful since the foundational work of Hofer \cite{Hofer1993}. As mentioned before, transversely vanishing rotational Beltrami fields occur frequently and naturally both in fluid dynamics and plasma physics (see, e.g., \cite{Woltjer1958,ArnoldKhesin}), so we expect that there would be substantial interest in such developments.

\paragraph{An intrinsic characterization of curl eigenforms.} The second part of the paper is devoted to understanding exactly which transversely vanishing $1$-forms arise as curl eigenforms for some Riemannian metric. This is motivated by the discussion above, but can also be understood as a natural variant of the characterization of intrinsically harmonic $1$-forms, studied by Calabi \cite{Calabi} and later by Farber, Katz and Levine \cite{FarberKatzLevine}, Honda \cite{HondaTransversality2004}, and Volkov \cite{Volkov}.

We call a $1$-form $\alpha$ an \emph{intrinsic curl eigenform} if there exists a Riemannian metric $g$ for which $\alpha$ is an eigenform of the curl operator associated to $g$. Similarly, we call a $1$-form $\alpha$ an \emph{intrinsic rotational Beltrami form} if there exists a metric $g$ such that $\alpha$ is a rotational Beltrami form, namely, we have $\star_g d\alpha=f\,\alpha$ for some non-vanishing function $f$. This condition is equivalent to being the metric dual of a rotational Beltrami field with respect to $g$. 

Provided they vanish transversely, both curl eigenforms and rotational Beltrami forms are near-contact forms. The first observation towards the characterization result is that transversely vanishing curl eigenforms (respectively rotational Beltrami forms) are near-contact forms which additionally satisfy that the map \eqref{eq:symmetric_wedge_intro} is not only positive definite but also symmetric (Lemma \ref{lem:transverse}). The following theorem shows that near-contact forms with this property are exactly those transversely vanishing forms which are intrinsic curl eigenforms, which in turn are precisely those that are intrinsic rotational Beltrami forms. 

\begin{theorem}\label{theorem: intrinsically rotationally Beltrami}
    Let $M^3$ be an oriented $3$-manifold and let $\alpha$ be a transversely vanishing smooth $1$-form on $M$. Then the following are equivalent
\begin{enumerate}
    \item $\alpha$ is an intrinsic curl eigenform
    \item $\alpha$ is an intrinsic rotational Beltrami form 
    \item $\alpha$ is a near-contact form and the map \eqref{eq:symmetric_wedge_intro} is symmetric for every $p\in \alpha^{-1}(0)$.
\end{enumerate}
\end{theorem}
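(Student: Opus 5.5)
The implications $(1)\Rightarrow(2)$ and $(2)\Rightarrow(3)$ are essentially contained in what precedes the statement. If $\star_g d\alpha=\lambda\alpha$ and $\alpha$ vanishes somewhere, then $\lambda\neq 0$: otherwise $d\alpha=0$ would vanish identically, contradicting that $d\alpha^{-1}(0)=\alpha^{-1}(0)$ is a transverse zero set. Hence $f\equiv\lambda$ is a nowhere-vanishing proportionality factor, and $(1)\Rightarrow(2)$ follows. For $(2)\Rightarrow(3)$ we invoke Lemma \ref{lem:transverse}, which states that a transversely vanishing rotational Beltrami form is near-contact and that the bilinear map \eqref{eq:symmetric_wedge_intro} is symmetric at each zero. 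Concretely, at a zero $p$ one has $\nabla_w d\alpha=\star_g(f\nabla_w\alpha)$, since the terms involving $df$ and $\nabla\star$ are multiplied by $\alpha(p)=0$ or $d\alpha(p)=0$. Therefore $\nabla_v\alpha\wedge\nabla_w d\alpha=f\,g(\nabla_v\alpha,\nabla_w\alpha)\,\mathrm{vol}_g$, which is symmetric in $(v,w)$. So the real content is $(3)\Rightarrow(1)$.

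For $(3)\Rightarrow(1)$, the plan is to build the metric locally near each zero and then extend it globally by the Etnyre--Ghrist-type construction. Away from the zeros, $\alpha$ is contact, and for any metric $g$ the equation $\star_g d\alpha=\alpha$ says that $g$ is adapted in the following sense. Let $R$ be a Reeb-like field with $\alpha(R)>0$, and let $\xi=\ker\alpha$. We need $\alpha$ to be $g$-dual to a vector $X$ that is $g$-orthogonal to $\xi$ with $|X|^2=\alpha(X)$, and we need $d\alpha|_\xi=|\alpha|_g\,\mathrm{area}_g|_\xi$. Such metrics form a nonempty convex-like family: choose any $d\alpha$-compatible complex structure on $\xi$, and note that the conditions fix the normalization of $X$. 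The set of admissible metrics at a point is contractible, so local solutions can be glued with a partition of unity, provided one works with the parametrization (choice of $J$ on $\xi$, choice of transverse direction) rather than with $g$ itself. I would rather realize this as follows. Fix $\lambda=1$ and seek $g$ with $\star_g d\alpha=\alpha$. This condition is equivalent to $\alpha\wedge\star_g\alpha=\alpha\wedge d\alpha$ together with $\iota_X\star_g$ relations, and it is pointwise a linear-algebra condition on $g$. Away from $\alpha^{-1}(0)$, the solution set at each point is a nonempty contractible space (essentially the positive metrics on $\xi$ compatible with $d\alpha$, times a choice of orthogonal complement), so any local solutions can be patched together once one shows the solution space is fibrewise convex in a suitable parametrization.

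The main obstacle is the neighborhood of a zero $p$, where $\alpha$ and $d\alpha$ both vanish and the pointwise condition degenerates, so smoothness of $g$ at $p$ is the issue. Here I would use condition (3). The symmetric, positive-definite bilinear form $B(v,w)=\nabla_v\alpha\wedge\nabla_w d\alpha/\mathrm{vol}$ allows us to define a linear metric $g_0$ at $p$ for which $\nabla d\alpha|_p=\star_{g_0}\nabla\alpha|_p$. Writing $A=\nabla\alpha|_p$, an invertible endomorphism by transversality, we need a positive definite $g_0$ with $\star_{g_0}A=\nabla d\alpha|_p$. Symmetry and positivity of $B$ are exactly the conditions making this linear problem solvable: one sets $g_0(Av,Aw)$-type expressions equal to $B$ and checks consistency. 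Then $\star_{g_0}d\alpha-\alpha$ vanishes to second order at $p$. Next I would correct $g_0$ to a smooth metric $g$ near $p$ with $\star_g d\alpha=\alpha$ exactly, by solving the pointwise algebraic equation for $g=g_0+h$ with $h=O(|x|)$. Near $p$, the equation determines $g$ up to the contractible choice above, and I would argue that a smooth choice extending $g_0$ exists by a division argument. The error terms are $O(|x|^2)$, while the quantities $\alpha$ and $d\alpha$ are $O(|x|)$ with invertible linear parts, so the ratios defining the correction are smooth, for instance via Hadamard's lemma applied after writing $\alpha=A x+O(|x|^2)$. This division step is where I expect the real work to lie. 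Finally, I would glue the metrics near the zeros to the metric on the complement using the fibrewise convexity, which yields a global $g$ with $\star_g d\alpha=\alpha$, so $\alpha$ is a curl eigenform with eigenvalue $1$.
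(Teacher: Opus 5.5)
Your treatment of $(1)\Rightarrow(2)\Rightarrow(3)$ is fine, and so is your first-order metric $g_0$: up to a constant rescaling to get $\lambda=1$, it is the paper's $g^0_k=(L^{-1})^*B$. The proof of $(3)\Rightarrow(1)$, however, has a genuine gap exactly at the step you flag.

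After writing $g=g_0+h$, you must find a \emph{smooth symmetric} $h$ with $h(x)X(x)=\beta(x)$, where $\beta=\mathcal{O}(|x|^2)$ and $X$ has a nondegenerate zero. Hadamard's lemma does not give this. It produces $\beta(x)=N(x)X(x)$ with $N$ smooth but not symmetric. The pointwise symmetric solutions instead divide by $|X|^2\sim|x|^2$, and in general they are only Lipschitz at $0$.

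Already for $X=x$ and $\beta=(x_1^2,0,0)$ this fails. The natural solution $h=|x|^{-2}(\beta\otimes x+x\otimes\beta)-|x|^{-4}\langle\beta,x\rangle\,x\otimes x$ has $h_{22}=-x_1^3x_2^2/|x|^4$, which is not differentiable at $0$, even though $h=\operatorname{diag}(x_1,0,0)$ is a smooth solution. So the existence of a smooth symmetric solution is a real statement to be proved, and it is the technical heart of the paper's argument (Lemma \ref{lem: polynomial metric construction}).

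What is needed is that, for every $s\geq 1$, the map $\operatorname{Sym}_3(\mathcal P_s)\to\mathcal P_{s+1}^3$, $g\mapsto g(x)Lx$, is surjective. Dimension counting does not give this: the map is underdetermined for $s\geq2$, and the symmetry constraint is precisely what could obstruct surjectivity. The paper reduces to $L=\mathrm{id}$ via $y=Lx$ and writes down the explicit solution $g=\frac1s(D\beta+D\beta^T)-\frac{1}{s(s+1)}D^2\langle\beta,x\rangle$. With this in hand, the argument runs in three steps:
\begin{enumerate}
\item Solve order by order; the higher-order terms of $X$ enter the right-hand sides of later orders.
\item Use Borel's lemma to get a smooth $g_\infty$ with $g_\infty(X,\cdot)-\alpha$ flat at $0$.
\item Only then apply the pointwise rank-two formula above. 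It now preserves smoothness because the remainder is flat.
\end{enumerate}

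A secondary point: your ``fibrewise convexity in a suitable parametrization'' is never made precise. With the equation $\star_g d\alpha=\alpha$ itself, the condition on $g$ is not affine, because of the factor $\sqrt{\det g}$. So neither a partition-of-unity gluing nor a clean order-by-order linear scheme is available as stated. Parametrizing by compatible complex structures on $\xi$ gives contractible but not convex fibres, so partitions of unity do not apply directly there either.

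The paper avoids all of this. It fixes a volume form $\mu$, defines $X$ by $\iota_X\mu=d\alpha$, and imposes $g(X,\cdot)=\alpha$. This condition is affine in $g$, so local solutions (including those near the zeros) glue by a partition of unity. It yields $\star_g d\alpha=J^{-1}\alpha$ with $J=\operatorname{vol}_g/\mu$, and the constant eigenvalue is then obtained by the conformal rescaling of Remark \ref{rem:rescaling}.
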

The first equivalence follows from a simple metric-rescaling trick, any $1$-form which is Beltrami with a smooth proportionality factor for some smooth metric is also a curl eigenform for some other smooth metric. This is explained in Remark \ref{rem:rescaling}. The equivalence to Item 3 in Theorem \ref{theorem: intrinsically rotationally Beltrami}, however, is quite subtle.
For a singular contact form, a metric making it Beltrami always exists on the complement of the zeros, but, as observed in \cite[Section 3]{PollardAlexander}, there is no a priori reason for it to extend over those zeros. Theorem \ref{theorem: intrinsically rotationally Beltrami} determines exactly when it does. Theorem \ref{theorem: intrinsically rotationally Beltrami} should be distinguished from \cite[Theorem 4.4, Proposition 4.3]{PollardAlexander}, which characterize which leading-order types of singularity occur in some Beltrami form, rather than whether a given form is Beltrami for some metric.
\medskip

The picture that emerges, then, is the following.
\begin{align*}
&\qquad \{\text{transversely vanishing intrinsic curl eigenforms}\} \\
& \qquad \qquad \qquad \qquad \qquad \qquad \rotatebox[origin=c]{90}{=} \\
&\{\text{transversely vanishing intrinsic rotational Beltrami forms}\}\\
&\qquad \qquad \qquad \qquad \qquad \qquad \rotatebox[origin=c]{270}{$\subsetneq$}\\
&\qquad \qquad \qquad \qquad \{\text{near-contact forms}\}    
\end{align*}
The last inclusion is strict due to the existence of near-contact forms for which the map \eqref{eq:symmetric_wedge_intro} is not symmetric, see Example \ref{ex:nonBeltrami}. However, interestingly enough, any near-contact form is conformal to a transversely vanishing intrinsic curl eigenform, see Proposition \ref{prop:near-contact is conformal to intrinsically Beltrami}. Furthermore, any two such factors $f_1,f_2$ satisfy $d(f_2/f_1)|_p=0$ at every $p\in\alpha^{-1}(0)$.

\begin{remark}
Notice that Theorem \ref{theorem: intrinsically rotationally Beltrami} implies that the set of transversely vanishing intrinsic curl eigenforms (or intrinsic rotational Beltrami forms) is emphatically not open in the set of transversely vanishing $1$-forms. Somewhat in contrast, the set of transversely vanishing intrinsically harmonic $1$-forms is open inside the set of transversely vanishing closed $1$-forms \cite{VolkovPhD}. Moreover, for intrinsically harmonic $1$-forms, Calabi's characterization \cite{Calabi} is a global condition on the foliation together with a mild local condition at the zeros, whereas for curl eigenforms it is precisely the local question at the zeros which is interesting.
\end{remark}

\noindent Theorem \ref{theorem: intrinsically rotationally Beltrami} then has several interesting consequences.
\begin{enumerate}
    \item \emph{A generalization of Etnyre and Ghrist's correspondence between Reeb-like vector fields and rotational Beltrami fields (Theorem \ref{thm: Beltrami-contact for near-contact forms}):}
    Remarkably, Theorem \ref{theorem: intrinsically rotationally Beltrami} can be used to obtain a generalized version of Etnyre and Ghrist's correspondence to the setting of transversely vanishing vector fields and $1$-forms. We prove that transversely vanishing vector fields are rotational Beltrami fields for some Riemannian metric if and only if they are Reeb-like for some near-contact form $\alpha$ for which the map \eqref{eq:symmetric_wedge_intro} is symmetric for every $p\in \alpha^{-1}(0)$.

\item \emph{Curl eigenfields with prescribed singularity germs (Corollary \ref{cor:prescribed_singularities}):}
Determining precisely which (collections of) submanifolds of a given manifold can be realized as zero sets of Laplace eigenfunctions for some metric is a difficult problem; we refer to work of Colin de Verdière \cite{CdV1986}, Enciso and Peralta-Salas \cite{EncisoPeralta2015}, and Lisi \cite{Lisi2011}. For eigenforms of the curl operator on a closed $3$-manifold, the analogous problem is much more tractable: combining Theorem \ref{theorem: intrinsically rotationally Beltrami} with Eliashberg's relative $h$-principle for contact structures, one can prescribe not only the zero set of a transversely vanishing curl eigenform, subject only to the necessary condition that it consists of as many zeros of index $1$ as of index $2$, but even the germ of the eigenform at each zero, which can be any germ of positive near-contact form with symmetric map \eqref{eq:symmetric_wedge_intro}. Notice that these conditions only involve the $2$-jet of the form at the singularity, and thus all the higher jets can be prescribed to be arbitrary.
\item \emph{Regularity is the only distinguishing property of transversely vanishing curl eigenforms for smooth metrics (Corollary \ref{cor:upgrade_regularity}):}
By elliptic regularity, curl eigenforms of the curl operator defined with respect to a smooth metric are themselves smooth. However, it is not clear whether they need to have any additional, hidden qualities. In Corollary \ref{cor:upgrade_regularity} we prove that, in fact, any transversely vanishing smooth $1$-form which is an eigenform for a curl operator defined with respect to a metric which is merely assumed to be continuous already is a curl eigenform with respect to a smooth metric. 
\end{enumerate}

\paragraph{Structure of the paper.} In Section \ref{sec:contact_singularities} we introduce near-contact forms and structures, study their elementary properties, and compare them with other notions of singular contact forms. In Section \ref{sec:singularGray} we prove the singular Gray stability Theorem \ref{thm:singularGray} and deduce the topological stability of generic curl eigenforms. In Section \ref{sec: contact topology of transverse singularities} we prove the local normal form Theorem \ref{thm: local model for Beltrami}, discuss its optimal regularity, and derive the local and global contact topological consequences, including the classification of near-contact structures and the proof of Martinet's theorem. Section \ref{sec:symplectization} is devoted to symplectizations of near-contact forms, and Section \ref{sec: intrinsic rotational Beltrami fields} to the proof of Theorem \ref{theorem: intrinsically rotationally Beltrami} and its applications.

\paragraph{Acknowledgements.} RC acknowledges partial support from the AEI grant PID 2023-147585NA-I00, the Departament de Recerca i Universitats de la Generalitat de Catalunya (2021 SGR 00697), and the Spanish State Research Agency, through the Severo Ochoa and María de Maeztu Program for Centers and Units of Excellence in R\&D (CEX2020-001084-M). The latter provided RC's ``Boosting Career" grant, under which this collaboration was launched.
Throughout the course of this project, WK was supported at various points in time by the Bilateral AEI-DFG project: Celestial Mechanics, Hydrodynamics, and Turing Machines with reference code PCI2024-155062-2, and the grant PID2022-136795NB-I00 funded by MCIN/AEI/10.13039/501100011033, as well as by the Austrian Science Fund (FWF) [10.55776/P34318] and by the European Research Council (ERC) Consolidator Grant through the grant agreement 101001159. RC is grateful to Eduardo Fernández-Fuertes for helpful discussions concerning edge-rounding of convex surfaces. WK is grateful to both Yasha Eliashberg and John Etnyre for helpful discussions.

\paragraph{AI disclosure.}
This project was started in March $2025$, all main ideas and computations are the authors' own. AI models (specifically, Claude Opus 5.5 and ChatGPT 5.6) were used in the final stages mostly for proofreading, detecting minor mistakes and inaccuracies, with the following noteworthy exceptions. Lemma \ref{lem: polynomial metric construction} was originally proved by the authors, with a proof in local coordinates for $g_k^0$ and an intricate combinatorial argument showing that the system of equations for $g_k^s$, with $s>0$, is always solvable. The latter argument was hard to present in written form, so the proof was given to AI models, asking for a simplification. They provided an essentially equivalent coordinate-free argument for the $g_k^0$ case, and a clean alternative argument for the $g_k^s$ case, which we simplified and present in this work. 
Another AI contribution worth mentioning was the idea to use the infinitesimal contact volume growth in Proposition \ref{prop: local invariant for near-contact forms} in order to recover $S_\beta$ even in low regularity, allowing us to lower the assumption on the regularity of the diffeomorphism from $C^2$ to $C^1$.
Lastly, AIs pointed out to us that the continuous extension of $f$ in Theorem \ref{thm:singularGray} over the singular points was already implicit in our computations, and provided the explicit form of the contact vector field $Y$ in the local model for the zeros of near-contact forms. Together, these allowed us to significantly streamline Section \ref{sec:symplectization}.
The authors take full responsibility for the content of this paper. In particular, this paper is entirely written by the authors.

\section{Near-contact forms and structures}\label{sec:contact_singularities}

In this section, we will introduce near-contact forms and near-contact structures as natural singular generalizations of contact forms and contact structures and study some of their most elementary properties. More delicate questions, for example on their topological stability (Section \ref{subsec: first application}) and local invariants of their singularities (Proposition \ref{prop: local invariant for near-contact forms}) are relegated to later sections. 

\subsection{Near-contact and rotational Beltrami forms}

When a $1$-form $\alpha$ and its exterior derivative vanish at a point $p$, one has the following elementary observation.

\begin{lemma}\label{lem:covariant_alpha_and_diagonalization}
    Let $\alpha$ be a $1$-form in $M$ such that $\alpha(p)=0$ and $d\alpha(p)=0$. Then
\begin{enumerate}
    \item The maps
$$\nabla \alpha|_p:T_pM\longrightarrow T^*_pM$$
and 
$$\nabla d\alpha|_p:T_pM \longrightarrow \wedge^2T^*_pM$$
are independent of the metric used to compute the covariant derivative. Moreover, they are linear isomorphisms if $\alpha$ respectively $d\alpha$ vanish transversely.
    \item The bilinear form $(u,v)\mapsto(\nabla_u\alpha)_p(v)$ on $T_pM$ is symmetric. In particular, there exists a basis of $T_pM$ in which its matrix is diagonal.
\end{enumerate}
\end{lemma}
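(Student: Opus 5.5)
The plan is to reduce both items to a computation in local coordinates, using only that a tensor vanishing at $p$ has a well-defined first derivative there.

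For item 1, I fix coordinates $x^1,x^2,x^3$ around $p$ and write $\alpha=\alpha_i\,dx^i$. For any metric, with Christoffel symbols $\Gamma$, the covariant derivative is
$$(\nabla_{\partial_j}\alpha)_i=\partial_j\alpha_i-\Gamma^k_{ji}\alpha_k .$$
At $p$ all $\alpha_k(p)=0$, so $(\nabla_{\partial_j}\alpha)_p=\partial_j\alpha_i(p)\,dx^i$, and this does not depend on $g$. The same holds for $d\alpha$, since the Christoffel terms in $\nabla d\alpha$ are linear in the components of $d\alpha$, and these vanish at $p$. Coordinate-freely, if $\nabla,\nabla'$ are two connections, then $\nabla-\nabla'$ is a tensor $A$ with $(\nabla_v-\nabla'_v)\alpha=-\alpha\circ A(v,\cdot)$, which is zero at $p$. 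Next I interpret "vanishes transversely": viewing $\alpha$ as a section of $T^*M$, transversality to the zero section at $p$ means its intrinsic derivative at a zero, $T_pM\to T^*_pM$, is surjective. That derivative is exactly $\nabla\alpha|_p$ in any trivialization, as just computed. Since the dimensions agree (both are $3$), surjective means isomorphism. The same argument applies to $d\alpha$, using $\dim\wedge^2T_p^*M=3$.

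For item 2, symmetry follows from $d\alpha(p)=0$ together with torsion-freeness of the Levi-Civita connection, via the identity
$$d\alpha(u,v)=(\nabla_u\alpha)(v)-(\nabla_v\alpha)(u).$$
In coordinates this reads $\partial_j\alpha_i(p)=\partial_i\alpha_j(p)$, because $d\alpha=\sum(\partial_j\alpha_i-\partial_i\alpha_j)\,dx^j\wedge dx^i$ vanishes at $p$. So the bilinear form $B(u,v)=(\nabla_u\alpha)_p(v)$ is symmetric.

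For the diagonalization, I pick any inner product on $T_pM$ (for instance $g_p$). The symmetric form $B$ then corresponds to a self-adjoint endomorphism, and the spectral theorem gives an orthonormal eigenbasis. Equivalently, Sylvester's theorem gives a basis in which $B$ is diagonal, and in fact of the form $\mathrm{diag}(\pm1,\dots)$ when $B$ is nondegenerate.

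There is no real obstacle here. The only point needing care is matching the notion of transverse vanishing with invertibility of $\nabla\alpha|_p$, which is the identification of the intrinsic derivative of a section at a zero noted in item 1.
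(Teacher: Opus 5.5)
Your proposal is correct and follows essentially the same route as the paper. In a chart, the Christoffel terms drop out because the coefficients of $\alpha$ and $d\alpha$ vanish at $p$, symmetry of $\partial_j\alpha_i(p)$ follows from $d\alpha(p)=0$, and Sylvester's law (or the spectral theorem) then diagonalizes. You also spell out why transverse vanishing gives invertibility (a surjective intrinsic derivative between $3$-dimensional spaces), which the paper leaves implicit. One minor point: your coordinate formula for $d\alpha$ double-counts if the sum runs over all $i,j$ (it should run over $j<i$, or carry a factor $\tfrac12$), but this does not affect the vanishing argument.
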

\begin{proof}
Item $1$ follows simply from $\alpha$ and $d\alpha$ vanishing at $p$: in any chart, $\nabla\alpha|_p$ and $\nabla d\alpha|_p$ are given by the differentials $D\alpha|_p$ and $Dd\alpha|_p$ of the coefficients. For Item $2$ we observe that $d\alpha(p)=0$ implies that the linearization $(\nabla_i \alpha)_j$ of $\alpha$ is symmetric, and hence diagonalizes by Sylvester's law of inertia.
\end{proof}
The type of singularities that we want to consider is the following.
\begin{defi}
Let $\alpha$ be a $1$-form on an oriented $3$-manifold $M$. A point $p\in M$ is a \textit{positive (negative) near-contact singularity} of $\alpha$ if both $\alpha$ and $d\alpha$ vanish transversely at $p$ and the map 
    \begin{align}\label{eq:infinitessimal_contact_map}
    \begin{split}
        B_\alpha: T_pM \times T_pM &\longrightarrow \wedge^3T^*M\\
                (v,w) &\longmapsto \nabla_v\alpha\wedge\nabla_wd\alpha
    \end{split}
    \end{align}
    is \textit{positive (negative) definite}, meaning that for any $v\neq 0$ the form $\nabla_v\alpha \wedge \nabla_v d\alpha|_p$ is a positive (negative) three-form with respect to the fixed orientation in $M$.
\end{defi}
We note that, by definition, near-contact singularities are always isolated. 
\begin{remark}\label{rem:matrix_representation}
     Fix coordinates $(x,y,z)$ in a neighborhood $U$ of $p$, the canonical basis in $TM$ and $T^*M$, and the cyclic basis $(dy\wedge dz, dz\wedge dx, dx\wedge dy)$ in $\wedge^2 T^*M$ given by the wedge basis in $T^*M$, then we have the equality
    \begin{equation}\label{eq:wedge_matrix_representation}
        \nabla_u\alpha\wedge \nabla_v d\alpha |_p= {u}^t (D\alpha|_0)^t (Dd\alpha|_0) v \cdot dx\wedge dy \wedge dz.
    \end{equation}
    In particular, for a positive near-contact singularity, the matrix $(D\alpha|_0)^t(Dd\alpha|_0)$ is positive definite. This local representation will sometimes be useful in computations.
\end{remark}

In the case that $\alpha$ has a near-contact singularity at $p$, the bilinear form $B_\alpha$ does indeed capture the lowest-order terms of $\alpha\wedge d\alpha$ near $p$. This property will play a big part later when determining the optimal regularity of the local model for near-contact singularities (Theorem \ref{thm: local model for Beltrami}).

\begin{lemma}
\label{lem:Taylor_contact_condition}
Let $\alpha$ be a $1$-form on $M$ such that $\alpha$ and $d\alpha$ vanish at $p$. Then one has
\begin{align*}
    B_\alpha (v,v)=\lim_{t\to 0}\frac{1}{t^2}(\alpha\wedge d\alpha)(tv).
\end{align*}
\end{lemma}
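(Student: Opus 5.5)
The statement is local, so I will work in a chart $(x_1,x_2,x_3)$ centered at $p$ and read $tv$ as the point with coordinates $tv$, for $v\in T_pM\cong\R^3$. Both sides are then compared as multiples of $dx_1\wedge dx_2\wedge dx_3$, the right-hand side being evaluated at $tv$.

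The plan is a second-order Taylor expansion. Since $\alpha(p)=0$ and $d\alpha(p)=0$, write
$$\alpha(tv)=t\,D\alpha|_0(v)+R_1(t),\qquad d\alpha(tv)=t\,Dd\alpha|_0(v)+R_2(t),$$
with $R_1,R_2=\mathcal O(t^2)$ by smoothness of the coefficients. Here $D\alpha|_0(v)$ is the constant-coefficient $1$-form whose coefficients are the directional derivatives $\partial_v\alpha_i(0)$, and similarly $Dd\alpha|_0(v)$ for the $2$-form. By Item 1 of Lemma \ref{lem:covariant_alpha_and_diagonalization}, these coincide with $\nabla_v\alpha|_p$ and $\nabla_v d\alpha|_p$ for any metric. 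The reason is that the Christoffel terms in $\nabla_v\alpha$ and $\nabla_v d\alpha$ are linear in the undifferentiated coefficients of $\alpha$ and $d\alpha$, which vanish at $p$.

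Wedging the two expansions gives
$$(\alpha\wedge d\alpha)(tv)=t^2\,\nabla_v\alpha\wedge\nabla_v d\alpha|_p+t\bigl(D\alpha|_0(v)\wedge R_2+R_1\wedge Dd\alpha|_0(v)\bigr)+R_1\wedge R_2 .$$
The cross terms are $\mathcal O(t^3)$ and the last term is $\mathcal O(t^4)$. Dividing by $t^2$ and letting $t\to0$ yields $B_\alpha(v,v)$, as claimed.

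The only point needing care is that $B_\alpha(v,v)$ is a $3$-form at $p$ while $(\alpha\wedge d\alpha)(tv)$ lives at $tv$. In the chart, the frame $dx_1\wedge dx_2\wedge dx_3$ identifies the fibres, and the limit is taken coefficientwise. Equivalently, the formula is the coordinate identity \eqref{eq:wedge_matrix_representation} of Remark \ref{rem:matrix_representation} evaluated at $u=v$. There is no real obstacle beyond checking that the remainders are $\mathcal O(t^2)$ uniformly for $v$ in compact sets, which follows from $\alpha$ being $C^2$.
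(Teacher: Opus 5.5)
Your proof is correct and is essentially the paper's argument: the paper also Taylor-expands $\alpha$ and $d\alpha$ to first order along a curve $\gamma$ with $\gamma(0)=p$, $\gamma'(0)=v$ (your straight line $tv$ in a chart is a special case of such a curve), uses $\alpha(p)=0=d\alpha(p)$ to get $\alpha\wedge d\alpha|_{\gamma(t)}=t^2\,\nabla_v\alpha\wedge\nabla_v d\alpha+\mathcal O(t^3)$, and reads off the limit. One small correction to your regularity remark: $C^2$ regularity of $\alpha$ only gives $R_2=o(t)$, not $\mathcal O(t^2)$, but $o(t)$ remainders already suffice for the limit, so $C^2$ is indeed enough.
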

\begin{proof}
Choose a tangent vector $v\in T_pM$, and consider a curve $\gamma:(-\varepsilon,\varepsilon)\longrightarrow M$ such that $\gamma(0)=p$ and $\gamma'(0)=v$. The Taylor expansion of $\alpha|_{\gamma(t)}$ is then
    $$\alpha|_{\gamma(t)}=\alpha|_p + t\nabla_v\alpha|_p+\mathcal{O}(t^2),$$
    and, similarly, the Taylor expansion of $d\alpha|_{\gamma(t)}$ is
    $$d\alpha|_{\gamma(t)}=d\alpha|_p+t\nabla_vd\alpha|_p + \mathcal{O}(t^2).$$
    Using that $\alpha$ and $d\alpha$ vanish at $p$, we deduce that
    \begin{align}\label{eq:taylor_contact}
        \alpha\wedge d\alpha|_{\gamma(t)}= t^2 \nabla_v\alpha \wedge \nabla_vd\alpha + \mathcal{O}(t^3).
    \end{align}
    from which the claim follows.
\end{proof}

We wish to point out that it was really only the symmetric part $$S_\alpha (v,w)=\frac{1}{2}\left(B_\alpha(v,w)+B_\alpha (w,v)\right)$$ which was used in Lemma \ref{lem:Taylor_contact_condition} and that $S_\alpha$ is positive definite if and only if $B_\alpha$ is. Nevertheless, the antisymmetric part of $B_\alpha$, which is generically nonzero, plays a big role in the narrative of this paper because it is precisely this antisymmetric part which distinguishes general near-contact forms from those that are curl eigenforms for some metric. Interestingly, any near-contact form admits a rescaling for which the antisymmetric part of $B_\alpha$ vanishes; see Proposition \ref{prop:near-contact is conformal to intrinsically Beltrami}.

As we will see, the Gray stability-type result Theorem \ref{thm:singularGray} will allow us to establish a normal form for the kernel of $\alpha$ in a punctured neighborhood of a near-contact singularity. More generally, the stability holds for global forms that fail to be contact along such nondegenerate singularities.

\begin{defi}\label{def:near_contact_form}
    A $1$-form $\alpha$ on an oriented $3$-manifold $M$ is called a \emph{positive (respectively, negative) near-contact form} if $\alpha\wedge d\alpha>0$ ($<0$) in the complement of $\alpha^{-1}(0)$, and if all zeros of $\alpha$ are positive (respectively, negative) near-contact singularities.
\end{defi}
Notice that as a consequence of Lemma \ref{lem:Taylor_contact_condition}, if $p$ is a near-contact singularity of a $1$-form $\alpha$, then $\alpha$ is a near-contact form in a sufficiently small neighborhood of $p$.
\medskip

Near-contact forms are a good candidate for generalizing contact topology and Reeb dynamics to forms with transverse zeros for two main reasons. First, the notion of being near-contact is manifestly invariant under pullback along diffeomorphisms and furthermore also under rescalings by nowhere-vanishing functions (as we will see in Proposition \ref{prop: near contact forms preserved by conformal rescaling}), leading to the notion of near-contact structure (Definition \ref{def:near_contact_structure}). Secondly, they have a kind of analogue to the Reeb field of a contact form. Notice that we cannot define the usual Reeb field of $\alpha$ in $M\setminus \alpha^{-1}(0)$, since it does not extend smoothly to all of $M$, and in fact there is no canonical vector field associated with $\alpha$. However, there is a unique well-defined vector field \emph{up to reparametrization}. Indeed, if we fix an ambient volume form $\mu$ in $M$ inducing the same orientation as $\alpha\wedge d\alpha$ in $M\setminus \alpha^{-1}(0)$, it determines a unique vector field $R_{\mu}$ via the equation $\iota_{R_\mu}\mu=d\alpha$. The zeros of $R_\mu$, which are non-degenerate, are the same as that of $\alpha$, it spans the kernel of $d\alpha$, and satisfies $\alpha(R_\mu)\geq 0$ with equality exactly along $\alpha^{-1}(0)$. Different choices of volume forms yield positive rescalings of $R_\mu$. This leads to the following definition.
\begin{defi}\label{def:Reeb_class}
Let $\alpha$ be a near-contact form. The set $\mathcal{R}_\alpha$ of positive reparametrizations of $R_\mu$, which only depends on $\alpha$, is called the \emph{Reeb class} of $\alpha$.
\end{defi}

A main motivation for studying near-contact forms comes from the fact that they generalize transversely vanishing rotational Beltrami forms, the metric-duals to transversely vanishing rotational Beltrami fields.

\begin{defi}\label{def:rotational_Beltrami_form}
Let $M$ be endowed with a Riemannian metric $g$. A $1$-form $\alpha$ is a rotational Beltrami form if $\star_g d \alpha =f \alpha$ with $f$ a non-vanishing smooth function. We call it positive or negative according to the sign of $f$. Its metric-dual vector field is a rotational Beltrami field.
\end{defi}

Similarly, when $f$ is constant, we get the notion of curl eigenform.

\begin{defi}\label{def:curl_eigenform}
A rotational Beltrami form with $f\equiv \lambda$ constant is called a curl eigenform. We call it positive or negative according to the sign of $\lambda$. Its metric-dual vector field is a curl eigenfield.
\end{defi}

\begin{convention}\label{conv:positive}
Reversing the orientation of $M$ changes the sign of the Hodge star $\star_g$ on $2$-forms, and hence of the curl operator. Consequently, $\alpha$ is a negative rotational Beltrami form (respectively, a negative curl eigenform) on $M$ if and only if it is a positive one, with proportionality factor $-f$ (respectively, eigenvalue $-\lambda$), on $M$ endowed with the opposite orientation. Similarly, since the signs of $\alpha\wedge d\alpha$ and of $B_\alpha$ are measured with respect to the orientation of $M$, a $1$-form is a negative near-contact form if and only if it is a positive near-contact form for the opposite orientation, and its Reeb class is not affected by this change. Throughout the rest of the paper, unless explicitly stated otherwise, all rotational Beltrami forms, curl eigenforms, and near-contact forms and structures are assumed to be positive, and we will often omit the qualifier. Every result has a negative analogue, which follows from the positive one by reversing the orientation of $M$.
\end{convention}

\begin{lemma}\label{lem:transverse}
    Let $\alpha$ be a transversely vanishing smooth $1$-form which is rotational Beltrami for a smooth Riemannian metric, then $\alpha$ is a near-contact form with symmetric $B_\alpha$, and the corresponding rotational Beltrami field belongs to its Reeb class.
\end{lemma}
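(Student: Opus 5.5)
The plan is to verify the three defining properties of a near-contact form directly from the Beltrami equation $\star_g d\alpha = f\alpha$, with $f$ nowhere vanishing and, by Convention \ref{conv:positive}, positive. Away from the zeros, $\alpha\wedge d\alpha = \alpha\wedge \star_g(f\alpha) = f|\alpha|_g^2\,\mu_g > 0$, where $\mu_g$ is the Riemannian volume form. Since $\star_g$ is an isomorphism, $d\alpha = f\star_g\alpha$ vanishes exactly where $\alpha$ does. This already gives $\alpha^{-1}(0)=d\alpha^{-1}(0)$.

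Next I treat the behaviour at a zero $p$. By Lemma \ref{lem:covariant_alpha_and_diagonalization}, $\nabla\alpha|_p$ and $\nabla d\alpha|_p$ do not depend on the connection, so I may use the Levi-Civita connection of $g$, which commutes with $\star_g$. Differentiating $d\alpha = f\star_g\alpha$ at $p$, the term involving $df$ drops out because $\alpha(p)=0$, and I get
$$\nabla_w d\alpha|_p = f(p)\,\star_g\nabla_w\alpha|_p .$$
Because $\nabla\alpha|_p$ is an isomorphism (transversality of $\alpha$) and $\star_g$ is an isomorphism, $\nabla d\alpha|_p$ is also an isomorphism, so $d\alpha$ vanishes transversely. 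Substituting this into $B_\alpha$ gives
$$B_\alpha(v,w) = f(p)\,\nabla_v\alpha\wedge\star_g\nabla_w\alpha = f(p)\,g(\nabla_v\alpha,\nabla_w\alpha)\,\mu_g .$$
This expression is manifestly symmetric in $v,w$. It is also positive definite, since $\nabla_v\alpha\neq 0$ for $v\neq 0$. Hence every zero is a positive near-contact singularity, and $\alpha$ is a near-contact form with symmetric $B_\alpha$.

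For the Reeb class, let $X=\alpha^\sharp$ be the rotational Beltrami field. I take $\mu=f\mu_g$, a volume form positively oriented with respect to $\alpha\wedge d\alpha$. Then $\iota_X\mu_g = \star_g\alpha$, so
$$\iota_X\mu = f\star_g\alpha = d\alpha ,$$
which means $X=R_\mu\in\mathcal R_\alpha$.

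The only delicate point in this argument is the commuting of $\nabla$ with $\star_g$ and the vanishing of the $df$ term at $p$; both are routine.
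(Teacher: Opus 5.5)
Your proposal is correct and follows essentially the same route as the paper. Both arguments differentiate $d\alpha=f\star_g\alpha$ at a zero to get $\nabla d\alpha|_p=f(p)\star_g\nabla\alpha|_p$, read off $B_\alpha(v,w)=f(p)\,g(\nabla_v\alpha,\nabla_w\alpha)\,\mu_g$, and observe that $\iota_X(f\mu_g)=d\alpha$. The one small difference is how you get transversality of $d\alpha$: you deduce it from this linearization formula, whereas the paper argues that the bundle isomorphism $f\star_g$ carries transverse sections to transverse sections. Your version is the one the paper itself uses in Remark \ref{rem:transverse_C0}.
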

\begin{proof}
    First, it directly follows from $\star_gd\alpha=f\alpha$ that the zero set of $\alpha$ and $d\alpha$ coincide, and that $\alpha\wedge d\alpha>0$ away from their zeros. Let us show that $d\alpha$ vanishes transversely along its zeros. Recall that two sections $X$ and $Y$ of a vector bundle $E\to M$ are transverse iff the submanifolds $X(M)$ and $Y(M)$ in $E$ are transverse. The map 
    $$f\,\star_g : \Lambda^1 T^\ast M \to \Lambda^2 T^\ast M$$ 
    is a smooth vector bundle isomorphism covering the identity, which is smoothly invertible, and hence a diffeomorphism. Consequently $f\star_g$ takes any pair of transverse submanifolds of $\Lambda^1 T^\ast M$ to a pair of transverse submanifolds of $\Lambda^2 T^\ast M$. Thus $\alpha \transv 0$ implies $d\alpha\transv 0$.  
    
    {Differentiating $d\alpha=f \star_g\alpha$ at $p$ one gets
    $$ \nabla d\alpha|_p= (\nabla f)|_p \star_g\alpha|_p+f(p) \star_g \nabla\alpha|_p=f(p) \star_g \nabla\alpha|_p.$$
    We then have
    \begin{align*}
        \nabla_u \alpha \wedge \nabla_v d\alpha|_p&= \nabla\alpha|_p(u)\wedge \nabla d\alpha|_p(v)\\
        &=f(p) \cdot \nabla\alpha|_p(u)\wedge \star_g \nabla\alpha|_p(v)\\
        &=f(p)\cdot g(\nabla\alpha|_p(u), \nabla\alpha|_p(v))\mu|_p,
    \end{align*}
    where $\mu$ is the Riemannian volume. It follows from the non-degeneracy of $D\alpha|_p$, the positivity of $f$, and the symmetry and positive definiteness of $g$ that $B_\alpha$ is positive definite and symmetric.}

    Lastly, if $X$ is the vector field such that $\alpha=g(X,\cdot)$, the Beltrami equation for $\alpha$ is equivalent to $\iota_{fX}\mu=d\alpha$
\end{proof}

{\begin{remark}\label{rem:transverse_C0}
Lemma \ref{lem:transverse} also holds in the following more general setting: Suppose that $\alpha$ is smooth, $f$ is continuous and positive, and the metric $g$ making $\alpha$ a rotational Beltrami form is only of $C^0$-regularity. The only modifications in the proof are the following. First, the map $f\star_g$ is then only a homeomorphism from $\Lambda^1T^*M$ to $\Lambda^2T^*M$. However, at a zero $p$ of $\alpha$, the continuity of $f\,\star_g$ gives the expansion
$$d\alpha(x)=f(x)\star_{g_x}\alpha(x)=f(p)\star_{g_p}(\nabla\alpha)|_p(x-p)+o(|x-p|)$$
in a chart around $p$, so $\nabla d\alpha|_p=f(p)\star_{g_p}\nabla\alpha|_p$. This shows directly that $d\alpha$ vanishes transversely at $p$, and the rest of the computation of $B_\alpha$ only involves $g_p$, so it is unchanged. The vector field $X=\alpha^{\sharp_g}$ is now only continuous and thus cannot be in $\mathcal{R}_{\alpha}$ (which only contains smooth vector fields by definition), however, it is a positive continuous multiple of any vector field in $\mathcal{R}_\alpha$.
\end{remark}
}
Thus rotational Beltrami forms constitute a distinguished subclass of near-contact forms. Understanding the relation between their contact geometry, contact topology, and the Riemannian geometric nature of these forms is the mathematical question at the heart of this paper. The following example shows that there are near-contact forms which are not rotational Beltrami for any metric.

\begin{example}[A transverse singularity not coming from a rotational Beltrami form] \label{ex:nonBeltrami}
    Consider the $1$-form 
    $$\alpha=xdx+ydy-2zdz+yz dx - (x+y)zdy,$$
    which satisfies
    $$D\alpha|_0= \begin{pmatrix}
        1 &0 & 0\\
        0 & 1 & 0 \\
        0 & 0 & -2
    \end{pmatrix}, \qquad \text{and} \qquad Dd\alpha|_0=\begin{pmatrix}
        1 & 1 & 0\\
        0 & 1 & 0\\
        0 & 0 & -2
    \end{pmatrix}.$$
    In particular, we have
    $$D\alpha|_0\cdot Dd\alpha|_0=\begin{pmatrix}
        1 & 1 & 0 \\
        0 & 1 & 0 \\
        0 & 0 & 4
    \end{pmatrix}.$$
    This is a positive definite matrix that is not symmetric. By Equation \eqref{eq:wedge_matrix_representation}, we know that the map \eqref{eq:infinitessimal_contact_map} is positive definite but not symmetric, so $\alpha$ is a near-contact form in a neighborhood of the origin. In particular, the form $\alpha$ is contact in a punctured neighborhood of the origin. Indeed, one can check directly that 
    \begin{align*}
        \alpha\wedge d\alpha&= \left[x^2+y^2+4z^2+xy\right]dx\wedge dy\wedge dz\\
        &= \left[\left(x+\frac{y}{2}\right)^2+\frac{3}{4}y^2+4z^2\right]dx\wedge dy\wedge dz,
    \end{align*}
    which is positive away from the origin. However, it is not a rotational Beltrami form for any metric by Lemma \ref{lem:transverse} and the fact that $D\alpha|_0\cdot Dd\alpha|_0$ is not a symmetric matrix.
\end{example}

\subsection{Near-contact structures}\label{subsec: infinitesimal properties of generic contact singularities}

An interesting property of near-contact forms is that they remain near-contact under positive rescalings.

\begin{prop}\label{prop: near contact forms preserved by conformal rescaling}
    Let $\alpha$ be a positive (negative) near-contact form, and $f$ a non-vanishing smooth function in $M$. Then $f\alpha$ is also a positive (negative) near-contact form.
\end{prop}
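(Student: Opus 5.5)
We verify the conditions of Definition \ref{def:near_contact_form} for $\beta=f\alpha$ directly, splitting into the complement of the zero set and the zeros themselves.

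\emph{Away from the zeros.} Since $d\beta = df\wedge\alpha + f\,d\alpha$, we have $\beta\wedge d\beta = f\alpha\wedge df\wedge\alpha + f^2\alpha\wedge d\alpha = f^2\,\alpha\wedge d\alpha$. Because $f$ never vanishes, $\beta^{-1}(0)=\alpha^{-1}(0)$, and $\beta\wedge d\beta$ has the same sign as $\alpha\wedge d\alpha$ on the complement of this set.

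\emph{At the zeros.} Fix $p\in\alpha^{-1}(0)$, so $\alpha(p)=0$ and $d\alpha(p)=0$.
\begin{enumerate}
\item The identity $d\beta=df\wedge\alpha+f\,d\alpha$ shows $d\beta(p)=0$. Hence Lemma \ref{lem:covariant_alpha_and_diagonalization} applies to $\beta$, and its linearizations at $p$ are metric-independent.
\item Differentiating at $p$ and using $\alpha(p)=0$ gives $\nabla\beta|_p=f(p)\nabla\alpha|_p$.
\item Differentiating $d\beta$ at $p$, all terms containing an undifferentiated $\alpha$ or $d\alpha$ drop out, leaving
$$\nabla_v d\beta|_p = df|_p\wedge\nabla_v\alpha|_p + f(p)\,\nabla_v d\alpha|_p.$$
\end{enumerate}
Item 2 gives transversality of $\beta$ at $p$, since $\nabla\beta|_p$ is a nonzero multiple of an isomorphism.

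\emph{The main obstacle.} Item 3 shows that $\nabla d\beta|_p$ is \emph{not} simply a multiple of $\nabla d\alpha|_p$: the correction $df|_p\wedge\nabla_v\alpha|_p$ could a priori destroy invertibility. I would handle this through the bilinear form rather than checking invertibility directly. We compute
$$B_\beta(v,w)=f(p)\,\nabla_v\alpha\wedge\bigl(df\wedge\nabla_w\alpha + f(p)\nabla_w d\alpha\bigr)=f(p)\,\nabla_v\alpha\wedge df\wedge\nabla_w\alpha + f(p)^2 B_\alpha(v,w).$$
The first term is antisymmetric in $(v,w)$ and vanishes when $v=w$. Therefore $B_\beta(v,v)=f(p)^2B_\alpha(v,v)$, so $B_\beta$ is positive (respectively negative) definite exactly when $B_\alpha$ is. This also shows that the symmetric parts satisfy $S_\beta=f(p)^2S_\alpha$, and that scaling changes only the antisymmetric part, consistent with the later remark.

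\emph{Transversality of $d\beta$.} Definiteness now settles the remaining point. If $\nabla_w d\beta|_p=0$ for some $w\neq0$, then $B_\beta(w,w)=0$, which contradicts definiteness. So $\nabla d\beta|_p$ is injective, hence an isomorphism between these $3$-dimensional spaces, and $d\beta$ vanishes transversely at $p$.

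Finally, since $d\beta=df\wedge\alpha+f\,d\alpha$, we have $\alpha^{-1}(0)\subseteq d\beta^{-1}(0)$, and $\beta\wedge d\beta\neq0$ off $\alpha^{-1}(0)$ forces $d\beta\neq0$ there. Thus the zero sets of $\beta$ and $d\beta$ coincide, and $\beta$ satisfies all conditions of Definition \ref{def:near_contact_form} with the same sign as $\alpha$.
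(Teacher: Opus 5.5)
Your proof is correct and follows essentially the same route as the paper. It uses the same formulas for $\nabla\beta|_p$ and $\nabla d\beta|_p$, the observation that the $df$-correction only contributes an antisymmetric term so that $B_\beta(u,u)=f(p)^2B_\alpha(u,u)$, and deduces transversality of $d\beta$ from definiteness; your injectivity argument is the coordinate-free version of the paper's appeal to Remark \ref{rem:matrix_representation}, and your explicit checks of $\beta\wedge d\beta=f^2\alpha\wedge d\alpha$ and of $\beta^{-1}(0)=(d\beta)^{-1}(0)$ fill in steps the paper calls immediate.
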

\begin{proof}
Let $\beta=f\alpha$ be the rescaled form. The fact that $\beta\wedge d\beta>0$ away from the singularities is immediate. For any $p\in \alpha^{-1}(0)$ we have
$$(\nabla_u\beta)_p=f(p)(\nabla_u\alpha)_p\quad \text{and} \quad
(\nabla_vd\beta)_p
=
df(p)\wedge(\nabla_v\alpha)_p
+f(p)(\nabla_vd\alpha)_p.
$$
Consequently,
$$B_\beta (u,u)
=
f(p)^2 B_\alpha (u,u),$$
at $p$ which implies that both $\beta$ and $d\beta$ vanish transversely at $p$ by Remark \ref{rem:matrix_representation} and that $B_\beta$ is definite, so $p$ is indeed a near-contact singularity for $\beta$.
\end{proof}

\noindent Proposition \ref{prop: near contact forms preserved by conformal rescaling} implies that we can meaningfully introduce the notion of a \textit{near-contact structure}.

\begin{defi}\label{def:near_contact_structure}
A \emph{positive (negative) near-contact structure} $\xi$ is a singular plane field which is the kernel of a positive (negative) near-contact form. 
\end{defi}
We will call the set of singularities of a near-contact structure its \emph{singular locus}. Singularities of positive or negative near-contact forms and structures fall into two different classes. This is observed in the following lemma, which was previously proven in \cite[Lemma 7]{ShevSmir}. In the following, for a diagonalizable automorphism, we call \emph{signature} the difference between the number of positive and negative eigenvalues.

\begin{lemma}\label{lem:signature}
Let $\alpha$ be a near-contact form on $M$ and let $p\in \alpha^{-1}(0)$. Having fixed some basis in $T_pM$ and its dual basis in $T_p^*M$, the map induced by $(\nabla \alpha)_p$ from $T_pM$ to $T_pM$ has signature $1$ or $-1$. This corresponds to $p$ being a zero of index $1$ respectively $2$.
\end{lemma}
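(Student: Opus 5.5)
Work in a chart around $p$ and write $A=D\alpha|_p$ and $C=Dd\alpha|_p$, with $d\alpha$ expressed in the cyclic basis of Remark \ref{rem:matrix_representation}. By Lemma \ref{lem:covariant_alpha_and_diagonalization}, $A$ is symmetric and invertible, so it is diagonalizable with nonzero real eigenvalues. Its signature therefore lies in $\{3,1,-1,-3\}$, and the task is to exclude $\pm3$. Since $A$ is the Jacobian of $\alpha$ (viewed as a vector field through the fixed basis) at a nondegenerate zero, the local index of $\alpha$ at $p$ is $\operatorname{sign}\det A$. Signature $1$ means two positive eigenvalues and one negative, so $\det A<0$ and the zero has index $1$ in the Morse sense (one negative direction), with vector-field index $-1$. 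Signature $-1$ gives index $2$. The correspondence in the statement is therefore automatic once $\pm3$ is excluded, and everything rests on the key step.

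The key step is to relate $C$ to $A$ using $d(d\alpha)=0$. Let $\omega=d\alpha$, a $2$-form vanishing at $p$. In the cyclic basis, $\omega$ corresponds to a vector field $W$ with $\iota_W(dx\wedge dy\wedge dz)=\omega$, and $d\omega=0$ says exactly that $\operatorname{div}W=0$. Linearizing at $p$ gives $\operatorname{tr} C=0$.

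Now suppose the signature were $\pm3$. Replacing $\alpha$ by $-\alpha$ multiplies $B_\alpha$ by $+1$, since both factors change sign, so definiteness is unaffected. We may therefore assume $A$ is positive definite. Then $A^tC=AC$ positive definite means the symmetric part of $AC$ is positive definite. Set $A=P^2$ with $P$ symmetric positive definite. The matrix $P^{-1}(AC)P^{-1}=PCP^{-1}$ is congruent to $AC$, so its symmetric part is still positive definite. Hence $\operatorname{tr}(PCP^{-1})>0$, since the trace of a matrix equals the trace of its symmetric part. But $\operatorname{tr}(PCP^{-1})=\operatorname{tr}C=0$, a contradiction. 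This excludes signatures $\pm3$, and the lemma follows.

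The main obstacle is identifying the constraint $\operatorname{tr}C=0$ from closedness of $d\alpha$, together with the congruence/similarity trick using $P=A^{1/2}$. The remaining step is careful bookkeeping of the cyclic-basis convention in \eqref{eq:wedge_matrix_representation}, so that $d\omega=0$ translates into the trace of that matrix and not of some transpose or permuted version. This is harmless in any case, because the trace is invariant under transposition.
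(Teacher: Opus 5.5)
Your proof is correct and is essentially the paper's argument: $\operatorname{tr}(Dd\alpha|_p)=0$ follows from $d^2\alpha=0$, and if $D\alpha|_p$ were definite, conjugating $Dd\alpha|_p$ by the square root of $\pm D\alpha|_p$ would give a matrix whose symmetric part is positive definite, hence whose trace is nonzero, a contradiction. The only difference is cosmetic: you reduce to the positive definite case by replacing $\alpha$ with $-\alpha$ (which leaves $B_\alpha$ unchanged), whereas the paper carries the sign $\pm$ through $|S|^{-1/2}$.
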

\begin{proof}
We focus on the case of positive near-contact forms, as the negative one follows analogously. Fixing coordinates, by Remark \ref{rem:matrix_representation}, we know that $(D\alpha|_p)^t\cdot(Dd\alpha|_p)$ is positive definite. On the other hand, the fact that $d^2\alpha=0$ implies that the trace of $(Dd\alpha|_p)$ is zero. By contradiction, suppose that $S=D\alpha|_p$ were positive or negative definite, and write $Dd\alpha|_p=S^{-1}P$ with $P=S\cdot Dd\alpha|_p$, which has positive definite symmetric part. Then $\operatorname{tr}(Dd\alpha|_p)=\pm\operatorname{tr}\big(|S|^{-1/2}\,P\,|S|^{-1/2}\big)$, where $|S|=\pm S$, and this is nonzero because $|S|^{-1/2}P|S|^{-1/2}$ also has positive definite symmetric part. This contradicts $\operatorname{tr}(Dd\alpha|_p)=0$.
\end{proof}

\noindent In the upcoming definition, recall that $M$ is oriented and induces orientations on $T_pM$ and $T^*_pM$.

\begin{defi}\label{def:signature}
Let $p$ be a near-contact singularity of a $1$-form $\alpha$ in $M$. We call the \textit{signature} of $\alpha$ at $p$ the signature of $(\nabla \alpha)_p$. 
\end{defi}

\subsection{Other models for singular contact forms and structures}\label{subsec: a comparison with other models for singular}

First, as we mentioned earlier, there is an equivalent definition of near-contact form introduced in \cite{ShevSmir}. 

\begin{defi}[Alternative definition of near-contact form]\label{def:alternative_near_contact_form}
Let $M$ be an oriented $3$-manifold. A $1$-form $\alpha$ is called a positive (negative) near-contact form if it vanishes transversely, and there exists a positive volume form $\mu$ on $M$ such that $\alpha\wedge d\alpha=f\mu$, with $f$ a non-negative (non-positive) function whose only zeros, which coincide exactly with the zeros of $\alpha$, are nondegenerate minima (maxima).
\end{defi}

\begin{prop}\label{prop:equivalent_definitions}
    Definitions \ref{def:near_contact_form} and \ref{def:alternative_near_contact_form} are equivalent.
\end{prop}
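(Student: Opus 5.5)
We prove the two implications separately. In both directions the condition $\alpha\wedge d\alpha>0$ ($\geq 0$) away from $\alpha^{-1}(0)$ is common to the two definitions. Everything then reduces to a local analysis at a zero $p$, using the Taylor expansion of Lemma \ref{lem:Taylor_contact_condition}. We work in the positive case and fix a chart around $p$ together with a positive volume form $\mu$, and write $\alpha\wedge d\alpha=h\mu$.

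\emph{Definition \ref{def:near_contact_form} $\Rightarrow$ Definition \ref{def:alternative_near_contact_form}.} Transversality of $\alpha$ is part of the hypothesis, and $h>0$ away from $\alpha^{-1}(0)$. At a zero $p$ both $\alpha$ and $d\alpha$ vanish, so $h(p)=0$. By \eqref{eq:taylor_contact}, $h$ vanishes to first order at $p$, and its Hessian at $p$ equals $2S_\alpha/\mu_p$. Since $B_\alpha$ is positive definite, $S_\alpha$ is positive definite, so $p$ is a nondegenerate minimum. The zeros of $h$ are exactly the zeros of $\alpha$: away from $\alpha^{-1}(0)$ we have $h>0$, and on $\alpha^{-1}(0)$ we have $h=0$.

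\emph{Definition \ref{def:alternative_near_contact_form} $\Rightarrow$ Definition \ref{def:near_contact_form}.} Here we must recover three things: $d\alpha(p)=0$, $d\alpha^{-1}(0)=\alpha^{-1}(0)$ with $d\alpha$ transverse, and positive definiteness of $B_\alpha$. The main obstacle is the first item, showing $d\alpha(p)=0$.

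For this, write $\alpha=L+\mathcal{O}(|x|^2)$ with $L=D\alpha|_p\,x$ invertible, and $d\alpha=\omega_0+\mathcal{O}(|x|)$ with $\omega_0=d\alpha(p)$. Then
$$h(x)=\langle L(x)\wedge\omega_0\rangle+\mathcal{O}(|x|^2).$$
Since $p$ is a minimum, the linear term must vanish. As $L$ is an isomorphism, the linear forms $L(x)$ range over all of $T^*_pM$, so this forces $\beta\wedge\omega_0=0$ for every covector $\beta$, hence $\omega_0=0$.

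Once $d\alpha(p)=0$, Lemma \ref{lem:Taylor_contact_condition} shows that the quadratic part of $h$ is exactly $S_\alpha$. Nondegeneracy of the minimum then gives that $S_\alpha$, and hence $B_\alpha$, is positive definite. By Remark \ref{rem:matrix_representation}, the matrix $(D\alpha|_p)^t(Dd\alpha|_p)$ is then nonsingular, so $Dd\alpha|_p$ is invertible and $d\alpha$ vanishes transversely at $p$.

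It remains to check that $d\alpha$ has no zeros outside $\alpha^{-1}(0)$. At such a point $h>0$, so $\alpha\wedge d\alpha\neq 0$ and therefore $d\alpha\neq 0$. Hence $d\alpha^{-1}(0)=\alpha^{-1}(0)$.

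The negative case follows by reversing the orientation, as in Convention \ref{conv:positive}.
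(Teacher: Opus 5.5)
Your proposal is correct and follows essentially the same route as the paper. In the forward direction you identify the Hessian of $f$ at a zero with $2S_\alpha$ via \eqref{eq:taylor_contact}; in the converse you use that the gradient vanishes at the minimum, together with the invertibility of $\nabla\alpha|_p$, to force $d\alpha(p)=0$, and then reuse the Hessian computation and Remark \ref{rem:matrix_representation} to get positive definiteness of $B_\alpha$ and transversality of $d\alpha$ (your final check that $d\alpha$ has no zeros off $\alpha^{-1}(0)$ is already implicit in Definition \ref{def:near_contact_form}, but harmless).
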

\begin{proof}
We will treat the positive case only. Suppose $\alpha$ is a positive near-contact form according to Definition \ref{def:near_contact_form}. Let $\mu$ be any volume form inducing the orientation of $M$ and write $\alpha\wedge d\alpha=f\mu$. First, notice that $f$ vanishes exactly at the zeros of $\alpha$, and these are global minima of $f$ because $\alpha$ is a positive contact form away from the singularities. Let $p$ be any of the singularities of $\alpha$. Recall that by Equation \eqref{eq:taylor_contact}, we have
$$D^2 (\alpha\wedge d\alpha)_p (u,u)= 2(\nabla_u \alpha\wedge \nabla_ud\alpha)_p.$$
On the other hand
$$D^2(f\mu)_p(u,u)= D^2f_p(u,u)\mu_p.$$
It follows that $D^2f_p$ is positive definite if and only if $B_\alpha$ is. This shows that $\alpha$ is a positive near-contact form according to Definition \ref{def:alternative_near_contact_form}. 
\medskip

Conversely, since $\alpha$ is contact away from $p$, we have $\nabla(\alpha\wedge d\alpha)|_p=0$, which implies by the non-degeneracy of $\nabla \alpha|_p$ and the fact that $\alpha|_p=0$ that $d\alpha|_p=0$. By the same computation as before, using that $p$ is a (nondegenerate) minimum of $f$, we know that $B_\alpha$ is positive definite, and by Remark \ref{rem:matrix_representation} it follows that $Dd\alpha|_p$ is nondegenerate.
\end{proof}

\paragraph{Other notions of singular contact forms.} Several other singular generalizations of contact forms appear in the literature; we briefly compare them with near-contact forms.

\emph{Zeros with non-vanishing differential.} Jiang, Minh and Zung \cite{JiangMinhZung}, extending earlier work of Lychagin, Webster and Zhitomirskii, study local normal forms of \emph{singular contact forms}, i.e.\ $1$-forms vanishing at a point $O$ and satisfying the contact condition almost everywhere. Their nondegeneracy assumption requires $(d\alpha)^n(O)\neq0$; in dimension three, $d\alpha(O)\neq0$, so that the set where the contact condition fails is a hypersurface through $O$. In contrast, a transversely vanishing $1$-form which is contact on a punctured neighborhood of a zero $p$ necessarily satisfies $d\alpha(p)=0$ (see the proof of Proposition \ref{prop:equivalent_definitions} and \cite[Lemma 6]{ShevSmir}), so these two classes of singularities are disjoint.

\emph{Isolated zeros.} Pollard and Alexander \cite{PollardAlexander} consider $1$-forms on $3$-manifolds that are contact away from finitely many zeros, motivated by point defects in chiral liquid crystals. Near-contact forms are precisely those among them whose zeros are transverse and for which the map \eqref{eq:symmetric_wedge_intro} is definite.

\emph{Singularities along hypersurfaces.} In the setting of $b$-symplectic and Jacobi geometry, Miranda and Oms \cite{MirandaOms} introduced $b^m$-contact forms, which are smooth sections of the $b^m$-cotangent bundle associated with a hypersurface $Z$ but singular as ordinary forms along $Z$, and whose kernel is tangent to $Z$. Besides the fact that these are not ordinary smooth forms, these structures degenerate along a hypersurface rather than at isolated points, and are thus of a different nature than near-contact forms.

\section{Gray stability for near-contact structures}\label{sec:singularGray}

The aim of this section is to prove the following theorem, which is a generalization of Gray stability to the setting of near-contact forms. This can be seen as a contact version of Moser's path method for self-dual harmonic $2$-forms in four-manifolds, proven by Honda \cite{honda2004local}.

\begin{theorem}\label{thm:singularGray}
Suppose that $(\alpha_s)_{s\in [0,1]}$ is a {smooth} path of near-contact forms on a closed manifold $M$ vanishing on a collection of points $\{p_i(s)\}_{i=1}^n$. Then, there exists {a once-differentiable} family of homeomorphisms 
$$\varphi_s: M \longrightarrow M, \enspace s\in[0,1],$$
such that $\varphi_s(p_i (s))=p_i(0)$, $\varphi_s$ is smooth in $M\setminus \bigcup_{i=1}^n\{p_i(s)\}$, and 
$$\varphi_s^*\alpha_0=h_s\,\alpha_s$$ on $M\setminus\{p_i(s)\}$, where $h_s$ is a positive function, smooth away from the $p_i(s)$, and which extends continuously to $M$ with $h_s(p_i(s))=1$ and $|h_s(x)-1|\le C\,d(x,p_i(s))$ near $p_i(s)$, where $C$ is a constant independent of $s$.
\end{theorem}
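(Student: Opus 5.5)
First reduce to the case of fixed zeros. Choose a smooth isotopy $\psi_s$ of $M$, supported near the paths $p_i(s)$, with $\psi_s(p_i(0))=p_i(s)$. Then $\psi_s^*\alpha_s$ is a smooth path of near-contact forms, by the diffeomorphism invariance of the definition. Its zeros are the fixed points $p_i:=p_i(0)$. So from now on we assume $p_i(s)=p_i$ for all $s$, and at the end we compose with $\psi_s$. The plan is Moser's path method. We look for a time-dependent vector field $X_s$ on $M\setminus\{p_i\}$ with flow $\phi_s$ satisfying $\phi_s^*\alpha_s=\lambda_s\alpha_0$; then $\varphi_s=\phi_s^{-1}$. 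Differentiating, the required equation is
\[
\dot\alpha_s+\mathcal L_{X_s}\alpha_s=\mu_s\alpha_s,\qquad \mu_s=\tfrac{d}{ds}\log\lambda_s\circ\phi_s^{-1}.
\]
Away from the zeros, $\alpha_s$ is contact. So the standard Gray solution is the unique $X^G_s\in\ker\alpha_s$ with $\iota_{X^G_s}d\alpha_s|_{\ker\alpha_s}=-\dot\alpha_s|_{\ker\alpha_s}$, and $\mu_s$ is read off by evaluating on a Reeb-like field $R\in\mathcal R_{\alpha_s}$.

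More generally, we may add to $X^G_s$ any contact vector field $Y_H$ of $\alpha_s$ with Hamiltonian $H=\alpha_s(Y_H)$. This changes $\mu_s$ by $R(H)/\alpha(R)$, suitably normalised. The main obstacle is that $X^G_s$ may blow up at $p_i$. The equation for $X^G_s$ involves inverting $d\alpha_s|_{\ker\alpha_s}$, which degenerates like $|x|$, while $\dot\alpha_s$ only vanishes to first order at $p_i$. So generically $X^G_s=O(1)$ but discontinuous, or with nonzero radial component, and the flow may hit or leave $p_i$ in finite time.

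To cure this, following Honda's idea, I would first normalise the $1$-jets. By Lemma \ref{lem:covariant_alpha_and_diagonalization} and the signature Lemma \ref{lem:signature}, $D\alpha_s|_{p_i}$ is a nondegenerate symmetric matrix of constant signature. So a smooth family of linear maps near each $p_i$, together with a rescaling $(1+\ell_s)\alpha_s$ by functions $\ell_s$ with $\ell_s(p_i)=0$, should make the $1$-jets of $\alpha_s$ and of $d\alpha_s$ at $p_i$ independent of $s$. The rescaling uses Proposition \ref{prop: near contact forms preserved by conformal rescaling}, and the linear freedom plus $df\wedge\nabla\alpha$ reaches the relevant part of $Dd\alpha$. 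After this normalisation $\dot\alpha_s=O(|x|^2)$. Then $\iota_X d\alpha=O(|x|^2)$ with $d\alpha$ nondegenerate of size $|x|$ on $\ker\alpha$ gives $X^G_s=O(|x|)$. That is Lipschitz-type control, but not yet tangent enough to be smooth at $p_i$.

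The remaining step is to choose the Hamiltonian $H_s=O(|x|^2)$ near $p_i$, solving the transport equation along the Reeb-like field, so that $X_s=X^G_s+Y_{H_s}$ is bounded by $C|x|$ and vanishes at $p_i$. Near $p_i$ a Reeb-like field is a nondegenerate zero of index $1$ or $2$, by Lemma \ref{lem:signature}. Then Gr\"onwall's inequality gives that the flow exists up to time $1$, fixes $p_i$, and is a homeomorphism that is smooth off $p_i$, and $C^1$ in $s$. For the conformal factor, $\mu_s$ is computed as a quotient of quantities vanishing to matching order at $p_i$, using Lemma \ref{lem:Taylor_contact_condition} for the order of $\alpha\wedge d\alpha$. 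This should give $|\mu_s|\le C|x|$. Integrating along the flow, which preserves distances to $p_i$ up to a factor $e^{C}$, yields $h_s(p_i)=1$ and $|h_s-1|\le C\,d(x,p_i)$ uniformly in $s$, by compactness of $[0,1]\times M$. If solving for $H_s$ exactly at the degenerate point proves awkward, I would instead take the $O(|x|)$ field $X^G_s$ directly and check that its boundedness by $C|x|$ already suffices. I expect this analysis at $p_i$ to be the main difficulty.
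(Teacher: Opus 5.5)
Your argument proves the theorem by a different route from the paper, once two pieces of it are discarded.

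\textbf{How the two routes differ.} The paper normalizes nothing. It keeps the fixed-zero path $\alpha_s$ and adds to the Gray field the contact Hamiltonian field of $H_s=gf_s$, where $f_s=-\frac12\sum_{i,j}\partial_j\dot\alpha_{s,i}(0)x_ix_j$. This $f_s$ is a primitive of the linear part of $-\dot\alpha_s$, which exists because $d\dot\alpha_s(0)=0$. Then $Z_s=f_sR_s+Y_s$ with $\iota_{Y_s}d\alpha_s|_{\xi_s}=(-\dot\alpha_s-df_s)|_{\xi_s}=O(|x|^2)$. The bounds $|R_s|\le C/|x|$ and $|\iota_v d\alpha_s|\ge c|x||v|$ for $v\in\xi_s$ (Lemmas \ref{lem:linearbound2} and \ref{lem:linearbound1}, where the definiteness of $B_{\alpha_s}$ enters) give $|Z_s|\le K|x|$ and $|\mu_s+dH_s(R_s)|\le C|x|$.

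You instead remove the linear part of $\dot\alpha_s$ in advance, by a smooth isotopy that is linear near each $p_i$ and makes $A_s=D\alpha_s|_{p_i}$ constant. Then the paper's $f_s$ vanishes identically. The same two estimates show that the plain Gray field satisfies $|X^G_s|\le C|x|$ and $\mu_s=\dot\alpha_s(R_s)=O(|x|)$.

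So your closing ``fallback'' is in fact the entire proof. The transport-equation step for $H_s$ has nothing left to cancel ($H_s=0$ works), and smoothness at $p_i$ is neither needed nor attainable. Your version uses only the classical Gray field, at the price of an extra normalizing isotopy. The paper's version uses a single generating vector field with no preliminary step, in the spirit of Honda.

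\textbf{Making the normalization rigorous.} Lift $A_s$ to a smooth path $L_s$ with $L_0=I$ and $L_s^TA_sL_s=A_0$; this is possible because the orbit map $L\mapsto L^{-T}A_0L^{-1}$ is a principal bundle over the symmetric forms of that signature. Realize $L_s$ by cutting off the linear field $\dot L_sL_s^{-1}x$, not by cutting off $L_s$ itself, which need not give a diffeomorphism when $L_s$ is far from $I$. Since $\alpha_s(p_i)=0$, the pulled-back form has linearization $L_s^TA_sL_s$. Composing back with these smooth isotopies fixing the $p_i$ preserves $h_s(p_i)=1$ and the bound $|h_s-1|\le C\,d(x,p_i)$.

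\textbf{The claim to drop.} You assert that linear maps plus rescalings $(1+\ell_s)\alpha_s$ with $\ell_s(p_i)=0$ can also make the $1$-jet of $d\alpha_s$ at $p_i$ independent of $s$. This is false in general.
\begin{itemize}
\item After fixing $A_0$, the remaining linear freedom is $3$-dimensional and $d\ell_s(p_i)$ adds $3$ more parameters, while $Dd\alpha_s|_{p_i}$ ranges over $8$-dimensional traceless matrices.
\item More decisively, constant $1$-jets of both $\alpha_s$ and $d\alpha_s$ would force a constant projective spectrum of $T_{\alpha_s}$. By Proposition \ref{prop: local invariant for near-contact forms} this spectrum is unchanged by such normalizations. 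Yet the path of Remark \ref{rem:preclude} has constant $D\alpha_c$ and varying projective spectrum.
\end{itemize}
Nothing in your argument depends on this claim. The estimate $\dot\alpha_s=O(|x|^2)$ only needs the $1$-jet of $\alpha_s$ to be constant. A rescaling with $\ell_s(p_i)=0$ does not even change $D\alpha_s|_{p_i}$.
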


The key difficulty in proving Theorem \ref{thm:singularGray} relative to the standard Gray stability theorem for contact structures is that the existence of the near-contact singularities introduces a source of noncompactness that does not exist in the non-singular setting. Interestingly, just as is the case for Theorem \ref{thm: local model for Beltrami}, the lack of regularity in the statement of Theorem \ref{thm:singularGray} is not an artifact of the particular technique we use for remedying the noncompactness but instead there exists a real obstruction to improving it. This is because the regularity of the contactomorphism in Theorem \ref{thm: local model for Beltrami} is inherited from that of Theorem \ref{thm:singularGray}, and we prove in Proposition \ref{prop: local invariant for near-contact forms} that there is an invariant of near-contact singularities which precludes the existence of a version of Theorem \ref{thm:singularGray} for which the homeomorphisms $\varphi_s$ can be upgraded to be of regularity $C^1$ at $p_i(s)$ (see Remark \ref{rem:preclude}).  

An interesting feature of the proof of Theorem \ref{thm:singularGray} is that the fact that the $\alpha_s$ vanish transversely is absolutely crucial, as the construction of the isotopy makes explicit use of that property and it is not clear to the authors how to avoid this assumption.

\begin{remark}\label{rmk:singularGray_openmanifold}
    If $M$ is not closed, the proof still applies near the zeros: If $(\alpha_s)_{s\in[0,1]}$ is a smooth path of near-contact forms vanishing on finitely many points $\{p_i(s)\}$, there exist a neighborhood $U$ of $\{p_i(0)\}$ and a continuous family of homeomorphisms $\psi_s: U \rightarrow V_s$ onto neighborhoods $V_s$ of $\{p_i(s)\}$, with $\psi_s(p_i(0))=p_i(s)$, smooth away from the $p_i(0)$, and such that $\psi_s^*\alpha_s=h_s\alpha_0$, where $h_s$ is positive, continuous, equal to $1$ at the $p_i(0)$, and satisfies $h_s-1=\mathcal{O}(d(x,p_i(0)))$. If the zeros do not move, $\psi_s$ fixes them, and $h_s$ is as in Lemma \ref{lem:conformal_factor}.
\end{remark}

The plan for this section is as follows. In Subsection \ref{subsec: a modified gray stability} we set up the modification of the usual Gray stability trick by the flow of a suitable contact Hamiltonian which cancels the vector field generating the Gray isotopy at the near-contact singularities to zeroth order. This cancellation cannot in general be expected to work to higher order, which explains the lack of regularity in Theorem \ref{thm:singularGray}. The relevant properties of this modified vector field close to the near-contact singularities are established in Subsection \ref{subsec: behavior of the modified vector field}, and the proof of Theorem \ref{thm:singularGray} then follows easily in Subsection \ref{subsec: proof of singular gray}. Finally, we use Theorem \ref{thm:singularGray} to prove that generic curl eigenforms have stable contact topology in Subsection \ref{subsec: first application}.

\subsection{Modifying Gray's flow close near the singularities}\label{subsec: a modified gray stability}
 
In this subsection and the next one, we first deal with a simplified version of the problem, from which we will deduce the general case. Let $\alpha_s$ be a path of near-contact forms in $M$ with common zeroes $p_1,...,p_N$. We will try to apply Moser's path method to $\alpha_s$, and analyze what happens near $p$.

\paragraph{Recalling the standard Moser's path method in contact geometry.}
Let us first recall the standard Gray's stability of contact structures using Moser's path method following \cite[Section 2.2]{Geiges}. Let $\alpha_s$, with $s\in [0,1]$, be a path of contact forms in a manifold $M$, and denote by $R_s$ the Reeb field of $\alpha_s$. We look for a flow $\psi_s$ satisfying
$$\psi_s^*\alpha_s=h_s\alpha_0,$$
for some positive smooth function $h_s\in C^\infty(M)$. Denoting by $X_s$ the vector field generating $\psi_s$ and taking the $s$ derivative of the previous equation, one gets
$$\psi_s^*(\dot \alpha_s + d(\alpha_s(X_s))+\iota_{X_s}d\alpha_s)=\psi_s^*(\mu_s\alpha_s),$$
where $\mu_s=\pp{(\log h_s)}{s}\circ \psi_s^{-1}$. We now make the ansatz of choosing $X_s$ in $\xi_s$, that is $\alpha_s(X_s)=0$, which then implies that it is enough to solve
\begin{equation}\label{eq:standard_Gray}
    \dot \alpha_s + \iota_{X_s}d\alpha_s=\mu_s\alpha_s.
\end{equation}
Contracting this equation with $R_s$ gives $\mu_s=\dot \alpha_s(R_s)$ from which we learn that $\mu_s\alpha_s-\dot \alpha_s$ vanishes on $R_s$. Thus Equation \ref{eq:standard_Gray} admits a unique solution $X_s\in \xi_s$ of the equation above.
\medskip

When $\alpha_s$ are near-contact that have zeros at a collection of points $\{p_i\}_{i=1}^N$, one can still uniquely determine the vector field $X_s$ in $M\setminus \bigcup_{i=1}^N \{p_i\}$ by the same argument. However, such a vector field will in general not have a flow that is defined up to time one as needed due to the non-compactness of $M\setminus \bigcup_{i=1}^N \{p_i\}$. We tackle this issue by modifying $X_s$ into a different vector field with a controlled behavior near the singularity.

\paragraph{Modifying Gray's flow using a contact Hamiltonian}
We apply the construction to the path of contact forms $\alpha_s|_{M\setminus \bigcup_{i=1}^N \{p_i\}}$ in the open manifold $M\setminus \bigcup_{i=1}^N \{p_i\}$. The key observation that allows us to modify Gray's stability trick, to try to make the flow defined up to time one, is the following: One may change the vector field $X_s$ by adding another term $X_{H_s}$ that is, at any given time $s$, Hamiltonian with respect to $\alpha_s$. Indeed, such a flow infinitesimally preserves $\xi_s$, and we will observe in Lemma \ref{lem:Gray_Hamiltonian} below that $X_s+X_{H_s}$ still generates a $1$-parameter family of contactomorphisms. The choice of $s$-dependent Hamiltonian is crucial for the resulting flow to possibly be defined up to time one. Inspired by \cite{honda2004local}, we construct the Hamiltonian $H_s$ as follows. Fix a chart around a zero $p$ of $\alpha_s$, identifying it with $0\in\R^3$, a radius $r_1>0$ such that $\overline{B_{2r_1}(0)}$ lies in the chart, and a bump function $g$ equal to $1$ on $B_{r_1}(0)$ and to $0$ outside $B_{2r_1}(0)$. We first consider
$$f_s= -\frac{1}{2}\sum_{i,j} \pp{\dot \alpha_{s,i}}{x_j}(0)\cdot x_ix_j,$$
which is chosen so that $-\dot \alpha_s$ and $df_s$ coincide up to first order. Then, we set 
\begin{equation}\label{eq:Hamiltonian_Gray}
    H_s:=gf_s, \qquad s\in[0,1].
\end{equation} 
Let $X_{H_s}$ be the Hamiltonian vector field of $H_s$ with respect to $\alpha_s$ and consider the vector field 
$$Z_s:=X_s+X_{H_s},$$ 
which we decompose as 
\begin{equation}\label{eq:Zs_decomposition}
    Z_s=B_sR_s+Y_s,
\end{equation}
where $R_s$ is the Reeb field of the contact form $\alpha_s|_{M\setminus \{p\}}$ and $Y_s\in \xi_s$. In the complement of the $p_i$, $Z_s$ satisfies
\begin{equation}\label{eq:modified_Gray}
\begin{cases}
\alpha_s(Z_s)=f_s \iff B_s=f_s,\\
\iota_{Y_s}d\alpha_{s}|_{\xi_s}=(-\dot\alpha_s - df_s)|_{\xi_s}.
\end{cases}
\end{equation}
Let us now check that $Z_s$ still identifies the corresponding plane fields.
\begin{lemma}\label{lem:Gray_Hamiltonian}
Let $M$ be a manifold and $\alpha_s$ a path of contact forms. Let $X_s$ be the corresponding Gray stability vector field, i.e., the unique vector field satisfying Equation \eqref{eq:standard_Gray} with $\mu_s=\dot\alpha_s(R_s)$ and $X_s\in \xi_s$. For any $s$-dependent Hamiltonian function $H_s$, the flow $\psi_s$ of $Z_s=X_s+X_{H_s}$, where $X_{H_s}$ is the $s$-dependent contact Hamiltonian of $H_s$ with respect to $\alpha_s$, satisfies $\psi_s^*\alpha_s=h_s\alpha_0$ for some positive function $h_s$.
\end{lemma}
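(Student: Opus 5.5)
The plan is to differentiate the identity $\psi_s^*\alpha_s=h_s\alpha_0$ in $s$ and show that it reduces to an equation of the form $\dot\alpha_s+\mathcal{L}_{Z_s}\alpha_s=\nu_s\alpha_s$. We fix conventions first. The contact Hamiltonian vector field $X_{H}$ of a function $H$ with respect to a contact form $\alpha$ is the unique field with $\alpha(X_H)=H$ and $\iota_{X_H}d\alpha=dH(R)\alpha-dH$. Here $R$ is the Reeb field. With this convention, Cartan's formula gives
$$\mathcal{L}_{X_H}\alpha=d(\alpha(X_H))+\iota_{X_H}d\alpha=dH+dH(R)\alpha-dH=dH(R)\,\alpha.$$
So $X_H$ preserves $\ker\alpha$ infinitesimally, with conformal factor $dH(R)$. (If the sign convention for $X_H$ is the opposite one, the same computation applies with signs flipped.)

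Next we compute the derivative. Let $\psi_s$ be the flow of the time-dependent field $Z_s=X_s+X_{H_s}$, with $\psi_0=\mathrm{id}$. Then
$$\frac{d}{ds}\psi_s^*\alpha_s=\psi_s^*\big(\dot\alpha_s+\mathcal{L}_{Z_s}\alpha_s\big).$$
Since $\alpha_s(X_s)=0$, Equation \eqref{eq:standard_Gray} gives $\dot\alpha_s+\mathcal{L}_{X_s}\alpha_s=\dot\alpha_s+\iota_{X_s}d\alpha_s=\mu_s\alpha_s$. The computation above gives $\mathcal{L}_{X_{H_s}}\alpha_s=dH_s(R_s)\alpha_s$. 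Adding these,
$$\frac{d}{ds}\psi_s^*\alpha_s=\psi_s^*(\nu_s\alpha_s)=(\nu_s\circ\psi_s)\,\psi_s^*\alpha_s,\qquad \nu_s:=\mu_s+dH_s(R_s).$$

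We then solve the resulting linear ODE. At each point $x$, the covector $\beta_s:=\psi_s^*\alpha_s|_x$ satisfies $\dot\beta_s=(\nu_s\circ\psi_s)(x)\,\beta_s$, with $\beta_0=\alpha_0|_x$. Hence
$$\beta_s=h_s(x)\alpha_0|_x,\qquad h_s(x)=\exp\!\Big(\int_0^s\nu_\sigma(\psi_\sigma(x))\,d\sigma\Big)>0.$$
This $h_s$ is positive and smooth wherever the flow is defined.

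The only genuine care is a domain issue. The lemma should be read for all $x$ and $s$ for which $\psi_s(x)$ exists. In the intended application, on $M\setminus\bigcup_i\{p_i\}$, existence of the flow up to time one is the separate issue treated in the subsequent subsections. The argument here is purely local along flow lines, so it needs no compactness. I expect no step to be a serious obstacle. The only point requiring attention is fixing the sign convention for contact Hamiltonians so that it matches Equation \eqref{eq:modified_Gray}, i.e.\ $\alpha_s(Z_s)=H_s$.
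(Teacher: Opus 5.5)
Your proposal is correct and follows essentially the same route as the paper: the Gray part contributes $\mu_s\alpha_s$, the contact Hamiltonian part contributes $dH_s(R_s)\,\alpha_s$, and integrating the resulting pointwise linear ODE in $s$ gives exactly the conformal factor of Equation \eqref{eq:conformal_factor}. Your remark that the identity holds wherever the flow is defined, with existence up to time one handled separately, matches how the paper uses the lemma.
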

\begin{proof}
Recall that $X_s$ satisfies $\dot \alpha_s + \iota_{X_s}d\alpha_s=\mu_s\alpha_s$, where $\mu_s=\dot \alpha_s(R_s)$. On the other hand, by the definition of contact Hamiltonian vector field, we know that 
$$
\begin{cases}
\alpha_s(X_{H_s})=H_s,\\ 
\iota_{X_{H_s}}d\alpha_s=dH_s(R_s)\alpha_s-dH_s.
\end{cases}$$
Hence, we have
\begin{align*}
    \dot \alpha_s+d(\alpha_s(Z_s))+\iota_{Z_s}d\alpha_s&=\dot \alpha_s + dH_s-\dot \alpha_s +\mu_s\alpha_s+dH_s(R_s)\alpha_s-dH_s\\
    &=\left(\mu_s+dH_s(R_s)\right)\alpha_s,
\end{align*}
and we claim that this implies the desired statement. Indeed, if $\psi_s$ is the flow of $Z_s$, we deduce from the previous equality that
$$\psi_s^*\left( \dot \alpha_s+d(\alpha_s(Z_s))+\iota_{Z_s}d\alpha_s\right)=\nu_s \psi_s^*\alpha_s,$$
{where $\nu_s=(\mu_s+dH_s(R_s))\circ\psi_s$. Using that the left hand side is equal to $\frac{d}{ds}(\psi_s^*\alpha_s)$, we conclude that $\frac{d}{ds}(\psi_s^*\alpha_s)=\nu_s\,\psi_s^*\alpha_s$, and hence $\psi_s^*\alpha_s=h_s\alpha_0$ with
\begin{equation}\label{eq:conformal_factor}
h_s=\exp\Big(\int_0^s(\mu_\sigma+dH_\sigma(R_\sigma))\circ\psi_\sigma\,d\sigma\Big).
\end{equation}}
\end{proof}
Notice that $Z_s$ can be extended as $X_s$ away from a neighborhood of $p$, patching up to a vector field $Z_s$ in $M\setminus \bigcup_{i=1}^N\{p_i\}$. We are left with proving that the flow of $Z_s$ is defined up to time one near any zero $p$, and that it extends continuously to those singular points by keeping them fixed.

\subsection{Behavior of the modified field close to the singularities}\label{subsec: behavior of the modified vector field}

In order to show the extendibility of $Z_s=B_sR_s+Y_s$ to the singular points, we will study each term separately. Once again we focus on the neighborhood of a zero $p$ of $\alpha_s$ which we identify with the origin in a local chart $U$. By compactness of $[0,1]$, all the constants below can be chosen independently of $s$. We first establish a linear bound for the term $B_sR_s$.

\begin{lemma}\label{lem:linearbound2}
    In $U$, we have
    $$\norm{B_sR_s}\leq K_1\norm{x}$$
    for some constant $K_1>0$.
\end{lemma}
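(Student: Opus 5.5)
We need to bound $B_sR_s = f_s R_s$ in $U$. Since $f_s$ is a quadratic form in $x$ with coefficients depending smoothly on $s\in[0,1]$, we have $|f_s(x)|\le C\norm{x}^2$ uniformly in $s$. It therefore suffices to show that the Reeb field blows up at most like $\norm{R_s}\le C'\norm{x}^{-1}$ near the zero. The approach is to write the Reeb field as a renormalized version of the smooth vector field in the Reeb class, and then use the quadratic lower bound on $\alpha_s\wedge d\alpha_s$ from Lemma \ref{lem:Taylor_contact_condition}.

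\textbf{Step 1: Reeb field in terms of the Reeb class.} Fix the Euclidean volume form $\mu=dx\wedge dy\wedge dz$ in the chart. Let $W_s$ be defined by $\iota_{W_s}\mu=d\alpha_s$, so that $W_s$ lies in the Reeb class $\mathcal{R}_{\alpha_s}$. This field is smooth in $(s,x)$ and vanishes at $0$, because $d\alpha_s(0)=0$. Hence $\norm{W_s}\le C_1\norm{x}$ uniformly in $s$. Away from the zero, $W_s$ spans $\ker d\alpha_s$, so
$$R_s=\frac{W_s}{\alpha_s(W_s)}.$$
Moreover, $\alpha_s(W_s)\,\mu=\alpha_s\wedge\iota_{W_s}\mu=\alpha_s\wedge d\alpha_s$. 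Writing $\alpha_s\wedge d\alpha_s=F_s\,\mu$, we obtain $\alpha_s(W_s)=F_s$.

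\textbf{Step 2: uniform quadratic lower bound on $F_s$.} This is the only real point, and the step where care is needed. By Lemma \ref{lem:Taylor_contact_condition} and the proof of Proposition \ref{prop:equivalent_definitions}, $F_s$ vanishes to second order at $0$ with Hessian $2S_{\alpha_s}$, which is positive definite. The plan is to argue as follows.
\begin{itemize}
\item $S_{\alpha_s}$ depends continuously on $s\in[0,1]$, so compactness gives $S_{\alpha_s}(v,v)\ge 2c\norm{v}^2$ with $c>0$ independent of $s$.
\item Taylor's theorem with remainder, whose third derivatives of $F_s$ are bounded uniformly in $s$ on $\overline{B_{2r_1}}$, gives $F_s(x)\ge c\norm{x}^2$ on a small ball $B_{r_0}$, uniformly in $s$.
\end{itemize}
On the compact annulus $\overline{B_{2r_1}}\setminus B_{r_0}$ the estimate is easier. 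There $F_s>0$ and $\norm{R_s}$ is bounded by continuity and compactness in $(s,x)$, so the bound holds trivially after enlarging the constant. If the chart $U$ is larger, I would shrink $U$ to $B_{2r_1}$, which is where $H_s$ is supported anyway.

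\textbf{Step 3: conclusion.} On $B_{r_0}$, combining the two steps gives
$$\norm{B_sR_s}=\frac{|f_s|\,\norm{W_s}}{F_s}\le \frac{C\norm{x}^2\cdot C_1\norm{x}}{c\norm{x}^2}=K_1\norm{x}.$$
This yields the lemma.
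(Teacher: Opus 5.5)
Your proof is correct and follows essentially the same route as the paper. Both write $R_s$ as the renormalization of the smooth Reeb-class field defined by $\iota\,\Omega=d\alpha_s$ (your $W_s$, the paper's $L_s$), bound that field linearly in $\norm{x}$, bound its pairing with $\alpha_s$ below by $c\norm{x}^2$ via positive definiteness of $(D\alpha_s)^t Dd\alpha_s$, and combine with $|f_s|\le C\norm{x}^2$. The difference is cosmetic: the paper expands $\alpha_s(L_s)$ by hand to extract its quadratic part, whereas you use the identity $\alpha_s(W_s)\,\Omega=\alpha_s\wedge d\alpha_s$ together with Lemma \ref{lem:Taylor_contact_condition}, and you make the uniformity in $s$ and the region away from the origin more explicit.
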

\begin{proof}
Fix the volume form $\Omega=dx_1\wedge dx_2 \wedge dx_3$, and let ${L_s}$ be defined by $\iota_{L_s} \Omega=d\alpha_s$. This is a vector field with a non-degenerate zero at $p$, which belongs to the Reeb class of $\alpha_s$. If we write $d\alpha_s$ in coordinates (where we drop the $s$-dependence in the $\beta_i$ terms to lighten the notation)
\begin{equation}\label{eq:dalpha_s}
d\alpha_s= \beta_1 dx_2\wedge dx_3 + \beta_2 dx_3\wedge dx_1 + \beta_3 dx_1\wedge dx_2,
\end{equation}
where $\beta_1,\beta_2,\beta_3$ are $s$-dependent smooth functions in $U$. Taking the Taylor expansion of each coefficient, we can write $d\alpha_s$ as 
$$d\alpha_s=(d\alpha_s)_1+(d\alpha_s)_r,$$
where $(d\alpha_s)_1$ is the linear part
\begin{align*}
(d\alpha_s)_1= &\left(\sum_{j=1}^3 \pp{\beta_1}{x_j}|_0 x_j \right) dx_2\wedge dx_3 +\left(\sum_{j=1}^3 \pp{\beta_2}{x_j}|_0 x_j \right) dx_3\wedge dx_1\\ &+ \left(\sum_{j=1}^3 \pp{\beta_3}{x_j}|_0 x_j \right) dx_1\wedge dx_2.    
\end{align*} 
and $(d\alpha_s)_r$ is the remainder, which is second order in $\norm{x}$.
If we let $L_{s,0}$ be defined by $\iota_{L_{s,0}}\Omega=(d \alpha_s)_1$, then we have
$$L_s=L_{s,0}+L_{s,r},$$
where $L_{s,0}$ is first order in $\norm{x}$ and $L_{s,r}$ is second order in $\norm{x}$.
As we did for $d\alpha_s$, we write $\alpha_s$ as
$$\alpha_s= \alpha_{s,1} dx_1 + \alpha_{s,2} dx_2 +\alpha_{s,3} dx_3,$$
where $\alpha_{s,i}$ are $s$-dependent smooth functions in $U$, and split it as
$$\alpha_s= (\alpha_s)_1 + (\alpha_s)_r,$$
where 
$$(\alpha_s)_1= \left(\sum_{j=1}^3 \pp{\alpha_{s,1}}{x_j}|_0 x_j \right) dx_1 + \left(\sum_{j=1}^3 \pp{\alpha_{s,2}}{x_j}|_0 x_j \right) dx_2 + \left(\sum_{j=1}^3 \pp{\alpha_{s,3}}{x_j}|_0 x_j \right) dx_3,$$
and $(\alpha_s)_r$ is the remainder. Then
$$\alpha_s(L_s)=(\alpha_s)_1(L_{s,0}) + (\alpha_s)_1(L_{s,r}) + (\alpha_s)_r(L_s),$$
which implies that
$$\alpha_s(L_s)=(\alpha_s)_1(L_{s,0})+\mathcal{O}(\norm{x}^3),$$
where the first term is second order in $\norm{x}$. If we consider $W_s=\frac{L_s}{\norm{L_s}}$, it follows that
$$\alpha_s(W_s)= (\alpha_s)_1(W_s)+\mathcal{O}(\norm{x}^2).$$
Throughout the rest of the proof, we denote $D\alpha_s|_0$ and $Dd\alpha_s|_0$ simply by $D\alpha_s$ and $Dd\alpha_s$. Since $\norm{L_s}$ is equal to $\norm{Dd\alpha_s x}$ to first order, we have that the leading term can be written as
$$(\alpha_s)_1(W_s)=\frac{x^TD\alpha_s^T D d\alpha_s x}{\norm{D d\alpha_s x}}.$$
The latter satisfies
$$\frac{x^TD\alpha_s^T D d\alpha_s x}{\norm{D d\alpha_s x}}=\norm{x}\frac{\frac{x^T}{\norm{x}}D\alpha_s^TDd\alpha_s \frac{x}{\norm{x}}}{\left\lVert Dd\alpha_s \frac{x}{\norm{x}}\right\rVert}.$$
Recall that by Remark \ref{rem:matrix_representation}, the quadratic form defined by $D\alpha_s^TDd\alpha_s$ is positive definite, since $p$ is a near-contact singularity. Thus, we can find a constant $a_1>0$ such that
$$\frac{x^T}{\norm{x}}D\alpha_s^TDd\alpha_s \frac{x}{\norm{x}}\geq a_1$$
in $U$, and since $Dd\alpha_s$ is non-singular, there is a constant $a_2$ such that
$$\left\lVert Dd\alpha_s \frac{x}{\norm{x}}\right\rVert\leq a_2.$$
Combining these two inequalities, we obtain
$$(\alpha_s)_1(W_s)\geq a_0\norm{x},$$
where $a_0=\frac{a_1}{a_2}$. After possibly picking a slightly smaller $a_0$, we can assume that $\alpha_s(W_s)$ satisfies
\begin{equation}\label{eq:alpha_s_W_s}
    \alpha_s(W_s)\geq a_0\norm{x}
\end{equation}
near $0$. We can finally estimate the norm of $R_s$ by
\begin{equation}\label{eq:bound1}
    \norm{R_s}=\left\lVert \frac{L_s}{\alpha_s(L_s)}\right\rVert=\frac{1}{\alpha_s(W_s)} \leq  C_1\frac{1}{\norm{x}},
\end{equation}
where $C_1=\frac{1}{a_0}$. Recall that the function $f_s$ is defined by 
$$f_s= - \frac{1}{2}\sum_{i,j} \pp{\dot \alpha_{s,i}}{x_j}(0)\cdot  x_ix_j,$$ 
we can thus find a constant $C_2>0$ such that in $U$ we have
\begin{equation}\label{eq:bound2}
    |f_s| \leq C_2\norm{x}^2.
\end{equation}
Using Equations \eqref{eq:bound1} and \eqref{eq:bound2}, we conclude that
$$\norm{B_sR_s}^2=f_s^2\norm{R_s}^2\leq (C_1C_2)^2\norm{x}^2,$$
from which the lemma follows.
\end{proof}

We now establish the same type of bound for $Y_s$.

\begin{lemma}\label{lem:linearbound1}
In $U$, we have 
$$\norm{Y_s}\leq K_2\norm{x}$$
for some constant $K_2>0$.
\end{lemma}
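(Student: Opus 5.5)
The plan is to bound $Y_s$ through the second line of \eqref{eq:modified_Gray}: $Y_s\in\xi_s$ is characterized by $\iota_{Y_s}d\alpha_s|_{\xi_s}=\eta_s|_{\xi_s}$, where $\eta_s:=-\dot\alpha_s-df_s$. We work on $B_{r_1}(0)$, where $g\equiv 1$ and $H_s=f_s$. Outside a fixed smaller ball everything is smooth and bounded, so only a neighborhood of $0$ matters. The argument has two parts: control the size of $\eta_s$, and control the inverse of $d\alpha_s|_{\xi_s}$.

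\emph{Step 1: $\eta_s$ vanishes to second order.} Since $p$ is a common zero of all $\alpha_s$, we have $\dot\alpha_s(0)=0$. By construction, $df_s=-(\dot\alpha_s)_1$ is exactly minus the linear part of $\dot\alpha_s$. Strictly speaking this needs the linearization $D\dot\alpha_s|_0$ to be symmetric, and it is: $d\alpha_s(0)=0$ for all $s$, so $d\dot\alpha_s(0)=0$, and the argument of Lemma \ref{lem:covariant_alpha_and_diagonalization} applies. Hence $\eta_s=\mathcal{O}(\norm{x}^2)$ uniformly in $s$.

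\emph{Step 2: lower bound on $d\alpha_s$ restricted to $\xi_s$.} I claim that for $v\in\xi_s$ at $x\neq 0$,
$$|d\alpha_s(v,w)|\geq c\norm{x}\norm{v}$$
for some unit $w\in\xi_s$. Equivalently, the induced isomorphism $\xi_s\to\xi_s^*$ has inverse of norm at most $C/\norm{x}$. To prove this, use $\iota_{L_s}\Omega=d\alpha_s$ from the proof of Lemma \ref{lem:linearbound2}. For $v,w$ spanning $\xi_s$,
$$d\alpha_s(v,w)=\Omega(L_s,v,w).$$
The volume $|\Omega(L_s,v,w)|$ equals $\norm{L_s}\cdot|v\times w|\cdot|\cos\theta|$, where $\theta$ is the angle between $L_s$ and the normal to $\xi_s$. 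Now $\norm{L_s}\geq c\norm{x}$ because $Dd\alpha_s$ is nondegenerate. Moreover, \eqref{eq:alpha_s_W_s} gives
$$\alpha_s(W_s)\geq a_0\norm{x},\qquad \norm{\alpha_s}\leq C\norm{x},$$
and the ratio of these is exactly $|\cos\theta|$, which is therefore bounded below by $a_0/C>0$. So $L_s$ stays uniformly transverse to $\xi_s$. Consequently
$$|d\alpha_s(v,w)|\geq c'\norm{x}\,|v\times w|,$$
which is the claimed $C/\norm{x}$ bound on the inverse.

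\emph{Step 3: combine.} From Steps 1 and 2,
$$\norm{Y_s}\leq \frac{C}{\norm{x}}\,\norm{\eta_s|_{\xi_s}}\leq \frac{C}{\norm{x}}\cdot C'\norm{x}^2=K_2\norm{x}.$$
All constants are uniform in $s$ by compactness of $[0,1]$ and the smooth dependence of the Taylor coefficients on $s$.

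The main obstacle is Step 2. The $1/\norm{x}$ degeneration of $d\alpha_s|_{\xi_s}$ must be shown to be no worse than linear, which is where positive-definiteness of $B_\alpha$ enters through the transversality angle. A secondary point is the symmetry of $D\dot\alpha_s|_0$ in Step 1. If the zeros are allowed to move, one first reduces to fixed zeros via a diffeomorphism, as the section's setup already assumes.
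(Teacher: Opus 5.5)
Your proposal is correct and follows essentially the same route as the paper: the second-order vanishing of $-\dot\alpha_s-df_s$, combined with a $C/\norm{x}$ bound on the inverse of $d\alpha_s|_{\xi_s}$, where that bound comes from $\norm{L_s}\geq c\norm{x}$ and from the uniform transversality of $L_s$ to $\xi_s$ via $\alpha_s(W_s)\geq a_0\norm{x}$ and $\norm{\alpha_s}=\mathcal{O}(\norm{x})$. The differences are minor. You work directly on $\xi_s$ through $\Omega(L_s,v,w)$, whereas the paper extends the equation to the full $1$-form, bounds the cross product $V(F)b$, and so must also control the term $\rho_s(R_s)\alpha_s$. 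You also make explicit the symmetry of $D\dot\alpha_s|_0$ needed for $df_s$ to cancel the linear part of $\dot\alpha_s$, which the paper leaves implicit.
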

\begin{proof}
Recall that the vector field $Y_s$ satisfies the equation
\begin{equation}\label{eq:Ys1}
\iota_{Y_s}d\alpha_s|_{\xi_s}= (-\dot\alpha_s-df_s)|_{\xi_s}.    
\end{equation}
By construction of $f_s$, the right-hand side vanishes to second order, namely, in $U$ we have
\begin{equation}\label{eq:alpha_dfs_secondorder}
    \norm{\dot\alpha_s+df_s}\le C_3\norm{x}^2.
\end{equation}
Since $\iota_{Y_s}d\alpha_s$ vanishes on $R_s$, Equation \eqref{eq:Ys1} means that, as $1$-forms, $\iota_{Y_s}d\alpha_s=\rho_s-\rho_s(R_s)\alpha_s$ with $\rho_s=-\dot\alpha_s-df_s$. By Equations \eqref{eq:alpha_dfs_secondorder} and \eqref{eq:bound1}, $\rho_s(R_s)\alpha_s=\mathcal{O}(\norm{x}^2)$ as well.
On the other hand, since $d\alpha_s$ vanishes transversely at $0$, we write it as in Equation \eqref{eq:dalpha_s}, and we have that $\beta_i(0)=0$ and the function
\begin{align*}
    F: U\subset \mathbb{R}^3 &\longrightarrow \mathbb{R}^3\\
                x &\longmapsto (\beta_1,\beta_2,\beta_3),
\end{align*}
has a non-singular differential $DF$ at $0$. If we write $Y_s$ as an arbitrary vector field 
$$Y_s= b_1 \pp{}{x_1} + b_2 \pp{}{x_2} + b_3 \pp{}{x_3},$$
then the previous identity and Equation \eqref{eq:alpha_dfs_secondorder} can be combined into
\begin{equation}\label{eq:eq1}
    \begin{cases}
        \beta_2b_3 - \beta_3b_2= \mathcal{O}(\norm{x}^2)\\
        \beta_3b_1 - \beta_1b_3= \mathcal{O}(\norm{x}^2)\\
        \beta_1b_2 - \beta_2b_1 = \mathcal{O}(\norm{x}^2)
        \end{cases}
\end{equation}
Introduce the matrix
$$V(F)=\begin{pmatrix} 
0 & -\beta_3 & \beta_2 \\
\beta_3 & 0 & -\beta_1 \\
-\beta_2  & \beta_1 & 0
\end{pmatrix}$$
and then the system is written as 
\begin{equation}\label{eq:eq2}
    V(F) \cdot b^T=\mathcal{O}(\norm{x}^2),
\end{equation}
where $b=(b_1, b_2, b_3)$. 

For any vector $v$, we have $V(F)v=F\times v$, where $F=(\beta_1,\beta_2,\beta_3)$ is the coordinate expression of $L_s$ (defined as in the proof of Lemma \ref{lem:linearbound2}). Hence $V(F)$ has a one-dimensional kernel in $U\setminus \{p\}$, spanned by $L_s$, or equivalently by $R_s$, and $\norm{V(F)v}=\norm{F}\norm{v}\sin\angle(v,R_s)$. For $v\in\xi_s$, this sine is bounded below by the cosine of the angle between $R_s$ and the normal vector to $\xi_s$, that is, by $\alpha_s(W_s)/\norm{\alpha_s}$ (where $W_s$ is as in the proof of Lemma \ref{lem:linearbound2}), which is bounded below by a constant $c_1>0$ since $\alpha_s(W_s)\geq a_0\norm{x}$ by Equation \eqref{eq:alpha_s_W_s} and $\norm{\alpha_s}=\mathcal{O}(\norm{x})$. Hence
$$\norm{V(F)v}\geq c_1 \norm{F}\norm{v}, \qquad \text{for all} \qquad v\in \xi_s.$$
Using that $DF|_0$ is nondegenerate, there is some constant $c_2>0$ such that
$$\norm{F}\geq c_2 \norm{x}, $$
from which we deduce that
$$\norm{V(F)b^T}\geq c\norm{x}\norm{b},$$
where $c=c_1c_2$. Combining this equation with Equation \eqref{eq:eq2}, we conclude that
$$\norm{Y_s}=\norm{b}\leq K_2\norm{x}$$
for some constant $K_2>0$.
\end{proof}

As a consequence of the previous lemmas, we can extend $Z_s$ to all of $M$ by setting $Z_s(p):=0$, and this yields a continuous vector field on $M$, smooth away from $p$. We keep the notation $Z_s$ for the extension. 

We are now ready to prove that the flow of $Z_s$ is defined up to time one near $p$.

\begin{prop}\label{prop:Zs_flow_defined}
There are constants $K,r_0>0$, independent of $s$, such that for every $s_0\in[0,1]$ and every $x_0$ with $0<\norm{x_0}<r_0e^{-K}$, the integral curve $x(s)$ of $Z_s$ with $x(s_0)=x_0$ exists for all $s\in[0,1]$, stays in $B_{r_0}(0)\setminus\{0\}$, and satisfies
\begin{equation}\label{eq:twosided}
\norm{x_0}e^{-K|s-s_0|}\le\norm{x(s)}\le\norm{x_0}e^{K|s-s_0|}.
\end{equation}
\end{prop}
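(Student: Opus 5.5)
The plan is a Grönwall-type argument for $\norm{x(s)}$, driven by the linear bound on $Z_s$. By Lemmas \ref{lem:linearbound2} and \ref{lem:linearbound1}, and since $Z_s=B_sR_s+Y_s$ by \eqref{eq:Zs_decomposition}, there is a radius $r_0>0$, which we take with $\overline{B_{r_0}(0)}\subset U$ and $r_0\le r_1$, such that
$$\norm{Z_s(x)}\le K\norm{x}\quad\text{on } B_{r_0}(0)\setminus\{0\}, \qquad K:=K_1+K_2.$$
Both $r_0$ and $K$ are independent of $s\in[0,1]$, because all constants in those lemmas were chosen uniformly in $s$ by compactness of $[0,1]$.

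The argument then runs as follows.

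\textbf{Local existence.} On $B_{r_0}(0)\setminus\{0\}$ the time-dependent field $Z_s$ is smooth in $x$ and jointly continuous in $(s,x)$. Hence the integral curve through $(s_0,x_0)$ exists and is unique on a maximal interval $J\ni s_0$ with $J\subseteq[0,1]$, as long as it stays in this open set.

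\textbf{Differential inequality.} On $J$ the curve never reaches $0$, so $\norm{x(s)}$ is differentiable. By Cauchy--Schwarz,
$$\Big|\frac{d}{ds}\norm{x(s)}\Big|=\frac{|\langle x,\dot x\rangle|}{\norm{x}}\le\norm{Z_s(x(s))}\le K\norm{x(s)},$$
which equivalently says $\big|\frac{d}{ds}\log\norm{x(s)}\big|\le K$. Integrating from $s_0$ gives the two-sided estimate \eqref{eq:twosided} for every $s\in J$.

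\textbf{Continuation.} Suppose $\norm{x_0}<r_0e^{-K}$. For $s\in J$ we have $|s-s_0|\le1$, so the upper bound in \eqref{eq:twosided} gives $\norm{x(s)}<r_0$. The lower bound gives $\norm{x(s)}\ge\norm{x_0}e^{-K}>0$. The curve is therefore confined to the compact annulus
$$\{\norm{x_0}e^{-K}\le\norm{x}\le\norm{x_0}e^{K}\}\subset B_{r_0}(0)\setminus\{0\}.$$
On this compact set $Z_s$ is bounded and smooth, so by the standard escape lemma for ODEs the solution cannot blow up or leave the domain in finite time. Consequently $J$ is all of $[0,1]$.

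The only delicate point is the lower bound, and it is a point of logic rather than computation. The field $Z_s$ is merely continuous at $0$ after the extension $Z_s(0)=0$, so uniqueness at the origin is not automatic. We avoid this by working entirely in the punctured ball, where uniqueness holds, and by using the lower bound to show a priori that the curve never approaches $0$. This is exactly the role of the logarithmic Grönwall inequality.
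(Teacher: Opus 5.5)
Your proposal is correct and follows essentially the same argument as the paper. The paper also uses the linear bound $\norm{Z_s}\le K\norm{x}$ from Lemmas \ref{lem:linearbound2} and \ref{lem:linearbound1}, applies Cauchy--Schwarz to $\frac{d}{ds}\norm{x(s)}$, integrates to get \eqref{eq:twosided}, and takes $\norm{x_0}<r_0e^{-K}$ with $B_{r_0}$ inside the region where the bump function equals $1$ to keep the curve in $B_{r_0}(0)\setminus\{0\}$; your maximal-interval and escape-lemma bookkeeping only spells out what the paper leaves implicit.
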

\begin{proof}
 By Lemmas \ref{lem:linearbound1} and \ref{lem:linearbound2}, there is a constant $K=K_1+K_2$ such that 
\begin{equation}\label{eq:linearbound}
    \norm{Z_s}\leq K\norm{x} \quad \text{in}\quad U.
\end{equation}
Choose $r_0>0$ so that $B_{r_0}$ lies where the bump function $g$ equals $1$. Let $x_0$ be an initial condition in $U\setminus \{p\}$, and $x(s)$ the solution to the ODE defined by $Z_s$, namely, we know
\begin{equation*}
    \begin{cases}
        \pp{x(s)}{s}=Z_s(x(s)),\\
        x(s_0)=x_0.
    \end{cases}
\end{equation*}
Its norm evolves according to 
$$\pp{\norm{x(s)}}{s}=\frac{\langle Z_s(x(s)), x(s)\rangle}{\norm{x(s)}}.$$
By the Cauchy--Schwarz inequality, we deduce that
$$\left|\pp{\norm{x(s)}}{s}\right|\leq  \norm{Z_s(x(s))},$$
and using Equation \eqref{eq:linearbound} we conclude that
$$-K\norm{x(s)}\leq \pp{\norm{x(s)}}{s}\leq  K \norm{x(s)}.$$
Integrating, we get $\norm{x_0}e^{-K|s-s_0|}\leq \norm{x(s)}\leq\norm{x_0}e^{K|s-s_0|}$ for all $s$ such that $x(s)\in U$. If $\norm{x_0}<r_0e^{-K}$, where $B_{r_0}(0)\subset U$, the right inequality shows that $x(s)$ stays in $U$ for $s\in[0,1]$, and the left one that it cannot reach $p$. Hence the solution exists on $[0,1]$ and satisfies \eqref{eq:twosided}.
\end{proof}
{We deduce the following.
\begin{lemma}\label{lem:conformal_factor}
Let $\psi_s$ be the flow of $Z_s$ and let $h_s$ be as in \eqref{eq:conformal_factor}. There is $C>0$, independent of $s\in[0,1]$, such that $|\log h_s(x)|\le C\norm{x}$ for $\norm{x}<r_0e^{-K}$. In particular, $h_s$ extends continuously to $p$ with $h_s(p)=1$, and $h_s-1=\mathcal{O}(\norm{x})$.
\end{lemma}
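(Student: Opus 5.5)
From \eqref{eq:conformal_factor} we have
$$\log h_s(x)=\int_0^s\big(\mu_\sigma+dH_\sigma(R_\sigma)\big)(\psi_\sigma(x))\,d\sigma .$$
The plan is therefore to show that the integrand $\mu_\sigma+dH_\sigma(R_\sigma)$ is bounded by a constant times $\norm{y}$ at every point $y$ of the small ball, uniformly in $\sigma$. Proposition \ref{prop:Zs_flow_defined} then transports this bound along the flow. For $\norm{x}<r_0e^{-K}$ the trajectory stays in $B_{r_0}(0)$, where $g\equiv1$, and it satisfies $\norm{\psi_\sigma(x)}\le e^{K}\norm{x}$ by \eqref{eq:twosided}. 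Integrating over $\sigma\in[0,s]\subset[0,1]$ gives $|\log h_s(x)|\le C'e^{K}\norm{x}$. The continuous extension with $h_s(p)=1$ and the estimate $h_s-1=\mathcal{O}(\norm{x})$ follow at once from $|e^t-1|\le e^{|t|}|t|$.

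For the pointwise bound we combine the two terms, since each one separately may only be bounded, not small. On $B_{r_0}$ we have $H_\sigma=f_\sigma$, so
$$\mu_\sigma+dH_\sigma(R_\sigma)=\dot\alpha_\sigma(R_\sigma)+df_\sigma(R_\sigma)=(\dot\alpha_\sigma+df_\sigma)(R_\sigma).$$
By construction of $f_\sigma$, estimate \eqref{eq:alpha_dfs_secondorder} gives $\norm{\dot\alpha_\sigma+df_\sigma}\le C_3\norm{y}^2$. By \eqref{eq:bound1} we have $\norm{R_\sigma}\le C_1/\norm{y}$. Multiplying, $|\mu_\sigma+dH_\sigma(R_\sigma)|\le C_1C_3\norm{y}$ on $B_{r_0}\setminus\{0\}$.

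Uniformity in $\sigma$ comes from compactness of $[0,1]$ together with smoothness of the path $\alpha_s$. Indeed, the constants $C_1$, $C_3$, $K$ and $r_0$ were all chosen independently of $s$ in Lemmas \ref{lem:linearbound2} and \ref{lem:linearbound1}.

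The one point needing care is exactly this cancellation. Taken separately, $\dot\alpha_\sigma(R_\sigma)$ is only $\mathcal{O}(1)$, since $\dot\alpha_\sigma$ is $\mathcal{O}(\norm{y})$ and $R_\sigma$ is $\mathcal{O}(1/\norm{y})$. The same holds for $df_\sigma(R_\sigma)$. Hence it is essential to group them before estimating, using that $f_\sigma$ was designed to match $-\dot\alpha_\sigma$ to first order. Beyond that, the only check is that every point of the trajectory lies in the region where $g=1$ and where \eqref{eq:bound1} holds, and the choice $\norm{x}<r_0e^{-K}$ guarantees this.
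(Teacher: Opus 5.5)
Your proposal is correct and follows the paper's proof essentially verbatim. You group the integrand as $(\dot\alpha_\sigma+df_\sigma)(R_\sigma)$ and bound it by $C_1C_3\norm{y}$ using \eqref{eq:alpha_dfs_secondorder} and \eqref{eq:bound1}, then transport along the flow via $\norm{\psi_\sigma(x)}\le e^K\norm{x}$ from \eqref{eq:twosided} to get $|\log h_s(x)|\le C_1C_3e^K\norm{x}$.
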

\begin{proof}
Near $p$ we have $H_s=f_s$, so $\mu_s+dH_s(R_s)=(\dot\alpha_s+df_s)(R_s)$. By Equations \eqref{eq:alpha_dfs_secondorder} and \eqref{eq:bound1}, which hold uniformly in $s$, we have
$$|(\dot\alpha_s+df_s)(R_s)|\le\norm{\dot\alpha_s+df_s}\,\norm{R_s}\le C_3\norm{x}^2\cdot\frac{C_1}{\norm{x}}=C_1C_3\norm{x},$$
and since, by Equation \eqref{eq:twosided}, $\norm{\psi_\sigma(x)}\le e^{K}\norm{x}$ for $\sigma\in[0,s]$, we conclude that
$$|\log h_s(x)|\le\int_0^s C_1C_3\norm{\psi_\sigma(x)}\,d\sigma\le C_1C_3e^K\norm{x}.$$
\end{proof}
}

\subsection{Proof of Theorem \ref{thm:singularGray}}\label{subsec: proof of singular gray}

First, let us reduce to the case where the $\alpha_s$ all have the same fixed zero set.  Indeed, because the family $\alpha_s$ is smooth and the forms vanish transversely, the points $p_i(s)$ vary smoothly with the parameter $s$, see the proof of Proposition \ref{prop: submanifold property of TNC forms}. We can thus choose an isotopy $\theta_s: M\rightarrow M$, with $s \in [0,1]$, such that $\theta_s(p_i(0))=p_i(s)$ and $\theta_0=\mathrm{id}$. The forms $\beta_s=\theta_s^*\alpha_s$ all have the same zero set $\{p_i(0)\}_{i=1}^N$. Supposing that the theorem holds for forms with a fixed zero set, there exists a family of homeomorphisms $\phi_s:M\rightarrow M$ satisfying the claimed properties for the family $\beta_s$. Then the family of homeomorphisms $\varphi_s=\phi_s\circ (\theta_s)^{-1}$ is smooth away from the $p_i(s)$, satisfies $\varphi_s(p_i(s))=\phi_s(p_i(0))=p_i(0)$ and
\begin{align*}
    (\varphi_s|_{M\setminus \bigcup_{i=1}^n \{p_i(s)\}})^*(\ker \alpha_0)&= (\theta_s^{-1})^*\circ(\phi_s|_{M\setminus \bigcup_{i=1}^n \{p_i(0)\}})^*(\ker \beta_0)\\
    &= (\theta_s^{-1}|_{M\setminus \bigcup_{i=1}^n \{p_i(s)\}})^*(\ker \beta_s)
    \\
    &=\ker \alpha_s,
\end{align*}
as we wanted. {If $\phi_s^*\beta_0=k_s \beta_s$, then $\varphi_s^*\alpha_0=(k_s \circ\theta_s^{-1})\,\alpha_s$, and $k_s \circ\theta_s^{-1}$ has the claimed properties at $p_i(s)$, since $\theta_s$ is a smooth isotopy.}
\medskip

Let us then prove the statement under the assumption that the zero set of $\alpha_s$ is $\bigcup_{i=1}^N \{p_i\}$, for a fixed set of points $p_i\in M$. Consider, as we did in Section \ref{subsec: a modified gray stability}, the corresponding Gray-stability vector field $X_s$, whose flow need not be defined up to time one because of the non-compactness of $M\setminus \bigcup_{i=1}^N \{p_i\}$. In small disjoint neighborhoods $U_i$ of the $p_i$, we consider the Hamiltonians $H_s^i$ as defined in Equation \eqref{eq:Hamiltonian_Gray}. Lastly, we consider the modified Gray-stability vector field
$$Z_s= X_s + X_{H_s},$$
where $H_s=\sum_{i=1} H_s^i$.
{Let $\psi_s$ be the flow of $Z_s$ and set $\varphi_s:=\psi_s^{-1}$. By Proposition \ref{prop:Zs_flow_defined}, applied forward and backward in time and to each $p_i$, every integral curve in $M\setminus\{p_i\}$ exists on $[0,1]$ (otherwise it would accumulate at some $p_i$, hence enter $B_{r_0e^{-K}}(p_i)$ at some time $s_0$, and then Equation \eqref{eq:twosided} would keep it at distance at least $\norm{x(s_0)}e^{-K}>0$ from $p_i$), and $\psi_s$, hence $\varphi_s$, extends to a homeomorphism of $M$ fixing the $p_i$ and smooth away from them. {This flow gives a once-differentiable (in the parameter) family of homeomorphisms, since its derivative is given by a continuous vector field $Z_s$.} By Lemma \ref{lem:Gray_Hamiltonian}, $\psi_s^*\alpha_s=\tilde h_s\alpha_0$, where $\tilde h_s$ denotes the function given by Equation \eqref{eq:conformal_factor}, that is, $\varphi_s^*\alpha_0= h_s\alpha_s$ with $h_s=1/(\tilde h_s\circ\varphi_s)$. Lemma \ref{lem:conformal_factor} applied to $\tilde h_s$, for $x$ close enough to the $p_i$, together with $\norm{\varphi_s(x)}\le e^K\norm{x}$, gives $h_s(p_i)=1$ and $h_s-1=\mathcal{O}(\norm{x})$. }

\subsection{First application: Topological stability of generic curl eigenforms}\label{subsec: first application}
In this subsection we explore the stability of near-contact forms as well as generic curl eigenforms. All of these stability statements work in both the $C^k$, $k\geq 2$, and the $C^\infty$ setting with the same proofs (see, for example, \cite[Appendix A]{greilhuber2023arnolds} for the requisite implicit function theory in Fr\'{e}chet spaces), for which reason we will suppress the regularity in the statement as well as the arguments.

While the set of near-contact forms is not open in $\Omega^1(M)$ with respect to even the smooth topology since $\alpha$ and $d\alpha$ having the same zero set is a closed condition, it is still a fairly stable notion. To make this precise, define the subset $$\mathcal{A}=\{\alpha \in \Omega^1(M)\vert \alpha^{-1}(0)=(d\alpha)^{-1}(0)\}\subset \Omega^1(M).$$ In general, this set has a complicated structure, but for a near-contact form $\alpha$ on a closed $3$-manifold $M$ one can show the following local submanifold property.

\begin{prop}\label{prop: submanifold property of TNC forms}
    Let $M$ be a closed $3$-manifold. Let $\alpha_0$ be a near-contact form with $N$ zeros. Then there exists an open neighborhood $\mathcal{U} \subset \Omega^1(M)$ of $\alpha_0$ such that $\mathcal{U}\cap \mathcal{A}$ is a submanifold of $\Omega^1(M)$ of codimension $3N$, and so that all $\alpha \in \mathcal{U}\cap \mathcal{A}$ are near-contact forms with $N$ zeros.
\end{prop}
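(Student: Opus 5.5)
The plan is to localize near the zeros and use the implicit function theorem twice. First I locate the zeros of nearby forms, then I describe the condition $\alpha^{-1}(0)=(d\alpha)^{-1}(0)$ as the vanishing of a submersion with values in $\R^{3N}$. Let $p_1,\dots,p_N$ be the zeros of $\alpha_0$. Since $\alpha_0$ vanishes transversely, the implicit function theorem applied to the evaluation map $(\alpha,x)\mapsto\alpha(x)$ in charts around each $p_i$ gives a neighborhood $\mathcal{U}_1$ of $\alpha_0$, disjoint balls $B_i\ni p_i$, and smooth maps $\alpha\mapsto p_i(\alpha)\in B_i$ with the following properties: every $\alpha\in\mathcal{U}_1$ has exactly one zero $p_i(\alpha)$ in each $B_i$, and it is transverse. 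On the compact set $M\setminus\bigcup B_i$, $\alpha_0$ is bounded away from zero. Shrinking $\mathcal{U}_1$ (in the $C^1$ or $C^2$ topology, which is all we need), each $\alpha\in\mathcal{U}_1$ has no zeros there, so it has exactly $N$ zeros.

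Next I define
$$\Phi:\mathcal{U}_1\longrightarrow \textstyle\bigoplus_{i=1}^N \wedge^2T^*_{p_i}M\cong\R^{3N},\qquad \Phi(\alpha)=\big(\tau_i(\alpha)\, d\alpha|_{p_i(\alpha)}\big)_{i},$$
where $\tau_i(\alpha)$ is the parallel transport from $p_i(\alpha)$ to $p_i$ given by the trivialization in the chart. Then $\mathcal{U}_1\cap\mathcal{A}$ contains $\Phi^{-1}(0)$. The reverse inclusion needs a little care, because $d\alpha$ could vanish at points where $\alpha\neq0$. I handle it in the third step below, where I show that near-contact forms are stable inside $\Phi^{-1}(0)$.

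The main step is to show that $\Phi$ is a submersion at $\alpha_0$. Given a variation $\dot\alpha$, we have
$$D\Phi_{\alpha_0}(\dot\alpha)_i=d\dot\alpha|_{p_i}+\nabla_{\dot p_i}d\alpha_0|_{p_i}.$$
I choose $\dot\alpha$ supported near $p_i$ and vanishing at $p_i$, so that $\dot p_i=0$. Taking $\dot\alpha$ linear in the chart makes $d\dot\alpha|_{p_i}$ an arbitrary constant $2$-form; for instance $\dot\alpha=\tfrac12(x_2dx_3-x_3dx_2)$ has $d\dot\alpha=dx_2\wedge dx_3$. After cutting off with a bump function, this shows surjectivity. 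Since the target is finite dimensional, the kernel of $D\Phi$ splits, and the implicit function theorem gives that $\Phi^{-1}(0)$ is a submanifold of codimension $3N$ near $\alpha_0$. In the Fréchet setting I invoke the cited appendix; alternatively, the right inverse can be taken explicit and finite dimensional, which suffices.

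It remains to check that every $\alpha\in\Phi^{-1}(0)$ close to $\alpha_0$ is near-contact, and in particular lies in $\mathcal{A}$. Near each $p_i$ I argue as follows. The quantities $D\alpha|_{p_i(\alpha)}$ and $Dd\alpha|_{p_i(\alpha)}$ depend continuously on $\alpha$ in $C^2$. Positive definiteness of $(D\alpha)^t Dd\alpha$ is an open condition, so $p_i(\alpha)$ is a near-contact singularity. Applying Lemma \ref{lem:Taylor_contact_condition} with uniform Taylor remainders, I get $\alpha\wedge d\alpha\ge c\,|x-p_i(\alpha)|^2\,\mu$ on a fixed ball around $p_i$, uniformly in $\alpha$. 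Away from these balls, $\alpha_0\wedge d\alpha_0\ge c>0$ by compactness, and a $C^1$-small perturbation preserves this. Hence $\alpha\wedge d\alpha>0$ off the zeros of $\alpha$. This forces $d\alpha\neq0$ wherever $\alpha\neq0$, so $\alpha\in\mathcal{A}$ and $\alpha$ is near-contact with $N$ zeros.

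I expect the uniformity of this Taylor estimate to be the main obstacle, because the zero moves with $\alpha$. To handle it, I write everything in coordinates centered at $p_i(\alpha)$ and use that the third derivatives are uniformly bounded on $\mathcal{U}$.
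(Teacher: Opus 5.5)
Your proposal is correct and follows essentially the paper's proof: the implicit function theorem tracks the zeros $p_i(\alpha)$, surjectivity of $\alpha\mapsto d\alpha|_{p_i(\alpha)}$ comes from cut-off forms with $\eta(p_i)=0$ and prescribed $d\eta(p_i)$, and a uniform Taylor-plus-compactness argument gives the near-contact property. The one difference is how you get $\Phi^{-1}(0)\subseteq\mathcal{A}$: you deduce it from $\alpha\wedge d\alpha>0$ away from the zeros, whereas the paper uses a zero count for $d\alpha$ (exactly one zero in each $U_i$ for $\alpha$ near $\alpha_0$); also, you wrote the inclusions backwards, since the trivial one is $\mathcal{U}_1\cap\mathcal{A}\subseteq\Phi^{-1}(0)$.
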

We note that this is exactly the expected codimension count since requiring the zeros of $\alpha$ and $d\alpha$ to coincide should be precisely a codimension $3$ condition. As this needs to be achieved for all $N$ points, we arrive at the codimension $3N$. The following proof will make this reasoning precise.
\begin{proof}
Denote by $p_1,\dots,p_N$ the zeros of $\alpha_0$. Since $\alpha_0$ vanishes transversely, it is clear that nearby $1$-forms will also vanish transversely. 
Moreover, we can choose neighborhoods $U_i$ of the $p_i$ and a neighborhood $\mathcal{U}$ of $\alpha_0$ such that all $U_i$ will contain precisely one zero of $\alpha$ and $d\alpha$ for all $\alpha \in \mathcal{U}$. By choosing a trivialization of $T^\ast U_i \cong U_i \times \mathbb R^3$ we construct the map
\begin{align*}
    \Phi:\mathcal{U}\times U_i &\to \mathbb R^3\\
    (\alpha,x)&\mapsto \alpha(x)
\end{align*}
Clearly $\Phi^{-1}(0)$ gives all pairs of $(\alpha,p_i(\alpha))$ of $1$-forms $\alpha$ in $\mathcal{U}$ and their unique zero $p_i(\alpha)$ in $U_i$. Differentiating $\Phi$ at such a zero in the $U_i$-factor gives precisely $\nabla \alpha$ at the zero, which by assumption is surjective. As such, the prerequisites of the implicit function theorem are satisfied, and we obtain that the functions $p_i:\mathcal{U}\to U_i$ assigning $1$-forms in $\mathcal{U}$ their unique vanishing point in $U_i$ are smooth.

We will now choose trivializations of $\Lambda^2 T^\ast U_i\cong \mathbb R^3$ and consider the maps
\begin{align*}
    F_i:\mathcal{U}&\to \mathbb R^3\\
    \alpha & \mapsto d\alpha_{p_i(\alpha)}
\end{align*}
In order to differentiate $F_i$ we first differentiate the equation $\alpha(p_i(\alpha))=0$ in direction $\eta$ to obtain
\begin{align*}
    \eta(p_i)+\left(\nabla\alpha_0\right) (Dp_i)(\eta)=0
\end{align*}
implying that $(Dp_i)(\eta)=-\left(\nabla\alpha_0\right)^{-1}(\eta(p_i))$. Thus we have
\begin{align*}
    (D F_i)(\eta)=d\eta (p_i)+\left(\nabla_{(Dp_i)(\eta)} d\alpha_0\right)\vert_{p_i}
\end{align*}
It is easy to see that for any $\omega_i \in \Lambda^2T_{p_i}^\ast M$ there exists a $1$-form $\eta_i$ supported in $U_i$ such that $\eta_i(p_i)=0$ and $d\eta_i (p_i)=\omega_i$. For such $\eta_i$ it is then clear that $(DF_i)(\eta_i)=\omega_i$ and $(DF_j)(\eta_i)=0$ for $j\neq i$, so $DF$ is surjective. The set $\mathcal{U}\cap \mathcal{A}$ is the zero set of
\begin{align*}
    F:\mathcal U &\to \bigoplus_{i=1}^N \Lambda^2 T^\ast_{p_i}U_i\cong \mathbb R^{3N}\\
    \alpha &\mapsto (F_1(\alpha),\dots,F_N(\alpha))
\end{align*}
Since each $F_i$ is a submersion, so is $F$.

Finally, after shrinking $\mathcal U$ in the $C^2$ topology, every $\alpha\in\mathcal U\cap\mathcal A$ is near-contact: $B_\alpha$ at $q=p_i(\alpha)$ is close to $B_{\alpha_0}$ at $p_i$, hence positive definite, and the expansion of Lemma \ref{lem:Taylor_contact_condition} holds uniformly for $\alpha$ $C^2$-close to $\alpha_0$, so $\alpha\wedge d\alpha>0$ on a punctured ball around $q$ of radius independent of $\alpha$. On the complement of these balls, this follows from $\alpha_0\wedge d\alpha_0>0$.
\end{proof}

While Proposition \ref{prop: submanifold property of TNC forms} establishes the stability of the class of near-contact forms, we can actually use the Gray stability theorem to prove that the contact topology of such forms is stable in an appropriate sense.

\begin{prop}\label{prop: stability of near contact forms}
    Let $M$ be a closed $3$-manifold and let $\alpha_0\in \Omega^1(M)$ be a near-contact form. Then there exists a $C^2$-neighborhood $\mathcal U \subset \Omega^1(M)$ such that for all $\alpha \in \mathcal{U}\cap \mathcal{A}$ there exists a homeomorphism $\Phi_\alpha:M\to M$ and a continuous function $f_\alpha$ such that
\begin{align*}
    \alpha_0 = f_\alpha \Phi_\alpha^\ast \alpha
\end{align*}
where $\Phi_\alpha$ and $f_\alpha$ are smooth in the complement of $\alpha_0^{-1}(0)$. 
\end{prop}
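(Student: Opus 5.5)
Proposition \ref{prop: submanifold property of TNC forms} reduces the statement to Theorem \ref{thm:singularGray}. Choose the neighborhood $\mathcal U$ so that $\mathcal U\cap\mathcal A$ is a codimension-$3N$ submanifold of $\Omega^1(M)$ consisting of near-contact forms with exactly $N$ zeros, one zero $p_i(\alpha)$ in each small neighborhood $U_i$ of the zero $p_i$ of $\alpha_0$. Shrinking $\mathcal U$ further, we may take $\mathcal U\cap\mathcal A$ to be the image of a small ball under a submanifold chart. Then, for $\alpha\in\mathcal U\cap\mathcal A$, the straight segment in that chart joining $\alpha_0$ to $\alpha$ gives a smooth path $(\alpha_s)_{s\in[0,1]}\subset\mathcal U\cap\mathcal A$ with $\alpha_1=\alpha$. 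Every $\alpha_s$ is near-contact with $N$ zeros, and the zeros $p_i(\alpha_s)$ depend smoothly on $s$, since the maps $p_i:\mathcal U\to U_i$ are smooth by the implicit-function argument in that proof.

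Next I apply Theorem \ref{thm:singularGray} to this path. It gives homeomorphisms $\varphi_s$ of $M$, smooth away from $\{p_i(s)\}$, with $\varphi_s(p_i(s))=p_i(0)=p_i$ and $\varphi_s^*\alpha_0=h_s\alpha_s$. Here $h_s$ is positive, smooth away from the zeros, and extends continuously with value $1$ at the zeros. At $s=1$ I set $\Phi_\alpha:=\varphi_1^{-1}$, which sends $p_i$ to $p_i(\alpha)$ and is smooth on the complement of $\alpha_0^{-1}(0)$. Pulling back $\varphi_1^*\alpha_0=h_1\alpha$ by $\Phi_\alpha$ gives
$$\alpha_0=(h_1\circ\Phi_\alpha)\,\Phi_\alpha^*\alpha,$$
so I take $f_\alpha:=h_1\circ\Phi_\alpha$. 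This function is positive and continuous on $M$ with $f_\alpha(p_i)=1$, and it is smooth away from $\alpha_0^{-1}(0)$ because $\Phi_\alpha$ maps $M\setminus\alpha_0^{-1}(0)$ diffeomorphically onto $M\setminus\alpha^{-1}(0)$.

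\textbf{Main obstacle.} The delicate step is producing the path with the right regularity. Theorem \ref{thm:singularGray} asks for a smooth path, with uniform constants coming from compactness of $[0,1]$ and from $C^2$ control, since $B_{\alpha_s}$ and the Taylor expansion of Lemma \ref{lem:Taylor_contact_condition} must be uniformly nondegenerate along the path. A linear interpolation $(1-s)\alpha_0+s\alpha$ generally leaves $\mathcal A$, because the zeros of $\alpha_s$ and $d\alpha_s$ separate. This is why the path has to be taken inside the submanifold chart. The paper has suppressed the regularity class, working in $C^k$ with $k\ge2$ or $C^\infty$ as in the Fr\'echet setting, so I would verify that the chart path is smooth in $s$ as a family of forms that are at least $C^2$ in $x$. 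I would also check that Theorem \ref{thm:singularGray} only uses finitely many derivatives, namely $C^2$ bounds in Lemmas \ref{lem:linearbound2} and \ref{lem:linearbound1} and smoothness of $\alpha_s$ away from the zeros to solve the Gray equation. If smoothness is needed literally, I would work in the $C^\infty$ topology, where the statement is identical, and use a $C^2$-small neighborhood only to guarantee the uniform near-contact estimates.
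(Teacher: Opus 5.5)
Your proof is correct, but it produces the path differently from the paper. The paper does not use the submanifold chart at all. It constructs diffeomorphisms $\Psi_\alpha$ (a shift $\Sigma_\alpha$ returning the zero to $p_i$, followed by a Sylvester normalization $L_\alpha\in GL^+(\mathbb{R}^3)$, cut off away from the zeros) so that $\Psi_\alpha^*\alpha$ has the same zeros \emph{and} the same linearization $\nabla\alpha_0$ as $\alpha_0$. It then uses the plain straight line $(1-t)\alpha_0+t\Psi_\alpha^*\alpha$. This path lies in $\mathcal{A}$ automatically. It stays near-contact because, with $\nabla\alpha_t$ frozen, $B_{\alpha_t}=(1-t)B_{\alpha_0}+tB_{\Psi_\alpha^*\alpha}$ is a convex combination of positive definite forms, while away from the zeros $C^2$-openness of $\alpha\wedge d\alpha>0$ suffices. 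Only the fixed-zero case of Theorem~\ref{thm:singularGray} is then needed.

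You instead let Proposition~\ref{prop: submanifold property of TNC forms} carry the near-contact condition: every element of $\mathcal U\cap\mathcal A$ is near-contact, so no matching of linearizations is required. You absorb the motion of the zeros into the moving-zero version of Theorem~\ref{thm:singularGray}, whose isotopy $\theta_s$ plays the role of the paper's shift. Your route is shorter and more conceptual. The paper's route buys a path that is explicit and manifestly smooth in $(t,x)$, whereas yours is only implicit.

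On that point, your fallback of passing to the $C^\infty$ topology would only give a $C^\infty$-open $\mathcal U$, not the $C^2$-neighborhood asserted. It is cleaner to keep the $C^2$ chart and note that your path has the form $\alpha_s=\alpha_0+s\beta+\sum_j c_j(s)\eta_j$, where:
\begin{itemize}
\item the $\eta_j$ are fixed smooth forms spanning a complement of $\ker DF(\alpha_0)$, with $F$ as in the proof of Proposition~\ref{prop: submanifold property of TNC forms};
\item $\beta$ is the (smooth) $\ker DF(\alpha_0)$-component of $\alpha-\alpha_0$;
\item $c(s)\in\mathbb{R}^{3N}$ solves $F(\alpha_s)=0$.
\end{itemize}
This is a finite-dimensional equation depending smoothly on $(s,c)$. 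Hence $c$ is smooth in $s$ by the finite-dimensional implicit function theorem and local uniqueness, and the path is jointly smooth in $(s,x)$ as Theorem~\ref{thm:singularGray} requires.
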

\begin{proof}
This proof is essentially standard, but there is a small subtlety: The usual straight-line interpolation $\alpha_t=(1-t)\alpha_0+t\alpha_1$ between near-contact forms $\alpha_0$ and $\alpha_1$ cannot in general be guaranteed to remain near-contact. For example, the two near-contact forms $\alpha^+_{std}$ and a version of $\alpha^+_{std}$ which is shifted by $\varepsilon$ in the $x$ and $z$ directions have the property that their convex combinations are not all near-contact. However, if we assume that the zeros of $\alpha_0$ and $\alpha_1$ as well as their linearizations $\nabla \alpha_0$ and $\nabla \alpha_1$ agree, then the straight line interpolation $\alpha_t = (1-t)\alpha_0+t\alpha_1$ does stay near-contact in a neighborhood of the zero set because $\alpha_t$ all have the same nondegenerate zeros and at those zeros they satisfy that
\begin{align*}
    B_{\alpha_t}&=\nabla\alpha_t\wedge \nabla d\alpha_t \\ &=\nabla \alpha_0\wedge \left((1-t)\nabla d\alpha_0 + t\nabla d\alpha_1\right)\\
    &=(1-t)B_{\alpha_0} + t B_{\alpha_1}
\end{align*}

Our goal then is to construct a family of diffeomorphisms $\{\Psi_\alpha\}_{\alpha\in \mathcal{U}}$ where $\mathcal{U}\subset \Omega^1(M)$ is a neighborhood of $\alpha_0$, so that $(\Psi_\alpha^\ast \alpha)^{-1}(0)=\alpha_0^{-1}(0)$ and so that $\nabla(\Psi_\alpha^\ast \alpha) = \nabla \alpha_0$. The pullback by $\Psi_\alpha$ of course does not change the topology of $\alpha$ and so we may postcompose this pullback with the map given by the Gray isotopy for near-contact forms in Theorem \ref{thm:singularGray} to obtain the result for any individual form in some neighborhood $\mathcal{U}$ of $\alpha_0$. We upgrade this to the full statement claimed in the proposition by considering the following map.
\begin{align*}
    \Omega^1(M)\times [0,1]&\to \Lambda^3 T^\ast M\\
    (\alpha,t)&\mapsto \alpha_t\wedge d\alpha_t, \quad \alpha_t=(1-t)\alpha_0+t\Psi_\alpha^{\ast}\alpha
\end{align*}
Since this map is continuous with respect to the $C^2$ topology and since the right hand side is positive at $\alpha_0$ one may conclude that there exists a $C^2$ neighborhood of $\alpha_0\times [0,1]$ such that $\alpha_t$ satisfies $\alpha_t\wedge d\alpha_t>0$ in the complement of a neighborhood of the zero set of $\alpha_0$ and near the zero set, this follows from the positivity of $B_{\alpha_t}$ and the uniform expansion of Lemma \ref{lem:Taylor_contact_condition}, as in the proof of Proposition \ref{prop: submanifold property of TNC forms}. Intersecting this open neighborhood with $\mathcal{A}$ and applying Theorem \ref{thm:singularGray} finishes this part of the proposition. 

It remains to prove the existence of a neighborhood $\mathcal{U}$ of $\alpha_0$ and the family of diffeomorphisms $\{\Psi_\alpha\}_{\alpha\in \mathcal{U}}$ with the properties claimed above. We will focus on the neighborhood of a single point $p\in \alpha_0^{-1}(0)$ since the construction works locally around every point in the zero set of $\alpha_0$. We work in a local chart containing $p$ but no other zero of $\alpha_0$ and identify $p$ with $0\in \mathbb R^3$. Consider an open set $U\subset \R^3$ containing the origin. By the first step in the proof of Proposition \ref{prop: submanifold property of TNC forms} we know that there exists an open subset $\mathcal{U}\subset \Omega^1(M)$ containing $\alpha_0$ and a smooth function $p:\mathcal{U}\to U$ which assigns to $\alpha\in \mathcal{U}$ its unique zero in $U$. Thus the diffeomorphism $\Sigma_\alpha$ which shifts $\alpha$ by $-p(\alpha)$ is smooth in $\alpha$ and achieves that 
$$\Sigma_\alpha^\ast \alpha(0)=0.$$ 
Moreover, both $\nabla \alpha_0$ and $\nabla\Sigma_\alpha^\ast\alpha=\Sigma_\alpha^\ast\nabla\alpha$ are symmetric bilinear forms with the same signature, so by Sylvester's Theorem there exists $L_\alpha\in GL^+(\R^3)$ such that $$\nabla\alpha_0=L_\alpha^\ast \Sigma_\alpha^\ast\nabla\alpha=\nabla(\Sigma_\alpha \circ L_\alpha)^\ast \alpha.$$
Naturally, $L_\alpha$ is both smooth in $\alpha$ and does not move the zero of $\alpha$ away from the origin in $\R^3$. Upon choosing a smooth cutoff function $\eta$ which equals $1$ on a smaller neighborhood $V$ of $0$ but compactly supported in $U$, we define $$\Psi_\alpha^p(x)=x+\eta(x)\big((\Sigma_\alpha\circ L_\alpha)(x)-x\big),$$
which, for $\alpha$ close to $\alpha_0$, is a diffeomorphism equal to $\Sigma_\alpha\circ L_\alpha$ on $V$ and to the identity outside $U$, since $\Sigma_\alpha\circ L_\alpha$ is $C^1$-close to the identity. The global family of diffeomorphisms is then obtained by repeating this construction around every $p_i \in \alpha_0^{-1}(0)$ and gluing these local constructions together by extending with the identity. The open set $\mathcal U$ of course must be adjusted for both of these steps, but smoothness (which implies continuity) of all operations guarantees the existence of an appropriate open set.
\end{proof}

Since any curl eigenform is naturally an element of $\mathcal{A}$, we can thus prove the topological stability of transversely vanishing curl eigenforms associated to simple eigenvalues. Before we explain the construction, we recall that, for a generic Riemannian metric $g$ on a closed oriented $3$-manifold $M$, all associated curl eigenforms vanish transversely, and furthermore all nonzero eigenvalues are simple (\cite[Theorem 1.1]{Enciso2012}), so everything that follows describes the expected behavior of curl eigenforms. 

Let now $(M,g)$ be a closed oriented Riemannian $3$-manifold, and let $\alpha\in \Omega^1(M)$ satisfy $\star_gd\alpha=\lambda\alpha$ with $\lambda$ nonzero having multiplicity one. Since the eigenvalues of the curl operator move continuously under perturbation of the underlying metric, there exists a neighborhood $V$ of $g$ in the space of metrics, a real number $\varepsilon>0$, and continuous assignments 
\begin{align*}
    \lambda&:V\to (\lambda-\varepsilon,\lambda+\varepsilon)\subset \mathbb R\\
    \alpha&:V\to \Omega^1(M)
\end{align*}
so that $\lambda(g^\prime)$ is the unique eigenvalue of $\star_{g^\prime}d$ in $(\lambda-\varepsilon,\lambda+\varepsilon)$, and $\alpha(g^\prime)$ is the unique eigenform (up to scaling) associated to $\lambda(g^\prime)$ for all $g^\prime\in V$. These are well-known facts for which we refer to \cite{Kato} for a general theory, and to \cite[Section 2]{greilhuber2023arnolds} for the concrete construction in the case of the Laplace operator, which is perfectly analogous to that of the curl operator. We are now ready to state and prove the topological stability theorem announced in the Introduction.

\begin{theorem}\label{thm: topological stability of curl eigenforms}
    Let $M$ be a closed oriented $3$-manifold. There exists a Baire-generic subset $\mathcal{G}_{\text{gen}}$ of the space of Riemannian metrics $\mathcal{G}$ such that the following holds for every $g\in \mathcal{G}_{\text{gen}}$. 
\begin{enumerate}
    \item The nonzero spectrum of $\star_g d$ is simple.
    \item For every eigenform $\alpha$ of $\star_g d$ {with nonzero eigenvalue} there exists an open neighborhood $V_\alpha\subset \mathcal{G}$ of $g$ such that for all $\tilde g \in V_\alpha$ there exists a homeomorphism $\Phi_{\tilde g}:M\to M$ and a continuous function $f_{\tilde g}$ so that
\begin{align*}
    \alpha=f_{\tilde g}\Phi^\ast_{\tilde g} \alpha(\tilde g)
\end{align*}
where $\Phi_{\tilde g}$ and $f_{\tilde g}$ are smooth in the complement of $\alpha^{-1}(0)$.
\end{enumerate}    
\end{theorem}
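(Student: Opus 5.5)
We take $\mathcal{G}_{\text{gen}}$ to be the Baire-generic set from \cite[Theorem 1.1]{Enciso2012}. For metrics in it, the nonzero spectrum of $\star_g d$ is simple and every eigenform with nonzero eigenvalue vanishes transversely. Item 1 is then immediate, and the work is in Item 2.

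Fix $g\in\mathcal{G}_{\text{gen}}$ and an eigenform $\alpha$ with eigenvalue $\lambda\neq0$. By Convention \ref{conv:positive} we may assume $\lambda>0$. Lemma \ref{lem:transverse} shows that $\alpha$ is a near-contact form. Since $\lambda$ is simple, the perturbation theory recalled before the theorem gives a neighborhood $V$ of $g$ and continuous assignments $\tilde g\mapsto\lambda(\tilde g)$ and $\tilde g\mapsto\alpha(\tilde g)$. Here $\alpha(\tilde g)$ is the unique (up to scale) eigenform for the eigenvalue $\lambda(\tilde g)$ near $\lambda$, and $\alpha(g)=\alpha$. We fix the normalization, for instance by requiring $\langle\alpha(\tilde g),\alpha\rangle_{L^2}>0$ and unit $L^2$ norm. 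This rules out the sign flip $\alpha\mapsto-\alpha$ and keeps the branch continuous.

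The key point is that this assignment is continuous into $\Omega^1(M)$ with the $C^2$ (indeed $C^k$) topology, not just in $L^2$. We will argue this via uniform elliptic estimates for the first-order elliptic system $(\star_{\tilde g}d-\lambda(\tilde g))\alpha=0$ together with $d^{\star_{\tilde g}}\alpha=0$. The eigenform is coclosed because $\lambda\neq 0$. The coefficients of this system depend continuously on $\tilde g$ in $C^k$, so Schauder or Sobolev bootstrapping upgrades $L^2$ convergence to $C^2$ convergence. This is where we expect the main technical care is needed. Alternatively, one can invoke the analytic perturbation theory of \cite{Kato} and \cite{greilhuber2023arnolds} directly on the Sobolev spaces $H^s$ for $s$ large, together with Sobolev embedding.

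Each $\alpha(\tilde g)$ is an eigenform, so $\alpha(\tilde g)\in\mathcal{A}$. Let $\mathcal{U}$ be the $C^2$-neighborhood of $\alpha$ given by Proposition \ref{prop: stability of near contact forms}. By the continuity above, we may shrink $V$ to $V_\alpha$ so that $\alpha(\tilde g)\in\mathcal{U}\cap\mathcal{A}$ for all $\tilde g\in V_\alpha$. Proposition \ref{prop: stability of near contact forms} then supplies a homeomorphism $\Phi_{\tilde g}$ and a continuous $f_{\tilde g}$ with $\alpha=f_{\tilde g}\Phi_{\tilde g}^*\alpha(\tilde g)$, both smooth off $\alpha^{-1}(0)$, which is Item 2.

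Positivity of $\lambda(\tilde g)$ is preserved automatically, since $\lambda(\tilde g)$ is close to $\lambda$. Transversality of the zeros, and the requirement that $\alpha(\tilde g)$ has the same number of zeros, both come from Proposition \ref{prop: submanifold property of TNC forms}. So the only genuine obstacle is the $C^2$-continuity of the eigenform branch.
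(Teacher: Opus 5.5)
Your proposal is correct and follows essentially the same route as the paper: take the generic set from \cite[Theorem 1.1]{Enciso2012}, follow the continuous branch $\tilde g\mapsto\alpha(\tilde g)$ of simple eigenforms (which automatically lies in $\mathcal{A}$), and apply Proposition \ref{prop: stability of near contact forms}. Your added care about the sign normalization and the $C^2$-continuity of the eigenform branch makes explicit details that the paper leaves implicit.
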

\begin{proof}
On closed oriented $3$-manifolds there exists a Baire-generic set $\mathcal{G}_{\text{gen}}\subset \mathcal{G}$ such that for all $g\in \mathcal{G}_{\text{gen}}$ all nonzero eigenvalues of $\star_g d$ are simple and all eigenforms vanish transversely \cite[Theorem 1.1]{Enciso2012}. Fixing one such metric and an associated eigenform $\alpha$, the discussion preceding the statement of this theorem implies that we can uniquely and continuously assign a curl eigenform $\alpha(g^\prime)$ corresponding to $\alpha$ to any nearby metric $g^\prime$. As transversely intersecting the zero section is an open property, we conclude that there exists a neighborhood $V_\alpha$ of $g$ so that $\alpha(g^\prime)\in \mathcal{A}$ for all $g^\prime \in V_\alpha$. The theorem then follows by an application of Proposition \ref{prop: stability of near contact forms}.
\end{proof}

\begin{remark}
    The regularity of the map $\Phi_{\tilde g}$ in Theorem \ref{thm: topological stability of curl eigenforms} cannot be improved to $C^1$ at the zero set of $\alpha$, see Corollary \ref{cor: sharpness of local normal forms}.
\end{remark}
\begin{remark}\label{rmk: regularity of metric transversality statement}
While Theorem $1.1$ in reference \cite{Enciso2012} was only proven for finite regularity metrics, it follows for smooth metrics as well by straightforward applications of standard techniques. For the generic simplicity of the spectrum, one may combine the computations in \cite[Theorem 3.4]{kepplinger2022spectral} with the Fr\'echet-theoretic functional analytic setup of \cite[Section 2]{greilhuber2023arnolds}. Upgrading the generic transversality of curl eigenforms from finite regularity to smooth regularity is a straightforward application of elliptic regularity of the curl operator in the complement of its kernel.
\end{remark}

\section{Contact topology of near-contact singularities}\label{sec: contact topology of transverse singularities}

We will now use Theorem \ref{thm:singularGray} to obtain a normal form for the contact structure defined on a punctured neighborhood of a zero of a near-contact form. This will allow us to study the contact topology near the zero. 

\subsection{The local normal form for near-contact singularities}
In this section, we prove a standard neighborhood theorem for near-contact singularities. First, we know that the linearization of $\alpha$ at a point $p$ has either $1$ or $-1$ signature as in Definition \ref{def:signature} by Lemma \ref{lem:signature}. The following preliminary result establishes that the signature is the only local invariant up to homotopy of near-contact forms. This fact was previously observed in \cite[Lemma 8]{ShevSmir}, but we provide an alternate proof which, due to the language we choose to work with in this paper, becomes much shorter and simpler. 

\begin{prop}\label{prop:homotopy_signature}
    Let $\alpha_0$ and $\alpha_1$ be two $1$-forms, each with a positive near-contact singularity at $p\in M$. If the signature of the linearization of $\alpha_0$ and $\alpha_1$ at $p$ coincide, then there is a neighborhood of $p$ and a homotopy of forms $\alpha_t$ connecting $\alpha_0$ to $\alpha_1$ such that $\alpha_t$ has a positive near-contact singularity at $p$ for all $t$.
\end{prop}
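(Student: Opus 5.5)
The plan is to separate the first-order data at $p$ from everything else, and to observe that once the linearizations agree, the straight-line homotopy is automatically through near-contact singularities. We work in a chart around $p$ identified with $0\in\R^3$, with the orientation of $M$ matching the standard one. By Lemma \ref{lem:covariant_alpha_and_diagonalization}, $A_i:=D\alpha_i|_0$ is a nondegenerate symmetric matrix. By hypothesis $A_0$ and $A_1$ have the same signature, which is $\pm1$ by Lemma \ref{lem:signature}.

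\textbf{Step 1: matching the linear parts.} We first connect $A_0$ to $A_1$ by linear changes of coordinates. The group $GL(3,\R)$ acts transitively on nondegenerate symmetric bilinear forms of fixed signature by congruence $A\mapsto L^tAL$. Since $(-L)^tA(-L)=L^tAL$, the connected group $GL^+(3,\R)$ already acts transitively. Choose $L_1\in GL^+(3,\R)$ with $L_1^tA_0L_1=A_1$, and a path $L_t$ in $GL^+(3,\R)$ from $\mathrm{id}$ to $L_1$.

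Let $\tilde L_t$ be the local diffeomorphism equal to $L_t$ near $0$, cut off to the identity further out as in the proof of Proposition \ref{prop: stability of near contact forms}. The forms $\tilde L_t^*\alpha_0$ are pullbacks of $\alpha_0$ by orientation-preserving diffeomorphisms fixing $0$. Hence each has a positive near-contact singularity at $0$, and the signs of $\alpha\wedge d\alpha$ and of $B_\alpha$ are preserved. This is a homotopy from $\alpha_0$ to $\alpha_0':=\tilde L_1^*\alpha_0$, and $D\alpha_0'|_0=L_1^tA_0L_1=A_1$.

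\textbf{Step 2: straight line with common linearization.} Next set $\alpha_t=(1-t)\alpha_0'+t\alpha_1$. Every $\alpha_t$ vanishes at $0$ with $d\alpha_t(0)=0$ and $D\alpha_t|_0=A_1$, which is nondegenerate. Exactly as in the computation in the proof of Proposition \ref{prop: stability of near contact forms},
\[
B_{\alpha_t}=(1-t)B_{\alpha_0'}+tB_{\alpha_1},
\]
which is positive definite as a convex combination of positive definite forms. By Remark \ref{rem:matrix_representation}, $A_1^tDd\alpha_t|_0$ is then positive definite, in particular invertible, so $Dd\alpha_t|_0$ is invertible and $d\alpha_t$ vanishes transversely at $0$. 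Thus $0$ is a positive near-contact singularity of every $\alpha_t$. Concatenating the homotopies of Steps 1 and 2 gives the desired path.

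\textbf{Main obstacle: a uniform neighborhood.} The delicate point is producing a single neighborhood of $p$ on which all $\alpha_t$ are near-contact, if that is wanted beyond the singularity being near-contact at $p$ itself. For this I would use compactness of $t\in[0,1]$. The family is smooth in $t$, so the expansion \eqref{eq:taylor_contact} in Lemma \ref{lem:Taylor_contact_condition} holds with a remainder $\mathcal{O}(\norm{x}^3)$ uniform in $t$. The eigenvalues of the symmetric part of $B_{\alpha_t}$ are bounded below by a positive constant uniform in $t$. Hence $\alpha_t\wedge d\alpha_t>0$ on a punctured ball around $0$ of radius independent of $t$, as in the proof of Proposition \ref{prop: submanifold property of TNC forms}. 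Uniform transversality similarly leaves $0$ as the only zero of $\alpha_t$ in a fixed small ball.
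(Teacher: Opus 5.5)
Your proposal is correct and is essentially the paper's proof. You match the linearizations by a path in $GL^+(\R^3)$ via Sylvester's law, use that with a common linearization the straight line satisfies $B_{\alpha_t}=(1-t)B_{\alpha_0'}+tB_{\alpha_1}$, and invoke compactness of $[0,1]$ for a common neighborhood.

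The only quibble is the cutoff in Step 1. It is unnecessary, since the statement is local and you can simply pull back by $L_t$ on a small ball. Moreover, the interpolation $x+\eta(x)(L_tx-x)$ from Proposition \ref{prop: stability of near contact forms} is a diffeomorphism only when $L_t$ is $C^1$-close to the identity. If you want a compactly supported isotopy, cut off the generating vector field $\dot L_tL_t^{-1}x$ instead.
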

\begin{proof}
As $\nabla \alpha_0$ and $\nabla \alpha_1$ are symmetric bilinear forms with the same signature, there exists $L\in GL(\mathbb R^3)$ such that $\nabla\alpha_0=L^\ast\nabla\alpha_1$ by Sylvester's law of inertia, and replacing $L$ by $-L$ if necessary, we may assume $L\in GL^+(\mathbb R^3)$. Pulling back $\alpha_1$ along a path in $GL^+(\mathbb R^3)$ connecting the identity to $L$ gives a homotopy of near-contact forms which makes the linearizations of $\alpha_0$ and $\alpha_1$ agree. We may thus assume that $\nabla\alpha_0 = \nabla\alpha_1$. As shown in the proof of Proposition \ref{prop: stability of near contact forms}, near-contact forms with the same zero set and linearization at their zero sets have the property that convex combinations are again near-contact forms with the same zero set. By compactness of $[0,1]$, all forms in the resulting path are near-contact on a common neighborhood of $p$.
\end{proof}

\begin{remark}
    The proof of Proposition \ref{prop:homotopy_signature} works just as well for $\alpha_0$ and $\alpha_1$, which have $B_{\alpha_0}$ and $B_{\alpha_1}$ that are not only positive definite but also symmetric. That is to say, intrinsic curl eigenforms can be joined, near a common zero, via homotopies through intrinsic curl eigenforms provided the signatures of that zero agree.
\end{remark}

We can now combine Proposition \ref{prop:homotopy_signature} with Theorem \ref{thm:singularGray} to deduce our normal form result, Theorem \ref{thm: local model for Beltrami}. Throughout the rest of the paper, we will prove statements for the case of signature $1$, and analogous statements hold for $-1$. For this reason, we will use the notation $\alpha_{std}$ for $\alpha_{std}^+$.

\begin{proof}[Proof of Theorem \ref{thm: local model for Beltrami}]
Fix a chart $\kappa:(W,0)\to(\kappa(W),p)$ and write $\alpha$ also for $\kappa^*\alpha$. By Proposition \ref{prop:homotopy_signature}, after shrinking $W$ there is a smooth path $\alpha_s$, $s\in[0,1]$, of near-contact forms on $W$, all vanishing only at $0$, with $\alpha_0=\alpha_{std}$ and $\alpha_1=\alpha$. Let $Z_s$ be the modified Gray vector field of Section \ref{sec:singularGray} for this path, and $\psi_s$ its flow. By Proposition \ref{prop:Zs_flow_defined} (see also Remark \ref{rmk:singularGray_openmanifold}), there is a neighborhood $U$ of $0$ such that $\psi_1$ is defined on $U$, is a homeomorphism onto its image fixing $0$, and is smooth on $U\setminus\{0\}$. {By Lemma \ref{lem:Gray_Hamiltonian}, $\psi_1^*\alpha=h_1\alpha_{std}$ on $U\setminus\{0\}$, where $h_1>0$ is given by Equation \eqref{eq:conformal_factor}. By Lemma \ref{lem:conformal_factor}, $h_1$ extends continuously to $U$ with $h_1(0)=1$ and $h_1-1=\mathcal{O}(\norm{x})$. The statement follows with $\phi=\kappa\circ\psi_1$, $V=\phi(U)$ and $f=h_1$.}
\end{proof}
\begin{remark}\label{remark: first vs second order in alphastd}
It is interesting to note that the first-order part of $\alpha_{std}^\pm$ is closed (and even harmonic), so that the contact volume $\alpha^\pm_{std}\wedge d\alpha^\pm_{std}$ comes from the interaction of the first- and second-order parts. More generally, at a zero of any near-contact form the linear part is exact, so a near-contact form must have nonzero quadratic terms, and Theorem \ref{thm: local model for Beltrami} can be understood as saying that one can always remove the higher-order parts, up to pullback along a homeomorphism smooth away from the zero and a conformal factor.
\end{remark}

\subsection{Optimal regularity of the normal form via a local invariant}\label{subsec: optimal regularity}

We now turn to investigate the regularity of the map which identifies a neighborhood of the singularity with the local model in Theorem \ref{thm: local model for Beltrami}. The most obvious candidate for an invariant which obstructs smooth conjugation to the local model is the set of eigenvalues of the linearization of $\alpha$. However, since linearizations of $1$-forms transform by congruence as opposed to conjugation, we have to find a natural way of constructing a vectorial version of $\nabla\alpha$. For this, we will make use of its nondegeneracy. We first compute the transformation behavior of $B_\alpha$ under pullback. Let $\varphi:M\to N$ be a $C^1$ diffeomorphism and let $\alpha$ be a $1$-form with a positive near-contact type singularity at $p=\varphi(q)$. Set $\beta=f\,\varphi^\ast \alpha$ where $f$ is a positive continuous function. The invariant of a singularity of $\alpha$ will be constructed using the symmetrization $S_\beta$ of the bilinear form $B_\beta$. On the face of it, this requires the existence of the exterior derivative and at least one more derivative, and so, in order to make sense of this, there are several regularity issues that we need to address. We first prove that one can define $d\beta$ at zeros of $\beta$, which will allow us to find the contact volume $\beta\wedge d\beta$. We then use the second-order approximation formula $B_\beta(v,v)=\lim_{t\to 0}\frac{1}{t^2}\beta\wedge d\beta(tv)$ from Lemma \ref{lem:Taylor_contact_condition} to recover, or rather define, $S_\beta$ at $q$. The tensor $S_\beta$ thus defined then satisfies a good transformation law relative to $S_\alpha$ under pullback by $\varphi$.

In general, the pullback of a smooth $1$-form along a $C^1$ diffeomorphism can only be expected to be $C^0$, and of course $f$ is not assumed to be differentiable at all. In both cases, it is precisely the fact that we only care about these quantities at a vanishing point of $\alpha$ which makes the construction work. We explain how to deal with the apparent lack of differentiability of $\varphi^\ast \alpha$ in the following lemma.

\begin{lemma}\label{lem: differentiability of the pullback}
Let $M$ and $N$ be smooth manifolds, $\varphi:M\to N$ be a map in $C^1(M,N)$, and let $\alpha\in \Omega^1(N)$ be a smooth $1$-form on $N$. Then $\varphi^\ast \alpha$ is a continuous $1$-form on $M$ which is differentiable at $\varphi^{-1}(\alpha^{-1}(0))$. Moreover, its exterior derivative satisfies
\begin{align*}
    d (\varphi^\ast \alpha)_q=\varphi^\ast \left(d\alpha\right)_p,
\end{align*}
where $p\in \alpha^{-1}(0)$ and $q\in \varphi^{-1}(p)$.
\end{lemma}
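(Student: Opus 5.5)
The plan is to work in local coordinates and use that $\alpha$ vanishes at $p$ to compensate for the missing derivative of $D\varphi$. Fix charts around $q$ and $p$, identify both points with the origin, and write $\alpha=\sum_i a_i(y)\,dy_i$ with $a_i$ smooth and $a_i(0)=0$. Then
$$(\varphi^*\alpha)_x=\sum_{i,j} a_i(\varphi(x))\,\partial_j\varphi_i(x)\,dx_j .$$
The coefficients $b_j(x)=\sum_i a_i(\varphi(x))\partial_j\varphi_i(x)$ are continuous, since $a_i\circ\varphi$ is $C^1$ and $D\varphi$ is $C^0$. This already shows that $\varphi^*\alpha$ is a continuous $1$-form.

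For differentiability at $q$, the key estimate is the following. Since $a_i(\varphi(0))=0$ and $a_i\circ\varphi$ is $C^1$, we have $a_i(\varphi(x))=\sum_k \partial_k(a_i\circ\varphi)(0)\,x_k+o(\norm{x})$, so $a_i(\varphi(x))=\mathcal{O}(\norm{x})$. Write $\partial_j\varphi_i(x)=\partial_j\varphi_i(0)+o(1)$ by continuity of $D\varphi$. The product then becomes
$$b_j(x)=\sum_{i,k}\partial_k(a_i\circ\varphi)(0)\,\partial_j\varphi_i(0)\,x_k+\mathcal{O}(\norm{x})\cdot o(1),$$
which is linear in $x$ up to $o(\norm{x})$. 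Hence each $b_j$ is differentiable at $0$, with
$$\partial_k b_j(0)=\sum_{i,l}\partial_l a_i(0)\,\partial_k\varphi_l(0)\,\partial_j\varphi_i(0).$$
This product step, where the vanishing of $a_i$ absorbs the mere continuity of $D\varphi$, is the only real point. It is also why the statement is restricted to $\varphi^{-1}(\alpha^{-1}(0))$.

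It remains to identify the exterior derivative. Define $d(\varphi^*\alpha)_q$ as the antisymmetrization $\sum_{k,j}\partial_k b_j(0)\,dx_k\wedge dx_j$, which makes sense because the $b_j$ are differentiable at $0$. Writing $A=(\partial_l a_i(0))$ and $J=D\varphi(0)$, the formula above says $\partial_k b_j(0)=(J^tA^tJ)_{jk}$, that is, the matrix of first derivatives is $J^tDa\,J$. Its antisymmetric part is $J^t\,(\text{antisymmetric part of }Da)\,J$, and this is exactly the coordinate expression of $\varphi^*(d\alpha)_p=(J^t\cdot)\,d\alpha_p$ on $2$-forms.

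One should also check that this pointwise definition agrees with the distributional exterior derivative, or simply that it is chart independent. Chart independence follows the same way, since changes of chart are smooth. Alternatively, one can note that the full linearization $\nabla(\varphi^*\alpha)_q=\varphi^*(\nabla\alpha)_p$ transforms tensorially by the same computation, in the spirit of Lemma~\ref{lem:covariant_alpha_and_diagonalization}.
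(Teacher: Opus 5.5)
Your proof is correct and follows essentially the same route as the paper. Both arguments work in coordinates centered at $q$ and $p$ and use $\alpha(p)=0$ so that the mere continuity of $D\varphi$ only produces an $o(\norm{x})$ error, which gives the linearization $J^t\,Da\,J$ (the paper's $L^TAL$). Your antisymmetrization step identifying $d(\varphi^*\alpha)_q$ with $\varphi^*(d\alpha)_p$ spells out what the paper leaves implicit; one small point is that you write both $J^tA^tJ$ and $J^t\,Da\,J$, so fix one convention for which index of $(\partial_l a_i(0))$ is the row.
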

\begin{proof}
We prove this statement in local coordinates centered at $p$ and $q$ in which we assume that both points correspond to $0$. To make the following computations easier to read, we introduce the notation $L=(D\varphi)_0$ and $A=(D\alpha)_0$. We use the assumptions we have on $\varphi$ and $\alpha$ to expand around zero, obtaining
\begin{align*}
    \varphi(x)&=L x + r_\varphi (x)\quad \text{where}\quad r_\varphi(x)=o(|x|),\\
    \left(D\varphi\right)(x) &= L + r_{D\varphi}(x)\quad \text{where}\quad r_{D\varphi}(x)=o(1),\\
    \alpha(y)&=Ay+r_\alpha (y)\quad \text{where}\quad r_\alpha(y)=o(|y|).
\end{align*}
We substitute these expressions into the formula for the pullback $(\varphi^\ast \alpha)(x)=D\varphi^T(x)\alpha(\varphi(x))$ to obtain
\begin{align*}
    (\varphi^\ast \alpha)(x)=\big(L+r_{D\varphi}(x)\big)^T\big(A (Lx+r_\varphi(x))+r_\alpha(\varphi(x))\big).
\end{align*}
The candidate for the linearization of $\varphi^\ast \alpha$ at $0$ is $(D(\varphi^\ast \alpha))_0=L^T A Lx$, so we subtract this term from the previous one and compute
\begin{align*}
  (\varphi^\ast \alpha)(x) - L^T ALx &=  L^T Ar_\varphi(x)+L^Tr_\alpha(\varphi(x))+r_{D\varphi}^T(x)ALx\\
  &+r_{D\varphi}^T(x)Ar_\varphi (x)+r_{D\varphi}^T(x)r_\alpha(\varphi(x)).
\end{align*}
Since $\varphi(x)=Lx+r_\varphi(x)$ is $\mathcal{O}(|x|)$, all the terms on the right-hand side are $o(|x|)$, and thus the derivative exists. This reasoning works for all first derivatives and thus in particular for the exterior derivative.
\end{proof}
Note that the assumption $\alpha(p)=0$ in Lemma \ref{lem: differentiability of the pullback} is what makes the second derivative of $\varphi$ disappear: In the derivative of $(\varphi^\ast\alpha)(x)=D\varphi^T(x)\,\alpha(\varphi(x))$ at $q$, the only term involving $D^2\varphi$ is multiplied by $\alpha(\varphi(q))=\alpha(p)=0$.

A similar but better known and completely elementary trick which will allow us to compute $d\beta=d(f\varphi^\ast \alpha)$ at a zero is the following: Let $f,h:\mathbb R\to \mathbb R$ satisfy that $f$ is continuous, $h$ is $C^1$, and $h(0)=0$. Then the product $fh$ is differentiable at $0$ and its derivative is given by $(fh)^\prime(0)=f(0)h^\prime(0)$. With this in mind and assuming henceforth that $\beta$ is sufficiently differentiable off its zero set we can finally compute 
\begin{align*}
    d\beta_q=d(f\varphi^\ast\alpha)_q=f(q)\varphi^\ast d\alpha_p,
\end{align*}
and, for $x\neq q$ near $q$,
\begin{align*}
    (\beta\wedge d\beta)_x=f(x)^2\,\big(\varphi^\ast (\alpha\wedge d\alpha)\big)_x .
\end{align*}
In spite of the lack of regularity, we can use this observation to define $S_\beta$ as the leading-order behavior of $\beta\wedge d\beta$ at $q$ and obtain the transformation law
\begin{align*}
    S_\beta (q)(v,v)=\lim_{t\to 0}\frac{1}{t^2}(\beta\wedge d\beta)_{tv}=f(q)^2\left(\varphi^\ast S_\alpha\right)(q)(v,v).
\end{align*}

Hence $S_\beta$ is positive definite if and only if $S_\alpha$ is. However, we can construct a finer-grained local invariant of near-contact singularities which is also invariant under contactomorphism. 

For this, choose volume forms $\mu_N \in \Lambda^3 T_p^\ast N$ and $\mu_M \in \Lambda^3 T_q^\ast M$. Since $p$ is a positive near-contact singularity, $S_\alpha$ is positive definite, and we define $h_\alpha$ as the unique inner product on $T_pN$ such that $h_\alpha\, \mu_N = S_\alpha$. Similarly, $h_\beta$ is the inner product on $T_qM$ such that $h_\beta\, \mu_M = S_\beta$. With this, we may introduce natural vectorial versions of the linearizations of $\alpha$ and $\beta$: using $h_\alpha$ to identify $T_pN$ with $T_p^\ast N$, we let $T_\alpha$ be the unique endomorphism of $T_pN$ such that
\begin{align*}
    h_\alpha (T_\alpha u,v)=\left(\nabla_u \alpha\right)(v) \quad \text{for all } u,v\in T_pN,
\end{align*}
that is, $T_\alpha=h_\alpha^{-1}\circ(\nabla\alpha)_p$, and we define $T_\beta$ analogously by $h_\beta (T_\beta u,v)=\left(\nabla_u \beta\right)(v)$. Here, the right-hand side makes sense because of Lemma \ref{lem: differentiability of the pullback} and the discussion following it. Since $d\alpha(p)=0$, the bilinear form $h_\alpha(T_\alpha\,\cdot,\cdot)=(\nabla\alpha)_p$ is symmetric, that is, $T_\alpha$ is $h_\alpha$-self-adjoint. In particular, $T_\alpha$ is diagonalizable with real nonzero eigenvalues, and the same holds for $T_\beta$.

We claim that the unordered projective triple $[\lambda_1:\lambda_2:\lambda_3]$ of eigenvalues of $T_\alpha$ is an invariant of $\alpha$ up to contactomorphism. We call this quantity the \emph{unordered projective spectrum} of $T_\alpha$ and we have the following.

\begin{prop}\label{prop: local invariant for near-contact forms}
    Let $\alpha\in \Omega^1(N)$ be a $1$-form with a positive near-contact singularity at $p$, and let $\varphi:M\to N$ be a $C^1$-diffeomorphism which is smooth in the complement of $q=\varphi^{-1}(p)$. Consider a $1$-form $\beta =f\,\varphi^\ast \alpha$ for some nowhere-vanishing $C^0$-function $f$. Given the unordered spectrum $\sigma(T_\alpha)=(\lambda_1,\lambda_2,\lambda_3)$ of $T_\alpha$, there exists $s\in \mathbb R\setminus \{0\}$ such that
    \begin{align*}
        \sigma(T_\beta)=s\,\sigma(T_\alpha).
    \end{align*}
\end{prop}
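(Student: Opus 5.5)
The plan is to compute everything at the single points $q$ and $p$ using the pointwise transformation laws already established in the discussion preceding the statement. Let $L=(D\varphi)_q:T_qM\to T_pN$, which exists and is invertible since $\varphi$ is a $C^1$-diffeomorphism. The first step is to identify the linearization of $\beta$ at $q$. By Lemma \ref{lem: differentiability of the pullback}, $\varphi^*\alpha$ is differentiable at $q$ with linearization $L^TAL$, where $A=(\nabla\alpha)_p$. We then use the elementary product trick (continuous times a function vanishing at the point with a derivative there) to get
\[
(\nabla\beta)_q(u,v)=f(q)\,(\nabla\alpha)_p(Lu,Lv).
\]

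The second step is the transformation law for the symmetrized contact volume. From the discussion preceding the proposition we have $S_\beta(q)(v,v)=f(q)^2(\varphi^*S_\alpha)(q)(v,v)$. Here the pullback of the $\Lambda^3$-valued form means
\[
S_\beta(u,v)=f(q)^2\,S_\alpha(Lu,Lv)\circ\Lambda^3L .
\]
Write $L^*\mu_N=c\,\mu_M$ with $c=\det L\neq0$, computed with respect to the chosen volume forms. Then $h_\beta\,\mu_M=f(q)^2c\,h_\alpha(L\cdot,L\cdot)\,\mu_M$, so
\[
h_\beta=f(q)^2c\,L^*h_\alpha .
\]
Some care is needed if $c<0$: then $h_\beta$ would be negative definite, and to avoid this one should either assume $\varphi$ is orientation preserving or allow $h$ to be only nondegenerate. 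The argument below only uses nondegeneracy, so this does not matter.

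The third step is to solve for $T_\beta$. The defining relation gives
\[
f(q)^2c\,h_\alpha(LT_\beta u,Lv)=f(q)\,h_\alpha(T_\alpha Lu,Lv)\quad\text{for all }u,v .
\]
Since $L$ is surjective and $h_\alpha$ is nondegenerate, this yields $LT_\beta=(f(q)c)^{-1}T_\alpha L$, that is,
\[
T_\beta=s\,L^{-1}T_\alpha L,\qquad s=\frac{1}{f(q)\det L}\neq0 .
\]
Conjugate endomorphisms have the same spectrum, so $\sigma(T_\beta)=s\,\sigma(T_\alpha)$, and $s$ is nonzero because $f$ is nowhere vanishing and $L$ is invertible.

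The main obstacle I expect is justifying the transformation law for $S_\beta$ when $\varphi$ is only $C^1$ at $q$. The quantity $d\beta$ exists only at $q$ in a classical sense, so $S_\beta$ must be defined through the limit $\lim_{t\to0}t^{-2}(\beta\wedge d\beta)_{tv}$, computed away from $q$, where $\varphi$ is smooth. The formula $(\beta\wedge d\beta)_x=f(x)^2(\varphi^*(\alpha\wedge d\alpha))_x$ requires $f$ to be $C^1$ off $q$, which I would assume, as the paper does. Granting it, write $(\alpha\wedge d\alpha)_y=(S_\alpha(y-p,y-p)+o(|y-p|^2))\mu_N$ from Lemma \ref{lem:Taylor_contact_condition}. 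Substituting $y=\varphi(tv)=tLv+o(t)$ and $(\Lambda^3D\varphi)_{tv}\to\Lambda^3L$ by continuity of $D\varphi$, and using $f(tv)\to f(q)$, the limit equals $f(q)^2S_\alpha(Lv,Lv)\,c\,\mu_M$. The remaining points are to polarize this identity to recover the full symmetric form, and to note that only these first-order data at $q$ enter the argument.
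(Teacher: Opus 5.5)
Your proposal is correct and matches the paper's proof. The paper likewise combines $\nabla\beta=f(q)\,\varphi^*\nabla\alpha$ with $h_\beta=f(q)^2J\,\varphi^*h_\alpha$, where $\varphi^*\mu_N=J\mu_M$ (your $c=\det L$), to get $T_\beta=\frac{1}{f(q)J}\varphi_*^{-1}T_\alpha\varphi_*$, so the spectra agree up to the factor $s=1/(f(q)J)$. Your justification of the law for $S_\beta$ through the limit $\lim_{t\to0}t^{-2}(\beta\wedge d\beta)_{tv}$ is the one given in the discussion preceding the proposition. Your remark that only nondegeneracy of $h_\beta$ is needed when $J<0$ is a point the paper leaves implicit.
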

\begin{proof}
We continue using the notation established in the discussion preceding this proposition. Denote by $J$ the Jacobian defined by $\varphi^*\mu_N=J\mu_M$. The pullback of $h_\alpha$ under $\varphi$ computes as
\begin{align*}
    h_\beta(q) (u,v)=f^2(q)J (\varphi^\ast h_\alpha)(p)(u,v) . 
\end{align*}
Combining this with the transformation $\nabla\beta = f(q) \varphi^\ast \nabla\alpha$ and
\begin{align*}
    h_\beta (T_\beta u,v)&=\nabla \beta (u,v) \\
    &=f(q)(\varphi^\ast \nabla \alpha)(u,v)\\
    &=f(q)\, \nabla \alpha (\varphi_\ast u,\varphi_\ast v)\\
    &= f(q)\, h_\alpha (T_\alpha \varphi_\ast u,\varphi_\ast v),
\end{align*}
gives 
\begin{align*}
    T_\beta = \frac{1}{f(q) J}\varphi_\ast ^{-1} T_\alpha \varphi_\ast .
\end{align*}
Clearly, the choice of volume forms only changes these computations by a constant factor, and so the projective triple of eigenvalues is left invariant.
\end{proof}

As a consequence, we learn that the regularity of all quantities in Theorem \ref{thm: local model for Beltrami} is essentially sharp. 

\begin{corollary}\label{cor: sharpness of local normal forms}
    Theorems \ref{thm: local model for Beltrami} and \ref{thm: topological stability of curl eigenforms} do not hold if the homeomorphisms are required to be of regularity at least $C^1$ at the zero set of $\alpha$. 
\end{corollary}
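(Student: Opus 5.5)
The plan is to exhibit a near-contact singularity whose unordered projective spectrum differs from that of $\alpha_{std}^+$, and then invoke Proposition \ref{prop: local invariant for near-contact forms}. If Theorem \ref{thm: local model for Beltrami} held with $\phi$ of class $C^1$ at $0$, we would have $\alpha_{std}=f\,\phi^*\alpha$ near $0$ with $f$ positive and continuous. Here $\phi$ is a $C^1$-diffeomorphism smooth away from $0$, so we need its inverse to be $C^1$ as well; this holds if $D\phi|_0$ is invertible, and we will assume so as part of the strengthened regularity. Proposition \ref{prop: local invariant for near-contact forms} then forces $\sigma(T_{\alpha_{std}})$ and $\sigma(T_\alpha)$ to be proportional as unordered triples. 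So we need two explicit forms with non-proportional projective spectra.

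\textbf{Computing the spectrum of the model.} For $\alpha_{std}=xdx+ydy-2zdz+yzdx-xzdy$ we have $D\alpha|_0=\mathrm{diag}(1,1,-2)$. Computing $d\alpha_{std}=-y\,dx\wedge dz\cdot(\ldots)$ directly gives $d(yz\,dx-xz\,dy)=z\,dy\wedge dx+y\,dz\wedge dx-z\,dx\wedge dy-x\,dz\wedge dy=x\,dy\wedge dz+y\,dz\wedge dx-2z\,dx\wedge dy$. Hence $Dd\alpha|_0=\mathrm{diag}(1,1,-2)$, and $S_{\alpha_{std}}=\mathrm{diag}(1,1,4)\,\mu$. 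Then $h=\mathrm{diag}(1,1,4)$ and $T=h^{-1}D\alpha=\mathrm{diag}(1,1,-1/2)$, so the projective spectrum is $[1:1:-\tfrac12]$.

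\textbf{The competitor.} We take a diagonal family $\alpha_{a,b,c}=a\,x\,dx+b\,y\,dy+c\,z\,dz+(\text{quadratic terms})$, with $a+b+c=0$ arranged so that $d\alpha$ has linear part $\mathrm{diag}(a',b',c')$ with trace zero and $a a',bb',cc'>0$. Concretely, we take quadratic terms $\lambda_1 yz\,dx+\lambda_2 zx\,dy+\lambda_3 xy\,dz$, for which $Dd\alpha|_0$ is diagonal with entries $\lambda_3-\lambda_2,\ \lambda_1-\lambda_3,\ \lambda_2-\lambda_1$. These entries sum to zero automatically and can be any trace-zero diagonal. For example, we choose $D\alpha=\mathrm{diag}(1,2,-1)$ and $Dd\alpha=\mathrm{diag}(1,1,-2)$. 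Then $S=\mathrm{diag}(1,2,2)$ is positive definite, so by Lemma \ref{lem:Taylor_contact_condition} the form is near-contact near $0$, with signature $1$. Its spectrum is $T=\mathrm{diag}(1,1,-\tfrac12)$, which coincides with the model's and so is useless. We instead choose $D\alpha=\mathrm{diag}(1,3,-1)$ and $Dd\alpha=\mathrm{diag}(1,1,-2)$, giving $h=\mathrm{diag}(1,3,2)$ and $T=\mathrm{diag}(1,1,-\tfrac12)$ again. We observe that $T=h^{-1}D\alpha=(Dd\alpha)^{-1}$ in the diagonal case. It is therefore $Dd\alpha$ that must be varied: taking $Dd\alpha=\mathrm{diag}(1,2,-3)$ with $D\alpha=\mathrm{diag}(1,1,-1)$ gives $T=\mathrm{diag}(1,\tfrac12,-\tfrac13)$. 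This triple is not proportional to $[1:1:-\tfrac12]$, since a proportional triple would need two equal entries.

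\textbf{Concluding for both theorems.} This contradicts Proposition \ref{prop: local invariant for near-contact forms} for Theorem \ref{thm: local model for Beltrami}. For Theorem \ref{thm: topological stability of curl eigenforms}, the point to settle is that the projective spectrum of eigenform zeros genuinely varies with the metric. For a curl eigenform with eigenvalue $\lambda$, Lemma \ref{lem:transverse} gives $Dd\alpha=\lambda\star_g D\alpha$, so $T$ is determined by $D\alpha$ and $g_p$. Our approach is to perturb $g$ near a zero only, inside the generic set; such perturbations change $T$ continuously at the zero. The invariant cannot stay constant for all nearby metrics unless some rigidity holds, and to rule that rigidity out we would use the realization results of Section \ref{sec: intrinsic rotational Beltrami fields} (prescribed germs, Corollary \ref{cor:prescribed_singularities}). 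This global step is the main obstacle; the local counterexample itself is a routine computation.
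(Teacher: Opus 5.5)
Your argument for Theorem \ref{thm: local model for Beltrami} is correct and follows the paper's idea. You produce a signature-$1$ near-contact singularity whose unordered projective spectrum is not proportional to that of $\alpha_{std}$, and then invoke Proposition \ref{prop: local invariant for near-contact forms}. Your final competitor, $x\,dx+y\,dy-z\,dz+2yz\,dx-xz\,dy$, satisfies $\alpha\wedge d\alpha=(x^2+2y^2+3z^2)\,dx\wedge dy\wedge dz$. Its spectrum is $(1,\tfrac12,-\tfrac13)$, against $(1,1,-\tfrac12)$ for the model, so this half is fine. Comparing directly with $\alpha_{std}$ rather than within the paper's family $\alpha_c$ changes nothing. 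The condition $a+b+c=0$ you mention is superfluous: the trace-zero constraint falls on $Dd\alpha|_0$ and holds automatically.

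The half concerning Theorem \ref{thm: topological stability of curl eigenforms} is, as you acknowledge, not proved, and the route you sketch does not close it. Continuity of $T$ in the metric says nothing about non-constancy. Perturbing $g$ near a zero changes the eigenform globally, so you would have to compute how the zero and $D\alpha$ there move, and you do not. Corollary \ref{cor:prescribed_singularities} produces one metric per germ, with no control on its dependence on the germ, its distance to a given metric, or its genericity, so it cannot rule out the ``rigidity'' you worry about.

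The missing ingredient is an explicit continuous family of metrics $g_c$, together with eigenforms of $\star_{g_c}d$ whose projective spectra at the zero genuinely vary. The paper supplies this by explicit formulas. For $\alpha_c=\frac{x}{c}dx+y\,dy-\frac{z}{c+1}dz+yz\,dx-cxz\,dy$, the form $B_{\alpha_c}$ is symmetric with $S_{\alpha_c}$ Euclidean. Hence $\sigma(T_{\alpha_c})=(\frac1c,1,-\frac1{c+1})$, which has a double eigenvalue only at $c=1$. Take $X_c=cx\partial_x+y\partial_y-(c+1)z\partial_z$, so that $\iota_{X_c}\mu_0=d\alpha_c$, and an explicit symmetric tensor $\bar g_c$, affine in $x$, with $\bar g_c(X_c,\cdot)=\alpha_c$. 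Then the metrics $g_c=|\bar g_c|^{-1}\bar g_c$ satisfy $\star_{g_c}d\alpha_c=\alpha_c$ near $0$ for $c$ near $1$. Thus $g_c\to g_1$, while for $c\neq1$ the eigenforms $\alpha_1$ and $\alpha_c$ admit no conjugation that is $C^1$ at the zero.

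Your own competitor also has symmetric $B_\alpha$ and linear $d\alpha$, so the same recipe applies to a one-parameter deformation of it. Solve $g^0X^1=\alpha^1$ and the uniquely solvable degree-one equation $g^1(x)X^1(x)=\alpha^2(x)$ exactly (cf.\ Section \ref{subsection: proof of polynomial metric lemma}), then rescale by the determinant.
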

\begin{proof}
For the claim regarding Theorem \ref{thm: local model for Beltrami} we merely need to exhibit examples of near-contact singularities which do not have the same unordered projective spectrum. We will construct an analytic $1$-parameter family of such near-contact forms which furthermore have symmetric $B_\alpha$. Constructing explicit metrics for which they are curl eigenforms for eigenvalue $1$ then completes the claim of optimal regularity for Theorem \ref{thm: topological stability of curl eigenforms}. We start with the first step. For this, consider the following family of $1$-forms
\begin{align*}
    \alpha_c=\frac{x}{c}dx+ydy-\frac{z}{c+1}dz +yzdx-cxzdy,
\end{align*}
where $c\in (0,\infty)$. We compute that $B_{\alpha_c}$ is symmetric and that
\begin{align*}
    S_{\alpha_c}(v,v)=(v_x^2+v_y^2+v_z^2)\,dx\wedge dy\wedge dz >0,
\end{align*}
meaning that all $\alpha_c$ are positive near-contact forms. Choosing $\mu_0=dx\wedge dy\wedge dz$ it is immediate that $S_{\alpha_c}=h_{\alpha_c}\,\mu_0$ with $h_{\alpha_c}=dx^2+dy^2+dz^2$. We can then recover
\begin{align*}
T_{\alpha_c}=h_{\alpha_c}^{-1}\left(\nabla \alpha_c\right),
\end{align*}
which gives $\sigma(T_{\alpha_c})=(\frac{1}{c},1,-\frac{1}{c+1})$. While not all $\alpha_c$ have distinct $\sigma(T_{\alpha_c})$ (for example, the unordered projective spectra coincide for $\alpha_c$ and $\alpha_{\frac{1}{c}}$), it is clear that $\alpha_1$ does not have an unordered projective spectrum that is equal to any other element in the family. The quickest way to see this is that the spectrum at $c=1$ has a double eigenvalue while the one for $c\neq 1$ does not.

For the construction of compatible metrics, consider first the auxiliary Reeb-like vector fields
\begin{align*}
    X_c=cx\partial_x + y \partial_y -(c+1)z\partial_z
\end{align*}
and symmetric tensors
\begin{align*}
    \bar g_c=\begin{pmatrix}
        \frac{1}{c^2} & 0 & -\frac{y}{c+1}\\
        0&1&\frac{cx}{c+1}\\
        -\frac{y}{c+1}&\frac{cx}{c+1}&\frac{1}{(c+1)^2}
    \end{pmatrix}.
\end{align*}
These quantities then satisfy the relations
\begin{align*}
&d\alpha_c =\iota_{X_c}\mu_0\\
    &\bar g_c\left(X_c,-\right)=\alpha_c\\
    &\det \bar g_c=\frac{1-c^2(x^2+y^2)}{c^2(1+c)^2}=:|\bar g_c|
\end{align*}
In particular, we can find $\varepsilon>0$ and a neighborhood $U$ of the origin such that $\bar g_c$ is a Riemannian metric on $U$ for all $c\in(1-\varepsilon,1+\varepsilon)$. We then use the standard identity $\star_{\bar g_c}\left(\iota_Y \mu_{\bar g_c}\right)=\bar g_c (Y,-)$ to get
\begin{align*}
    \star_{\bar g_c}d\alpha_c&= |\bar g_c|^{-1/2} \alpha_c
\end{align*}
Consequently, by defining $g_c=|\bar g_c|^{-1}\bar g_c$ and using $\star_{f^2g}=f^{-1}\star_g$ we conclude that $\star_{g_c}d\alpha_c=\alpha_c$.
\end{proof}

\begin{remark}\label{rem:preclude}
The regularity of the homeomorphisms in Theorem \ref{thm:singularGray} cannot be improved to $C^1$ in general either. Indeed, let $\alpha_0$ be a positive near-contact form on a closed $3$-manifold $M$ with a zero $p$ near which, in an orientation-preserving chart, $\alpha_0=\alpha^+_{std}$. Let $\chi$ be a cut-off function supported in this chart and equal to $1$ near $p$, and consider
$$\alpha_c=\alpha_0+c\,\chi\, xz\,dy .$$
Where $\chi=1$ we have $\alpha_c\wedge d\alpha_c=\big((1-c)x^2+y^2+(4-2c)z^2\big)\,dx\wedge dy\wedge dz$, and $\alpha_c$ vanishes only at $p$ there. Elsewhere, $\alpha_c$ is a small perturbation of the contact form $\alpha_0$. Hence, for $c\in[0,c_0]$ with $c_0>0$ small, $\alpha_c$ is a smooth path of near-contact forms with the same zero set as $\alpha_0$. At $p$ we have $D\alpha_c|_p=\mathrm{diag}(1,1,-2)$ and $Dd\alpha_c|_p=\mathrm{diag}(1-c,1,c-2)$, so that $S_{\alpha_c}=\mathrm{diag}(1-c,1,4-2c)$ and
$$\sigma(T_{\alpha_c})(p)=\Big(\frac{1}{1-c},\,1,\,-\frac{1}{2-c}\Big).$$
For $c=0$ the positive eigenvalue is double, while for $c\in(0,c_0]$ it is not, so the unordered projective spectra of $\alpha_0$ and $\alpha_c$ at $p$ differ. If the homeomorphism $\varphi_c$ given by Theorem \ref{thm:singularGray}, which satisfies $\varphi_c^*\alpha_0=h_c\alpha_c$ with $h_c$ continuous and $h_c(p)=1$, were a $C^1$ diffeomorphism near $p$, this would contradict Proposition \ref{prop: local invariant for near-contact forms}.
\end{remark}

\begin{remark}\label{q:invariants}
Is there an invariant of more contact geometric nature than the projective spectrum of the singularity? By the Grobman--Hartman theorem (\cite{Hartman1960}), there cannot be an invariant coming from the $C^0$-dynamics of the Reeb-like flows associated to the near-contact form; the invariant would have to use more subtle geometric quantities. Relatedly, we do not know whether the contactomorphism in the statement of Theorem \ref{thm: local model for Beltrami} can be upgraded to a strict contactomorphism. 
\end{remark}

\subsection{The local contact topology of near-contact structures}
Having established a local normal form for the contact structure in a punctured neighborhood of the singularity, we can now analyze its topological features.

\subsubsection{Overtwistedness and convex surfaces}
An important tool in the analysis of contact structures on $3$-manifolds is the notion of a convex surface. To define it, let us first introduce the characteristic foliation of a surface
\begin{defi}
    Let $S$ be an oriented surface in a contact manifold $(M,\xi)$. The characteristic foliation is the singular foliation defined by a vector field $X$ uniquely defined by $\iota_X\Omega=\alpha_S$, where $\Omega$ is some area form in $S$, compatible with its orientation, and $\alpha_S$ is the restriction of some defining contact form $\alpha$ to $S$.
\end{defi}
One can see that the foliation, which has one-dimensional and zero-dimensional leaves, only depends on $S$ and $\xi$. The points where the leaf is zero-dimensional correspond to those points where $TS$ coincides with $\xi$, while at any other point $p$, the tangent space to the one-dimensional leaf is given by $T_pS\cap \xi_p$. With this notion in hand we can define one of the central objects of $3$-dimensional contact topology.
\begin{defi}
    Let $(M,\xi)$ be a contact manifold. A disk $D\subset (M,\xi)$ is called overtwisted if its characteristic foliation is conjugate to that of $$D_0=\{z=0,r\leq \pi\}\subset (\mathbb R^3, \alpha_{OT}=\cos(r)dz+r\sin(r) d\theta ).$$ A contact structure containing an overtwisted disk is called an overtwisted contact structure. Otherwise, it is called a tight contact structure.
\end{defi}
This notion, while superficially innocuous, has been the principal tool in the rough classification of contact structures \cite{El}. 

We will now recall a technique which allows for efficient cut-and-paste processes in $3$-dimensional contact manifolds.

\begin{defi}
    A surface $S$ is convex if there exists a vector field $Y$ in $M$ that is transverse to $S$ and which is a contact vector field (that is, its flow $\psi_t$ satisfies $(\psi_t)^*(\xi)=\xi$ for all $t$). The set $\Gamma=\{p\in S \mid  Y_p\in \xi_p\}$, determined up to isotopy, is called the dividing set.
\end{defi}
While $\Gamma$ depends on the choice of contact vector field $Y$, it is an elementary fact that its isotopy class does not. Moreover, the contact structure in a neighborhood of a convex surface is entirely determined by its dividing set. Remarkably, Giroux found a simple criterion on dividing sets which fully answers the question of when a convex surface has a neighborhood containing an overtwisted disk.

\begin{theorem} {Giroux \cite[Th\'{e}or\`{e}me 4.5.]{Giroux2001}}\label{theorem: giroux}
    Let $S\subset (M,\xi)$ be a closed convex surface. A neighborhood of $S$ is overtwisted if and only if
\begin{enumerate}
    \item $S$ is homeomorphic to $\mathbb S^2$ and $\Gamma$ has two or more components, or
    \item $S$ is not homeomorphic to a sphere and $\Gamma$ has a nullhomotopic component.
\end{enumerate}
\end{theorem}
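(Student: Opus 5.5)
Since this is Giroux's criterion, I would follow the standard two-direction argument built on Giroux flexibility. The key fact is that the germ of $\xi$ along a convex surface $S$ is determined up to isotopy by the dividing set $\Gamma$. Moreover, any singular foliation on $S$ divided by $\Gamma$ can be realized as the characteristic foliation after a $C^0$-small isotopy of $S$ through convex surfaces. So the question reduces to the model $S\times(-\varepsilon,\varepsilon)$ with the $\partial_t$-invariant contact structure determined by $\Gamma$.

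\emph{Overtwisted direction.} Assume $S\cong\mathbb S^2$ with $\#\Gamma\ge 2$, or $S\not\cong\mathbb S^2$ with a nullhomotopic component. In either case I first pick a component $\gamma$ bounding a disk $D_0\subset S$ with $D_0\cap\Gamma=\gamma$: on the sphere take an innermost curve; otherwise an innermost nullhomotopic curve. Let $\delta$ be a slight push-off of $\gamma$ outside $D_0$, bounding a disk $D\supset D_0$ with $\delta\cap\Gamma=\emptyset$. I then apply the Legendrian realization principle, which is a consequence of flexibility, to make $\delta$ Legendrian. This requires $\delta$ to be nonisolating, i.e.\ every component of $S\setminus\delta$ meets $\Gamma$. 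The inside contains $\gamma$. The outside contains another dividing curve on the sphere. In the higher-genus case, if the outside contains no dividing curve, I would instead replace $\delta$ by a curve crossing $\gamma$ twice, or use the Giroux-cobordism trick. After realization, the twisting of $\xi$ along $\delta$ relative to $D$ equals $-\tfrac12\#(\delta\cap\Gamma)=0$. Hence $D$, after perturbing its interior to have the standard characteristic foliation with one elliptic point, is an overtwisted disk.

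\emph{Tight direction.} If $S\cong\mathbb S^2$ and $\Gamma$ is connected, the unit sphere in $(\mathbb R^3,\xi_{std})$ is convex with connected dividing set (take the radial contact field). By uniqueness of the germ, a neighborhood of $S$ embeds into $(\mathbb R^3,\xi_{std})$, which is tight by Bennequin's theorem. If $S$ has positive genus and no component of $\Gamma$ is contractible, I would pass to the $\partial_t$-invariant model $S\times\mathbb R$ and lift to the universal cover $\widetilde S\times\mathbb R\cong\mathbb R^2\times\mathbb R$. An overtwisted disk in a neighborhood lifts to one in the cover, so it suffices to prove the cover is tight. There the lifted dividing set is a locally finite family of properly embedded lines, with no closed components.

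The main obstacle is showing that such an invariant structure on $\mathbb R^2\times\mathbb R$ is tight. My first approach is to exhaust $\mathbb R^2\times\mathbb R$ by compact pieces $D\times[-1,1]$ with $D$ a large disk whose boundary meets the lifted dividing lines efficiently. Each such piece would then be shown contactomorphic to a subset of $(\mathbb R^3,\xi_{std})$, by matching the dividing set on the boundary sphere after edge-rounding (which yields one connected curve, since the lines are arcs) and invoking Eliashberg's uniqueness of tight balls. Failing that, I would fall back on Giroux's original route: build an explicit contact form on $\mathbb R^2\times\mathbb R$ adapted to the lines and check tightness by an explicit contact embedding into $\mathbb R^3$.
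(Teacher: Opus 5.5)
The paper gives no proof of this statement. It is quoted from Giroux (Th\'eor\`eme 4.5 of the cited work) and used as a black box, so your proposal has to stand on its own.

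\textbf{Overtwisted direction.} This is the standard Legendrian-realization argument. It is correct on the sphere, and also whenever $\Gamma$ has at least two components. It fails in the one remaining case: $S\not\cong\mathbb S^2$ and $\Gamma=\{\gamma\}$ is a single nullhomotopic curve. There $S\setminus D$ misses $\Gamma$, so the push-off $\delta$ is isolating, and neither of your fixes works.
\begin{itemize}
\item A curve bounding a disk and meeting $\gamma$ twice becomes, after realization, a Legendrian unknot with twisting $-1$ relative to its disk. Such unknots already exist in the tight $\mathbb R^3$, so no overtwisted disk results.
\item The ``Giroux-cobordism trick'' is never specified.
\end{itemize}
One correct repair: Legendrian-realize a nonseparating curve $c\subset S\setminus D_0$. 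It is nonisolating, since $S\setminus c$ is connected and contains $\gamma$. Fold along $c$ to create two dividing curves parallel to $c$; now $\delta$ is nonisolating and your argument goes through. Alternatively, note that $|\chi(S_+)-\chi(S_-)|=|1-(1-2g)|=2g>-\chi(S)$. This violates the Bennequin--Eliashberg inequality, so no neighborhood of $S$ can be tight.

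There is also a minor slip in the sphere case. The radial field is never a contact vector field near its zero. At such a zero, the trace of the linearization on $\xi_0$ must equal its eigenvalue on $T_0\mathbb R^3/\xi_0$, and the identity fails this ($2\neq 1$). Use $x\partial_x+y\partial_y+2z\partial_z$ for $dz+x\,dy-y\,dx$ instead; its dividing set on the unit sphere is the equator.

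\textbf{Tight direction in positive genus.} This is the genuine gap, and it is the real content of Giroux's theorem. Your reduction to the $\mathbb R$-invariant structure on $\widetilde S\times\mathbb R$, with $\tilde\Gamma$ a locally finite family of properly embedded lines, is fine. The argument that this structure is tight is circular.
\begin{itemize}
\item Eliashberg's theorem says two \emph{tight} structures on $B^3$ with the same boundary germ are isotopic. To identify $D\times[-1,1]$ with a standard ball, you must already know it is tight.
\item A connected dividing set on the boundary sphere only gives a tight collar of the boundary. Overtwisted balls with such boundary exist, for example the complement of a small Darboux ball in an overtwisted $S^3$.
\item So ``matching the dividing set on the boundary'' produces no embedding into $(\mathbb R^3,\xi_{std})$. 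Your fallback only restates what must be shown.
\end{itemize}
What is missing is an actual tightness mechanism for the lifted invariant structure. One option is an explicit contact embedding of $(\widetilde S\times\mathbb R,\xi_{\tilde\Gamma})$ into the standard tight $\mathbb R^3$; supplying such an argument is exactly the work in Giroux's proof. Another is a non-circular version of your exhaustion. There, each compact piece containing a candidate overtwisted disk would be realized concretely inside $(\mathbb R^3,\xi_{std})$, for example as an invariant neighborhood of a convex disk with Legendrian boundary carrying the given arc pattern. It would then be identified by uniqueness of invariant neighborhoods, not by uniqueness of tight balls.
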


\subsubsection{The local contact topology of a near-contact singularity}

The following lemma tells us that transverse zeros of near-contact forms have neighborhoods which are foliated by convex spheres.

\begin{lemma}\label{lem:convexspheres}
    The coordinate spheres $S_r (0)$ centered at zero in $(\mathbb R^3,\alpha_{std})$ are convex. Their dividing set is $r$-invariant and has $2$ connected components. In particular, every punctured neighborhood of zero is overtwisted.
\end{lemma}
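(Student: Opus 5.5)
The plan is to exhibit an explicit contact vector field for $\xi_{std}=\ker\alpha_{std}$ on $\R^3\setminus\{0\}$ that is transverse to every coordinate sphere $S_r(0)$. Then the dividing set is the zero locus of its contact Hamiltonian restricted to $S_r(0)$, and Theorem \ref{theorem: giroux} finishes the argument.

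\textbf{Choice of Hamiltonian.} I would use the Euler field $E=x\partial_x+y\partial_y+z\partial_z$. The quadratic "twist" part $z(y\,dx-x\,dy)$ of $\alpha_{std}$ annihilates $E$, so
$$H:=\alpha_{std}(E)=x^2+y^2-2z^2 .$$
Let $Y=X_H$ be the contact Hamiltonian vector field of $H$ with respect to $\alpha_{std}$ on $\R^3\setminus\{0\}$. It is characterized by $\alpha_{std}(Y)=H$ and $\iota_Y d\alpha_{std}=dH(R)\alpha_{std}-dH$, where $R$ is the Reeb field. On $S_r(0)$ we have $Y\in\xi$ exactly where $H=0$, that is, on the two circles $z=\pm r/\sqrt3$, $x^2+y^2=2r^2/3$.

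\textbf{Dividing set.} These two circles are disjoint. Their position scales linearly with $r$, which gives the $r$-invariance of $\Gamma$ up to the obvious dilation, and there are exactly two components. Theorem \ref{theorem: giroux} then shows that any neighborhood of $S_r(0)$, and hence every punctured neighborhood of $0$, is overtwisted. Since every punctured neighborhood contains small spheres $S_r(0)$, this last step is immediate.

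\textbf{Transversality (main obstacle).} The real content is checking $d(r^2)(Y)>0$ on $\R^3\setminus\{0\}$. I would compute $Y$ explicitly. Write $Y=HR+Y_\xi$ with $Y_\xi\in\xi$ determined by $\iota_{Y_\xi}d\alpha_{std}|_\xi=-dH|_\xi$. The rotational symmetry of $\alpha_{std}$ about the $z$-axis (both $H$ and $\alpha_{std}$ are invariant under rotations in the $(x,y)$-plane) lets me work in cylindrical coordinates $(\rho,\theta,z)$, which should reduce this to a $2\times2$ linear system.

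I then expect $d(r^2)(Y)$ to be a positive multiple of an expression like $(\rho^2+4z^2)+(\text{higher order})$, divided by the contact volume $\rho^2+4z^2$. Positivity near $0$ would then follow from the leading quadratic terms. One should note that the same leading behaviour appears in $S_{\alpha_{std}}$, which is $\mathrm{diag}(1,1,4)$ here.

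If positivity fails globally, I would fall back on a slightly modified Hamiltonian, such as $H=\alpha_{std}(E')$ with a weighted Euler field $E'$, or multiply $H$ by a positive radial function; this keeps $\{H=0\}$, and hence $\Gamma$, unchanged. The fallback would also include shrinking to a ball where the leading-order computation dominates, which suffices for the local statement. A sanity check I would do first is that along the equator $z=0$, where $\xi$ is tangent to the sphere, $H=\rho^2>0$ and $Y$ has a nonzero Reeb component transverse to the sphere there.
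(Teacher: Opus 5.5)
Your plan is correct and is the paper's alternative argument: the contact field $X_H$ for $H=x^2+y^2-2z^2$ is exactly the paper's $Y=\frac{(x^2+y^2+2z^2)(x\partial_x+y\partial_y)+4z^3\partial_z}{x^2+y^2+4z^2}$, whereas the paper's primary argument verifies Giroux's divergence criterion for the characteristic foliation of $S_r(0)$. The transversality you left unverified does hold on all of $\R^3\setminus\{0\}$, so no fallback is needed: since $dH(R)\equiv 2$, your system becomes $\alpha_{std}(Y)=H$, $\iota_Y d\alpha_{std}=2z(y\,dx-x\,dy)$, which the ansatz $Y=a(x\partial_x+y\partial_y)+b\partial_z$ solves with $a=\frac{\rho^2+2z^2}{\rho^2+4z^2}$, $b=2z(1-a)$ (where $\rho^2=x^2+y^2$), giving $\langle (x,y,z),Y\rangle=\frac{\rho^4+2\rho^2z^2+4z^4}{\rho^2+4z^2}>0$ (homogeneous of degree two, not the form you guessed).
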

\begin{proof}
We expand $\alpha_{std}$ in spherical coordinates 
\begin{align*}
    \alpha_{std}=r(\sin^2\theta-2\cos^2\theta)\,dr+3r^2\sin\theta\cos\theta\,d\theta-r^3\sin^2\theta \cos\theta\,d\varphi
\end{align*}
Thus pulling back to the coordinate spheres $S_r(0)$ we get
\begin{align*}
    \alpha_{S_r}&:=\iota_{S_r(0)}^\ast \alpha_{std}=3r^2 \sin\theta \cos\theta\,d\theta -r^3\sin^2\theta\cos\theta \,d\varphi
\end{align*}

It is easy to see that the vector field 
\begin{align*}
    X=-r\sin\theta\cos\theta\partial_\theta-3\cos\theta\partial_\varphi
\end{align*}
spans the kernel of $\alpha_{S_r}$ for $\theta \in (0,\pi)$. Furthermore, the characteristic foliation induced by $\alpha_{std}$ is singular precisely when $\theta\in \{0,\frac{\pi}{2},\pi\}$, i.e., on the poles and the equator. 

We compute the divergence of $X$ using the induced volume form $$\omega_r=r^2\sin\theta \,d\theta\wedge d\varphi,$$
obtaining
\begin{align*}
    \mathcal{L}_X \omega &= d\iota_X \omega\\
    &= d\alpha_{S_r}\\
    &= r^3 \sin\theta \left(\sin^2\theta-2\cos^2\theta\right)\,d\theta\wedge d\varphi
\end{align*}
As $\sin\theta >0$ on $\theta\in (0,\pi)$, the sign of the divergence changes exactly on the set $\Gamma:=\{\sin^2\theta = 2\cos^2\theta\}$ which is independent of $r$ and corresponds to 
$$\Gamma=\left\{\left(C_1,\varphi\right),\varphi\in [0,2\pi]\right\} \sqcup \left\{\left(C_2,\varphi\right),\varphi\in [0,2\pi]\right\},$$
where $C_1=2\arctan(\sqrt{2+\sqrt{3}})$ and $C_2=2\arctan(\sqrt{2-\sqrt{3}})$. The divergence of $X$ is positive on the region $R_+=\{\sin^2\theta>2\cos^2\theta\}$ between the two circles of $\Gamma$, which contains the equator, and negative on the two polar caps $R_-$. The vector field $X$ is transverse to $\Gamma$ and points out of $R_+$ along $\Gamma$, since $X^\theta=-r\sin\theta\cos\theta$ is negative on the upper circle and positive on the lower one. Hence $\Gamma$ divides the characteristic foliation, and by Giroux's convexity criterion \cite{giroux1991convexity} the sphere $S_r(0)$ is convex with dividing set $\Gamma$. 

Alternatively, the vector field $$Y=\frac{(x^2+y^2+2z^2)(x\partial_x+y\partial_y)+4z^3\partial_z}{x^2+y^2+4z^2},$$
which will come in handy later in Section \ref{sec:contact_foliation}, satisfies $\mathcal{L}_Y\alpha_{std}=2\alpha_{std}$ and $\langle x,Y\rangle>0$, so it is a contact vector field transverse to every $S_r(0)$, and $\{\alpha_{std}(Y)=0\}=\{x^2+y^2=2z^2\}=\Gamma$.

The foliation by spheres is schematically depicted in Figure \ref{fig:convex_spheres}, with the blue color indicating the circle of singularities of the characteristic foliation, and the magenta color the dividing set. Theorem \ref{theorem: giroux} allows us to conclude that every (punctured) neighborhood of $0$ is overtwisted.
\end{proof}

\begin{figure}
    \centering
    \begin{tikzpicture}
    \node at (0,0) {\includegraphics[width=0.4\linewidth]{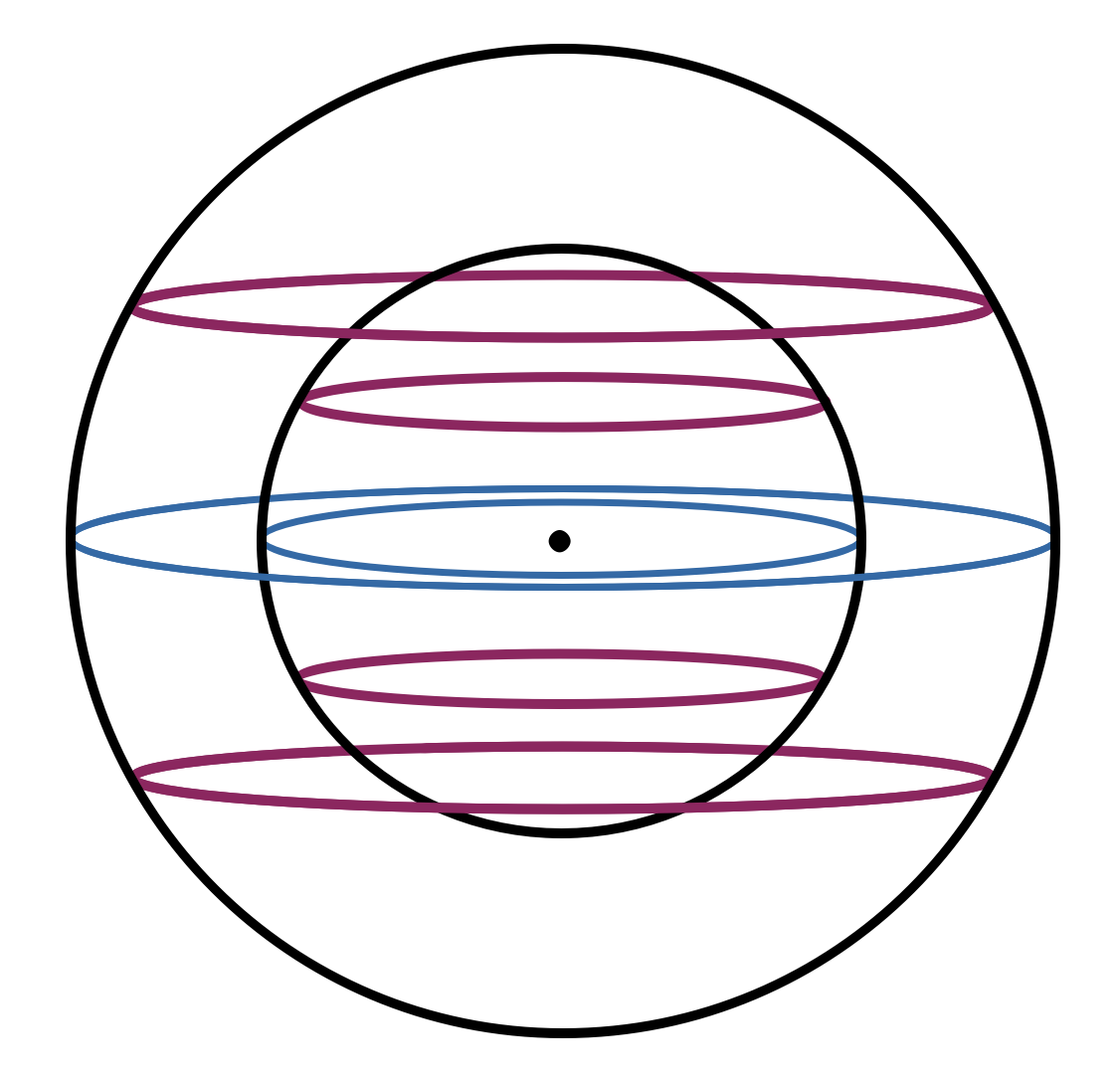}};
    \node[color=mymagenta] at (-2.7,1.5) {$\Gamma$};
    \end{tikzpicture}
    \caption{Foliation by convex spheres}
    \label{fig:convex_spheres}
\end{figure}

\begin{remark}\label{rem:sign_divergence}
    For $\alpha_{std}^+$, the sign of the divergence of $X$ is negative near the poles, and positive between both dividing curves.  For $\alpha_{std}^-$, the sign of the divergence is the opposite.
\end{remark}
As a consequence of Theorem \ref{thm: local model for Beltrami} and Lemma \ref{lem:convexspheres}, we deduce the following.
\begin{corollary}\label{cor:foliation_convex_spheres}
Let $p$ be a near-contact singularity of a $1$-form $\alpha$. Then a punctured neighborhood $U$ of $p$ is foliated by convex spheres whose dividing set has two connected components. In particular, the contact structure $\ker \alpha$ in $U$ is overtwisted.
\end{corollary}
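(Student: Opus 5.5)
The plan is to transport the convex-sphere foliation of the model through the homeomorphism supplied by Theorem \ref{thm: local model for Beltrami}. That theorem gives neighborhoods $U\ni 0$ and $V\ni p$ and a homeomorphism $\phi:(U,0)\to(V,p)$. It is a smooth diffeomorphism on $U\setminus\{0\}$ and satisfies $(\phi|_{U\setminus\{0\}})^*\alpha=f\alpha_{std}^{\pm}$ with $f>0$. Consequently $\phi$ restricts to a contactomorphism
\[
(U\setminus\{0\},\ker\alpha^\pm_{std})\longrightarrow (V\setminus\{p\},\ker\alpha).
\]
The signs work out: $\phi$ is orientation preserving and $f>0$, so positivity of the contact structures is preserved.

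Next, shrink $U$ to a round ball $B_{r_0}(0)$ and set $U':=\phi(B_{r_0}(0)\setminus\{0\})$, a punctured neighborhood of $p$ because $\phi$ is a homeomorphism fixing the singular point. The spheres $S_r(0)$ with $0<r<r_0$ foliate $B_{r_0}(0)\setminus\{0\}$, so their images $\phi(S_r(0))$ are smooth embedded spheres foliating $U'$.

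Convexity transfers directly. By Lemma \ref{lem:convexspheres}, the vector field $Y$ is a contact vector field for $\ker\alpha_{std}$ and is transverse to every $S_r(0)$. Since $\phi$ is a contactomorphism away from $0$, the pushforward $\phi_*Y$ is a contact vector field for $\ker\alpha$ on $U'$, and it is transverse to each $\phi(S_r(0))$. The dividing set is
\[
\{q:\alpha(\phi_*Y)_q=0\}=\phi(\Gamma\cap S_r(0)),
\]
because $\alpha(\phi_*Y)\circ\phi=f\,\alpha_{std}(Y)$ with $f>0$. Hence each leaf has a dividing set with exactly two components. For the signature $-1$ model, $\alpha^-_{std}=-\alpha^+_{std}$ has the same kernel and the same zero set of $\alpha(Y)$, so the same argument applies. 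Because convexity only needs a contact vector field defined near the surface, and each leaf lies in $U'$, the absence of smoothness at $p$ causes no difficulty.

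Overtwistedness then follows from Theorem \ref{theorem: giroux}. Each leaf is a closed convex sphere whose dividing set has two components, so any neighborhood of a leaf inside $U'$ is overtwisted, and therefore $\ker\alpha$ on $U'$ is overtwisted. The only point that needs care is that the leaves are genuinely smooth closed surfaces inside the contact region, which holds because they avoid $p$. Beyond that I expect no real obstacle, since the heavy lifting is already done in Theorem \ref{thm: local model for Beltrami}.
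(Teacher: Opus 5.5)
Your proposal is correct and takes the paper's route. The paper obtains this corollary by transporting the foliation by convex spheres $S_r(0)$ of Lemma \ref{lem:convexspheres} through the homeomorphism $\phi$ of Theorem \ref{thm: local model for Beltrami}, and then applies Giroux's criterion. Your pushed-forward contact vector field $\phi_*Y$, with $\alpha(\phi_*Y)\circ\phi=f\,\alpha_{std}(Y)$, is a clean way to make the transfer of convexity and of the two-component dividing set explicit.
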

Pollard and Alexander \cite[Theorem 5.6]{PollardAlexander} state that ``singular contact structures" with singularities of nonzero index, and locally defined by an analytic singular contact form, are overtwisted, sketching an argument based on the existence of a convex sphere and Giroux's criterion. The local model of Theorem \ref{thm: local model for Beltrami} allows us to give a complete proof of the local statement for every (smooth) near-contact singularity, finding even a foliation by overtwisted convex spheres.

\subsection{Second application: Global classification of near-contact structures}

The study of the local model has revealed the fact that near-contact structures are overtwisted contact structures in the complement of their singularities. Such structures are well known for their flexibility: They satisfy an $h$-principle that allows for a classification in terms of formal homotopical data \cite{El}. It is then natural to combine our local analysis with Eliashberg's results to give a similar classification result for near-contact structures.

\paragraph{Near-contact forms with the same zeros.}
The natural situation to address is that of two near-contact forms with the same set and type of zeros. That is, let $\alpha,\beta$ be two near-contact forms on a closed manifold $M$, and suppose that they have the same set of zeros $Z\neq \emptyset$ with matching indices. To identify them (assuming the formal homotopical data), the natural strategy is to first identify them near the singularities, keep track of the homotopy class relative to a neighborhood of the singularities, and then use the relative $h$-principle from the contact setting. This strategy is also the one used, for instance, for establishing a similar result for fully overtwisted $b$-contact structures \cite{CO}.

First, we show that we can assume that $\ker \alpha=\ker \beta$ in a punctured neighborhood of $Z$, up to continuous isotopy smooth away from $Z$.
\begin{lemma}\label{lem:equal_near_zeros}
There exists a continuous isotopy $\phi_t:M\longrightarrow M$ fixing $Z$ and smooth in the complement of $Z$, such that $\phi_1^*\beta=f\alpha$, for some positive function $f$, in a punctured neighborhood of $Z$.
\end{lemma}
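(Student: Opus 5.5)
The plan is to work one zero at a time and use the local machinery from Sections~\ref{sec:singularGray} and~\ref{sec: contact topology of transverse singularities}. Fix $p\in Z$ and a chart $\kappa$ centered at $p$, disjoint from the other zeros. By assumption $\alpha$ and $\beta$ have the same index at $p$, so by Lemma~\ref{lem:signature} the linearizations $\nabla\alpha|_p$ and $\nabla\beta|_p$ have the same signature. Proposition~\ref{prop:homotopy_signature} then gives a smooth path $\alpha_s$ of forms with a positive near-contact singularity at $p$, defined on a neighborhood of $p$, with $\alpha_0=\alpha$ and $\alpha_1=\beta$. This path begins by pulling $\beta$ back along a path in $GL^+(\mathbb R^3)$ and then interpolates linearly. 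All forms in the path vanish only at $p$ on a common small ball $B$.

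Next we apply the local version of Theorem~\ref{thm:singularGray}, in the form of Remark~\ref{rmk:singularGray_openmanifold}. It gives a continuous family of homeomorphisms $\psi_s\colon U\to V_s$ onto neighborhoods of $p$, with $\psi_0=\mathrm{id}$ and $\psi_s(p)=p$, smooth away from $p$, and with $\psi_s^*\alpha_s=h_s\alpha$ where $h_s>0$. Taking $s=1$ gives $\psi_1^*\beta=h_1\alpha$ on $U\setminus\{p\}$. It remains to extend $\psi_s$ from a smaller ball $U'\Subset U$ to a global isotopy of $M$ fixing $Z$.

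The extension is the step I expect to be the main obstacle, because $\psi_s$ is only a homeomorphism at $p$. Since $\psi_s$ is the flow of the time-dependent vector field $Z_s$, which is continuous, vanishes at $p$, and is smooth away from $p$, the natural approach is to cut off the generating field rather than the maps. Take a bump function $\chi$ that equals $1$ on $U'$ and is supported in $U$, and let $\phi_t$ be the flow of $\chi Z_s$, extended by zero outside $U$. The estimate $\norm{Z_s}\le K\norm{x}$ from Proposition~\ref{prop:Zs_flow_defined} shows that this flow exists up to time $1$, is a homeomorphism fixing $p$, and is smooth away from $p$, exactly as in the proof of Theorem~\ref{thm:singularGray}.

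By the two-sided bound \eqref{eq:twosided}, points starting in a smaller ball $U''$ with radius $e^{-K}$ times that of $U'$ never leave $U'$. On $U''$ the flow therefore coincides with $\psi_s$. Doing this simultaneously at every point of $Z$, with disjoint supports, gives the continuous isotopy $\phi_t$. It fixes $Z$, is smooth on $M\setminus Z$, and satisfies $\phi_1^*\beta=f\alpha$ with $f=h_1>0$ on a punctured neighborhood of $Z$, which is the claim.

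One technical point should be checked: the path from Proposition~\ref{prop:homotopy_signature} is a genuine smooth path in $s$, so the Gray construction applies. The initial $GL^+$ step is a linear pullback, so it is already a smooth isotopy fixing $p$. It can either be absorbed into the flow or be realized directly as a compactly supported smooth isotopy fixing $p$ and composed with the rest.
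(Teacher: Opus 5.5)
Your proposal is correct and follows essentially the same route as the paper: you obtain the local path from Proposition~\ref{prop:homotopy_signature}, run the modified Gray flow near each zero, and then cut off the generating vector field rather than the maps. Your use of the two-sided bound \eqref{eq:twosided} to find a smaller ball on which the cut-off flow agrees with the local one is exactly the paper's condition $\phi_t(V')\subset V$, spelled out in more detail.
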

\begin{proof}
Arguing as in the proof of Theorem \ref{thm: local model for Beltrami}, in a (punctured) neighborhood $U$ of $Z$ there is such an isotopy $\phi_t^{local}$ satisfying $(\phi_1^{local})^*\beta=f\alpha$, generated by a vector field as in the proof of the Gray-stability type Theorem \ref{thm:singularGray}. Cut off the vector field generating the isotopy, so that there is a smaller neighborhood $V\subset U$ such that the isotopy $\phi_t$ generated by it satisfies
\begin{itemize}
    \item[-] there exists a smaller neighborhood $V'\subset V$ such that $\phi_t(V')\subset V$ for all $t\in [0,1]$,
    \item[-] $\phi_t|_{V'}=\phi_t^{local}$ for $t\in[0,1]$,
    \item[-] $\phi_t|_{M\setminus U}=\operatorname{id}$.
\end{itemize}
Then $\phi_t$ is the desired isotopy.
\end{proof}
The following result gives a complete homotopical characterization of when these two structures are isotopic.
\begin{theorem}\label{thm:classification_near_contact_structures}
   Let $\ker \alpha$ and $\ker \beta$ be two positive near-contact structures with the same singular locus $Z\neq \emptyset$. There exists a continuous isotopy $\varphi_t: M\longrightarrow M$ fixing $Z$, smooth in $M\setminus Z$, and such that $\varphi_1^*\beta=f\alpha$ in $M\setminus Z$ with $f>0$, if and only if $\ker \alpha$ and $\ker \beta$ are homotopic (as co-oriented plane fields) in $M\setminus Z$.
\end{theorem}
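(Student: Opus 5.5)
The \emph{only if} direction is the easier one. I would first write $\varphi_t$ as an isotopy of $M\setminus Z$ starting at the identity; it is smooth there and preserves $M\setminus Z$ because it fixes $Z$. Then $\ker(\varphi_t^*\beta)$ is a continuous path of co-oriented plane fields on $M\setminus Z$. At $t=0$ it is $\ker\beta$, and at $t=1$ it is $\ker(f\alpha)=\ker\alpha$. The co-orientations agree because $f>0$. This path is the required homotopy.

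For the \emph{if} direction, I would start by applying Lemma \ref{lem:equal_near_zeros}. It gives a continuous isotopy $\phi_t$ fixing $Z$, smooth off $Z$, with $\phi_1^*\beta=g\alpha$ on a punctured neighborhood $U\setminus Z$ of $Z$, where $g>0$. The zeros of $\alpha$ and $\beta$ must have matching signatures; by Lemma \ref{lem:signature} the signature is read off from the index at each point of $Z$. I would note that the homotopy hypothesis forces this, since the restriction of the plane field to a small sphere around each point of $Z$ detects the index. Failing that, I would add matching signatures as an implicit assumption.

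Set $\beta'=\phi_1^*\beta$. Then $\ker\beta'$ is still homotopic to $\ker\alpha$ on $M\setminus Z$, by concatenating with the path from the first paragraph. I would then pick a closed ball $B_p\subset U$ around each $p\in Z$ and set $N=\bigcup_p B_p$. By Corollary \ref{cor:foliation_convex_spheres}, the contact structure $\ker\alpha=\ker\beta'$ is overtwisted in every punctured neighborhood of $p$. So I can choose an overtwisted disk $\Delta$ inside $U\setminus N$ that is common to both structures.

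Next comes the key homotopy-theoretic step. I need the homotopy between $\ker\beta'$ and $\ker\alpha$ to be upgraded to one relative to $N\setminus Z$ (and to $\Delta$). Both plane fields agree on the punctured balls. A homotopy on $M\setminus Z$ restricts, on each collar $S^2\times(0,\epsilon]$, to a loop of maps $S^2\to S^2$ based at the common field. So the obstruction to making it relative lives in $\pi_1(\mathrm{Map}(S^2,S^2)_{\deg},\ast)$. I expect this step to be the main obstacle. What I would try first is to absorb the discrepancy away from the collar: modify the homotopy on $M\setminus \mathrm{int}(N)$, using that the relative homotopy classes of plane fields on $M\setminus\mathrm{int}(N)$ rel boundary are classified by the Pontryagin--Thom data together with the $3$-dimensional ($d_3$/Hopf) invariant. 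A rotation of the loop on a boundary sphere can be compensated by changing these obstructions in the interior. If that fails, I would assume the homotopy to be relative to a neighborhood of $Z$, which is the natural reading of the statement.

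With a relative homotopy in hand, I would apply Eliashberg's relative $h$-principle for overtwisted contact structures \cite{El}. It is applied on the compact manifold $W=M\setminus\mathrm{int}(N)$, relative to $\partial W$, with the fixed overtwisted disk $\Delta$ in the interior. It yields a contact isotopy, i.e.\ a path of contact structures $\xi_t$ on $W$, fixed near $\partial W$, from $\ker\beta'$ to $\ker\alpha$. Gray stability, in its standard form relative to $\partial W$, then gives a smooth isotopy $\psi_t$ of $M$ supported in $W$ with $\psi_1^*\ker\alpha=\ker\beta'$.

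Finally I would set $\varphi_t$ to be the concatenation of $\phi_t$ and $\psi_t^{-1}$, adjusted so that the pullback convention matches. This isotopy is continuous, fixes $Z$, is smooth on $M\setminus Z$, and satisfies $\varphi_1^*\beta=f\alpha$ with $f>0$. Positivity of $f$ follows because all the maps involved preserve co-orientations.
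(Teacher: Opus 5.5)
Your architecture matches the paper's: Lemma \ref{lem:equal_near_zeros}, a common overtwisted disk from Corollary \ref{cor:foliation_convex_spheres}, Eliashberg's relative $h$-principle on the complement of a neighborhood of $Z$, and relative Gray stability. You also correctly locate the obstruction in $\pi_1(\mathrm{Map}_{\pm1}(S^2,S^2))\cong\mathbb{Z}/2$ at each zero. But the step you flag is a genuine gap, and neither fallback closes it.

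\emph{Why changing the homotopy fails.} Whether $\ker\beta'$ is homotopic to $\ker\alpha$ rel $\partial W$ is a property of the two plane fields, not of the homotopy you choose. ``Compensating in the interior'' would amount to altering $\ker\beta'$, which is not allowed. For a concrete case, take $M=S^3$ and let $Z$ consist of one zero of each index. Then $W\cong S^2\times[0,1]$, and every loop of plane fields on $W$ restricts to the same element of $\mathbb{Z}/2$ on both boundary spheres. So only the obstruction $(1,1)$ can be absorbed by a change of homotopy. Yet you can insert the nontrivial twist in a collar around one zero and then make the result contact rel a neighborhood of $Z$ via the $h$-principle. This produces two near-contact structures that agree near $Z$ and are homotopic on $M\setminus Z$, but are not homotopic rel any neighborhood of $Z$ on which they agree; their obstruction is $(1,0)$.

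\emph{Why assuming a relative homotopy fails.} This strengthens the hypothesis of the theorem, which only provides a homotopy on $M\setminus Z$. It would also have to be imposed on $\ker\beta'=\ker\phi_1^*\beta$, i.e.\ after Lemma \ref{lem:equal_near_zeros}, so it is not even a condition on the original data.

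\emph{The missing idea.} Kill the obstruction by an isotopy of $\ker\beta'$ fixing $Z$, rather than by a choice of homotopy.
\begin{enumerate}
\item In the chart of Theorem \ref{thm: local model for Beltrami}, the rotation $R_\pi(x,y,z)=(x,-y,-z)$ satisfies $R_\pi^*\alpha^\pm_{std}=\alpha^\pm_{std}$.
\item Let $\chi$ be a cut-off equal to $1$ near $0$ and set $\tau_t=R_{\pi t\chi(|x|)}$. This is a continuous isotopy fixing $p$ and smooth off $p$. Since $\beta'=f\alpha^\pm_{std}$ there, one has $\tau_1^*\beta'=g\beta'$ with $g>0$ where $\chi=1$.
\item Concatenating the given homotopy with the path $t\mapsto\tau_t^*\ker\beta'$ changes the class at $p$ by the class of the loop $t\mapsto R_{\pi t}^*\xi_{std}$ on a small sphere.
\item This loop is the generator. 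Deform to the linear part $A=\mathrm{diag}(1,1,-1)$ through matrices commuting with $R_\pi$, and use $R_\theta^TAR_\theta=AR_{2\theta}$. The loop becomes the Gauss map $x\mapsto Ax/|Ax|$ precomposed with the full rotation loop $R_{2\pi t}$.
\end{enumerate}
Do this exactly at the zeros where the class is nontrivial; the homotopy extension property then gives a homotopy rel a neighborhood of $Z$. The rest of your argument goes through, with the $\tau_t$ included in the final isotopy.
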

\begin{proof}
We may assume that $M$ is connected. One implication is direct, namely, if such an isotopy exists, then it defines a homotopy in $M\setminus Z$ between $\ker \alpha$ and $\ker \beta$. Let us then prove the difficult implication. First, because the plane fields are homotopic away from $Z$, the indices at each point in $Z$ must coincide. By Lemma \ref{lem:equal_near_zeros}, we may thus assume that $\ker \alpha=\ker \beta$ in a neighborhood of $Z$. Moreover, up to modifying $\beta$ by an isotopy fixing $Z$, the homotopy between $\ker\alpha$ and $\ker\beta$ can be taken relative to a neighborhood of $Z$. Indeed, for each $p\in Z$ and a small ball $B_p$ on which both structures agree, the homotopy restricted to $B_p\setminus\{p\}\simeq S^2$ defines a loop of plane fields. Since $B_p\setminus \{p\}$ deformation retracts to a sphere around $p$ in $B_p$, this loop of plane fields can be understood up to homotopy as a loop of maps from $S^2$ to $S^2$ that have degree $-1$ (respectively $1$) if $p$ has positive (respectively negative) signature. Thus, it defines an element in $\pi_1(\mathrm{Map}_{\pm 1}(S^2,S^2))\cong \mathbb{Z}/2$ (where the $\pm 1$ specifies the degree of the maps). The loop can be made constant by a homotopy relative to endpoints (which is equivalent to making the homotopy fixed in a punctured neighborhood of $p$) if and only if this element in $\pi_1(\mathrm{Map}_{\pm 1}(S^2,S^2))\cong \mathbb{Z}/2$ is trivial. If this class is non-trivial, the issue is that the relative homotopy classes of $\xi_\alpha$ and $\xi_{\beta}$, relative to a punctured neighborhood of $Z$, do not coincide. However, this can be corrected by a preliminary isotopy near $p$ (which keeps fixed the global homotopy class, but not the relative one). 

First, recall that if $R_{2\pi t}(x,y,z)$ denotes a rotation of $2\pi t$ around the $x$ axis, the loop $t\mapsto h\circ R_{2\pi t}$, $t\in [0,1]$, for any $h\in \mathrm{Map}_{\pm 1}(S^2,S^2)$ is a generator of $\pi_1(\mathrm{Map}_{\pm 1}(S^2,S^2))$. Secondly, observe that $R_\pi(x,y,z)=(x,-y,-z)$ satisfies $R_\pi^*\alpha_{std}^\pm=\alpha_{std}^\pm$. Consider, in the chart of Theorem~\ref{thm: local model for Beltrami} around $p$, the isotopy 
$$\tau_t=R_{\pi t\chi(|x|)},$$ 
where $\chi$ is a cut-off function equal to $1$ near $0$. It is a continuous isotopy fixing $p$ and smooth away from it. Additionally, where $\chi=1$, we have $\tau_1^*\beta= g\beta$ for some positive function $g$, since $\beta$ writes in this chart as $f\alpha_{std}^\pm$ for some positive function $f$, and 
$$R_{\pi}^*(f\alpha_{std}^\pm)=f\circ R_\pi \cdot \alpha_{std}^\pm=\frac{f\circ R_\pi}{f} f\alpha_{std}^\pm.$$ 
We claim that replacing $\beta$ by $\tau_1^*\beta$ changes the class above by the generator. Indeed, the path $t\mapsto \tau_t^*\xi_\beta$, $t\in[0,1]$, is a homotopy of plane fields on $M\setminus Z$ from $\xi_\beta$ to $\tau_1^*\xi_\beta$, so concatenating it with the given homotopy from $\xi_\alpha$ to $\xi_\beta$ yields a homotopy from $\xi_\alpha$ to $\tau_1^*\xi_\beta$, whose class at $p$ is the sum of the previous class and that of the loop $t\mapsto R_{\pi t}^*\xi_{std}$, where $\xi_{std}=\ker \alpha_{std}^\pm$, on a small sphere $S_r$ where $\chi=1$. 

It remains to check that the class of the loop is given by the generator. Up to homotopy through loops, we may replace $\alpha_{std}$ by its linear part $x\mapsto \hat Ax$, with $\hat A=\mathrm{diag}(1,1,-2)$, and then deform $\hat A$ to $A=\mathrm{diag}(1,1,-1)$ through matrices commuting with $R_\pi$. The loop of normal maps on $S_r$ then becomes $$x\mapsto R_{\pi t}^T A R_{\pi t}x/|R_{\pi t}^T A R_{\pi t}x|,$$ and since $R_\theta^T A R_\theta=A R_{2\theta}$, this is the degree $-1$ map $x\mapsto A x/|A x|$ precomposed with the full rotation loop $R_{2\pi t}$, i.e., the generator (for $\alpha^-_{std}=-\alpha^+_{std}$, we use $-A$ instead, which gives the degree $1$ map $x\mapsto -Ax/|Ax|$ precomposed with the same loop). After doing this at every $p$ where the class is nontrivial, the homotopy extension property yields a homotopy relative to a neighborhood $N$ of $Z$, contained in the region where $\chi=1$, on which $\ker\alpha=\ker\tau_1^*\beta$. We keep denoting $\tau_1^*\beta$ by $\beta$.
\medskip

By Corollary \ref{cor:foliation_convex_spheres}, the zeros are surrounded by convex spheres with a non-connected dividing set. Take a sufficiently small closed neighborhood $K$ of $Z$, and another closed neighborhood $V$ strictly containing $K$, such that $\ker \alpha$ and $\ker \beta$ coincide in some open neighborhood of $V$ contained in $N$. One of the convex spheres is contained in $N\setminus V$, both for $\xi_\alpha=\ker \alpha$ and for $\xi_\beta=\ker \beta$ since they coincide in $N$. By Giroux's criterion, there must be an overtwisted disk in any neighborhood of those spheres, and thus there is an overtwisted disk $\Delta$ in $N\setminus V$ for both $\xi_\alpha$ and $\xi_\beta$. In particular, the contact structures $\xi_\alpha$ and $\xi_\beta$ in $M\setminus \operatorname{int}(K)$ (which is a compact manifold with boundary) satisfy the following properties:
\begin{itemize}
\item[-] They coincide in $V\setminus \operatorname{int}(K)$,
\item[-] they share a fixed overtwisted disk $\Delta\subset N\setminus V$,
\item[-] they are homotopic as plane fields in $\widehat M=M\setminus \operatorname{int}(K)$ relative to $V\setminus \operatorname{int}(K)$.
\end{itemize}
We can thus choose a smooth family of plane fields $\xi_t$ with $t\in [0,1]$ such that $\xi_0=\xi_\alpha$, $\xi_1=\xi_\beta$, and $\xi_t|_{N}=\xi_\alpha=\xi_\beta$ for all $t$.
We have that $\widehat M\setminus V$ is connected, the plane fields $\xi_t$ are contact near $V$ for all $t$, and $\Delta$ is an overtwisted disk for all $\xi_t$, since they all coincide with $\xi_\alpha$ on $N$. By the relative $h$-principle for overtwisted contact structures \cite[Theorem 3.1.1]{El} (see also \cite[Theorem 1.2]{BEM}), there is a path $\xi'_t$, $t\in[0,1]$, of contact structures on $\widehat M$ such that $\xi'_0=\xi_\alpha$, $\xi'_1=\xi_\beta$, and $\xi'_t=\xi_\alpha$ near $V$ for all $t$. By Gray's stability applied to $\xi'_t$, which is constant near $V$, there is an isotopy of $\widehat M$ equal to the identity near $\partial\widehat M$ pulling $\xi_\beta$ back to $\xi_\alpha$. Extended by the identity on $K$ and composed with the isotopies of Lemma \ref{lem:equal_near_zeros} and the isotopies $\tau_t$ (constructed near some of the points in $Z$), it gives the desired isotopy.
\end{proof}
\begin{remark}\label{rem:homotopy_near_contact}
That two near-contact structures with the same set and type of zeros, and the same homotopy class of plane fields away from the singular locus, can be connected by a homotopy (which, unlike an isotopy, does not identify their isomorphism class) of near-contact structures is observed in \cite[Proposition 2]{ShevSmir}. We included in our proof a discussion of the small subtlety that after applying Lemma \ref{lem:equal_near_zeros} one might still get plane fields which are not homotopic relative to a neighborhood of $Z$ (because of the obstruction in $\mathbb{Z}/2$), which can be fixed using $R_\pi$.
\end{remark}

\begin{remark}\label{rem:different_zeros}
Theorem \ref{thm:classification_near_contact_structures} also covers near-contact forms $\alpha_1,\alpha_2$ whose sets of zeros $Z_1,Z_2$ differ. A necessary condition is that $Z_1$ and $Z_2$ have the same number of zeros of each index. Since, by the Poincaré--Hopf theorem, there are as many zeros of index $1$ as of index $2$, both sets then consist of $n$ zeros of each index. Given an isotopy $\psi_t$ of $M$ such that $\psi_1$ maps $Z_1$ onto $Z_2$ preserving indices, the forms $\alpha_1$ and $\psi_1^*\alpha_2$ have the same set of zeros with matching indices, and Theorem \ref{thm:classification_near_contact_structures} gives the following. There exists a continuous isotopy $\varphi_t$ of $M$, smooth away from $Z_1$, with $\varphi_1(Z_1)=Z_2$ and $\varphi_1^*\alpha_2=f\alpha_1$ on $M\setminus Z_1$ with $f>0$, if and only if, for some such $\psi_t$, the plane fields $\ker\alpha_1$ and $\ker\psi_1^*\alpha_2$ are homotopic as co-oriented plane fields on $M\setminus Z_1$. A priori, whether this holds depends on the choice of $\psi_t$, for instance on the bijection $Z_1\to Z_2$ it induces.
\end{remark}

\subsection{Third application: A spectral theoretic proof of Martinet's theorem}

We now give an alternative proof of Martinet's theorem. In this proof, we will assume some familiarity with contact topology; we refer to \cite{Geiges} and \cite{EtnyreNotes} for more background.

\begin{theorem}\label{thm:Martinet}
Any closed orientable $3$-manifold $M$ admits a (positive) contact structure.
\end{theorem}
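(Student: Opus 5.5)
We first produce a near-contact form from spectral theory, and then cancel its zeros in pairs. Fix a generic smooth Riemannian metric $g$ on $M$. By \cite[Theorem 1.1]{Enciso2012} and Remark \ref{rmk: regularity of metric transversality statement}, every curl eigenform of $g$ vanishes transversely. The nonzero spectrum of $\star_g d$ is nonempty and unbounded in both directions, so we may pick an eigenvalue $\lambda>0$ and an eigenform $\alpha$ with $\star_g d\alpha=\lambda\alpha$.

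By Lemma \ref{lem:transverse}, $\alpha$ is a positive near-contact form. Its zero set $Z$ is finite, and $\xi=\ker\alpha$ is a positive contact structure on $M\setminus Z$. If $Z=\emptyset$ we are done. Otherwise, Lemma \ref{lem:signature} says every $p\in Z$ is a zero of index $1$ or $2$, that is, of index $\pm1$ as a zero of a vector field in the Reeb class. Poincaré--Hopf with $\chi(M)=0$ then shows that $Z$ contains equally many zeros of each index. We pair each zero $p_+$ of signature $1$ with a zero $p_-$ of signature $-1$.

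For each pair, choose an embedded arc $\gamma$ from $p_+$ to $p_-$ avoiding the other zeros, with the arcs pairwise disjoint. By Corollary \ref{cor:foliation_convex_spheres}, each zero has a small ball neighborhood bounded by a convex sphere whose dividing set has two components. Connecting the two balls along a thin neighborhood of $\gamma$, we form a ball $B$ containing exactly $p_+$ and $p_-$. After edge-rounding, and a perturbation of the tube part if necessary, its boundary $\partial B$ is a convex sphere in $(M\setminus Z,\xi)$.

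The key point is to arrange that the dividing set of $\partial B$ is a single circle. Each small sphere carries two polar caps $R_-$ separated by an annulus $R_+$, with the signs reversed between the two signatures by Remark \ref{rem:sign_divergence}. We run the tube from the $R_-$ cap at $p_+$, where the divergence is negative, to the corresponding cap at $p_-$, where the sign conventions are opposite. The tube attachment should then merge one dividing circle from each sphere and leave one circle in total. Alternatively, we can apply Giroux flexibility and the Legendrian realization principle to modify $\xi$ near $\partial B$ inside a collar in $M\setminus Z$ so that $\partial B$ has connected dividing set. Once $\Gamma_{\partial B}$ is connected, $\partial B$ has a standard tight neighborhood. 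By Eliashberg's uniqueness of tight contact structures on the ball with a given convex boundary, or simply by taking a standard tight ball with the same characteristic foliation, we replace $\xi|_B$ with a genuine contact structure agreeing with $\xi$ near $\partial B$. Doing this for every pair removes all zeros and gives a positive contact structure on $M$.

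The main obstacle is this dividing-set computation. It requires controlling convexity under edge-rounding and checking that the tube really joins regions of compatible sign, so that the resulting sphere has one dividing curve rather than three. As a fallback, we would use the flexibility of overtwisted structures. The structure $\xi$ on $M\setminus \mathrm{int}(B)$ is overtwisted near the small spheres. If $\Gamma_{\partial B}$ stays disconnected, we instead ask whether $\xi|_{\partial B}$ extends as a plane field over $B$. Since the total degree of the normal Gauss map on $\partial B$ is $(-1)+1=0$, the plane field on $\partial B$ extends over $B$ as a formal plane field. Eliashberg's relative $h$-principle \cite{El}, applied on $B$ relative to its boundary, with an overtwisted disk placed inside, then yields a contact structure on $B$ extending $\xi$ near $\partial B$. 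This degree-zero check is the cleanest route, and I would present it as the main argument, keeping the convex-surface count as an alternative.
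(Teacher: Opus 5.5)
Your main argument is correct, but it is a genuinely different route from the paper's. It runs: degree zero of the normal Gauss map on $\partial B$, hence a formal extension of $\xi$ over $B$, then Eliashberg's relative $h$-principle on $B$ rel a collar of $\partial B$. What it costs is that nothing about the curl eigenform is used. Since the indices of all zeros sum to $\chi(M)=0$, a single ball around all of $Z$ would already do. More drastically, closed orientable $3$-manifolds are parallelizable, so Eliashberg's theorem applied to an arbitrary plane field gives a contact structure on $M$ directly. Your proof is therefore the $h$-principle proof of Martinet's theorem with a decorative spectral preamble. That is exactly the input the paper's argument is designed to replace.

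The paper uses only local convex surface theory around the model of Theorem \ref{thm: local model for Beltrami}:
\begin{enumerate}
\item It joins a point on the southern dividing circle of $S_+$ to a point on the northern dividing circle of $S_-$ by a Legendrian arc.
\item It builds a convex cylinder along this arc whose dividing set is two arcs running end to end (Lemma \ref{lem:folklore}, via a tight neighborhood of the arc and Giroux's criterion).
\item It edge-rounds to a convex sphere with three parallel dividing circles.
\item By Giroux flexibility, it replaces the inside by a ball of radius between $\pi$ and $2\pi$ in $(\mathbb{R}^3,\alpha_{OT})$, whose boundary has the same dividing set.
\end{enumerate}
What this buys over your route is a proof in which the eigenform, through its local model and the convex spheres of Corollary \ref{cor:foliation_convex_spheres}, does real work.

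Your convex-surface alternative, as written, does not work, for two reasons.
\begin{enumerate}
\item The count is wrong. A tube whose dividing set is two end-to-end arcs merges only the two dividing circles it meets. Two spheres carrying two dividing circles each therefore give $2+2-1=3$ circles, never one.
\item The attachment points are wrong. The tube must be attached at points of the dividing circles, so that its Legendrian boundary circles meet $\Gamma$ twice and are nonisolating. Attaching it in the interior of caps, as you propose, does not give this.
\end{enumerate}
Nor can Giroux flexibility or Legendrian realization make $\Gamma_{\partial B}$ connected. Both only change the characteristic foliation, or move the surface, within a fixed isotopy class of dividing set, and that class is determined by the germ of $\xi$ along $\partial B$. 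The fix is the paper's: accept three dividing circles and fill with an overtwisted ball rather than a tight one.
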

\begin{proof}
The starting point of the proof is to choose a metric $g$ such that the curl operator has a transversely vanishing curl eigenform $\alpha$ with positive eigenvalue. This is always possible by \cite[Theorem 1.1]{Enciso2012}. By Remark \ref{rmk: regularity in proof of martinet theorem} we may furthermore assume $\alpha$ to be smooth. If $\alpha$ has no zeros, it is a contact form, and we are done. Since a curl eigenform can only have nondegenerate zeros of index $1$ and $2$ and since $M$ has vanishing Euler characteristic, we see that there must be $k$ nondegenerate zeros of index $1$ and $k$ nondegenerate zeros of index $2$. These correspond to near-contact singularities of signature $1$ and $-1$ respectively. We will now pair these off and argue that for each pair we can achieve a cancellation on the contact topological level. We denote the zero of index $1$ by $p_+$ and that of index $2$ by $p_-$. According to Theorem \ref{thm: local model for Beltrami} and, more specifically, Corollary \ref{cor:foliation_convex_spheres}, there exist convex spheres around $p_+$ and $p_-$ as depicted in Figure \ref{fig:convex_spheres} (with opposite signs of the divergence of the characteristic field; see Remark \ref{rem:sign_divergence}). 

Fix $S_+$ and $S_-$ to be one of those convex spheres around $p_+$ and $p_-$, respectively. Choose points $x\in S_+$ and $y\in S_-$ such that $x$ lies on the southern dividing circle of $S_+$ and $y$ lies on the northern dividing circle of $S_-$. Let $\gamma$ be a Legendrian arc connecting these two points, and intersecting the spheres only along its endpoints $x,y$. We claim that there exists a cylinder $C$ surrounding $\gamma$ with the following properties:
\begin{itemize}
    \item[-] it is convex,
    \item[-] its dividing set is given by two arcs, each joining one boundary component to the other,
    \item[-] its boundary, which is also its intersection with $S_+$ and $S_-$, is Legendrian and intersects the dividing circles containing $x$ and $y$ twice.
\end{itemize}
The existence of such a cylinder is somewhat ``folklore" in contact topology; we sketch a proof of this general fact about convex surfaces in the appendix for completeness (see Lemma \ref{lem:folklore}). Applying it replaces $S_\pm$ by $C^0$-close convex spheres with the same dividing sets (up to isotopy), and these still bound balls containing $p_\pm$. The situation is schematically represented in Figure \ref{fig:connected_spheres}. 
\begin{figure}[!h]
    \centering
    \begin{tikzpicture}
    \node at (0,0) {\includegraphics[width=0.6\linewidth]{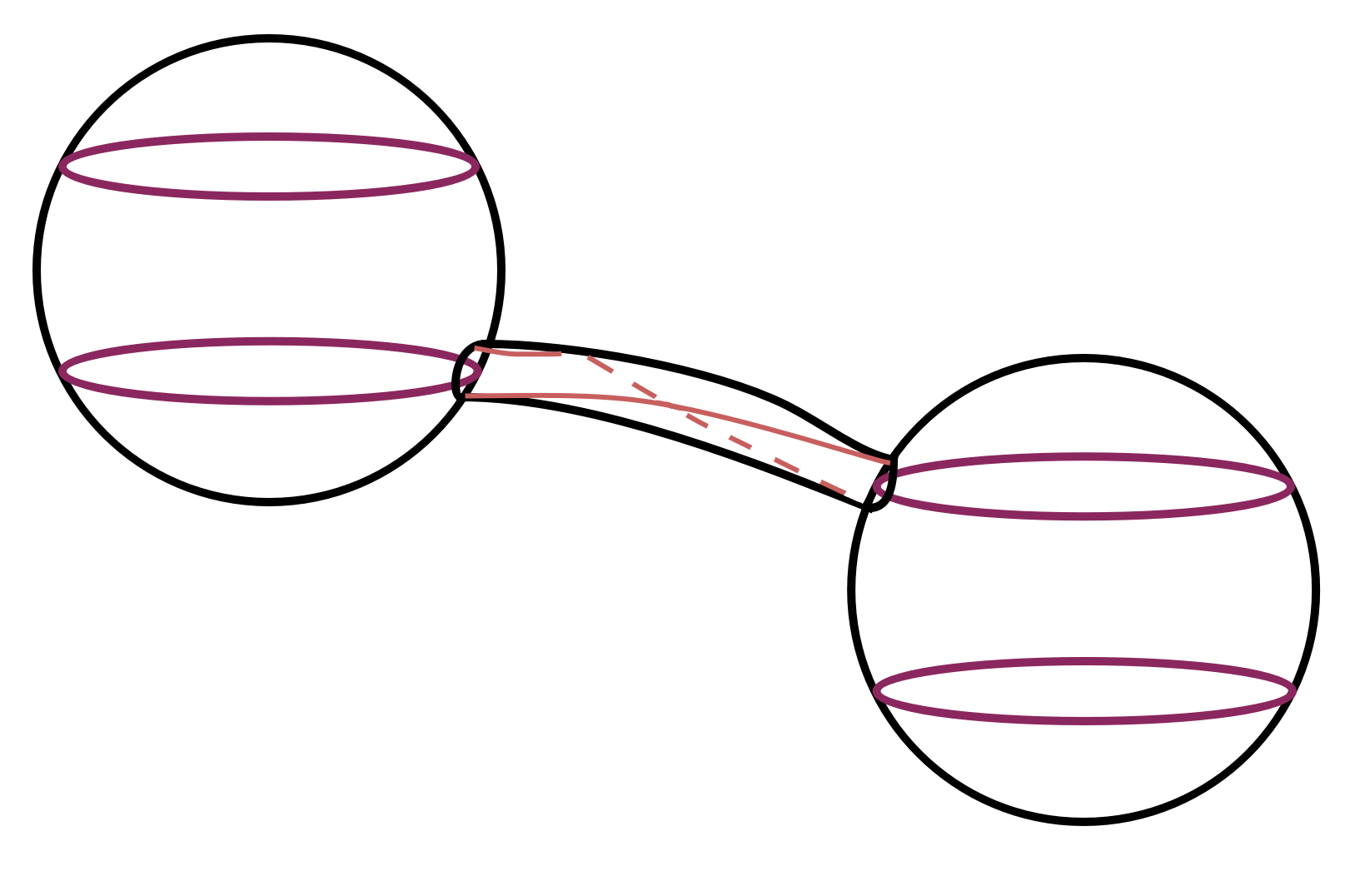}};
    \node at (-4.5,2) {$S_+$};
    \node at (4.5,0) {$S_-$};
    \node at (-2.5,2.15) {$-$};
    \node at (-2.5,1.1) {$+$};
    \node at (-2.5, 0) {$-$};
    \node at (2.55, 0.2) {$+$};
    \node at (2.55, -0.9) {$-$};
    \node at (2.55, -2) {$+$};
    \end{tikzpicture}
    \caption{Convex spheres connected by a convex cylinder}
    \label{fig:connected_spheres}
\end{figure}

Let us argue why the dividing arcs of the cylinder do behave as in the picture. First, along the intersection of $C$ with $S_+$ (or $S_-$), the intersection points with the dividing sets of $C$ and $S_+$ will alternate (see \cite{HondaClassification}). Edge-rounding the surface $(S_+\setminus D_+)\cup C\cup(S_-\setminus D_-)$, where $D_\pm\subset S_\pm$ are the disks bounded by $\partial C$ around $x$ and $y$, yields a convex sphere $S$, and its dividing set is obtained from a right-hand rule (we refer to \cite[Lemma 4.3]{EtnyreNotes} and \cite{HondaClassification} for more details). In particular, the dividing set of $S$ consists either of three parallel circles or of a circle enclosing two other circles, depending on which regions of $S_+$ and $S_-$ the dividing arcs of the cylinder connect. Each component of $S\setminus\Gamma_S$ has a single sign, and on $S_\pm$ the positive and negative regions of $S$ agree with those of $S_\pm$, since the orientation of $S$ agrees with the outward orientation of $S_\pm$. By Remark \ref{rem:sign_divergence}, the band of $S_+$ is positive and its caps are negative, while the band of $S_-$ is negative and its caps are positive. Hence the dividing arcs of the cylinder join the band of $S_+$ to the northern cap of $S_-$, and the southern cap of $S_+$ to the band of $S_-$, so the dividing set of $S$ consists of three parallel circles. The resulting sphere is schematically represented in Figure \ref{fig:rounded_sphere}.

\begin{figure}[!h]
    \centering
    \begin{tikzpicture}
    \node at (0,0) {\includegraphics[width=0.6\linewidth]{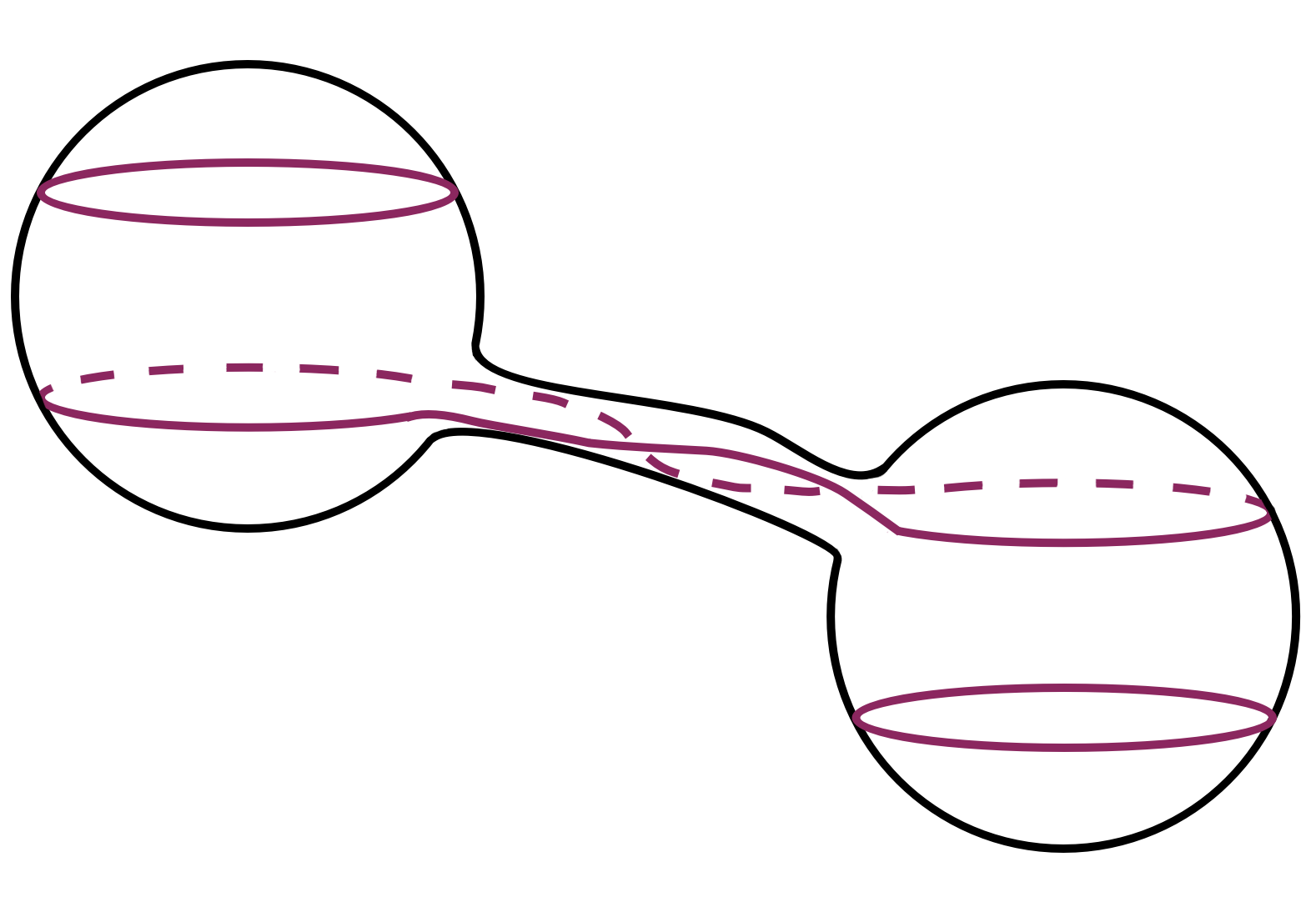}};
    \node at (-2.6,2.1) {$-$};
    \node at (-2.6, 1) {$+$};
    \node at (2.6, -1.1) {$-$};
    \node at (2.6, -2.2) {$+$};
    \end{tikzpicture}
    \caption{Smoothed convex sphere $S$}
    \label{fig:rounded_sphere}
\end{figure}

To conclude, recall that the boundary of a ball of radius between $\pi$ and $2\pi$ in $\R^3$ with the standard overtwisted contact structure (which writes in cylindrical coordinates as $\alpha_{OT}=\cos r\,dz+r\sin r\, d\theta$) is a convex sphere with such a dividing set. By Giroux's flexibility theorem \cite{giroux1991convexity}, the contact structure near a convex surface is determined up to isotopy by its dividing set. We may therefore replace the contact structure inside the ball bounded by $S$, which contains $p_+$ and $p_-$, by this overtwisted ball. Doing this for every pair of zeros of different index (choosing all spheres, arcs and cylinders below pairwise disjoint and disjoint from the remaining zeros), we construct a contact structure on $M$.
\end{proof}

\begin{remark}\label{rmk: regularity in proof of martinet theorem}
We sketch two different arguments for why the near-contact form $\alpha$ in the proof of Theorem \ref{thm:Martinet} may be assumed to be smooth. This is only a technical point, and we have thus decided to separate it from the actual proof.
\begin{enumerate}
    \item As mentioned in Remark \ref{rmk: regularity of metric transversality statement}, the regularity of Theorem $1.1$ in \cite{Enciso2012} may be improved to $C^\infty$. Elliptic regularity of the curl operator in the complement of its kernel then implies that all associated eigenforms are smooth as well.
    \item It is not hard to see that the space of smooth near-contact forms is dense in the space of $C^k$ near-contact forms. This follows from the submanifold property and the density of smooth $1$-forms in the space of $1$-forms with regularity $C^k$. 
\end{enumerate}
\end{remark}

\section{Symplectizations of near-contact forms and self-dual harmonic $2$-forms}\label{sec:symplectization}

Given a near-contact form $\alpha$ one can consider the ``symplectization" of $(M,\alpha)$, that is, the manifold $\R\times M$ endowed with the $2$-form
$$\omega=d(e^t\alpha).$$
This form is symplectic away from the set $\R\times \alpha^{-1}(0)$. In this section, we study this symplectization by first showing that the symplectization of a near-contact form is a so-called near-symplectic form (Section \ref{subsec: symplectizations of near-contact froms are near-symplectic}), and then proving that $\R\times \alpha^{-1}(0)$ admits a standard neighborhood theorem as well in Section \ref{subsec: local normal forms for the symplectization}. In Section \ref{sec:contact_foliation}, we deduce that the singular lines in the symplectization of a near-contact form are surrounded by overtwisted contact hypersurfaces.

\subsection{Symplectizations of near-contact forms are near-symplectic}\label{subsec: symplectizations of near-contact froms are near-symplectic}
As far as the authors can tell, the notion of near-symplectic forms first appeared in a paper of Auroux, Donaldson, and Katzarkov \cite[Definition 1]{AurouxDonaldsonKatzarkov2005}. However, the equivalent geometric object of a transversely vanishing self-dual harmonic $2$-form on a $4$-manifold had been identified as an interesting structure and treated before that by Taubes  \cite{Taubes1999} and Honda \cite{honda2004local}, and has later been used to great effect by Gerig in \cite{Gerig2020,Gerig2022}. The following definition follows the presentation of \cite[Definition 1.1.1]{PerutzThesis2005} since the authors of \cite{AurouxDonaldsonKatzarkov2005} have omitted an important infinitesimal property of such forms which was however clearly intended to be part of the definition.

\begin{defi}\label{def: near symplectic form}
    Let $W$ be a $4$-manifold. A closed $2$-form $\omega\in \Omega^2 (W)$ is called (positive) near-symplectic if for every $x\in M$
\begin{enumerate}
    \item either $\omega_x \wedge \omega_x >0$
    \item or $\omega_x=0$ but $\image(\nabla\omega_x)\subset \Lambda^2 T_x^\ast W$ has dimension $3$, and the wedge product restricts to a positive definite inner product on this subspace.
\end{enumerate}
\end{defi}

It was proven in \cite[Proposition 1]{AurouxDonaldsonKatzarkov2005} (see also \cite[Proposition 1.1.4]{PerutzThesis2005} and \cite[Lemma 2.1]{Perutz2006} for more complete accounts) that any near-symplectic form is self-dual and harmonic for some smooth Riemannian metric, and, conversely, every self-dual harmonic $2$-form which vanishes transversely is a near-symplectic form. In particular, this implies that it is always possible to find near-symplectic forms on closed $4$-manifolds with $b_2^+>0$, see \cite[Theorem 1.1]{HondaTransversality2004}. Indeed, the previous reference establishes that the zero set of self-dual harmonic $2$-forms consists of finitely many embedded circles for generic Riemannian metrics.

The next lemma will streamline the computations that follow and emphasize the conceptual similarity of near-contact and near-symplectic forms.

\begin{lemma}\label{lem: alternative positivity near symplectic}
 The second item in Definition \ref{def: near symplectic form} can equivalently be replaced by the requirement that $\image(\nabla \omega)$ has dimension $3$ and that the symmetric form
 \begin{align*}
     B_\omega (u,v)=\nabla_u\omega\wedge \nabla_v \omega
 \end{align*}
 be positive definite on $T_xW /\ker (\nabla\omega\vert_x)$.
\end{lemma}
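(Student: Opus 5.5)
The argument is pointwise linear algebra at a zero $x$ of $\omega$. Write $L=\nabla\omega|_x\colon T_xW\to\Lambda^2T_x^*W$; as in Lemma \ref{lem:covariant_alpha_and_diagonalization}, it is independent of the connection because $\omega_x=0$. Then $\image(\nabla\omega_x)=L(T_xW)$, and by definition
$$B_\omega(u,v)=L(u)\wedge L(v)=Q(Lu,Lv),$$
where $Q(\eta,\zeta)=\eta\wedge\zeta/\mathrm{vol}$ is the wedge pairing on $\Lambda^2T_x^*W$, using any fixed positive volume form. So $B_\omega$ is the pullback of $Q$ by $L$. In particular, $B_\omega$ is symmetric, since $2$-forms commute under the wedge product, and it descends to the quotient $T_xW/\ker L$.

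Next I use the canonical isomorphism $\bar L\colon T_xW/\ker L\to\image L$ induced by $L$. Both conditions require $\dim\image L=3$, so this dimension requirement is the same in the two formulations. Under $\bar L$, the descended form $B_\omega$ on $T_xW/\ker L$ corresponds exactly to $Q|_{\image L}$: for classes $[u],[v]$ we have $B_\omega([u],[v])=Q(\bar L[u],\bar L[v])$. A symmetric bilinear form is positive definite if and only if its pullback under a linear isomorphism is. Hence $Q|_{\image L}$ is a positive definite inner product, as in Definition \ref{def: near symplectic form}, if and only if $B_\omega$ is positive definite on $T_xW/\ker L$. This proves both directions at once.

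There is no real obstacle here. The only points to check are that $B_\omega$ is well defined on the quotient (it vanishes whenever one argument lies in $\ker L$) and that "positive" refers to the orientation of $W$, exactly as in Definition \ref{def: near symplectic form}. As a consistency remark, the closedness of $\omega$ forces $\dim\ker L\ge 1$ (compare Lemma \ref{lem:signature}), which is consistent with the rank-$3$ condition but is not needed for the equivalence.
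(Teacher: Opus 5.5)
Your argument is correct and is the paper's argument, just unpacked: the paper's entire proof is the observation that $B_\omega=(\nabla\omega)^*Q|_{\image\nabla\omega}$ with $Q$ the wedge pairing on $2$-forms, and your isomorphism $T_xW/\ker L\cong\image L$ is exactly what makes positive definiteness transfer in both directions. Your closing side remark is false as stated, though it does not affect the proof. Closedness alone does not force $\ker L\neq 0$: for instance $\omega=x_1\,dx_2\wedge dx_3+x_2\,dx_1\wedge dx_3+x_3\,dx_1\wedge dx_4+x_4(dx_1\wedge dx_3+dx_2\wedge dx_4)$ on $\mathbb{R}^4$ is closed with $\nabla\omega|_0$ injective. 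What does force a kernel in the near-symplectic setting is positivity: $\omega\wedge\omega\ge0$ near $x$ gives $Q(Lu,Lu)\ge0$ for all $u$, so $Q$ is positive semidefinite on $\image L$, and such a subspace of the signature-$(3,3)$ space $\Lambda^2T_x^*W$ has dimension at most $3$.
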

\begin{proof}
    This follows immediately upon noticing that $B_\omega=\left(\nabla \omega\right)^\ast Q\vert_{\image \nabla \omega}$, where $Q$ is the bilinear pairing given by the wedge product on $2$-forms.
\end{proof}

We can now show that the symplectization of a near-contact form is near-symplectic (see also \cite[Lemma 10]{ShevSmir}), where $\R\times M$ is oriented by $dt\wedge \operatorname{vol}_M$.

\begin{prop}\label{prop: symplectization of TNS is near symplectic}
    Let $M^3$ be an oriented $3$-manifold and let $\alpha$ be a positive near-contact form. Then the symplectization $(\mathbb R \times M, \omega=d(e^t \alpha))$ is a positive near-symplectic form.
\end{prop}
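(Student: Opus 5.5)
Closedness of $\omega=d(e^t\alpha)$ is automatic, so there are two cases, according to Definition \ref{def: near symplectic form}. We write
$$\omega=e^t\,dt\wedge\alpha+e^t\,d\alpha .$$

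\emph{Away from the singular lines.} A direct computation gives
$$\omega\wedge\omega=2e^{2t}\,dt\wedge\alpha\wedge d\alpha,$$
since $d\alpha\wedge d\alpha=0$ on the $3$-manifold $M$. Off $\R\times\alpha^{-1}(0)$ this is positive with respect to the orientation $dt\wedge\operatorname{vol}_M$, because $\alpha\wedge d\alpha>0$ there. This covers item 1 of the definition.

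\emph{On the singular lines.} Take a point $x=(t_0,p)$ with $p\in\alpha^{-1}(0)$. Then $\omega_x=0$, since both $\alpha$ and $d\alpha$ vanish at $p$. By Lemma \ref{lem: alternative positivity near symplectic}, it suffices to check two things: that $\image(\nabla\omega)$ is $3$-dimensional, and that $B_\omega$ is positive definite on the quotient by $\ker\nabla\omega$.

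For the derivative, take $u=a\partial_t+v$ with $v\in T_pM$. Because $\alpha$ and $d\alpha$ vanish at $p$, the $\partial_t$-derivative kills every term, and we get
$$\nabla_u\omega=e^{t_0}\big(dt\wedge\nabla_v\alpha+\nabla_v d\alpha\big).$$
Consequently $\partial_t\in\ker\nabla\omega$. Moreover $\nabla\omega$ is injective on $T_pM$, since $\nabla_v\alpha$ and $\nabla_v d\alpha$ are isomorphisms by Lemma \ref{lem:covariant_alpha_and_diagonalization}. Hence the kernel is exactly $\R\partial_t$, the image is $3$-dimensional, and the quotient $T_xW/\ker\nabla\omega$ is identified with $T_pM$.

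For positivity, using $dt\wedge dt=0$ and the fact that $\nabla_v d\alpha\wedge\nabla_v d\alpha$ is a $4$-form on $M$ and hence zero, we obtain
$$\nabla_v\omega\wedge\nabla_v\omega=2e^{2t_0}\,dt\wedge\nabla_v\alpha\wedge\nabla_v d\alpha=2e^{2t_0}\,dt\wedge B_\alpha(v,v).$$
This is positive for $v\neq0$ because $B_\alpha$ is positive definite.

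The only step needing care is the identification of $\ker\nabla\omega$ with $\R\partial_t$, which lets positivity of $B_\omega$ on the quotient reduce exactly to positivity of $B_\alpha$. That identification relies on the transversality of $\alpha$ and $d\alpha$ at $p$. The rest is routine wedge bookkeeping. Note also that $\nabla\omega$ at a zero is metric independent, as in Lemma \ref{lem:covariant_alpha_and_diagonalization}, so the computation can be done in coordinates.
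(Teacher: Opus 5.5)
Your proof is correct and follows the paper's argument: compute $\nabla\omega=e^{t}\big(dt\wedge\nabla\alpha+\nabla d\alpha\big)$ along $\R\times\alpha^{-1}(0)$, identify its kernel with $\R\partial_t$, and use Lemma \ref{lem: alternative positivity near symplectic} to reduce positivity of $\nabla\omega\wedge\nabla\omega$ on the quotient to positive definiteness of $B_\alpha$. The only cosmetic difference is that you evaluate the quadratic form on the diagonal. The paper instead writes out the full bilinear form as $e^{2t}\,dt\wedge\big(B_\alpha(u,v)+B_\alpha(v,u)\big)$, i.e.\ $2e^{2t}\,dt$ wedged with the symmetrization of $B_\alpha$, which is positive definite exactly when $B_\alpha$ is.
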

\begin{proof}
It is obvious that $\omega$ is a symplectic form whenever $(t,p)$ is such that $p\notin \alpha^{-1}(0)$. In case that $p\in \alpha^{-1}(0)$ we compute
\begin{align*}
    \nabla \omega=e^t\left(dt\wedge \nabla \alpha + \nabla d\alpha\right)
\end{align*}
and so it is clear that $\nabla d(e^t \alpha)$ has rank $3$ and that the kernel $\ker \left(\nabla \omega\right)$ is spanned precisely by $\partial_t$. Moreover, we have that for $p\in \alpha^{-1}(0)$
\begin{align*}
    \nabla_u \omega \wedge \nabla_v \omega = e^{2t} dt \wedge \left(\nabla_u \alpha \wedge \nabla_v d\alpha + \nabla_v \alpha \wedge \nabla_u d\alpha\right).
\end{align*}
So $\nabla \omega \wedge \nabla \omega$ is positive definite on $(T_{(t,p)}\mathbb R\times M)/\langle \partial_t\rangle\cong T_p M$ if and only if the symmetrization of $B_\alpha$ is positive definite on $T_p M$, which of course is equivalent to $B_\alpha$ itself being positive definite on $T_p M$. We conclude by Lemma \ref{lem: alternative positivity near symplectic}.
\end{proof}
Note that no symmetry property of $B_\alpha$ entered into the proof of Proposition \ref{prop: symplectization of TNS is near symplectic} since the antisymmetric part vanishes in the central computation.  

In case the $1$-form $\alpha$ on $M$ is a rotational Beltrami $1$-form for a metric $g$ we can even explicitly find a metric which makes the symplectization self-dual and harmonic. In the following we do not assume that $\alpha$ vanishes transversely.

\begin{prop}[Symplectizations of Beltrami fields are harmonic and self-dual]
Let $(M^3,g)$ be an oriented Riemannian manifold, and let
$\alpha\in\Omega^1(M)$ satisfy
$$
\star_g d\alpha=f\alpha
$$
for some positive function $f$. Equip $\mathbb{R}_t\times M$ with the product orientation $dt\wedge\vol_g$ and the
Riemannian metric
$$
G=f^{-2}dt^2+g.
$$
Then the $2$-form $\omega=d(e^t\alpha)$ is self-dual and harmonic with respect to $G$. The vanishing set of $\omega$ is given precisely by $\mathbb R \times \alpha^{-1}(0)$.
\end{prop}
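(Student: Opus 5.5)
The plan is to verify self-duality by a direct pointwise computation in an adapted orthonormal coframe. Harmonicity then follows formally. First, $\omega=d(e^t\alpha)=e^t\,(dt\wedge\alpha+d\alpha)$ is exact, hence closed. If I show $\star_G\omega=\omega$, then $d^{\ast_G}\omega=\pm\star_G d\star_G\omega=\pm\star_G d\omega=0$, so $\omega$ is harmonic. Since $e^t$ is a scalar, it suffices to prove $\star_G(dt\wedge\alpha+d\alpha)=dt\wedge\alpha+d\alpha$. I will treat $f$ as smooth and positive, so that $G=f^{-2}dt^2+g$ is a smooth Riemannian metric on $\R\times M$.

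\textbf{Coframe computation.} At a point, take a positively oriented $g$-orthonormal coframe $e^1,e^2,e^3$ of $T^*M$ and put $e^0=f^{-1}dt$. Then $(e^0,e^1,e^2,e^3)$ is $G$-orthonormal and positively oriented for $dt\wedge\vol_g$, so the $G$-volume form is $e^0\wedge\vol_g=f^{-1}dt\wedge\vol_g$. Checking on basis elements (for instance $\star_G(e^0\wedge e^1)=e^2\wedge e^3=\star_g e^1$), I get for every $1$-form $\beta$ on $M$:
\begin{align*}
\star_G(e^0\wedge\beta)=\star_g\beta,\qquad \star_G(\star_g\beta)=e^0\wedge\beta .
\end{align*}
Here $\star_g\beta$ is regarded as a $2$-form on $\R\times M$. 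The second identity follows from the first because $\star_G^2=1$ on $2$-forms in dimension $4$.

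\textbf{Applying the Beltrami equation.} From $\star_g d\alpha=f\alpha$ and $\star_g^2=1$ in dimension three, $d\alpha=f\star_g\alpha$, equivalently $\star_g\alpha=f^{-1}d\alpha$. Hence
\begin{align*}
\star_G(dt\wedge\alpha)=f\,\star_G(e^0\wedge\alpha)=f\star_g\alpha=d\alpha,
\end{align*}
and
\begin{align*}
\star_G(d\alpha)=\star_G(\star_g(f\alpha))=f\,e^0\wedge\alpha=dt\wedge\alpha .
\end{align*}
Adding these two identities gives $\star_G\omega=\omega$.

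\textbf{Zero set.} The two summands $dt\wedge\alpha$ and $d\alpha$ lie in complementary subspaces of $\Lambda^2T^*(\R\times M)$: one contains $dt$ and the other does not. So $\omega$ vanishes at $(t,p)$ exactly when $\alpha_p=0$ and $d\alpha_p=0$. Since $d\alpha=f\star_g\alpha$ with $f\neq0$, this is equivalent to $\alpha_p=0$, so the zero set of $\omega$ is $\R\times\alpha^{-1}(0)$.

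The only step I expect to need care is the Hodge star identity: I must keep the signs and the orientation convention $dt\wedge\vol_g$ consistent, and correctly track the factor $f^{-1}$ in $e^0$. Everything else is formal.
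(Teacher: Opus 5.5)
Your proof is correct and follows the paper's argument essentially verbatim: you use the same unit coframe element $e^0=f^{-1}dt$, the same two Hodge-star identities, and the same verification that $\star_G$ exchanges $dt\wedge\alpha$ and $d\alpha$, with harmonicity following from exactness. The only cosmetic difference is the zero set: you read it off from the splitting of $dt\wedge\alpha+d\alpha$ into complementary components, whereas the paper computes $\omega\wedge\omega=2fe^{2t}|\alpha|_g^2\,dt\wedge\vol_g$, which for a self-dual form vanishes exactly where $\omega$ does.
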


\begin{proof}
Since $\star_g^2=1$ on $1$- and $2$-forms in dimension three, the
Beltrami equation is equivalent to $d\alpha=f\star_g\alpha$.
Set $e^0=f^{-1}dt$ so that $e^0$ is a unit $1$-form for $G$. For every $1$-form $\beta$
and every $2$-form $\eta$ pulled back from $M$, the Hodge stars of
$g$ and $G$ are related by
\begin{align*}
\star_G(e^0\wedge\beta)&=\star_g\beta,\\
\star_G\eta&=e^0\wedge\star_g\eta.
\end{align*}
It follows that
\begin{align*}
&\star_G(dt\wedge\alpha)
=f\star_g\alpha=d\alpha,\\
&\star_Gd\alpha=e^0\wedge\star_gd\alpha=e^0\wedge f\alpha=dt\wedge\alpha.
\end{align*}
As $\omega=e^t\left(dt\wedge\alpha+d\alpha\right)$ it clearly is self-dual and, as an exact form, evidently closed as well. Harmonicity follows from closedness and self-duality. Finally,

\begin{align*}
\omega\wedge\omega
=2e^{2t}\,dt\wedge\alpha\wedge d\alpha=2f e^{2t}|\alpha|_g^2\,
dt\wedge\vol_g. 
\end{align*}
Thus $\omega$ is nondegenerate exactly where $\alpha$ is nonzero. In
particular, if $\alpha$ is nowhere vanishing, then
$\alpha\wedge d\alpha>0$, so $\alpha$ is a contact form and $\omega$
is symplectic. 
\end{proof}
\noindent We note that for a negative proportionality factor $f$ we get anti-self-dual $2$-forms in the symplectization.

\subsection{Local normal forms for the symplectization of near-contact singularities}\label{subsec: local normal forms for the symplectization}

As mentioned previously, Honda proved the existence of a local normal form for neighborhoods of the vanishing set of a transversely vanishing self-dual harmonic $2$-form $\omega$ on closed $4$-manifolds $W$. In his setting, those vanishing sets are closed $1$-dimensional submanifolds of $W$, and they admit one of two possible normal forms (see \cite[Section 4]{honda2004local}). In $S^1_\theta\times D^3$, the normal forms are given by either
$$\omega^+=x(d\theta \wedge dx + dy\wedge dz) + y(d\theta\wedge dy + dz\wedge dx) - 2z (d\theta \wedge dz + dx\wedge dy),$$
or
$$\omega^-=2x(d\theta \wedge dx + dy\wedge dz)-y(d\theta\wedge dy + dz\wedge dx) - z (d\theta \wedge dz + dx\wedge dy).$$
Slightly more precisely, \cite{honda2004local} establishes that any connected component $Z$ of the vanishing set of $\omega$ with orientable normal bundle has a neighborhood which is symplectomorphic to a neighborhood of $S^1\times \{0\}$ in either $(S^1\times D^3,\omega^+)$ or $(S^1\times D^3,\omega^-)$ via a continuous symplectomorphism which is smooth in the complement of $Z$.

In our setting the vanishing sets of $d(e^t\alpha)$ will be noncompact and it is thus natural to consider the analogous canonical $2$-forms along a critical line instead of a critical circle. We thus consider $\R_t\times\R^3$ equipped with either
$$\omega^+_H=x(dt \wedge dx + dy\wedge dz) + y(dt\wedge dy + dz\wedge dx) - 2z (dt \wedge dz + dx\wedge dy),$$
or
$$\omega^-_H=2x(dt \wedge dx + dy\wedge dz) - y(dt\wedge dy + dz\wedge dx) - z (dt \wedge dz + dx\wedge dy)$$
which satisfy ${\omega^{\pm}_H}^2\geq 0$ where equality holds precisely on $\R\times \{0\}$. 

\begin{remark}
We point out that $\omega_H^\pm$ are not, in the coordinates $(t,x,y,z)$, symplectizations $d(e^t\beta)$ of a $1$-form $\beta$ on $\R^3$, nor of the form $dt\wedge\beta+d\beta$. Indeed, in the latter case $\beta=\iota_{\partial_t}\omega^\pm_H$ would be closed (since for $\omega^+_H$ it equals $d\rho$ with $\rho=\frac12(x^2+y^2-2z^2)$), so the restriction of $\omega^\pm_H$ to the slices $\{t\}\times\R^3$, which equals $d\beta$, should vanish. However, this restriction is nonzero. In the former case, even disregarding the $e^t$ factor, one can see that any near-contact form has to be of at least second order at a given zero, and thus so will its symplectization. On the other hand $\omega_H$ is defined by first order expressions. Nevertheless, as explained in Remark \ref{rem:Moser_Honda} below, the two can be identified away from the line on a neighborhood which is however not uniformly thick in $t$.
\end{remark}

Our normal form theorem for near-contact forms implies a normal form near the singular line in its symplectization.

\begin{corollary}\label{cor:symplectization_normal_form}
Let $\alpha$ be a near-contact form on $M$ with a zero at $p$, and let $\phi:(U,0)\to(V,p)$, $f$ and $\alpha_{std}$ be as in Theorem \ref{thm: local model for Beltrami}. Then the map
$$\Phi:\R\times U\longrightarrow\R\times V,\qquad \Phi(t,x)=\big(t-\log f(x),\,\phi(x)\big),$$
is a homeomorphism with $\Phi(t,0)=(t,p)$, smooth on $\R\times(U\setminus\{0\})$, where it satisfies
$$\Phi^*(e^t\alpha)=e^t\alpha_{std},\qquad\text{and hence}\qquad \Phi^*d(e^t\alpha)=d(e^t\alpha_{std}).$$
\end{corollary}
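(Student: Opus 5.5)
The plan is a direct computation from Theorem \ref{thm: local model for Beltrami}, which gives $(\phi|_{U\setminus\{0\}})^*\alpha=f\alpha_{std}$ with $f>0$ smooth on $U\setminus\{0\}$, continuous on $U$, and $f(0)=1$. Regard $f$, $\phi$, $\alpha_{std}$ as pulled back to $\R\times U$ via the projection to $U$.

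First I check the topological claims. Since $f$ is continuous and positive on $U$, the function $\log f$ is continuous on $U$, smooth on $U\setminus\{0\}$, and vanishes at $0$. Hence $\Phi(t,x)=(t-\log f(x),\phi(x))$ is continuous, with $\Phi(t,0)=(t,\phi(0))=(t,p)$. Its inverse is $(s,y)\mapsto\big(s+\log f(\phi^{-1}(y)),\phi^{-1}(y)\big)$, which is continuous because $\phi^{-1}$ and $\log f$ are. So $\Phi$ is a homeomorphism $\R\times U\to\R\times V$. On $\R\times(U\setminus\{0\})$ both $\Phi$ and this inverse are smooth, so $\Phi$ is a diffeomorphism there.

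Next comes the pullback computation on $\R\times(U\setminus\{0\})$. Write the first component of $\Phi$ as $t\circ\Phi=t-\log f$, so $\Phi^*(e^t)=e^{t}e^{-\log f}=e^t/f$. The $M$-component of $\Phi$ is $\phi$ composed with the projection, and $\alpha$ has no $dt$ part, so $\Phi^*\alpha=\phi^*\alpha=f\alpha_{std}$. Therefore
$$\Phi^*(e^t\alpha)=\frac{e^t}{f}\cdot f\alpha_{std}=e^t\alpha_{std}.$$
Since pullback by a smooth map commutes with $d$ on the open set where $\Phi$ is smooth, we get $\Phi^*d(e^t\alpha)=d(e^t\alpha_{std})$ there.

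There is no real obstacle. The only points needing care are these: the identity holds only away from $\R\times\{0\}$, where $\Phi$ is merely continuous; and the continuity of $\log f$ at $0$, together with $f(0)=1$, is exactly what makes $\Phi$ extend as a homeomorphism fixing the singular line pointwise. Both are supplied by Theorem \ref{thm: local model for Beltrami}.
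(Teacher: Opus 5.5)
Your proposal is correct and follows essentially the same route as the paper: the pullback computation $\Phi^*(e^t\alpha)=e^{t-\log f}\,\phi^*\alpha=e^t\alpha_{std}$, continuity of $\Phi$ from continuity and positivity of $f$ with $f(0)=1$, and the explicit continuous inverse $(s,y)\mapsto\big(s+\log f(\phi^{-1}(y)),\phi^{-1}(y)\big)$. The extra remarks you add (that $\Phi$ is a diffeomorphism off the singular line, and that $d$ commutes with pullback there) are correct and harmless.
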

\begin{proof}
On $\R\times(U\setminus\{0\})$ we have
$$\Phi^*(e^t\alpha)=e^{t-\log f}\,\phi^*\alpha=e^t f^{-1}\,f\,\alpha_{std}=e^t\alpha_{std}.$$
By Theorem \ref{thm: local model for Beltrami}, $f$ is positive and continuous on $U$ with $f(0)=1$, so $\Phi$ is continuous with $\Phi(t,0)=(t,p)$. Its inverse $(t,y)\mapsto\big(t+\log f(\phi^{-1}(y)),\,\phi^{-1}(y)\big)$ is also continuous. Since $\phi$ and $f$ are smooth on $U\setminus\{0\}$, $\Phi$ is smooth on $\R\times(U\setminus\{0\})$.
\end{proof}

\begin{remark}
Since $d(e^t\alpha^+_{std})=e^t\big(\omega^+_H+z\,dt\wedge(y\,dx-x\,dy)\big)$, the symplectization of $\alpha^+_{std}$ agrees with $e^t\omega_H^+$ to first order along the singular line. In this sense, Corollary \ref{cor:symplectization_normal_form} is the analogue for symplectizations of Honda's normal form, with $d(e^t\alpha_{std})$ playing the role of $\omega_H$.
\end{remark}

\begin{remark}\label{rem:Moser_Honda}
A Moser-type argument along the path $(1-s)\,\omega^+_H+s\,d(e^t\alpha^+_{std})$, similar to the proof of Theorem \ref{thm:singularGray}, allows one to identify a neighborhood of the singular line in the symplectization of $\alpha^+_{std}$ with a neighborhood of $\R\times\{0\}$ in Honda's model $(\R\times\R^3,\omega^+_H)$, symplectically away from the line. Combined with Corollary \ref{cor:symplectization_normal_form}, this relates the symplectization of a near-contact form near its singular line to Honda's local model. We do not pursue this here, since the resulting neighborhood is not uniform along the line: The neighborhood obtained in this way shrinks as $t\to-\infty$, because the symplectic volume of $\{t<0\}$ is finite for $d(e^t\alpha^+_{std})$ but infinite for $\omega^+_H$. Corollary \ref{cor:symplectization_normal_form} instead provides a uniform tube $\R\times U$.
\end{remark}

\subsection{Overtwisted contact foliation near the line singularity}\label{sec:contact_foliation}

Corollary \ref{cor:symplectization_normal_form} allows us to extract geometric information about the symplectization of a near-contact form near its singular line from the model $d(e^t\alpha_{std})$. The following is the analogue, in our setting, of the discussion in \cite[Section 5]{honda2004local}. We denote by $S_r\subset\R^3$ the sphere of radius $r$ centered at the origin, and write $\alpha_{std}=\alpha^+_{std}$ (the case of $\alpha^-_{std}=-\alpha^+_{std}$ is identical).

\begin{lemma}\label{lem:contact_foliation}
In $\big(\R\times(\R^3\setminus\{0\}),\,d(e^t\alpha_{std})\big)$, the cylinders $C_r=\R\times S_r$, $r>0$, are contact-type hypersurfaces. Each $C_r$ is the concave boundary of $\R\times(\R^3\setminus \overline{B_r(0)})$, and the induced contact structures are overtwisted.
\end{lemma}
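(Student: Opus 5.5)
The plan is to exhibit an explicit Liouville vector field for $\omega=d(e^t\alpha_{std})$ transverse to the cylinders $C_r$. I will use the contact vector field $Y$ from the proof of Lemma \ref{lem:convexspheres}, which satisfies $\mathcal{L}_Y\alpha_{std}=2\alpha_{std}$ and $\langle x,Y\rangle>0$. Set $\lambda=e^t\alpha_{std}$, so that $\omega=d\lambda$. Regard $Y$ as a vector field on $\R\times(\R^3\setminus\{0\})$ with no $t$-component. Then $\mathcal{L}_{\partial_t}\lambda=\lambda$ and $\mathcal{L}_Y\lambda=e^t\mathcal{L}_Y\alpha_{std}=2\lambda$. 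Hence
$$V:=Y-\partial_t$$
satisfies $\mathcal{L}_V\lambda=\lambda$, and therefore $\mathcal{L}_V\omega=\omega$, so $V$ is Liouville. Its $\R^3$-component is $Y$, which has positive radial component. So $V$ is transverse to every $C_r=\R\times S_r$ and points towards increasing $r$. This gives the contact-type property, with contact form $\iota_V\omega|_{C_r}$.

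Since $V$ points outward from $B_r$, it points into $\R\times(\R^3\setminus\overline{B_r(0)})$ along $C_r$. A boundary where the Liouville field points into the domain is concave, which gives the second claim. I will double-check this sign against the chosen orientation conventions, but it only affects the word ``concave'' versus ``convex'', not the rest of the argument.

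For overtwistedness, I will exhibit a convex sphere inside $C_r$ whose dividing set is disconnected, and then apply Theorem \ref{theorem: giroux}.

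First, translation $\tau_c(t,x)=(t+c,x)$ satisfies $\tau_c^*\lambda=e^c\lambda$, so it scales $\omega$ by $e^c$. It also preserves $V$, because $Y$ is $t$-independent. Consequently $\tau_c^*(\iota_V\omega)=e^c\,\iota_V\omega$. Since $\tau_c$ preserves $C_r$, the restriction of $\partial_t$ to $C_r$ is a contact vector field for the induced contact structure. It is transverse to each slice $\{t\}\times S_r$, so these spheres are convex.

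The dividing set is where $\partial_t$ lies in the contact structure, i.e. where $\iota_V\omega(\partial_t)=0$. Since $\iota_{\partial_t}\omega=e^t\alpha_{std}$ and $\iota_V\omega(\partial_t)=-\iota_{\partial_t}\omega(V)$, this equals $-e^t\alpha_{std}(V)=-e^t\alpha_{std}(Y)$. So the dividing set is $\{\alpha_{std}(Y)=0\}\cap S_r=\Gamma$. By Lemma \ref{lem:convexspheres}, $\Gamma$ consists of two circles. Giroux's criterion (Theorem \ref{theorem: giroux}) then gives an overtwisted disk near the slice, and hence $C_r$ is overtwisted.

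The main obstacle I expect is verifying cleanly that the $\partial_t$-flow acts as a contact symmetry of the hypersurface, which is the conformal scaling computation above. A second point to check is the orientation and sign bookkeeping, so that the sign of $\alpha_{std}(Y)$ genuinely cuts out $\Gamma$ as the dividing set. The case of $\alpha^-_{std}$ is identical with signs flipped.
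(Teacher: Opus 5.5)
Your proof is correct. The contact-type and concavity claims are handled exactly as in the paper: your $V=Y-\partial_t$ is the paper's Liouville field $Z$, and transversality follows from $\langle x,Y\rangle>0$. Your sign is also the right one: $V$ points into $\R\times(\R^3\setminus\overline{B_r(0)})$, so $C_r$ is its concave boundary.

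For overtwistedness you take a genuinely different route. The paper writes the induced form explicitly as $\lambda_r=e^t(\alpha_{std}-H\,dt-dH)|_{C_r}$ with $H=x^2+y^2-2z^2$, and exhibits an overtwisted disk directly. On the hemisphere $D=\{t_0\}\times S_r^+$, the characteristic foliation is singular only at the pole and along the equator $\partial D$. There $\lambda_r=-e^{t_0}r^2\,dt$, so $\ker\lambda_r=TD$ along $\partial D$.

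You instead use that $t$-translation rescales $\lambda_r$, so $\partial_t$ is a contact vector field on $C_r$ transverse to every slice. Your computation $\lambda_r(\partial_t)=-e^t\alpha_{std}(Y)=-e^tH$ agrees with the $dt$-coefficient of the paper's formula. It identifies the dividing set of each slice with the two circles $\Gamma$ of Lemma \ref{lem:convexspheres}, and Giroux's criterion (Theorem \ref{theorem: giroux}) then finishes.

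The paper's argument is self-contained and elementary once $\lambda_r$ is written down. Yours never needs the tangential part of $\lambda_r$, but relies on the much deeper Giroux criterion. In exchange it gives more structure: $C_r$ is foliated by $t$-translation-invariant convex spheres with exactly the dividing set of the three-dimensional model. This parallels Corollary \ref{cor:foliation_convex_spheres} one dimension up.
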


\begin{figure}[!h]
    \centering
    \begin{tikzpicture}
    \node at (0,0) {\includegraphics[width=0.9\linewidth]{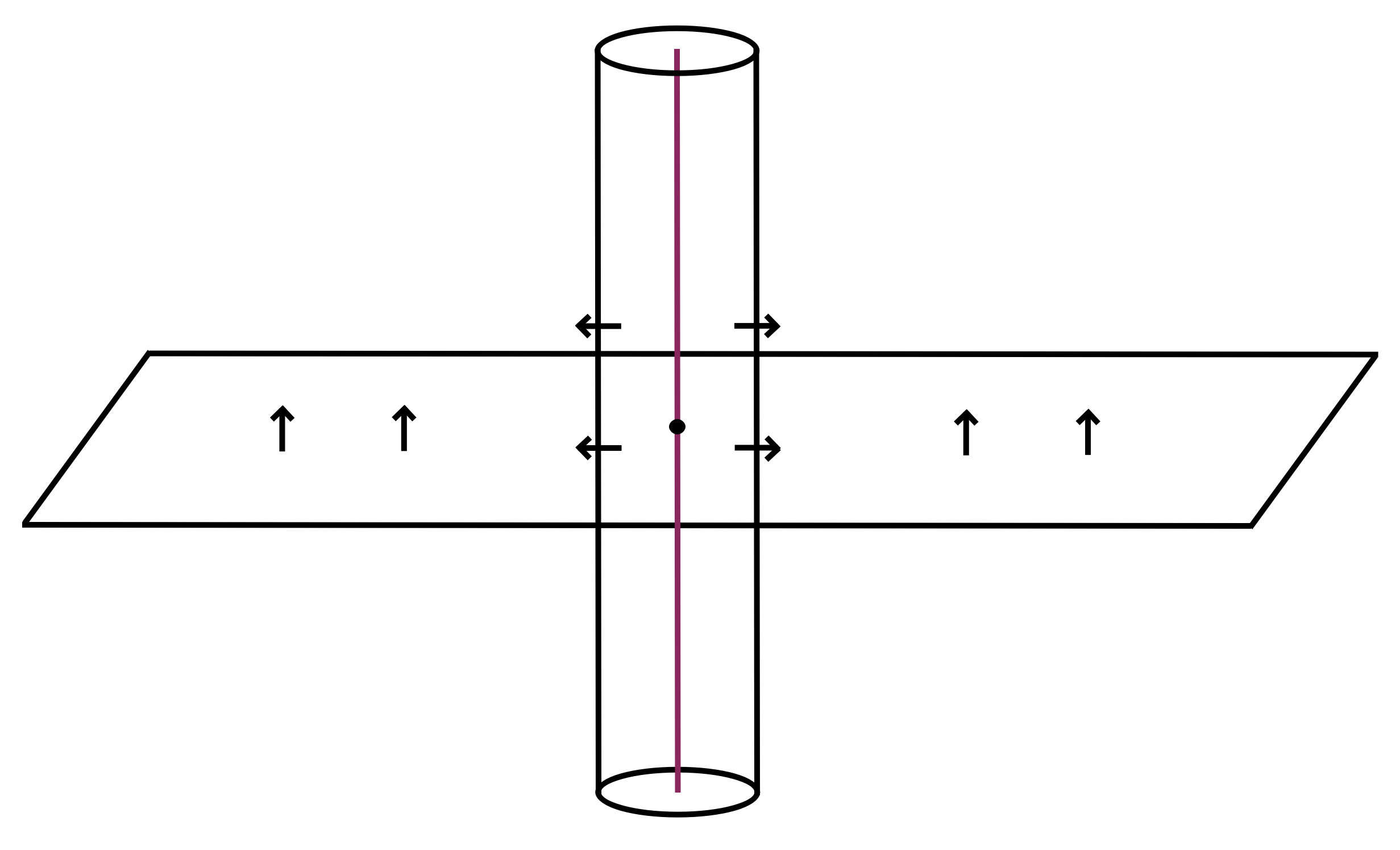}};
    \node at (0.1,0) {$p$};
    \node at (5,1) {$M$};
    \node at (-1.5, 2.1) {$C_{r}$};
    \end{tikzpicture}
    \caption{Symplectization near the singularity}
    \label{fig:symplectization}
\end{figure}

\begin{proof}
Consider the vector field
$$Y=\frac{(x^2+y^2+2z^2)(x\,\partial_x+y\,\partial_y)+4z^3\,\partial_z}{x^2+y^2+4z^2}$$
on $\R^3\setminus\{0\}$. A direct computation shows that
$$\mathcal{L}_Y\alpha_{std}=2\alpha_{std},\qquad \alpha_{std}(Y)=H:=x^2+y^2-2z^2,$$
and
$$\langle (x,y,z),Y\rangle=\frac{(x^2+y^2)^2+2(x^2+y^2)z^2+4z^4}{x^2+y^2+4z^2}>0.$$
Hence $Z=-\partial_t+Y$ satisfies $\mathcal{L}_Z(e^t\alpha_{std})=e^t\alpha_{std}$, so it is a Liouville vector field for $d(e^t\alpha_{std})$. Moreover, $Z$ is transverse to every $C_r$ and points outwards. Therefore, each $C_r$ is of contact type, with contact form
$$\lambda_r=\iota_Z\, d(e^t\alpha_{std})\big|_{C_r}=e^t\big(\alpha_{std}-H\,dt-dH\big)\big|_{C_r},$$
and it is the concave boundary of the region outside it.

To see that $\ker\lambda_r$ is overtwisted, fix $t_0$ and consider the disk $D=\{t_0\}\times S_r^+$, where $S_r^+$ denotes the upper hemisphere. In spherical coordinates,
$$(\alpha_{std}-dH)|_{TS_r}=-3r^2\sin\theta\cos\theta\,d\theta-r^3\sin^2\theta\cos\theta\,d\varphi,$$
which vanishes along the equator $\partial D$ and, in the interior of $D$, only at the pole. Along $\partial D$ we have $H=r^2\neq0$, so $\lambda_r=-e^{t_0}r^2dt$ there, and $\ker\lambda_r=TD$ along $\partial D$. Hence $D$ is an overtwisted disk.
\end{proof}

One can thus schematically understand the symplectization near a singularity as in Figure \ref{fig:symplectization}. The arrows represent the Liouville vector field $Z$, transverse to the contact hypersurfaces $C_r$ (i.e., the direction in which the symplectic form is expanded).

\section{Intrinsic curl eigenforms and rotational Beltrami forms}\label{sec: intrinsic rotational Beltrami fields}

The aim of this section is to establish Theorem \ref{theorem: intrinsically rotationally Beltrami}, completely characterizing those transversely vanishing $1$-forms which are intrinsic rotational Beltrami and intrinsic curl eigenforms.

\begin{remark}\label{rem:rescaling}
We point out that, in Theorem \ref{theorem: intrinsically rotationally Beltrami}, given a smooth metric $g$ such that $\star_g d\alpha=f\alpha$ with $f>0$, for any smooth positive function $\tilde f$ the metric $\tilde g=(f/\tilde f)^2g$ satisfies $\star_{\tilde g} d\alpha=\tilde f\alpha$, since $\star_{cg}=c^{-1/2}\star_g$ on $2$-forms in dimension three. Thus, in particular, the proportionality factor can be made constant, making $\alpha$ into a curl eigenform.
\end{remark}
In view of Theorem \ref{theorem: intrinsically rotationally Beltrami}, one can thus introduce the following two equivalent definitions.

\begin{defi}
A transversely vanishing $1$-form is called an \emph{intrinsic curl eigenform} if it satisfies one of the following equivalent conditions:
\begin{enumerate}
    \item  $\star_g d\alpha=\lambda\alpha$ for some smooth metric $g$ and some nonzero $\lambda$.
    \item is near-contact (positive or negative according to the sign of $\lambda$), and the map \eqref{eq:symmetric_wedge_intro} is symmetric at each singularity.
\end{enumerate}
\end{defi}
Similarly, of course, it makes sense to introduce the notion of \emph{intrinsic rotational Beltrami forms}, but as we have just argued, this definition really coincides with that of an intrinsic curl eigenform. 

Let us also point out that $B_\alpha$ being symmetric is not a condition that is invariant under rescaling by positive functions. Consequently, unlike being near-contact, being intrinsic rotational Beltrami is a property of the $1$-form, not of the associated singular plane field, respectively conformal class of $1$-forms. On the other hand, given a near-contact form $\alpha$ there exists a positive function $f$ such that $B_{f\alpha}$ is in fact symmetric, as we show in the following proposition.

\begin{prop}\label{prop:near-contact is conformal to intrinsically Beltrami}
    Given a near-contact form $\alpha$ there always exists a factor $f\in C^\infty_{>0}(M)$ such that $B_{f\alpha}$ is symmetric. Consequently, any near-contact form is conformal to an intrinsic rotational Beltrami form.
\end{prop}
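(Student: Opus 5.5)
The plan is to compute how $B_\alpha$ changes under rescaling and choose the $1$-jet of $f$ at each zero to kill the antisymmetric part. From the proof of Proposition \ref{prop: near contact forms preserved by conformal rescaling}, at a zero $p$ and for $\beta=f\alpha$,
$$B_\beta(u,v)=f(p)^2B_\alpha(u,v)+f(p)\,(\nabla_u\alpha)_p\wedge df(p)\wedge(\nabla_v\alpha)_p .$$
The correction term is antisymmetric in $(u,v)$. Symmetry of $B_\beta$ is therefore equivalent to
$$(\nabla_u\alpha)_p\wedge(\nabla_v\alpha)_p\wedge \frac{df(p)}{f(p)} = -A_\alpha(u,v)\quad\text{for all } u,v\in T_pM,$$
where $A_\alpha=\tfrac12(B_\alpha(u,v)-B_\alpha(v,u))$ is the antisymmetric part, a $\wedge^3T^*_pM$-valued skew form on $T_pM$.

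I then solve this linear equation for $\theta=d\log f(p)\in T_p^*M$. By Lemma \ref{lem:covariant_alpha_and_diagonalization}, $L=(\nabla\alpha)_p:T_pM\to T_p^*M$ is an isomorphism. The map $\theta\mapsto\big((u,v)\mapsto Lu\wedge Lv\wedge\theta\big)$ sends $T^*_pM$, which is $3$-dimensional, to the space of skew bilinear forms on $T_pM$ with values in the line $\wedge^3T^*_pM$, also $3$-dimensional. It is injective: if $Lu\wedge Lv\wedge\theta=0$ for all $u,v$, then, since $L$ is onto, $\eta\wedge\eta'\wedge\theta=0$ for all covectors $\eta,\eta'$, which forces $\theta=0$. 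So the map is an isomorphism, and a unique $\theta_p$ solves the equation. In coordinates this is just $\Lambda^2$ of the invertible matrix $D\alpha|_p$ acting on $\theta$, so nothing subtle happens.

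Globally, the zeros of $\alpha$ are finitely many (or at least discrete) isolated points, so I choose disjoint coordinate balls around them. In each ball I set $f=\exp(\chi\,\theta_p(x))$, where $\theta_p$ is read as a linear function in the chart and $\chi$ is a cutoff equal to $1$ near $p$. Outside the balls I set $f=1$. Then $f$ is smooth and positive with $f(p)=1$ and $d\log f(p)=\theta_p$, so $B_{f\alpha}$ is symmetric at every zero. By Proposition \ref{prop: near contact forms preserved by conformal rescaling}, $f\alpha$ is still near-contact and vanishes transversely, and Theorem \ref{theorem: intrinsically rotationally Beltrami} (item $3\Rightarrow2$) shows it is an intrinsic rotational Beltrami form.

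The main point needing care is the correctness of the rescaling formula, in particular the sign and ordering of the wedge factors in $\nabla_v d\beta=df\wedge\nabla_v\alpha+f\nabla_vd\alpha$ at $p$. The sign does not affect solvability, only the resulting $\theta_p$. Uniqueness of $\theta_p$ also gives the remark that any two such factors satisfy $d(f_2/f_1)|_p=0$.
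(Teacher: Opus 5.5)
Your proposal is correct and follows essentially the same route as the paper: the same rescaling formula for $B_{f\alpha}$ at a zero, followed by injectivity (hence bijectivity) of $\eta\mapsto\big((u,v)\mapsto(\nabla_u\alpha)_p\wedge\eta\wedge(\nabla_v\alpha)_p\big)$; your normalization via $d\log f(p)$ and your explicit cutoff construction of the global $f$ spell out details the paper leaves implicit. The only slip is the sign you flagged: since $(\nabla_u\alpha)_p\wedge df(p)\wedge(\nabla_v\alpha)_p=-(\nabla_u\alpha)_p\wedge(\nabla_v\alpha)_p\wedge df(p)$, the right-hand side of your equation should be $+A_\alpha(u,v)$, which, as you anticipated, only replaces $\theta_p$ by $-\theta_p$.
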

\begin{proof}
Given $\tilde\alpha=f\alpha$ it is elementary to check that at any zero $p$ of $\alpha$ we have
\begin{align*}
    B_{\tilde \alpha}(u,v)=f(p)^2B_\alpha(u,v)+f(p)\left(\nabla_u \alpha\right)_p \wedge df(p) \wedge \left(\nabla_v \alpha\right)_p
\end{align*}
We note that the second term is antisymmetric in $u$ and $v$. Upon rescaling $f$ so that $f(p)=1$, the question is thus whether the second term can cancel the antisymmetric part of $B_\alpha (u,v)$. We answer this question by showing that the map
\begin{align*}
    T_p^\ast M &\longrightarrow \Lambda^2T_p^\ast M \otimes \Lambda^3 T_p^\ast M \\
    \eta &\mapsto [(u,v)\mapsto \left(\nabla_u \alpha\right)_p \wedge \eta \wedge \left(\nabla_v \alpha\right)_p]
\end{align*}
is an isomorphism. It suffices to show injectivity. Suppose that 
$$\left(\nabla_u \alpha\right)\vert_p \wedge \eta \wedge \left(\nabla_v \alpha\right)_p =0$$ for all $u,v\in T_pM$. Since $u\mapsto\left(\nabla_u \alpha\right)_p$ is an isomorphism, this is equivalent to saying that $\theta_1\wedge \eta \wedge \theta_2=0$ for all $\theta_1,\theta_2 \in T_p^\ast M$, thus $\eta=0$.
\end{proof}
Inspecting the proof of Proposition \ref{prop:near-contact is conformal to intrinsically Beltrami}, we see that, upon fixing the value of $f$ at $p$, $df$ at $p$ is uniquely determined by the requirement that $B_{f\alpha}$ be symmetric.
\medskip

One implication of Theorem \ref{theorem: intrinsically rotationally Beltrami} follows directly from Lemma \ref{lem:transverse}; for the converse (and difficult) implication, we need to construct a metric making $\alpha$ a rotational Beltrami form. For this, we prove some preparatory lemmas in Section \ref{subsection: preparatory lemmas}, give the proof (assuming the key Lemma \ref{lem: polynomial metric construction}) of Theorem \ref{theorem: intrinsically rotationally Beltrami} in Section \ref{subsec: proof of theorem intrinsically rotational beltrami}, and prove the key Lemma \ref{lem: polynomial metric construction} in Section \ref{subsection: proof of polynomial metric lemma}.
After this, we work out the applications of Theorem \ref{theorem: intrinsically rotationally Beltrami} announced in the introduction.

\subsection{Preparatory Lemmas}\label{subsection: preparatory lemmas}
As we explained earlier, any near-contact form $\alpha$ admits Reeb-like vector fields. The following lemma records the extent to which the corresponding Reeb-like vector fields associated with polynomial approximations of $\alpha$ approximate those of the original near-contact form $\alpha$.

\begin{lemma}\label{lem: local Reeb vector field}
Let $\Omega=dx\wedge dy\wedge dz$ be the standard volume form on $\mathbb R^3$, and let $\alpha$ be a smooth $1$-form defined near $0\in\mathbb R^3$. Furthermore, define a vector field $X$ by setting 
\begin{align*}
    \iota_X \Omega=d\alpha.
\end{align*}
For each $k\geq 2$, define
$$
\alpha_{k}:=j^{k}_0\alpha
$$
to be the $k^{th}$ jet of $\alpha$ at the origin. Then there exists a sequence of polynomial vector fields $X_k$ of degree at most $k-1$, defined near $0$, such that
$$
\iota_{X_k}\Omega=d\alpha_{k}.
$$
Furthermore, we have
\begin{align*}
   \lVert X-X_k \rVert=\mathcal{O}(|x|^{k}), \qquad \text{and} \qquad \lVert X_{k+1}-X_k \rVert=\mathcal{O}(|x|^{k}),
\end{align*}
and, in particular, $j^{k-1}_0 X=X_k$.
\end{lemma}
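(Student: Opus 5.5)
The plan is to work entirely in coordinates, using the linear isomorphism between $2$-forms and vector fields given by contraction with $\Omega$. The map $V\mapsto \iota_V\Omega$ sends $V=a\partial_x+b\partial_y+c\partial_z$ to $a\,dy\wedge dz+b\,dz\wedge dx+c\,dx\wedge dy$. It is a pointwise linear isomorphism with constant coefficients, so it commutes with taking Taylor expansions at $0$ and preserves polynomial degree. Accordingly, I define $X_k$ as the unique vector field with $\iota_{X_k}\Omega=d\alpha_k$. Since $\alpha_k=j^k_0\alpha$ has polynomial coefficients of degree at most $k$, the form $d\alpha_k$ has polynomial coefficients of degree at most $k-1$, and so $X_k$ is a polynomial vector field of degree at most $k-1$.

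The key step is the identity $d(j^k_0\alpha)=j^{k-1}_0(d\alpha)$. Differentiating a coefficient lowers its homogeneous degree by exactly one. Hence the degree $\le k-1$ part of $d\alpha$ comes exactly from the degree $\le k$ part of $\alpha$, while the degree $\ge k+1$ part of $\alpha$ contributes only terms of degree $\ge k$. More precisely, write $\alpha=\alpha_k+r_k$ with $r_k$ having coefficients that are $\mathcal{O}(|x|^{k+1})$ together with first derivatives $\mathcal{O}(|x|^k)$; this follows from Taylor's theorem with remainder for smooth functions. Then $d\alpha-d\alpha_k=dr_k$ has coefficients $\mathcal{O}(|x|^k)$. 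Applying the inverse of $\iota_{\cdot}\Omega$, which is a constant matrix and therefore bounded, gives $\lVert X-X_k\rVert=\mathcal{O}(|x|^k)$.

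The remaining claims follow formally. Because $X_k$ is polynomial of degree $\le k-1$ and $X-X_k=\mathcal{O}(|x|^k)$, uniqueness of Taylor polynomials gives $j^{k-1}_0X=X_k$. Then
$$X_{k+1}-X_k=(X-X_k)-(X-X_{k+1}),$$
where the first term is $\mathcal{O}(|x|^k)$ and the second is $\mathcal{O}(|x|^{k+1})$, so the difference is $\mathcal{O}(|x|^k)$. One can also see this directly: $X_{k+1}-X_k$ is the homogeneous degree $k$ part of $X$.

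There is no real obstacle here. The only point needing care is the remainder estimate for the derivative of $r_k$: one has to note that the Taylor remainder of a smooth function, differentiated once, is still $\mathcal{O}(|x|^k)$ on a neighborhood of $0$. This holds because $\partial_j r_k$ is itself the Taylor remainder of order $k-1$ of the smooth function $\partial_j\alpha_i$.
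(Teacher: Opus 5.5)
Your proof is correct and essentially matches the paper's: both define $X_k$ through the constant-coefficient isomorphism $V\mapsto\iota_V\Omega$ and rely on the identity $d(j^k_0\alpha)=j^{k-1}_0(d\alpha)$. The only difference is the order of the steps. The paper first obtains $X_k=j^{k-1}_0X$ from uniqueness of the solution of $\iota_{X_k}\Omega=d\alpha_k$ and then applies Taylor's theorem to get the estimate, whereas you prove the remainder estimate first and deduce the jet identity from uniqueness of Taylor polynomials.
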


\begin{proof}
There exists a unique smooth vector field $X$ such that
$$
\iota_X\Omega=d\alpha.
$$
Similarly, since contraction with $\Omega$ defines an isomorphism between the space of polynomial vector fields of degree at most $k$ and the space of $2$-forms of degree at most $k$, we may conclude that there is a unique polynomial vector field $X_k$ of degree at most $k-1$ satisfying
$$
\iota_{X_k}\Omega=d\alpha_{k}.
$$
Using that exterior differentiation commutes with taking jets, we have
$$
d\alpha_{k}=j_0^{k-1}(d\alpha)=j_0^{k-1}(\iota_X\Omega)=\iota_{j_0^{k-1}X}\Omega.
$$
Thus uniqueness of $X_k$ implies $X_k=j_0^{k-1} X$ and Taylor's theorem implies
$$
\lVert X-X_k \rVert = \mathcal{O}(|x|^{k}),
$$
and since $X_{k+1}=j_0^{k}X$, the second estimate follows.
\end{proof}

The following lemma is the key technical step in the proof of Theorem \ref{theorem: intrinsically rotationally Beltrami}. We relegate its proof to Section \ref{subsection: proof of polynomial metric lemma} but state it here so we can proceed with the proof of Theorem \ref{theorem: intrinsically rotationally Beltrami}.

\begin{lemma}\label{lem: polynomial metric construction}
    Let $\alpha_k$ be a polynomial $1$-form of degree $k$ defined in a neighborhood $U$ of $0$ in $\mathbb R^3$ with $\alpha_k(0)=0$, {with $k\geq2$, which has a positive near-contact singularity at $0$ at which the map \eqref{eq:symmetric_wedge_intro} is symmetric}. Then there exists a polynomial symmetric tensor $g_k$ of degree {$k-1$}, defined {on $U$ and positive definite on a possibly smaller neighborhood $V_k\subset U$}, so that
    {\begin{align*}
    g_k(X_k,-)=\alpha_k+\mathcal{O}(|x|^{k+1}),
\end{align*}
where $X_k$ is the vector field given by Lemma \ref{lem: local Reeb vector field}, and consequently}
\begin{align*}
    \star_{g_k}d\alpha_k=f_k \alpha_k + \mathcal{O}(|x|^{k+1}).
\end{align*}    
 Here the proportionality factor is given by $f_k=|g_k|^{-\frac12}$. In particular, the functions $f_k$ are positive and smooth in {$V_k$}. {Moreover, the polynomial tensor $g_k$ constructed in Section \ref{subsection: proof of polynomial metric lemma} is ``canonical", in the sense that its homogeneous part of degree $j$ depends only on $j_0^{j+2}\alpha_k$ for $j\leq k-2$. In particular, if $j_0^{s}\alpha_k=\alpha_s$ with $k\geq s$, then $j_0^{j}g_k=j_0^jg_s$ for all $j\leq s-2$.}
\end{lemma}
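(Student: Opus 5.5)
We will look for $g_k$ as a sum of homogeneous symmetric tensors $g_k=\sum_{j=0}^{k-1} g^j$, with $g^j$ homogeneous of degree $j$ in $x$, and solve the equation $g_k(X_k,\cdot)=\alpha_k+\mathcal{O}(|x|^{k+1})$ degree by degree. Write $X_k=\sum_{m=1}^{k-1}X^m$ and $\alpha_k=\sum_{m=1}^{k}a^m$ in homogeneous parts; note that $X^1$ is the nondegenerate linear field $x\mapsto Dd\alpha|_0\,x$ and $a^1$ is the exact linear form $x\mapsto D\alpha|_0\,x$. Collecting terms of degree $m=j+1$ gives the triangular system
\begin{align*}
g^j(X^1,\cdot)=a^{j+1}-\sum_{i=0}^{j-1} g^i(X^{j+1-i},\cdot),\qquad j=0,\dots,k-1,
\end{align*}
whose right-hand side depends only on previously determined $g^i$ and on $j_0^{j+1}\alpha_k$ together with $j_0^{j}X_k$, hence on $j_0^{j+2}\alpha_k$ up to the shift in indexing. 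This triangular structure is what makes the construction canonical and gives the claimed compatibility $j_0^jg_k=j_0^jg_s$ once each step is solved by a fixed linear choice, for instance the minimal-norm solution with respect to a fixed inner product.

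The first step, $j=0$, is the crucial one and is where the symmetry hypothesis enters. We need a constant symmetric positive definite matrix $G$ with $G\,Dd\alpha|_0=D\alpha|_0$, that is $G=D\alpha|_0\,(Dd\alpha|_0)^{-1}$. Writing $A=D\alpha|_0$ (symmetric) and $P=A^tDd\alpha|_0$, Remark \ref{rem:matrix_representation} shows $P$ is positive definite, and the symmetry of the map \eqref{eq:symmetric_wedge_intro} says $P$ is symmetric. Then $G=A\,P^{-1}A^t$ is congruent to $P^{-1}$, hence symmetric positive definite. This yields $g^0$, and therefore positivity of $g_k$ on a small neighborhood $V_k$.

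For $j\ge1$, the key point is solvability of the homogeneous linear equation $h(X^1,\cdot)=\beta$, where $h$ is an unknown symmetric tensor with coefficients homogeneous of degree $j$ and $\beta$ a given $1$-form homogeneous of degree $j+1$. I expect this step to be the main obstacle. My approach would be to show that the map $h\mapsto h(X^1,\cdot)$ is surjective onto homogeneous $1$-forms of degree $j+1$. Since $X^1$ is linear and invertible, one can write $\beta=\sum_i\beta_i\,dx_i$ and use the nondegeneracy to choose $h=\beta\odot\theta+\dots$; concretely, the tensor $h=\frac{1}{q}\big(\beta\otimes\theta+\theta\otimes\beta-\beta(X^1)\theta\otimes\theta/q\big)$ with $\theta=g^0(X^1,\cdot)$ and $q=\theta(X^1)$ is not polynomial, so instead we decompose $\beta$ into $\theta$-parallel and transverse parts polynomially. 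The cleanest route is: $\theta=a^1$ is the differential of the quadratic form $Q=\frac12x^tAx$, and $\theta(X^1)=x^tPx$. Take $h=\gamma\odot\theta'$ where $\theta'$ is a linear $1$-form, and reduce to the polynomial division problem of writing any homogeneous $\beta$ of degree $j+1$ as $\sum_{l} c_l(x)\,(\text{row }l\text{ of }X^1)$. Because the components of $X^1$ are three independent linear forms generating the maximal ideal, such division always holds, and the symmetrization defect can be absorbed using $\theta$ itself. I would verify this by a dimension count plus the explicit construction $h_{il}=$ (coefficients from dividing by the linear coordinates $y=X^1(x)$).

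Finally, with $g_k(X_k,\cdot)=\alpha_k+\mathcal{O}(|x|^{k+1})$ and $\iota_{X_k}\Omega=d\alpha_k$, the identity $\star_{g}(\iota_Y\mu_g)=g(Y,\cdot)$ used in Corollary \ref{cor: sharpness of local normal forms}, with $\mu_g=|g_k|^{1/2}\Omega$, gives $\star_{g_k}d\alpha_k=|g_k|^{-1/2}g_k(X_k,\cdot)=f_k\alpha_k+\mathcal{O}(|x|^{k+1})$ on $V_k$, where $f_k=|g_k|^{-1/2}$ is smooth and positive.
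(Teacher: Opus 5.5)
Your triangular reduction, your construction of $g^0$ (your $G=AP^{-1}A^t$ is exactly the paper's $(L^{-1})^*B$), and your final Hodge-star computation are all correct. The gap is in the step $j\ge 1$, which is the real content of the lemma. You never prove that $h\mapsto h(X^1,\cdot)$ maps symmetric $3\times 3$ matrices with entries in $\mathcal P_j$ onto $\mathcal P_{j+1}^3$.

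The dimension count $6d(j)\ge 3d(j+1)$ is only necessary, not sufficient, for surjectivity. Componentwise division of $\beta$ by the linear coordinates $y=X^1(x)$ only gives a \emph{non-symmetric} matrix $c$ with $c\,y=\beta$, and making it symmetric is exactly the hard part. Your claim that the ``symmetrization defect can be absorbed using $\theta$ itself'' is unsupported. The only $\theta$-based correction you write down is the rational one with denominator $q=x^tPx$, which you rightly discarded, and you give no polynomial substitute. Without surjectivity, the minimal-norm choice you propose for canonicity is not even guaranteed to exist. (A minor point: the right-hand side at step $j$ involves $X^{j+1}$, i.e.\ $j_0^{j+1}X_k$, not $j_0^jX_k$; that is why $g^j$ depends on $j_0^{j+2}\alpha_k$.)

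The missing idea is an explicit polynomial symmetric right inverse, which is how the paper closes this step. First substitute $y=Lx$ with $L=Dd\alpha|_0$, and set $\tilde h(y)=h(L^{-1}y)$, $\tilde\beta(y)=\beta(L^{-1}y)$. This is a substitution in the argument, not a tensor pullback, so symmetry and homogeneity are preserved, and the problem becomes $\tilde h(y)\,y=\tilde\beta(y)$. Then use Euler's identity. With $q=\langle\tilde\beta(y),y\rangle$ one has $D\tilde\beta\,y=(j+1)\tilde\beta$, $D\tilde\beta^T y=\nabla q-\tilde\beta$ and $D^2q\,y=(j+1)\nabla q$. Hence
\[
\tilde h=\frac{1}{j}\bigl(D\tilde\beta+D\tilde\beta^T\bigr)-\frac{1}{j(j+1)}D^2q
\]
is a symmetric solution for every $j\ge1$. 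In other words, your natural division $c=\frac{1}{j+1}D\tilde\beta$ is the right starting point. Its transpose defect $\frac{1}{j+1}(\nabla q-\tilde\beta)$ is a gradient plus a multiple of $\tilde\beta$, and the gradient is absorbed by the Hessian of $q$, not by $\theta$. Since this formula is linear in $\tilde\beta$ and depends only on $L$, it also gives the canonicity statement directly.
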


\subsection{Proof of Theorem \ref{theorem: intrinsically rotationally Beltrami}}\label{subsec: proof of theorem intrinsically rotational beltrami}
We already explained in Remark \ref{rem:rescaling} that the first two items are equivalent. 

{To see that the first item implies the third one, if $\alpha$ is a rotational Beltrami form, by Lemma \ref{lem:transverse} it is a near-contact form whose map \eqref{eq:symmetric_wedge_intro} is symmetric.}

\medskip

{For the converse implication, let $\alpha$ be a $1$-form satisfying Item 3, and we want to show that it is a rotational Beltrami form for some metric. Fix a positive volume form $\mu$ on $M$, let $X$ be defined by $\iota_X\mu=d\alpha$, and work in a chart around a zero of $\alpha$, identified with $0$, in which $\mu=\Omega$. Set $\alpha_k=j^k_0\alpha$. For $k\geq 2$, $\alpha_k$ has a positive near-contact singularity at $0$ with symmetric $B_{\alpha_k}=B_\alpha$, since $B_\alpha$ only depends on $j^2_0\alpha$. Applying Lemma \ref{lem: polynomial metric construction} to each $\alpha_k$, we find polynomial symmetric $(0,2)$-tensors $g_k$ of degree $k-1$ such that $g_k(X_k,\cdot)=\alpha_k+\mathcal{O}(|x|^{k+1})$, and $j_0^{j}g_k=j_0^jg_s$ for each $j\leq s-2$ and $k\geq s$.

By the compatibility just stated, for each $s$ the degree-$s$ part of $g_k$ is independent of $k$ provided $k\geq s+2$, and we denote it by $\gamma^s$. By Borel's lemma, there is a smooth symmetric tensor $g_\infty$ with $j_0^{N}g_\infty=\sum_{s\leq N}\gamma^s$ for every $N$. Given any $k$, we know that $g_\infty-g_k=\mathcal{O}(|x|^{k-1})$ and
$X-X_k=\mathcal{O}(|x|^{k})$, so, since $X$ vanishes at the origin, we have
\begin{align*}
    g_\infty(X,\cdot)-\alpha
    &=(g_\infty-g_k)(X,\cdot)+g_k(X-X_k,\cdot)+\big(g_k(X_k,\cdot)-\alpha_k\big)
    +(\alpha_k-\alpha)\\
    &=\mathcal{O}(|x|^{k})
\end{align*}
for any $k$. It follows that $\alpha_{\infty}:=g_\infty(X,\cdot)$ and $\alpha$ have the same
Taylor expansion at the origin. Since $g_\infty(0)=\gamma^0$ is positive definite, $g_\infty$ is a metric near the origin. 

Consider now the $1$-form
$$\eta:= \alpha-\alpha_\infty,$$
which necessarily vanishes to infinite order at zero. Define the symmetric tensor
$$h:=\frac{1}{\alpha_{\infty}(X)}\left( \eta\otimes \alpha_{\infty} + \alpha_{\infty} \otimes \eta\right)-\frac{\eta(X)}{\alpha_{\infty}(X)^2} \alpha_{\infty} \otimes \alpha_{\infty}. $$
It satisfies
$$h(X,\cdot)=\frac{1}{\alpha_{\infty}(X)}\left(\eta(X)\alpha_{\infty}+\alpha_{\infty}(X)\eta\right)-\frac{\eta(X)}{\alpha_{\infty}(X)}\alpha_{\infty}=\eta.$$
If $h$ extends smoothly as zero along the origin, then $g:=g_\infty+h$ is a smooth metric in a neighborhood of zero and $g(X,\cdot)=\alpha$.
\begin{claim}
    The tensor $h$ extends smoothly as zero along the origin.
\end{claim}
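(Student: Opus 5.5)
The plan is to show that every coefficient of $h$ is a smooth function on a punctured neighborhood of $0$ whose partial derivatives of every order are $\mathcal{O}(|x|^N)$ for every $N$. Such a function extends by zero to a $C^\infty$ function on the full neighborhood, flat at the origin. The structure of $h$ is a sum of terms of the form (flat quantity)$\times$(smooth quantity)$\times\alpha_\infty(X)^{-m}$ with $m\in\{1,2\}$. So it suffices to show two things. First, $\eta$ and $\eta(X)$ are flat. Second, $1/\alpha_\infty(X)$ has at most polynomial blow-up in every derivative.

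\textbf{Step 1: the flat factors.} The form $\eta=\alpha-\alpha_\infty$ is smooth near $0$ and has vanishing Taylor series there, since $\alpha_\infty$ and $\alpha$ have the same jets. Hence every derivative of $\eta$ is $\mathcal{O}(|x|^N)$ for all $N$. The same then holds for $\eta(X)$ and for $\eta\otimes\alpha_\infty$, because $X$ and $\alpha_\infty$ are smooth.

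\textbf{Step 2: lower bound on the denominator.} We have $\alpha_\infty(X)=\alpha(X)-\eta(X)$. Lemma \ref{lem:Taylor_contact_condition} gives $\alpha\wedge d\alpha=\alpha(X)\Omega$, so $\alpha(X)=B_\alpha(x,x)+\mathcal{O}(|x|^3)\ge c|x|^2$ near $0$, using positive definiteness of $B_\alpha$. Since $\eta(X)$ is flat, this yields $\alpha_\infty(X)\ge \tfrac{c}{2}|x|^2$ on a small punctured ball. In particular $h$ is smooth there.

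\textbf{Step 3: derivative bounds on the inverse.} Write $u=\alpha_\infty(X)$, which is smooth on the full neighborhood. By Faà di Bruno (or induction on $\partial(u^{-1})=-u^{-2}\partial u$), any derivative $\partial^\beta(u^{-m})$ is a finite sum of terms $u^{-m-j}\prod_i\partial^{\beta_i}u$ with $j\le|\beta|$. The factors $\partial^{\beta_i}u$ are bounded, so Step 2 gives $|\partial^\beta(u^{-m})|\le C_\beta|x|^{-2(m+|\beta|)}$.

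\textbf{Step 4: conclusion.} Apply the Leibniz rule to each coefficient of $h$, such as $u^{-1}\eta_i(\alpha_\infty)_j$ and $u^{-2}\eta(X)(\alpha_\infty)_i(\alpha_\infty)_j$. Each term pairs a derivative of a flat factor, which is $\mathcal{O}(|x|^{N})$ for arbitrary $N$, with a derivative of $u^{-m}$, which is $\mathcal{O}(|x|^{-2m-2|\beta|})$, and with bounded smooth factors. Hence every derivative of $h$ is $\mathcal{O}(|x|^N)$ for every $N$.

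A standard lemma then finishes the argument: a function that is smooth on $B\setminus\{0\}$ and whose derivatives all tend to $0$ at the origin extends to a $C^\infty$ function with all derivatives zero at $0$. This can be proved by induction on the order, using the mean value theorem along rays to see that each derivative exists at $0$ and equals zero. Consequently $h$ extends smoothly by zero, and $g=g_\infty+h$ is smooth and positive definite near $0$, since $h(0)=0$ and $g_\infty(0)$ is positive definite.

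The main obstacle is Step 2. We need the quadratic lower bound for $\alpha_\infty(X)$ rather than just for $\alpha(X)$, and this is exactly where the near-contact positivity and the flatness of $\eta$ combine. Steps 3 and 4 are routine bookkeeping once this bound is in place.
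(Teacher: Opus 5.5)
Your argument is correct and follows the paper's proof. The ingredients are the same: flat factors built from $\eta$ and $\eta(X)$, Fa\`a di Bruno bounds showing that derivatives of $\alpha_\infty(X)^{-m}$ blow up at most polynomially, the Leibniz rule, and the extension-by-zero lemma. The only difference is Step 2. The paper gets $c|x|^2\le\alpha_\infty(X)$ at once from $\alpha_\infty(X)=g_\infty(X,X)=|X|^2_{g_\infty}$, since $g_\infty$ is positive definite near $0$ and $X$ has a nondegenerate zero. Your route through $\alpha(X)=B_\alpha(x,x)+\mathcal{O}(|x|^3)$ and the flatness of $\eta(X)$ is also valid, so that step is not really the main obstacle you describe.
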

\begin{proof}
Recall that $\alpha_{\infty}(X)=g_\infty(X,X)=|X|^2_{g_\infty}$, so that, $X$ having a
nondegenerate zero at the origin, we have $c|x|^2\le \alpha_{\infty}(X)\le C|x|^2$ near the origin for some $0<c\le C$. If we write $h$ coordinate-wise, each coordinate is a sum of functions of the following form: The product of a flat function $F$ (a product of smooth functions with either a component of $\eta$ or with $\eta(X)$) and a function $G=u/v$ with $u$ smooth and $v$ equal to $\alpha_{\infty}(X)$ or $\alpha_{\infty}(X)^2$. The Faà di Bruno formula together with the above bounds gives $\partial^\ell G=\mathcal{O}(|x|^{-4-|\ell|})$, so, $F$ being flat, the Leibniz rule yields $\partial^\ell(FG)=\mathcal{O}(|x|^N)$ for every $\ell$ and every $N$. A function smooth away from the origin all of whose derivatives tend to zero at the origin extends smoothly by zero, flatly, and hence so does $h$.
\end{proof}
This concludes the construction of $g$ near the singular set of $\alpha$. Away from the zeros, $\alpha(X)>0$, so metrics with $g(X,\cdot)=\alpha$ exist locally (e.g.\ as in \cite{Etnyre2000I}). Since this condition is affine and positivity is convex, a partition of unity extends $g$ to all of $M$ with $g(X,\cdot)=\alpha$. Then $\star_g\alpha=\iota_X\mathrm{vol}_g=J\,\iota_X\mu=J\,d\alpha$ with $J=\mathrm{vol}_g/\mu>0$, so $\star_gd\alpha=\frac{1}{J}\alpha$ and $\alpha$ is metric-dual to a rotational Beltrami field with smooth positive proportionality factor $f=\frac{1}{J}$.}  

\subsection{Proof of Lemma \ref{lem: polynomial metric construction}}\label{subsection: proof of polynomial metric lemma}
The structure of the proof is the following. Given $\alpha_k$ the idea is to find a metric $g_k$ such that $g_k(X_k,-)=\alpha_k$. If this were achieved we would get that $\star_{g_k} d\alpha_k =f_k \,\alpha_k$ for some smooth $f_k$. We will see that in general we can only expect this to hold up to order $(k+1)$. As a first step, we note that $X_k$ is of degree $(k-1)$ and we will decompose $\alpha_k$, $X_k$, and $g_k$ into homogeneous polynomial parts as follows.
$$
\begin{cases}
    \alpha_k&=\alpha^1_k + \dots + \alpha^k_k\\
    X_k&=X^1_k+\dots+X^{k-1}_k\\
    g_k &=g^0_k+g^1_k+\dots+g^{k-1}_k
\end{cases}
$$
Importantly, $g_k$ actually has a zeroth-order part, and moreover $g_k^0$ will have to be positive definite, a necessary and sufficient condition for $g_k$ to be positive definite in a small neighborhood of $0$.
In order to find $g_k$ we thus have to solve a system of linear equations which schematically looks as follows.
\begin{align*}
    g_k^0(X_k^1,-)&=\alpha_k^1\\
    g_k^0(X_k^2,-)+g_k^1(X_k^1,-)&=\alpha_k^2\\
    g_k^0 (X_k^3,-)+g_k^1(X_k^2,-)+g_k^2(X_k^1,-)&=\alpha_k^3\\
    &\vdots\\
    g_k^1 (X_k^{k-1},-)+g_k^2(X_k^{k-2},-)+\dots + g_k^{k-1}(X_k^1,-)&=\alpha^k_k\\
    g_k^2 (X_k^{k-1},-)+g_k^3(X_k^{k-2},-)+\dots +g_k^{k-1}(X_k^2,-)&= 0\\
    &\vdots\\
    g^{k-1}_k(X_k^{k-1},-)&=0
\end{align*}
There are several observations to be made about this system of equations. We will see that solving the first equation is no issue, and in fact there is a unique solution for the coefficients of $g_k^0$. Moreover, the positive definiteness and symmetry of $g_k^0$ will be guaranteed by the properties of the map $(v,w)\mapsto \nabla_v\alpha_k\wedge\nabla_w d\alpha_k$ (namely, its positive definiteness and symmetry). However, from the second equation onward, it seems that the structure of the system of equations implies that conditions on higher jets of $X_k$ might be needed. This is not so. Indeed, we can rewrite the first $k$ equations of this system in the following way
\begin{equation}\label{equ: system for gks}
\begin{cases}
       g_k^0(X_k^1,-)&=\alpha_k^1\\
    g_k^1(X_k^1,-)&=\alpha_k^2-g_k^0(X_k^2,-)\\
    g_k^2(X_k^1,-)&=\alpha_k^3-g_k^0 (X_k^3,-)-g_k^1(X_k^2,-)\\
    &\vdots\\
    g_k^{k-1} (X_k^1,-)&=\alpha_k^k-g_k^1(X_k^{k-1},-)-\dots-g_k^{k-2} (X_k^2,-) 
\end{cases}
\end{equation}
That is, the left-hand side of the $s^{th}$ order equation now only contains $X_k^1$ and $g_k^{s-1}$, while the right-hand side comprises $\alpha_k^s$, higher jets of $X_k$, and orders of the metric we have solved for in previous equations. If we can now show that the linear system for the coefficients of $g_k^{s-1}$, which is determined by $X_k^1$, has full rank, we are done. For $g_k^1$ there is still a unique solution to this problem, but the number of unknowns (coefficients of the metric) quickly outgrows the number of equations. As this is a system of equations which grows quadratically in $k$, the task may nevertheless seem daunting. It turns out, however, that there is an explicit formula for a particular solution at every order. 

The last remaining issue is, of course, that after having solved for all the $\{g_k^s\}_{s=0}^{k-1}$, the last $(k-2)$ equations will not in general be satisfied automatically. As a consequence, the metric $g_k$ will only satisfy
\begin{align*}
    g_k(X_k,-)=\alpha_k + \mathcal{O}(|x|^{k+1}).
\end{align*}
This condition already implies that $\star_{g_k} d \alpha_k=f_k\,\alpha_k + \mathcal{O}(|x|^{k+1})$. Indeed, write the Riemannian volume form of $g_k$ as $\vol_{g_k}=J_k\Omega$, where $J_k=|g_k|^{\frac12}$. Using the identity
$$
\star_{g_k}\alpha_k = 
\iota_{X_k}\vol_{g_k}+\mathcal{O}(|x|^{k+1})=J_k\,\iota_{X_k}\Omega + \mathcal{O}(|x|^{k+1})=J_k d\alpha_k + \mathcal{O}(|x|^{k+1}),
$$
and applying $\star_{g_k}$ once more we obtain 
$$
\star_{g_k}d\alpha_k = J_k^{-1}\alpha_k + \mathcal{O}(|x|^{k+1}).
$$
Consequently, the proportionality factor is
$$
f_k=J_k^{-1}=|g_k|^{-\frac12}.
$$
The constant term $g^0=g_k(0)$ is independent of $k$ and is positive definite. Hence
$$
|g_k|(0)=|g^0|>0.
$$
After possibly shrinking the neighborhood of the origin, $|g_k|$ remains positive, and therefore $f_k$ is a smooth positive function on this neighborhood. In particular, it extends smoothly across the origin, with
$f_k(0)=|g_k|^{-\frac12}(0)>0$, and $f_k$ has order zero at the origin.
The last assertion of the lemma follows from the construction in Sections \ref{ss:g^0} and \ref{ss:g^s}: the degree-$j$ part of $g_k$ depends only on $j_0^{j+2}\alpha_k$ for $j\le k-2$ (see Remark \ref{rem:g_s_and_g_k}).
\medskip

We proceed now to describe how to solve the system of equations \eqref{equ: system for gks}. Since the solution for the $1^{st}$ order equation is qualitatively different from the rest and, in fact, is the only step that really uses the algebraic properties of the map $(v,w)\mapsto \nabla_v \alpha_k \wedge \nabla_w d\alpha_k$ we will separate the proof into two parts: the first solving for $g_k^0$ and the second for $g_k^s$ with $s\geq 1$.

\subsubsection{Solving for $g_k^0$}\label{ss:g^0}

We construct the metric $g_k^0$ from the natural inner product we obtain on $T_0 \mathbb R^3$ from the map $(v,w)\mapsto \nabla_v \alpha_k\wedge \nabla_w d\alpha_k$. The symmetry and positive definiteness of this map precisely guarantee that we get an inner product. The construction works as follows.

Denote by $V=T_0\mathbb R^3$ and define the automorphism $L:V\to V$ by virtue of $\iota_{L(v)}\Omega=(\nabla_v d\alpha_k)_0$. In the metric-free setting we work in right now, this is the best way of formulating the derivative of $d\alpha_k$; we interpret $L(v)$ as the vectorial version of $(\nabla_v d\alpha_k)_0$. Indeed, $X_k^1 (x)=L(x)$ since $X_k$ is determined by the equation
\begin{align*}
    \iota_{X_k}\Omega=d\alpha_k.
\end{align*}
Now define the infinitesimal pairing
\begin{align*}
    B(v,w)\,\Omega&=(\nabla_v\alpha_k)_0\wedge (\nabla_w d\alpha_k)_0\\
    &=(\nabla_v\alpha_k)_0\wedge \iota_{L(w)}\Omega\\
    &=(\nabla_v\alpha_k)_0 (L(w))\,\Omega,
\end{align*}
from which we get that $B(v,w)=(\nabla_v\alpha_k)_0 (L(w))$. This is the most natural generalization of a pairing between the linearizations of $\alpha_k$ and $X_k$. The map $B$ is symmetric and positive definite by assumption.
We now define
\begin{align*}
    g_k^0:=(L^{-1})^*B.
\end{align*}
This satisfies our condition because
\begin{align*}
    g_k^0(L(v),w)&=(L^{-1})^{\ast}B(L(v),w)\\
    &=B(v,L^{-1}w)\\
    &=\left(\nabla_v\alpha_k\right)_0(w),
\end{align*}
which implies
\begin{align*}
    g_k^0(X_k^1,-)=\alpha_k^1.
\end{align*}

\subsubsection{Solving for $g_k^s$ when $s\geq 1$}\label{ss:g^s}
In order to solve 
\begin{align*}
    g_k^s(X_k^1,-)=\alpha_k^{s+1}-g_k^0(X_k^{s+1},-)-g_k^1(X_k^s,-)-\dots-g_k^{s-1}(X_k^2,-)
\end{align*}
we want to first convince ourselves that this is possible from a dimension-counting point of view. Note that at this point $g_k^0,\dots, g^{s-1}_k$ have already been solved for and that $X_k^i$ only depend on $\alpha_k$ up to order $(i+1)$. This system can be read as a linear map from the space of homogeneous polynomials of degree $s$ (times 6 since the metric has $6$ independent components) to the space of homogeneous polynomials of degree $(s+1)$ (times three since the $1$-form has $3$ components). We will denote the space of homogeneous polynomials of degree $s$ in $3$ variables by $\mathcal{P}_s$; its dimension is given by $d(s)=\binom{s+2}{2}$. 
The expected dimension of the space of solutions is thus 
\begin{align*}
    6\,d(s)-3\,d(s+1)=\frac{3}{2}(s+2)(s-1)
\end{align*}

The $s=0$ case was already dealt with in the previous section. For $s=1$ the dimensions agree exactly, and we thus get unique solvability. For all $s>1$ we see that this is an underdetermined linear system. We must now prove what this dimension count would naively suggest, namely that this system is solvable for all $s$. 

Since $X_k$ has a nondegenerate zero at the origin there exists $A\in \mathrm{GL}(3,\mathbb R)$ such that $X_k^1(x)=Ax$. We represent $g_k^s$ by a symmetric $3\times 3$ matrix $g(x)$ whose entries are homogeneous polynomials of degree $s$, and we name the right-hand side of this equation $\beta^{s+1}$ and represent it by a column vector $\beta(x)$ whose components are homogeneous polynomials of degree $(s+1)$. We then need to solve the equation
$$
g(x)Ax=\beta(x)
$$
for $g(x)$. Denote by $\sym_3(\mathcal P_s)$ the space of symmetric $3\times 3$ matrices with entries in $\mathcal P_s$. It is therefore enough to prove that the linear map
\begin{align*}
\mathcal L_{s,A}:\sym_3(\mathcal P_s)
&\longrightarrow \mathcal P_{s+1}^3,\\
g&\longmapsto g(x)Ax,
\end{align*}
is surjective. We reduce the problem to the case of $X_k^1$ being the radial vector field. Set $y=Ax$ and define $\tilde\beta(y):=\beta(A^{-1}y)$. Suppose that there exists a symmetric matrix $\tilde g(y)$, whose entries are homogeneous polynomials of degree $s$, such that $\widetilde g(y)y=\widetilde\beta(y)$.
Define
$$
g(x):=\tilde g(Ax).
$$
Then $g$ is symmetric and homogeneous of degree $s$, and
$$
g(x)Ax= \tilde g(Ax)Ax= \widetilde\beta(Ax)=
\beta(x).
$$
Consequently, the surjectivity of $\mathcal L_{s,A}$ follows from the surjectivity of $\mathcal L_{s,id}$. It therefore suffices to show that one can solve
$$
g(x)x=\beta(x),
$$
for an arbitrary homogeneous polynomial vector field $\beta$ of degree $s+1$.

Quite surprisingly, one can construct an explicit solution to this system. For this we define $q(x)=x^T\beta(x)=\langle \beta(x),x\rangle$ and consider the Ansatz
\begin{align*}
    g=a\left(D\beta+D\beta^T\right)+bD^2q.
\end{align*}
This is just a linear combination of the two most natural symmetric matrices of degree $s$ which can be built out of $\beta$. Both the first term and the second term satisfy a special identity by virtue of the homogeneity. We recall that homogeneity of degree $m$ means that
\begin{align*}
    p(tx)=t^m p(x).
\end{align*}
Upon differentiating in $t$ and evaluating at {$t=1$} we obtain
\begin{align*}
    \nabla p(x) \cdot x = mp(x).
\end{align*}
For vectors whose entries are homogeneous polynomials of degree $m$ we similarly get
\begin{align*}
    D\vec p(x)\cdot x=m\vec p(x).
\end{align*}
Now we differentiate $q(x)$ twice, obtaining
$$\nabla q(x)=\beta(x)+D\beta^T\cdot x, \qquad \text{and} \qquad D^2q(x)\cdot x=(s+1)\nabla q(x).$$
Thus we may reformulate the system as
\begin{align*}
    g(x)\cdot x&=a\left(D\beta(x)+D\beta^T(x)\right)\cdot x+bD^2q(x)\cdot x\\
    &=a\left( (s+1)\beta(x)+\nabla q(x)-\beta(x)\right)+b(s+1)\,\nabla q(x)\\
    &=as\beta(x)+(a+b(s+1))\,\nabla q(x).
\end{align*}
Consequently if we choose $a=\frac{1}{s}$ and $b=-\frac{1}{s(s+1)}$ we see that
\begin{align*}
    g(x)=\frac{1}{s}\left(D\beta(x)+D\beta^T(x)\right)-\frac{1}{s(s+1)}D^2q(x)
\end{align*}
solves the problem. As is evident from the dimension count, for $s=1$ this is the unique solution. (Here $s\geq1$, the case $s=0$ was treated in Section \ref{ss:g^0}.) 

\begin{remark}\label{rem:g_s_and_g_k}
    Notice that the solution $g_k^s$ of the $(s+1)$-st equation only depends on the $(s+2)$-jet of $\alpha_k$. Indeed, $\beta$ is defined in terms of previous iterations (i.e.\ $g_k^j$ with $j<s$), $\alpha_k^{s+1}$, and $X_k^j$ with $j\leq s+1$, and the latter are determined by $j_0^{s+2}\alpha_k$ by Lemma \ref{lem: local Reeb vector field}.
\end{remark}

\subsection{Applications of the intrinsic characterization}
In this last section, we give some applications of the characterization of intrinsic curl eigenforms, Theorem \ref{theorem: intrinsically rotationally Beltrami}.

\subsubsection{Relating Reeb-like vector fields to rotational Beltrami fields in the setting with transverse zeros}
In contact geometry, the so-called Reeb vector field associated to a contact form $\alpha$ is the unique vector field $R_\alpha$ satisfying
\begin{align*}
    \iota_{R_\alpha}d\alpha=0\quad\text{and}\quad \alpha(R_\alpha)=1.
\end{align*}
The class of vector fields which span the kernel of $d\alpha$, that is, the rescalings of $R_\alpha$ by positive functions, is referred to as \emph{Reeb-like}. Reeb-like vector fields can easily be seen to be equivalently characterized as those vector fields for which there exists a (positive) volume form $\mu$ on $M$ such that 
\begin{align}\label{eq: Reeb-like}
    d\alpha=\iota_X \mu.
\end{align}

In their interesting paper \cite{Etnyre2000I}, Etnyre and Ghrist related rotational Beltrami fields, that is, vector fields which satisfy $\nabla\times_g X=f\,X$ for some Riemannian metric $g$, to Reeb-like vector fields in the following way:

\begin{RefTheorem}[{\cite[Theorem 2.1]{Etnyre2000I}}] \label{thm: restate etnyre ghrist}
Let $M$ be a smooth oriented $3$-manifold. A nowhere-vanishing vector field is a rotational Beltrami field for some Riemannian metric if and only if it is a Reeb-like vector field for some contact form.
\end{RefTheorem}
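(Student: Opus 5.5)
The statement is the Etnyre--Ghrist correspondence for nowhere-vanishing fields. I will prove the two implications separately, working pointwise-linearly on $M$ since there are no zeros.

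\emph{Beltrami $\Rightarrow$ Reeb-like.} Suppose $\nabla\times_g X=fX$ with $f$ nowhere vanishing; after fixing orientation conventions as in Convention \ref{conv:positive} we take $f>0$. Set $\alpha=g(X,\cdot)$. The curl equation says exactly $d\alpha=\iota_{fX}\mu_g=\star_g(f\alpha)$, where $\mu_g$ is the Riemannian volume form. Two consequences follow:
\begin{itemize}
\item $\alpha\wedge d\alpha=f\,|X|_g^2\,\mu_g>0$, so $\alpha$ is a contact form;
\item $d\alpha=\iota_X(f\mu_g)$ with $f\mu_g$ a positive volume form, so by \eqref{eq: Reeb-like} $X$ is Reeb-like for $\alpha$.
\end{itemize}
This direction is routine and is essentially the computation already carried out in Lemma \ref{lem:transverse}.

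\emph{Reeb-like $\Rightarrow$ Beltrami.} Suppose $d\alpha=\iota_X\mu$ for a positive volume form $\mu$, with $\alpha$ contact. Then $\alpha(X)\,\mu=\alpha\wedge\iota_X\mu=\alpha\wedge d\alpha>0$, so $\alpha(X)>0$. The goal is a metric $g$ with $g(X,\cdot)=\alpha$. Once we have it, the last paragraph of the proof of Theorem \ref{theorem: intrinsically rotationally Beltrami} applies verbatim. Writing $\mathrm{vol}_g=J\mu$, we get
\[
\star_g\alpha=\iota_X\mathrm{vol}_g=J\,d\alpha,
\qquad\text{hence}\qquad
\star_g d\alpha=J^{-1}\alpha,
\]
that is, $\nabla\times_g X=J^{-1}X$.

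To construct $g$ locally, complete $X$ to a frame $(X,e_2,e_3)$ with $e_2,e_3$ spanning $\ker\alpha$. Declare the frame orthogonal, with $g(X,X)=\alpha(X)$ and $g(e_i,e_i)=1$. This gives $g(X,\cdot)=\alpha$ on that chart. Alternatively, globally, fix any metric $g_0$ and set
\[
g=\frac{\alpha\otimes\alpha}{\alpha(X)}+g_0|_{\ker\alpha},
\]
where the second term uses the splitting $TM=\langle X\rangle\oplus\ker\alpha$. This splitting is valid because $\alpha(X)\neq0$.

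The main point to check is positivity and gluing. The constraint $g(X,\cdot)=\alpha$ is affine in $g$, and positive definiteness is a convex condition. So local solutions can be patched with a partition of unity, and the result is still a metric satisfying the constraint. Given the explicit global formula, I expect no real obstacle: the only hypothesis needed is $\alpha(X)>0$, which holds everywhere precisely because $X$ never vanishes.
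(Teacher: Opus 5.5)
Your proof is correct and follows the argument the paper relies on. The paper cites Etnyre--Ghrist, and the same steps appear as the zero-free part of the proofs of Theorems \ref{theorem: intrinsically rotationally Beltrami} and \ref{thm: Beltrami-contact for near-contact forms}: $d\alpha=\iota_X(f\mu_g)$ in one direction, and a metric with $g(X,\cdot)=\alpha$ giving $\star_g d\alpha=J^{-1}\alpha$ in the other. Your explicit global formula for $g$ simply makes the partition of unity unnecessary. One small wording point: $\alpha(X)>0$ comes from $\alpha(X)\mu=\alpha\wedge d\alpha>0$, i.e.\ from the contact condition (as you correctly derived earlier), not merely from $X$ being nonvanishing.
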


As discussed in Section \ref{sec:contact_singularities}, in the case of a near-contact form $\alpha$, the notion of the Reeb vector field is problematic in that the normalization condition $\alpha(R_\alpha)=1$ forces the vector field to not be defined at the vanishing set of $\alpha$. One can, however, still generalize the notion of Reeb-like vector fields for such $\alpha$ by defining them to be precisely those vector fields for which there exists a volume form $\mu$ on $M$ so that Equation \eqref{eq: Reeb-like} holds. It is then clearly true that Reeb-like vector fields of $\alpha$ are transversely vanishing vector fields whose zero set coincides with that of $\alpha$ and $d\alpha$. Moreover, they span the kernel of $d\alpha$ in the complement of its vanishing set, and any two Reeb-like vector fields for the same near-contact form are related via a rescaling by a positive function. With this terminology in hand, we can formulate a generalization of the statement above, namely \cite[Theorem 2.1]{Etnyre2000I}, in the present setting of near-contact forms.

In the following formulation of the theorem, we will write ``$\alpha$ a near-contact form with symmetric $B_\alpha$'' instead of ``$\alpha$ a transversely vanishing intrinsic rotational Beltrami form'' to emphasize the non-trivial content of the theorem. That is, the real achievement lies in understanding precisely which $1$-forms are rotational Beltrami for some metric, since the rest of the proof is mostly a straightforward generalization of the above theorem by Etnyre and Ghrist.

\begin{theorem}[Beltrami-contact correspondence for near-contact forms]\label{thm: Beltrami-contact for near-contact forms}
Let $M$ be a (not necessarily closed) oriented $3$-manifold. A transversely vanishing vector field is a rotational Beltrami field for some Riemannian metric if and only if it is Reeb-like for some positive near-contact form $\alpha$ for which the map \eqref{eq:symmetric_wedge_intro} is symmetric for every $p\in \alpha^{-1}(0)$.
\end{theorem}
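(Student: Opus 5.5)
Both directions reduce to Theorem \ref{theorem: intrinsically rotationally Beltrami} and Lemma \ref{lem:transverse}, together with the elementary fact that, for a metric $g$ with volume form $\mathrm{vol}_g$ and $\alpha=g(X,\cdot)$, the identity $\star_g\alpha=\iota_X\mathrm{vol}_g$ holds.

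\emph{Forward direction.} Let $X$ be a transversely vanishing rotational Beltrami field for a metric $g$, so $\nabla\times_g X=fX$ with $f$ nowhere vanishing. Set $\alpha:=g(X,\cdot)$. This form vanishes transversely exactly where $X$ does, and it satisfies $\star_g d\alpha=f\alpha$. Under Convention \ref{conv:positive} we take $f>0$; if $f<0$ we reverse the orientation. Lemma \ref{lem:transverse} then gives that $\alpha$ is a positive near-contact form with symmetric $B_\alpha$, and that $X$ belongs to its Reeb class. Concretely, $d\alpha=f\star_g\alpha=\iota_{X}(f\,\mathrm{vol}_g)$, so $\mu=f\,\mathrm{vol}_g$ is a positive volume form satisfying \eqref{eq: Reeb-like}.

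\emph{Converse direction.} Suppose $X$ is Reeb-like for a positive near-contact form $\alpha$ with symmetric $B_\alpha$, so $d\alpha=\iota_X\mu$ for some positive volume form $\mu$. The task is to build a metric $g$ with $g(X,\cdot)=\alpha$. Once we have it, the computation closing the proof of Theorem \ref{theorem: intrinsically rotationally Beltrami} gives
\[
\star_g\alpha=\iota_X\mathrm{vol}_g=J\,d\alpha,\qquad J=\mathrm{vol}_g/\mu>0,
\]
and hence $\nabla\times_g X=J^{-1}X$. The key observation is that this metric is exactly what the proof of Theorem \ref{theorem: intrinsically rotationally Beltrami} constructs, namely a metric $g$ with $g(X,\cdot)=\alpha$ for the vector field $X$ defined by $\iota_X\mu=d\alpha$, where the volume form $\mu$ there was an arbitrary fixed positive one. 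We therefore rerun that argument with our given $\mu$.

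The construction has two parts. Near each zero, we choose a chart in which $\mu=\Omega$; this is possible by a local Moser argument, or simply by rescaling coordinates. In that chart the jet construction of Lemma \ref{lem: polynomial metric construction}, combined with Borel's lemma and the flat correction $h$, produces a smooth local metric with $g(X,\cdot)=\alpha$. Away from the zeros, $\alpha(X)>0$ because $\alpha\wedge d\alpha=\alpha(X)\mu>0$, so metrics with $g(X,\cdot)=\alpha$ exist locally; for instance, take $\alpha^2/\alpha(X)$ plus any metric on $\ker\alpha$. The condition $g(X,\cdot)=\alpha$ is affine and positivity is convex, so a partition of unity glues these local metrics into a global one. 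Closedness of $M$ was never used, so the argument holds on open manifolds as well.

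The main obstacle is the metric extension across the zeros. It has already been handled by Lemma \ref{lem: polynomial metric construction} and the flatness Claim, so the remaining check is routine: the chart in which $\mu=\Omega$ must not disturb the hypotheses. Symmetry of $B_\alpha$ is invariant under diffeomorphisms and depends only on $\alpha$, not on $\mu$, so the hypotheses survive the change of chart and the argument goes through.
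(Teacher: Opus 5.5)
Your proof is correct. The forward direction matches the paper: the paper redoes the computation $B_\alpha(u,v)=f(p)\,g(\nabla_uX,\nabla_vX)_p\,\mu_g$ directly instead of citing Lemma \ref{lem:transverse}, but it is the same computation.

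The converse takes a different route.

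\emph{The paper's route.} It uses Theorem \ref{theorem: intrinsically rotationally Beltrami} purely as a black box. It obtains some metric $g$ with $\star_g d\alpha=f\alpha$ and notes that the $g$-dual $X$ of $\alpha$ is Reeb-like, so the given field is $Y=hX$ with $h>0$. It then passes to the conformal metric $h^{-1}g$, for which $h^{-1}g(Y,\cdot)=\alpha$ and $\star_{h^{-1}g}d\alpha=h^{1/2}f\alpha$.

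\emph{Your route.} You reopen the proof of Theorem \ref{theorem: intrinsically rotationally Beltrami} and run it with your prescribed volume form $\mu$. This is legitimate, because that proof begins by fixing an arbitrary positive $\mu$ and only uses, near each zero, a chart in which $\mu=\Omega$.

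\emph{Comparison.} The paper's route is more modular. It needs only the statement of the theorem, the fact that Reeb-like fields of the same form differ by positive functions, and the scaling law $\star_{cg}=c^{-1/2}\star_g$ on $2$-forms. It would therefore survive any change in how the metric is constructed. Yours depends on the internals of that construction, but produces the metric with $g(Y,\cdot)=\alpha$ and the factor $\mu/\mathrm{vol}_g$ in one step.

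\emph{A small imprecision.} A linear rescaling of coordinates only normalizes $\mu$ at the zero itself. To get $\mu=\Omega$ on a whole neighborhood you need the Moser argument you also mention, or the substitution $x_1\mapsto\int_0^{x_1}\rho\,dt$ where $\mu=\rho\,\Omega$. Alternatively, you can build the metric $g'$ for the field $X'=\rho X$ defined by $\iota_{X'}\Omega=d\alpha$ and then use $\rho g'$. That is exactly the paper's conformal trick, applied locally.
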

\begin{proof}
The equation $\nabla\times_g X= f\, X$ translates to $dg(X,-)=f\,\iota_X\mu_g$. Thus, the forward direction of the theorem will be shown once we prove that $\alpha=g(X,-)$ is near-contact with symmetric $B_\alpha$. By assumption, $X$ is transverse to the zero section, which, jointly with the rotational Beltrami equation, implies that both $\alpha$ and $d\alpha$ vanish transversely and have the same zero set. For $p\in \alpha^{-1}(0)$ one has 
\begin{align*}
    \left(\nabla_u \alpha\right)_p=\left(\nabla_u X\right)^\flat_p \quad \text{and} \quad \left(\nabla_u d\alpha\right)_p = f(p)\,\iota_{\left(\nabla_u X\right)_p} \,\mu_g
\end{align*}
where the second equality follows from differentiating $d\alpha=f\,\iota_X\mu_g$ at $p$. Putting these terms together yields
\begin{align*}
    B_\alpha (u,v) &= \left(\nabla_u \alpha\right)_p \wedge \left(\nabla_v d\alpha\right)_p\\
    &= f(p)\, \left(\nabla_u X\right)^\flat_p \wedge \iota_{\left(\nabla_v X\right)_p} \,\mu_g \\
    &= f(p)\, g(\nabla_u X,\nabla_v X)_p\,\mu_g,
\end{align*}
which implies that $B_\alpha$ is symmetric in $u$ and $v$, and positive definite, as $f>0$, so $\alpha$ is a positive near-contact form. Moreover, $d\alpha=\iota_X(f\mu_g)$ shows $X$ is Reeb-like.

For the converse direction, let $Y$ be a Reeb-like vector field for a positive near-contact form $\alpha$ with symmetric $B_\alpha$. By Theorem \ref{theorem: intrinsically rotationally Beltrami} we know that there exists a metric $g$ so that $\star_g d\alpha = f\,\alpha$ for some positive function $f$. The dual vector field $X$ defined by $g(X,-)=\alpha$ then satisfies $\nabla\times_g X=f\,X$, and is Reeb-like for $\alpha$. In particular, we have $Y=hX$ for some positive function $h$. Then, the vector field $Y$ is dual to $\alpha$ via the metric $h^{-1}g$. We thus have
\begin{align*}
    \star_{h^{-1}g}d\alpha=h^{\frac12}\star_g d\alpha = h^{\frac12}f\alpha.
\end{align*}
So the $h^{-1}g$ metric dual of $\alpha$, which is $Y$, is a rotational Beltrami field with proportionality factor $h^\frac12 f$ for the metric $h^{-1}g$. Furthermore, the metric dual of a transversely vanishing $1$-form is clearly a transversely vanishing vector field, thus concluding the proof of the theorem.
\end{proof}

\subsubsection{Curl eigenforms with prescribed zeros}

Theorem \ref{theorem: intrinsically rotationally Beltrami} allows us to construct curl eigenforms with prescribed near-contact germs with symmetric map \eqref{eq:symmetric_wedge_intro}. Notice that, in particular, since the condition of being near-contact with the symmetric requirement is a condition on the $2$-jet of a form at the singularity, every higher jet can be prescribed to be arbitrary.

\begin{corollary}\label{cor:prescribed_singularities}
Let $M$ be a closed oriented $3$-manifold. Choose $2n$ points $p_1,...,p_{2n}$ in $M$, and endow $p_i$ with a germ of (positive) near-contact form of signature $-1$ for $i=1,...,n$ and of signature $1$ for $i=n+1,...,2n$ for which the map \eqref{eq:symmetric_wedge_intro} is symmetric at each singularity. Then there exists a smooth metric $g$ on $M$ such that the associated curl operator admits a transversely vanishing eigenform $\alpha$ with positive eigenvalue which vanishes exactly on $p_1,...,p_{2n}$ and extends the given germs.
\end{corollary}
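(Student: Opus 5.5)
The plan is to build a global positive near-contact form $\alpha$ on $M$ that vanishes exactly at $p_1,\dots,p_{2n}$, agrees with the prescribed germs near these points, and has symmetric map \eqref{eq:symmetric_wedge_intro} at each of them. Theorem \ref{theorem: intrinsically rotationally Beltrami} then provides a smooth metric for which $\alpha$ is a rotational Beltrami form. Remark \ref{rem:rescaling} rescales this metric so that $\alpha$ becomes a curl eigenform with eigenvalue $1$.

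The symmetry condition only involves the $2$-jet at the zeros, and the germs are given, so all the work is the global extension problem. Choose small disjoint balls $B_i$ around the $p_i$ on which the germs $\alpha_i$ are defined and on which each $\alpha_i$ is contact away from $p_i$ (this uses Lemma \ref{lem:Taylor_contact_condition}). Corollary \ref{cor:foliation_convex_spheres} gives a punctured neighborhood of each $p_i$ foliated by convex spheres with disconnected dividing set, so $\ker\alpha_i$ is overtwisted there. Fix a slightly smaller closed ball $K_i\subset B_i$ and let $\widehat M=M\setminus\bigcup_i\operatorname{int}(K_i)$.

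First I would extend the co-oriented plane field $\xi=\ker\alpha_i$ from a collar of $\partial\widehat M$ to a plane field on all of $\widehat M$. Co-oriented plane fields correspond to nonvanishing sections, or vector fields, after choosing a trivialization of $TM$ or simply a metric. The obstruction to extending a nonvanishing vector field from the boundary spheres over the compact manifold $\widehat M$ is the sum of the degrees on the boundary spheres, equivalently the sum of the Poincaré--Hopf indices of the zeros inside the balls. By Lemma \ref{lem:signature}, signature $-1$ corresponds to index $2$ and signature $1$ to index $1$. The prescribed zeros therefore have indices summing to $2n\cdot(+1)$ and $2n\cdot(-1)$ in equal numbers, i.e.\ total index $n(-1)+n(+1)=0=\chi(M)$; the index of a nondegenerate zero is the sign of the determinant, which is $\pm1$. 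Because $\widehat M$ is connected with nonempty boundary, vanishing total index is exactly the condition for extending (obstruction theory in the top dimension). Hence a plane field extension $\xi_0$ exists on $\widehat M$.

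Next, an overtwisted disk $\Delta\subset B_1\setminus K_1$ exists for the germ, by Giroux's criterion applied to one of the convex spheres. Eliashberg's relative $h$-principle \cite[Theorem 3.1.1]{El} homotopes $\xi_0$, relative to a neighborhood of $\partial\widehat M\cup\Delta$, to a contact structure $\xi$ on $\widehat M$. One must check that the contact structure comes out positive and co-oriented consistently with $\alpha_i$; since it agrees with $\ker\alpha_i$ near the boundary, this holds. Take a contact form $\beta$ for $\xi$, co-oriented compatibly. Near $\partial K_i$ we have $\beta=h\alpha_i$ with $h>0$. Extend $h$ to a positive function on $M$ and set $\alpha=\beta$ on $\widehat M$ and $\alpha=h\alpha_i$ on $K_i$.

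This $\alpha$ extends $h\alpha_i$ rather than $\alpha_i$ itself, so I would pick $h$ equal to $1$ near $p_i$ by interpolating in a collar. This is possible because on $B_i\setminus K_i$ we have the contact form $\alpha_i$, and positive rescalings of contact forms stay contact. Concretely, on $B_i$ set $\alpha=\tilde h\alpha_i$ with $\tilde h$ interpolating between $1$ near $p_i$ and $h$ near $\partial K_i$; this requires first shrinking $K_i$ so that $\beta=h\alpha_i$ holds on an annulus. The result is near-contact by Proposition \ref{prop: near contact forms preserved by conformal rescaling}, equals the germ near $p_i$, and hence has symmetric $B_\alpha$ there. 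Its zeros are exactly the $p_i$, and they are transverse. Applying Theorem \ref{theorem: intrinsically rotationally Beltrami} and Remark \ref{rem:rescaling} finishes the proof. I expect the main obstacle to be the homotopy-theoretic extension step; the index bookkeeping must match the orientation and degree conventions of Lemma \ref{lem:signature}.
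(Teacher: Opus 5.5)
Your proposal is correct and follows essentially the paper's route: extend the germs to a nonvanishing form using the vanishing total index, apply Eliashberg's relative $h$-principle on the complement of small balls around the $p_i$, glue the germs back in, and conclude with Theorem \ref{theorem: intrinsically rotationally Beltrami} and Remark \ref{rem:rescaling}. The differences are minor: the paper does the extension inside a single ball $B$ containing all the points, where the boundary data has degree zero, instead of running obstruction theory on all of $\widehat M$, and it leaves implicit the overtwisted disk and the conformal-factor adjustment that keeps the germ, and hence the symmetry of $B_\alpha$, exactly — both of which you spell out.
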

\begin{proof}
Let $\beta$ be any non-vanishing $1$-form in $M$. Choose a ball $B\subset M$ containing the points $\{p_i\}_{i=1,...,2n}$.  Take disjoint closed balls $B_i\subset B$ around $p_i$, and endow each $B_i$ with the given germ of near-contact form, which we denote by $\gamma_i$, which has exactly one transverse zero at $p_i$ of signature $-1$ or $1$ according to whether $i\leq n$ or $i>n$. We fix $\beta$ in a neighborhood of $\partial B$, and then the prescribed boundary data on $B\setminus \bigsqcup_{i=1}^{2n} B_i$, seen as a map to $S^2$, has degree zero. It follows that there is a smooth extension without additional zeros in $B\setminus \bigsqcup_{i=1}^{2n}B_i$. We denote by $\gamma$ this form, extended as $\beta$ away from $B$.
\medskip

By definition of a near-contact form, we know that $\gamma|_{B_i\setminus \{p_i\}}$ is a contact form. Let $U_i$ be a smaller closed ball around $p_i$ strictly contained in $B_i$, so that $\gamma$ is defined in a neighborhood of the boundary of $U_i$. Using the relative existence $h$-principle for contact structures \cite{El}, the form $\gamma|_{M\setminus \bigsqcup_{i=1}^{2n}{U_i}}$ is homotopic to a contact form relative to a neighborhood of the boundary of $\bigcup_i U_i$. We find a contact form $\alpha$ such that $\alpha$ near the boundary of $U_i$ coincides with $\gamma$. In particular, it extends as $\gamma$ inside the balls $U_i$; we keep calling $\alpha$ this global $1$-form in $M$. It is a form that vanishes transversely and is a near-contact form with symmetric $B_\alpha$ at each zero and extending the given germs along $p_i$. By Theorem \ref{theorem: intrinsically rotationally Beltrami} and Remark \ref{rem:rescaling}, the form $\alpha$ is a genuine curl eigenform for some Riemannian metric in $M$.
\end{proof}

\subsubsection{Upgrading the regularity of the metric in a curl eigenvalue problem}
A consequence of the ellipticity of the curl operator is that any curl eigenform inherits the regularity of the metric. In particular, any eigenform of the curl operator defined with respect to a smooth metric is itself smooth. What we will see now is that, assuming a $1$-form is transversely vanishing and satisfies the curl eigenvalue problem with respect to a continuous metric, its regularity is the only obstruction to it satisfying the curl eigenvalue problem with respect to a smooth metric.

\begin{corollary}\label{cor:upgrade_regularity}
Let $M$ be an oriented $3$-manifold, $g_0$ a continuous metric and $\alpha$ a transversely vanishing $1$-form satisfying $\star_{g_0}d\alpha=\lambda\alpha$ for $\lambda\neq 0$. Then there exists a smooth metric $g$ such that $\star_g d\alpha=\lambda\alpha$ if and only if $\alpha$ is smooth.
\end{corollary}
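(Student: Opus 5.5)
The forward direction is immediate from ellipticity: if $g$ is smooth and $\star_g d\alpha=\lambda\alpha$, then $\alpha$ is a curl eigenform of a smooth metric with nonzero eigenvalue. Elliptic regularity of curl on the complement of its kernel (equivalently, $\alpha$ is coclosed and satisfies $\Delta_g\alpha=\lambda^2\alpha$) then gives $\alpha$ smooth. So all the work is in the converse: assume $\alpha$ is smooth and transversely vanishing with $\star_{g_0}d\alpha=\lambda\alpha$ for a merely continuous $g_0$, and construct a smooth metric with the same eigenvalue.

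The plan is to verify Item 3 of Theorem \ref{theorem: intrinsically rotationally Beltrami} for $\alpha$ (after fixing the orientation sign) and then invoke that theorem together with Remark \ref{rem:rescaling}.

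\begin{enumerate}
\item By Convention \ref{conv:positive}, reverse the orientation if needed so that $\lambda>0$.
\item Apply Remark \ref{rem:transverse_C0}, which is exactly the $C^0$-metric version of Lemma \ref{lem:transverse}, with constant proportionality factor $f\equiv\lambda$. Away from zeros, $\alpha\wedge d\alpha=\lambda\,\alpha\wedge\star_{g_0}\alpha=\lambda|\alpha|^2_{g_0}\mathrm{vol}_{g_0}>0$, since only pointwise values of $g_0$ enter. The zero sets of $\alpha$ and $d\alpha$ coincide because $\star_{g_0}$ is pointwise invertible.
\item At a zero $p$, the expansion in Remark \ref{rem:transverse_C0} gives $\nabla d\alpha|_p=\lambda\star_{g_{0,p}}\nabla\alpha|_p$, where $\nabla d\alpha|_p$ is well defined by Lemma \ref{lem:covariant_alpha_and_diagonalization} because $\alpha$ is smooth. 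Hence $d\alpha$ vanishes transversely and $B_\alpha(u,v)=\lambda\, g_{0,p}(\nabla_u\alpha,\nabla_v\alpha)\mathrm{vol}_{g_{0,p}}$, which is symmetric and positive definite.
\end{enumerate}

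So $\alpha$ is a smooth positive near-contact form with symmetric map \eqref{eq:symmetric_wedge_intro} at every zero, and Theorem \ref{theorem: intrinsically rotationally Beltrami} produces a smooth metric $\tilde g$ with $\star_{\tilde g}d\alpha=\tilde f\alpha$, $\tilde f>0$ smooth. Remark \ref{rem:rescaling} then rescales to $g=(\tilde f/\lambda)^2\tilde g$, which gives $\star_g d\alpha=\lambda\alpha$; reversing the orientation back if we flipped it recovers the original sign of $\lambda$.

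The main obstacle is regularity at the zeros in step 3. With $g_0$ only continuous, $\nabla(\star_{g_0}\alpha)$ does not exist as a derivative of a product, so the argument must use that $\alpha(p)=0$ and expand $\star_{g_{0,x}}\alpha(x)=\star_{g_{0,p}}\nabla\alpha|_p(x-p)+o(|x-p|)$, exactly as in Remark \ref{rem:transverse_C0}. Smoothness of $\alpha$ guarantees that the left-hand side $d\alpha$ is differentiable, so its derivative at $p$ is read off from this first-order expansion. If $M$ is not closed, nothing changes, since Theorem \ref{theorem: intrinsically rotationally Beltrami} is stated for arbitrary oriented $3$-manifolds and its partition-of-unity gluing is local.
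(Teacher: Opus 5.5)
Your proposal is correct and follows the paper's proof: reduce to $\lambda>0$ by Convention~\ref{conv:positive}, get the forward direction from elliptic regularity, and for the converse use Remark~\ref{rem:transverse_C0} to see that $\alpha$ is a positive near-contact form with symmetric $B_\alpha$, then conclude with Theorem~\ref{theorem: intrinsically rotationally Beltrami} and Remark~\ref{rem:rescaling}. Your first-order expansion at the zeros and the explicit rescaling $g=(\tilde f/\lambda)^2\tilde g$ only spell out details the paper leaves implicit.
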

\begin{proof}
    By Convention \ref{conv:positive}, we may assume $\lambda>0$. The forward direction follows from elliptic regularity. Conversely, by Remark \ref{rem:transverse_C0}, $\alpha$ is a positive near-contact form with symmetric $B_\alpha$, so Theorem \ref{theorem: intrinsically rotationally Beltrami} and Remark \ref{rem:rescaling} give a smooth metric $g$ with $\star_gd\alpha=\lambda\alpha$. 
\end{proof}

\appendix

\section{Cylinders between convex surfaces}
In this appendix, we provide a proof of a ``folklore" result in convex surface theory. We are grateful to Eduardo Fernández-Fuertes for suggesting this approach.
\begin{lemma}\label{lem:folklore}
Let $\Sigma_1, \Sigma_2$ be two convex surfaces in a contact manifold $(M,\xi)$. Let $x,y$ be two points respectively in the dividing set of $\Sigma_1$ and $\Sigma_2$. Let $\gamma$ be an embedded Legendrian arc going from $x$ to $y$, with $\gamma\pitchfork \Sigma_1=\{x\}$ and $\gamma \pitchfork \Sigma_2=\{y\}$. Then, there exists a convex cylinder $C$ in $M$ such that:
\begin{itemize}
    \item[-] $\partial C=\left(C\cap \Sigma_1\right) \cup \left(C\cap \Sigma_2\right)$, and each intersection is a Legendrian circle,
    \item[-] $C\cap \Sigma_1$ (respectively $C\cap \Sigma_2$) intersects twice the dividing curve containing $x$ (respectively $y$),
    \item[-] the dividing set of $C$ is given by two arcs, each joining one boundary component to the other.
\end{itemize}
\end{lemma}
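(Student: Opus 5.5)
The plan is to build $C$ as the boundary of a standard neighborhood of the Legendrian arc $\gamma$, after first putting the surfaces into a normal form near $x$ and $y$. Near a point of the dividing set, a convex surface can be isotoped ($C^0$-small, through convex surfaces with isotopic dividing set, using Giroux flexibility \cite{giroux1991convexity}) so that its characteristic foliation near $x$ is the standard one transverse to $\Gamma$. Moreover, since $\gamma$ is Legendrian and transverse to $\Sigma_1$ at $x\in\Gamma_{\Sigma_1}$, we may arrange that the contact vector field $Y_1$ transverse to $\Sigma_1$ is tangent to $\gamma$ near $x$. This is consistent because $Y_1\in\xi$ exactly along $\Gamma$. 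We do the same at $y$. In a Darboux-type chart around $\gamma$, this gives a model $\gamma=\{y=z=0,\ t\in[0,1]\}$ in $(\R^3,dz-y\,dt)$ or its tubular analogue $J^1$-neighborhood. In this model, $\Sigma_1$ and $\Sigma_2$ are locally $\{t=0\}$ and $\{t=1\}$, and they are $Y$-invariant for the contact field $Y=\partial_t$.

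Next, I take $C$ to be the boundary of a small standard neighborhood of $\gamma$, i.e. a tube $\{y^2+z^2=\varepsilon^2\}\cap\{0\le t\le 1\}$ suitably modified, cut off by $\Sigma_1,\Sigma_2$. The radial field $y\partial_y+z\partial_z$ is not contact, so instead I use the standard fact that the boundary of a standard neighborhood of a Legendrian arc (as for Legendrian knots, e.g.\ \cite{EtnyreNotes}, \cite{HondaClassification}) can be made convex with dividing set two arcs running along the core direction. Concretely, the contact vector field $2z\partial_z+y\partial_y$ (a contact Hamiltonian for $dz-y\,dt$ with $H=2z-\ldots$; I would check the scaling $\mathcal L\alpha=2\alpha$) is transverse to suitable ellipse-tubes $\{y^2+z^2/\lambda=\varepsilon^2\}$. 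On such a tube, $\{\alpha(Y)=0\}$ is the pair of lines $\{z=0,\ y=\pm\varepsilon\}$. These are two arcs joining the two boundary circles, which gives the third item.

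For the boundary conditions, I then perturb the boundary circles $C\cap\Sigma_i$ to be Legendrian. By the Legendrian realization principle (Honda), a nonisolating closed curve on a convex surface can be made Legendrian after a small isotopy through convex surfaces. A small circle around $x$ crosses $\Gamma_{\Sigma_1}$ exactly twice, so it is nonisolating, and its twisting relative to $\Sigma_1$ is $-1$, i.e.\ half the number of intersections. These two intersection points match the two endpoints of the dividing arcs of $C$, which is the standard alternation of dividing curves along a Legendrian corner. This gives the first two items.

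The main obstacle is the compatibility at the corners. The tube's dividing arcs must meet $\partial C$ so that, together with the dividing set of $\Sigma_i$, they satisfy the edge-rounding setup; equivalently, the twisting number of the Legendrian boundary measured by $C$ must agree with the twisting number measured by $\Sigma_i$, namely $-1$. I would handle this by choosing the model so that the Legendrian circle $\partial D_\pm$ is itself the boundary of a standard neighborhood disk in the $J^1$ model. Then both surfaces see the same contact framing, and the intersection count is read off from the model directly.
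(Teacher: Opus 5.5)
\textbf{Gap: the third item (dividing set of $C$ is two arcs across).} Your handling of the first two items is essentially right. Legendrian realization applies on $\Sigma_i$ to a small nonisolating circle around the point. The twisting number of that circle relative to $C$ equals its twisting relative to $\Sigma_i$, namely $-1$; this is automatic because the two surfaces meet transversely along it. Alternation then gives exactly two endpoints of $\Gamma_C$ on each boundary circle.

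The problem is that your explicit convex tube does not have Legendrian boundary. In your model $\alpha|_{\{t=0\}}=dz$, and on the round tube $y=\varepsilon\cos\theta$, $z=\varepsilon\sin\theta$ one gets $\alpha|_C=\varepsilon\cos\theta\,(d\theta-dt)$, so the circles $\{t=\mathrm{const}\}$ are not Legendrian. Once you apply Legendrian realization to $\Sigma_i$ and re-end the cylinder on the resulting Legendrian circles, your contact vector field no longer governs $C$ near its ends. After a convex perturbation rel boundary, the alternation argument only tells you how many endpoints $\Gamma_C$ has on each boundary circle. Nothing in your proposal rules out that $\Gamma_C$ consists of two boundary-parallel arcs, one at each end, possibly together with closed curves. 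The sentence about choosing the model so that $\partial D_\pm$ bounds a standard disk does not address this. An explicit route would need a genuine corner model, e.g.\ perturbing the convex tube so its Legendrian ruling is meridional and normalizing $\Sigma_i$ to contain the corresponding convex meridian disks, and none of this is supplied.

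\textbf{Missing idea: a global tightness argument, as in the paper's proof.} Everything takes place inside a standard neighborhood $N$ of $\gamma$, which is tight. Edge-rounding $C\cup D_1\cup D_2$ gives a convex sphere in $N$. If $\Gamma_C$ had a closed component or two boundary-parallel arcs, this sphere would have disconnected dividing set, and Giroux's criterion would produce an overtwisted disk in $N$, a contradiction. With this argument your explicit tube, and the delicate normalization of $\Sigma_i$ and $Y_i$ in your first paragraph, become unnecessary. Any cylinder in $N$ joining the two Legendrian circles, made convex rel boundary (possible since the twisting is $-1<0$), works.

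\textbf{A smaller error in the vector field.} For $\alpha=dz-y\,dt$, the radial field $y\partial_y+z\partial_z$ \emph{is} contact: its flow $(t,y,z)\mapsto(t,e^sy,e^sz)$ pulls $\alpha$ back to $e^s\alpha$. By contrast, $y\partial_y+2z\partial_z$ is not contact, since it gives $\mathcal{L}\alpha=2dz-y\,dt$. With the radial field, your conclusion $\Gamma=\{z=0,\ y=\pm\varepsilon\}$ on the round tube is correct.
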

\begin{proof}
First, we fix some standard Legendrian neighborhood $N$ of $\gamma$. Choose two circles $c_1\subset\Sigma_1$ and $c_2\subset\Sigma_2$ around $x$ and $y$, bounding disks $D_1,D_2$ contained in $N$, and intersecting the dividing circles on which $x$ and $y$ lie exactly twice. Each $c_i$ is nonisolating, since both components of $\Sigma_i\setminus c_i$ meet the dividing set. By the Legendrian realization principle \cite[Theorem 3.7]{HondaClassification} (see also \cite{HondaRealization}), there is an isotopy of $\Sigma_i$ through convex surfaces, $C^0$-small and supported in a small neighborhood of $D_i$ in $N$, after which the dividing set is the image of the original one and $c_i$ is Legendrian. We replace $\Sigma_i$ by the resulting convex surface, still denoted $\Sigma_i$; the conclusion of the lemma is understood for these perturbed surfaces. In what follows, $\gamma$ is only used through the tight neighborhood $N$, which contains $D_1$ and $D_2$.

Consider now a smooth embedded cylinder $C$, contained in $N$, whose boundary is its (transverse) intersection with $\Sigma_1$ and $\Sigma_2$, which is given exactly by these two Legendrian circles in $\Sigma_1$ and $\Sigma_2$. Without loss of generality, these circles bound disks in $\Sigma_1$ and $\Sigma_2$ contained in $N$. This cylinder can be perturbed in its interior so that it becomes convex by \cite[Theorem 2.23]{EtnyreNotes}. Indeed, the twisting number of the Legendrian boundary with respect to $C$ is negative: Its value is equal to the twisting with respect to $\Sigma_1$ (or $\Sigma_2$), which is negative because it is given by minus half the number of intersection points with the dividing set (which is two). We claim that near each boundary component of $C$, the dividing set is given by two lines stemming out of the boundary circle. Indeed, since the Legendrian circle at the intersection of $C$ with $\Sigma_1$ (or $\Sigma_2$) intersects the dividing set of $\Sigma_1$ twice, it must do so twice as well for the dividing set of $C$, as these must alternate as one goes along the Legendrian circle (see e.g. \cite[Lemma 4.1]{EtnyreNotes}, or \cite{HondaClassification}). 

We are left with showing that the dividing set of $C$ is given by two arcs, each joining one boundary component to the other. A priori, there are three possibilities: either the dividing set of $C$ contains some closed curve, is given exactly by two arcs, each joining one boundary component to the other, or is given exactly by two boundary-parallel segments. Consider the edge-rounded (see \cite[Lemma 4.3]{EtnyreNotes}, or \cite{HondaClassification}) sphere $S'$ obtained from the cylinder together with the two disks in $\Sigma_1$ and $\Sigma_2$ containing $x$ and $y$. If the dividing set of $C$ had some closed curve, or if it were given by two boundary-parallel segments, the dividing set of $S'$ would have more than one component.  By Giroux's criterion, an arbitrarily small neighborhood of $S'$ would be overtwisted, but such a neighborhood would be contained in $N$, where the contact structure is tight. It must be the case that the dividing set of $C$ is given by two arcs, each joining one boundary component to the other, as claimed.
\end{proof}
\newpage
\bibliographystyle{amsplain0}
\bibliography{bibliography.bib}

@article{El,
  title={Classification of overtwisted contact structures on 3-manifolds},
  author={Eliashberg, Yakov},
  journal={Inventiones mathematicae},
  volume={98},
  number={3},
  pages={623--637},
  year={1989},
  publisher={Citeseer}
}

@article{BEM,
  title={Existence and classification of overtwisted contact structures in all dimensions},
  author={Borman, Matthew Strom and Eliashberg, Yakov and Murphy, Emmy},
  journal={Acta Mathematica},
  volume={215},
  number={2},
  pages={281--361},
  year={2015},
  publisher={Springer}
}

@article{honda2004local,
  title={Local properties of self-dual harmonic 2-forms on a 4-manifold},
  author={Honda, Ko},
  journal={Journal fur die Reine und Angewandte Mathematik},
  volume={577},
  pages={105--116},
  year={2004},
  publisher={Berlin, W. de Gruyter.}
}

@article{giroux1991convexity,
  title={Convexity in contact topology},
  author={Giroux, Emmanuel},
  journal={Comment. Math. Helv},
  volume={66},
  number={4},
  pages={637--677},
  year={1991}
}

@book{Geiges,
  title={An introduction to contact topology},
  author={Geiges, Hansj{\"o}rg},
  volume={109},
  year={2008},
  publisher={Cambridge University Press}
}

@article {Bar,
    AUTHOR = {B\"ar, Christian},
     TITLE = {The curl operator on odd-dimensional manifolds},
   JOURNAL = {J. Math. Phys.},
  FJOURNAL = {Journal of Mathematical Physics},
    VOLUME = {60},
      YEAR = {2019},
    NUMBER = {3},
     PAGES = {031501, 16},
      ISSN = {0022-2488,1089-7658},
   MRCLASS = {53A45},
  MRNUMBER = {3920324},
MRREVIEWER = {Alfonso\ Garc\'ia-Parrado},
       DOI = {10.1063/1.5082528},
       URL = {https://doi.org/10.1063/1.5082528},
}

@article {Enciso2012,
    AUTHOR = {Enciso, Alberto and Peralta-Salas, Daniel},
     TITLE = {Nondegeneracy of the eigenvalues of the {H}odge {L}aplacian
              for generic metrics on 3-manifolds},
   JOURNAL = {Trans. Amer. Math. Soc.},
  FJOURNAL = {Transactions of the American Mathematical Society},
    VOLUME = {364},
      YEAR = {2012},
    NUMBER = {8},
     PAGES = {4207--4224},
      ISSN = {0002-9947,1088-6850},
   MRCLASS = {58J50 (35B38 35P05 35R01 58D17)},
  MRNUMBER = {2912451},
MRREVIEWER = {Colette\ Ann\'e},
       DOI = {10.1090/S0002-9947-2012-05496-1},
       URL = {https://doi.org/10.1090/S0002-9947-2012-05496-1},
}

@article {kepplinger2022spectral,
    AUTHOR = {Kepplinger, Willi},
     TITLE = {On spectral simplicity of the {H}odge {L}aplacian and curl
              operator along paths of metrics},
   JOURNAL = {Trans. Amer. Math. Soc.},
  FJOURNAL = {Transactions of the American Mathematical Society},
    VOLUME = {377},
      YEAR = {2024},
    NUMBER = {11},
     PAGES = {7829--7845},
      ISSN = {0002-9947,1088-6850},
   MRCLASS = {58C40 (35P99 35R01 58A10 58A14)},
  MRNUMBER = {4806198},
       DOI = {10.1090/tran/9221},
       URL = {https://doi.org/10.1090/tran/9221},
}

@article {PeraltaSlobodeanu2023,
    AUTHOR = {Peralta-Salas, Daniel and Slobodeanu, Radu},
     TITLE = {Contact structures and {B}eltrami fields on the torus and the
              sphere},
   JOURNAL = {Indiana Univ. Math. J.},
  FJOURNAL = {Indiana University Mathematics Journal},
    VOLUME = {72},
      YEAR = {2023},
    NUMBER = {2},
     PAGES = {699--730},
      ISSN = {0022-2518,1943-5258},
   MRCLASS = {53D10},
  MRNUMBER = {4596189},
}

@article {R,
    AUTHOR = {Ruiz, David},
     TITLE = {Symmetry results for compactly supported steady solutions of
              the 2{D} {E}uler equations},
   JOURNAL = {Arch. Ration. Mech. Anal.},
  FJOURNAL = {Archive for Rational Mechanics and Analysis},
    VOLUME = {247},
      YEAR = {2023},
    NUMBER = {3},
     PAGES = {Paper No. 40, 25},
      ISSN = {0003-9527,1432-0673},
   MRCLASS = {35Q31 (76B03)},
  MRNUMBER = {4575796},
MRREVIEWER = {Mikhail\ M.\ Shvartsman},
       DOI = {10.1007/s00205-023-01877-6},
       URL = {https://doi.org/10.1007/s00205-023-01877-6},
}

@misc{greilhuber2023arnolds,
      title={On {A}rnold's {T}ransversality {C}onjecture for the {L}aplace-{B}eltrami {O}perator}, 
      author={Josef Greilhuber and Willi Kepplinger},
      year={2024},
      eprint={2312.16939},
      archivePrefix={arXiv},
      primaryClass={math.DG},
      url={https://arxiv.org/abs/2312.16939}, 
}

@article {EncisoPeralta2012,
    AUTHOR = {Enciso, Alberto and Peralta-Salas, Daniel},
     TITLE = {Knots and links in steady solutions of the {E}uler equation},
   JOURNAL = {Ann. of Math. (2)},
  FJOURNAL = {Annals of Mathematics. Second Series},
    VOLUME = {175},
      YEAR = {2012},
    NUMBER = {1},
     PAGES = {345--367},
      ISSN = {0003-486X,1939-8980},
   MRCLASS = {35Q31 (76B47)},
  MRNUMBER = {2874645},
MRREVIEWER = {Franck\ Sueur},
       DOI = {10.4007/annals.2012.175.1.9},
       URL = {https://doi.org/10.4007/annals.2012.175.1.9},
}

@article{Etnyre2012,
   author = {John B. Etnyre and Rafal Komendarczyk and Patrick Massot},
   doi = {10.1007/s00222-011-0355-2},
   issue = {3},
   journal = {Inventiones Mathematicae},
   title = {Tightness in contact metric 3-manifolds},
   volume = {188},
   pages={621–657},
   year = {2012},
}

@article{Etnyre2000I,
   author = {John Etnyre and Robert Ghrist},
   doi = {10.1088/0951-7715/13/2/306},
   issue = {2},
   pages={441–458},
   journal = {Nonlinearity},
   title = {{Contact topology and hydrodynamics: I. Beltrami fields and the Seifert conjecture}},
   volume = {13},
   year = {2000},
}

@article{Etnyre2000III,
   author = {John Etnyre and Robert Ghrist},
   doi = {10.1090/s0002-9947-00-02651-9},
   issue = {12},
   journal = {Transactions of the American Mathematical Society},
   title = {{Contact topology and hydrodynamics III: knotted orbits}},
   volume = {352},
   pages={5781–5794},
   year = {2000},
}

@article {Cardona2021,
    AUTHOR = {Cardona, Robert and Miranda, Eva and Peralta-Salas, Daniel and
              Presas, Francisco},
     TITLE = {Constructing {T}uring complete {E}uler flows in dimension 3},
   JOURNAL = {Proc. Natl. Acad. Sci. USA},
  FJOURNAL = {Proceedings of the National Academy of Sciences of the United
              States of America},
    VOLUME = {118},
      YEAR = {2021},
    NUMBER = {19},
     PAGES = {Paper No. e2026818118, 9},
      ISSN = {0027-8424,1091-6490},
   MRCLASS = {68Q04 (76B03)},
  MRNUMBER = {4294081},
MRREVIEWER = {Gabriel\ Ciobanu},
       DOI = {10.1073/pnas.2026818118},
       URL = {https://doi.org/10.1073/pnas.2026818118},
}

@article{Woltjer1958,
author = {L. Woltjer },
title = {A Theorem on force-free magnetic fields},
journal = {Proceedings of the National Academy of Sciences},
volume = {44},
number = {6},
pages = {489-491},
year = {1958},
doi = {10.1073/pnas.44.6.489},
URL = {https://www.pnas.org/doi/abs/10.1073/pnas.44.6.489},
eprint = {https://www.pnas.org/doi/pdf/10.1073/pnas.44.6.489}}

@book {Kato,
    AUTHOR = {Kato, Tosio},
     TITLE = {Perturbation theory for linear operators},
    SERIES = {Die Grundlehren der mathematischen Wissenschaften},
    VOLUME = {Band 132},
 PUBLISHER = {Springer-Verlag New York, Inc., New York},
      YEAR = {1966},
     PAGES = {xix+592},
   MRCLASS = {47.00 (47.48)},
  MRNUMBER = {203473},
MRREVIEWER = {L.\ de Branges},
}

@article {Hofer1993,
    AUTHOR = {Hofer, H.},
     TITLE = {Pseudoholomorphic curves in symplectizations with applications
              to the {W}einstein conjecture in dimension three},
   JOURNAL = {Invent. Math.},
  FJOURNAL = {Inventiones Mathematicae},
    VOLUME = {114},
      YEAR = {1993},
    NUMBER = {3},
     PAGES = {515--563},
      ISSN = {0020-9910,1432-1297},
   MRCLASS = {58F05 (58E99 58F22)},
  MRNUMBER = {1244912},
MRREVIEWER = {Hansj\"org\ Geiges},
       DOI = {10.1007/BF01232679},
       URL = {https://doi.org/10.1007/BF01232679},
}

@article{PollardAlexander,
  title={Singular contact geometry and beltrami fields in cholesteric liquid crystals},
  author={Pollard, Joseph and Alexander, Gareth P},
  journal={arXiv preprint arXiv:1911.10159},
  year={2019}
}

@article {AurouxDonaldsonKatzarkov2005,
    AUTHOR = {Auroux, Denis and Donaldson, Simon K. and Katzarkov, Ludmil},
     TITLE = {Singular {L}efschetz pencils},
   JOURNAL = {Geom. Topol.},
  FJOURNAL = {Geometry and Topology},
    VOLUME = {9},
      YEAR = {2005},
     PAGES = {1043--1114},
      ISSN = {1465-3060,1364-0380},
   MRCLASS = {53D35 (57M50 57R17)},
  MRNUMBER = {2140998},
MRREVIEWER = {Michael\ J.\ Usher},
       DOI = {10.2140/gt.2005.9.1043},
       URL = {https://doi.org/10.2140/gt.2005.9.1043},
}

@article {Taubes2002,
    AUTHOR = {Taubes, Clifford Henry},
     TITLE = {A compendium of pseudoholomorphic beasts in {$\Bbb R\times
              (S^1\times S^2)$}},
   JOURNAL = {Geom. Topol.},
  FJOURNAL = {Geometry and Topology},
    VOLUME = {6},
      YEAR = {2002},
     PAGES = {657--814},
      ISSN = {1465-3060,1364-0380},
   MRCLASS = {32Q65 (53D45 57R17 57R57)},
  MRNUMBER = {1943381},
MRREVIEWER = {Klaus\ Mohnke},
       DOI = {10.2140/gt.2002.6.657},
       URL = {https://doi.org/10.2140/gt.2002.6.657},
}

@article {Gerig2020,
    AUTHOR = {Gerig, Chris},
     TITLE = {Taming the pseudoholomorphic beasts in {$\Bbb{R}\times
              (S^1\times S^2)$}},
   JOURNAL = {Geom. Topol.},
  FJOURNAL = {Geometry \& Topology},
    VOLUME = {24},
      YEAR = {2020},
    NUMBER = {4},
     PAGES = {1791--1839},
      ISSN = {1465-3060,1364-0380},
   MRCLASS = {53D42 (57R57)},
  MRNUMBER = {4173922},
       DOI = {10.2140/gt.2020.24.1791},
       URL = {https://doi.org/10.2140/gt.2020.24.1791},
}

@article{EncisoKepplingerPeralta2025,
author = {Alberto Enciso  and Willi Kepplinger  and Daniel Peralta-Salas },
title = {Isolated steady solutions of the 3D Euler equations},
journal = {Proceedings of the National Academy of Sciences},
volume = {122},
number = {12},
pages = {e2414730122},
year = {2025},
doi = {10.1073/pnas.2414730122},
URL = {https://www.pnas.org/doi/abs/10.1073/pnas.2414730122},
eprint = {https://www.pnas.org/doi/pdf/10.1073/pnas.2414730122}}

@article {CieliebakVolkov2017,
    AUTHOR = {Cieliebak, K. and Volkov, E.},
     TITLE = {A note on the stationary {E}uler equations of hydrodynamics},
   JOURNAL = {Ergodic Theory Dynam. Systems},
  FJOURNAL = {Ergodic Theory and Dynamical Systems},
    VOLUME = {37},
      YEAR = {2017},
    NUMBER = {2},
     PAGES = {454--480},
      ISSN = {0143-3857,1469-4417},
   MRCLASS = {35Q31 (76B03)},
  MRNUMBER = {3614033},
MRREVIEWER = {Xinyu\ He},
       DOI = {10.1017/etds.2015.50},
       URL = {https://doi.org/10.1017/etds.2015.50},
}

@article{Hartman1960,
    author = {Philip Hartman},
    title = {A lemma in the theory of structural stability of differential equations},
    journal = {Proc. Amer. Math. Soc.},
    year = {1960},
    pages = {610--620}
}

@article{Giroux2001,
author = {Giroux, Emmanuel},
year = {2001},
month = {06},
pages = {218-262},
title = {Structures de contact sur les variétés fibrées en cercles au-dessus d'une surface},
volume = {76},
journal = {Commentarii Mathematici Helvetici},
doi = {10.1007/PL00000378}
}

@book{ArnoldKhesin,
    author = {Arnold, Vladimir and Khesin, Boris},
    title = {Topological Methods in Hydrodynamics},
    publisher = {Springer},
    SERIES = {Applied Mathematical Sciences},
    year = {2021},
    ISBN={978-3-030-74277-5}
}

@article{JiangMinhZung,
  title={Normalization of Singular Contact Forms and Primitive 1-forms: K. Jiang et al.},
  author={Jiang, Kai and Truong, Hong Minh and Zung, Nguyen Tien},
  journal={Acta Mathematica Vietnamica},
  volume={49},
  number={3},
  pages={327--345},
  year={2024},
  publisher={Springer}
}

@article{MirandaOms,
  title={Contact structures with singularities: from local to global},
  author={Miranda, Eva and Oms, C{\'e}dric},
  journal={Journal of Geometry and Physics},
  volume={192},
  pages={104957},
  year={2023},
  publisher={Elsevier}
}

@article{Volkov,
  title={Characterization of intrinsically harmonic forms},
  author={Volkov, Evgeny},
  journal={Journal of Topology},
  volume={1},
  number={3},
  pages={643--650},
  year={2008},
  publisher={Wiley Online Library}
}

@article{CO,
author = {Robert Cardona and C{\'e}dric Oms},
title = {{EXISTENCE AND CLASSIFICATION OF $b$-CONTACT STRUCTURES}},
volume = {70},
journal = {Publicacions Matemàtiques},
number = {1},
publisher = {Universitat Autònoma de Barcelona, Departament de Matemàtiques},
pages = {247 -- 275},
year = {2026},
doi = {10.5565/PUBLMAT7012609},
URL = {https://doi.org/10.5565/PUBLMAT7012609}
}

@phdthesis{VolkovPhD,
  author      = {Evgeny Volkov},
  title       = {Characterization of intrinsically harmonic forms},
  school      = {Ludwig-Maximilians-Universit{\"a}t M{\"u}nchen},
  year        = {2007},
  type        = {Dissertation},
  address     = {M{\"u}nchen},
  doi         = {10.5282/edoc.6695},
  url         = {https://edoc.ub.uni-muenchen.de/6695/}
}

@article{Calabi,
  title={An intrinsic characterization of harmonic one-forms},
  author={Calabi, Eugenio},
  journal={Global analysis (papers in honor of K. Kodaira)},
  pages={101--117},
  year={1969},
  publisher={Princeton University Press Tokyo, Japan}
}

@article{FarberKatzLevine,
  title={Morse theory of harmonic forms},
  author={Farber, Michael and Katz, Gabriel and Levine, Jerome},
  journal={Topology},
  volume={37},
  number={3},
  pages={469--483},
  year={1998},
  publisher={Elsevier}
}

@article {Perutz2006,
    AUTHOR = {Perutz, Timothy},
     TITLE = {Zero-sets of near-symplectic forms},
   JOURNAL = {J. Symplectic Geom.},
  FJOURNAL = {The Journal of Symplectic Geometry},
    VOLUME = {4},
      YEAR = {2006},
    NUMBER = {3},
     PAGES = {237--257},
      ISSN = {1527-5256,1540-2347},
   MRCLASS = {57R17 (53D35)},
  MRNUMBER = {2314214},
MRREVIEWER = {David\ T.\ Gay},
       DOI = {10.4310/jsg.2006.v4.n3.a1},
       URL = {https://doi.org/10.4310/jsg.2006.v4.n3.a1},
}

@article {GayKirby2004,
    AUTHOR = {Gay, David T. and Kirby, Robion},
     TITLE = {Constructing symplectic forms on 4-manifolds which vanish on
              circles},
   JOURNAL = {Geom. Topol.},
  FJOURNAL = {Geometry and Topology},
    VOLUME = {8},
      YEAR = {2004},
     PAGES = {743--777},
      ISSN = {1465-3060,1364-0380},
   MRCLASS = {53D35 (57M50 57R17)},
  MRNUMBER = {2057780},
MRREVIEWER = {Olgu\c ta\ Bu\c se},
       DOI = {10.2140/gt.2004.8.743},
       URL = {https://doi.org/10.2140/gt.2004.8.743},
}

@phdthesis{PerutzThesis2005,
    author = {Timothy Perutz},
    title = {Surface-fibrations, four-manifolds, and symplectic Floer homology},
    school = {Imperial College, University of London},
    year = {2005}
}

@article {HondaTransversality2004,
    AUTHOR = {Honda, Ko},
     TITLE = {Transversality theorems for harmonic forms},
   JOURNAL = {Rocky Mountain J. Math.},
  FJOURNAL = {The Rocky Mountain Journal of Mathematics},
    VOLUME = {34},
      YEAR = {2004},
    NUMBER = {2},
     PAGES = {629--664},
      ISSN = {0035-7596,1945-3795},
   MRCLASS = {58J50 (58J32 58J37)},
  MRNUMBER = {2072799},
MRREVIEWER = {Nader\ Yeganefar},
       DOI = {10.1216/rmjm/1181069872},
       URL = {https://doi.org/10.1216/rmjm/1181069872},
}

@article{EtnyreNotes,
  title={Convex surfaces in contact geometry: class notes},
  author={Etnyre, John B},
  journal={Lecture notes available on: https://etnyre.math.gatech.
edu/preprints/papers/surfaces.pdf},
  year={2004}
}

@article{HondaClassification,
author = {Ko Honda},
title = {{On the classification of tight contact structures I}},
volume = {4},
journal = {Geometry \& Topology},
number = {1},
publisher = {MSP},
pages = {309 -- 368},
year = {2000},
doi = {10.2140/gt.2000.4.309},
URL = {https://doi.org/10.2140/gt.2000.4.309}
}

@article{HondaRealization,
author = {Ko Honda},
title = {{Gluing tight contact structures}},
volume = {115},
journal = {Duke Mathematical Journal},
number = {3},
publisher = {Duke University Press},
pages = {435 -- 478},
year = {2002},
doi = {10.1215/S0012-7094-02-11532-4},
URL = {https://doi.org/10.1215/S0012-7094-02-11532-4}
}

@article{ShevSmir,
  title={Anti-self-dual blowups II},
  author={Shevchishin, Vsevolod and Smirnov, Gleb},
  journal={arXiv preprint arXiv:2512.03352},
  year={2025}
}

@article {Taubes1999,
    AUTHOR = {Taubes, Clifford Henry},
     TITLE = {Seiberg-{W}itten invariants and pseudo-holomorphic
              subvarieties for self-dual, harmonic {$2$}-forms},
   JOURNAL = {Geom. Topol.},
  FJOURNAL = {Geometry and Topology},
    VOLUME = {3},
      YEAR = {1999},
     PAGES = {167--210},
      ISSN = {1465-3060,1364-0380},
   MRCLASS = {53D35 (32Q65 57R57)},
  MRNUMBER = {1697181},
MRREVIEWER = {Ignasi\ Mundet-Riera},
       DOI = {10.2140/gt.1999.3.167},
       URL = {https://doi.org/10.2140/gt.1999.3.167},
}

@article {Gerig2022,
    AUTHOR = {Gerig, Chris},
     TITLE = {Seiberg-{W}itten and {G}romov invariants for self-dual
              harmonic 2-forms},
   JOURNAL = {Geom. Topol.},
  FJOURNAL = {Geometry \& Topology},
    VOLUME = {26},
      YEAR = {2022},
    NUMBER = {8},
     PAGES = {3307--3365},
      ISSN = {1465-3060,1364-0380},
   MRCLASS = {53D42 (57K41 57R57)},
  MRNUMBER = {4562564},
MRREVIEWER = {Alexander\ Fel\cprime shtyn},
       DOI = {10.2140/gt.2022.26.3307},
       URL = {https://doi.org/10.2140/gt.2022.26.3307},
}

@article {Altschuler1995,
    AUTHOR = {Altschuler, Steven J.},
     TITLE = {A geometric heat flow for one-forms on three-dimensional
              manifolds},
   JOURNAL = {Illinois J. Math.},
  FJOURNAL = {Illinois Journal of Mathematics},
    VOLUME = {39},
      YEAR = {1995},
    NUMBER = {1},
     PAGES = {98--118},
      ISSN = {0019-2082,1945-6581},
   MRCLASS = {58G11 (53C12 53C15 58G30)},
  MRNUMBER = {1299651},
MRREVIEWER = {Ben\ Andrews},
       URL = {http://projecteuclid.org/euclid.ijm/1255986631},
}

@article {CdV1986,
    AUTHOR = {Colin de Verdi\`ere, Yves},
     TITLE = {Sur la multiplicit\'e{} de la premi\`ere valeur propre non
              nulle du laplacien},
   JOURNAL = {Comment. Math. Helv.},
  FJOURNAL = {Commentarii Mathematici Helvetici},
    VOLUME = {61},
      YEAR = {1986},
    NUMBER = {2},
     PAGES = {254--270},
      ISSN = {0010-2571,1420-8946},
   MRCLASS = {58G25},
  MRNUMBER = {856089},
MRREVIEWER = {G\'erard\ Besson},
       DOI = {10.1007/BF02621914},
       URL = {https://doi.org/10.1007/BF02621914},
}

@article {EncisoPeralta2015,
    AUTHOR = {Enciso, Alberto and Peralta-Salas, Daniel},
     TITLE = {Eigenfunctions with prescribed nodal sets},
   JOURNAL = {J. Differential Geom.},
  FJOURNAL = {Journal of Differential Geometry},
    VOLUME = {101},
      YEAR = {2015},
    NUMBER = {2},
     PAGES = {197--211},
      ISSN = {0022-040X,1945-743X},
   MRCLASS = {58J50},
  MRNUMBER = {3399096},
MRREVIEWER = {Xiaolong\ Han},
       URL = {http://projecteuclid.org/euclid.jdg/1442364650},
}

@article {Lisi2011,
    AUTHOR = {Lisi, Samuel T.},
     TITLE = {Dividing sets as nodal sets of an eigenfunction of the
              {L}aplacian},
   JOURNAL = {Algebr. Geom. Topol.},
  FJOURNAL = {Algebraic \& Geometric Topology},
    VOLUME = {11},
      YEAR = {2011},
    NUMBER = {3},
     PAGES = {1435--1443},
      ISSN = {1472-2747,1472-2739},
   MRCLASS = {53D35 (58J50)},
  MRNUMBER = {2821429},
MRREVIEWER = {Rafal\ Komendarczyk},
       DOI = {10.2140/agt.2011.11.1435},
       URL = {https://doi.org/10.2140/agt.2011.11.1435},
}

\Addresses

\end{document}